%% file: Ver2.tex
\documentclass{article}
\usepackage{graphicx} 
\usepackage{amsthm}
\title{A Rank for Discontinuities of Multivalued Functions}
\author{Daniel S. Mourad}
\date{\today}
\usepackage{color}
\usepackage{fancyhdr} 
\usepackage{lastpage} 
\usepackage{lipsum} 
\usepackage{amsfonts}
\usepackage{stmaryrd}
\usepackage{amsmath}
\usepackage{multirow}
\usepackage{mathtools}
\usepackage[left=3cm, right=3cm]{geometry}
\usepackage{enumerate}
\usepackage{enumitem}
\usepackage{comment}
\usepackage{amssymb}
\usepackage{bbm}
\usepackage[style=numeric,maxnames=99,sortcites]{biblatex}
\usepackage{hyperref}
\usepackage[all]{hypcap}
\usepackage{tikz}
\usepackage{tikz-cd}
\usetikzlibrary{calc}
\makeatletter
\@ifundefined{T@LS2}{\DeclareFontEncoding{LS2}{}{\noaccents@}}{}
\makeatother
\DeclareFontSubstitution{LS2}{stix2}{m}{n}

\DeclareSymbolFont{stixsmashtimesfont}{LS2}{stix2tt}{m}{n}
\SetSymbolFont{stixsmashtimesfont}{bold}{LS2}{stix2tt}{b}{n}

\DeclareMathSymbol{\STIXsmashtimesSymbol}{\mathbin}{stixsmashtimesfont}{"A4}

\newcommand{\smashtimes}{\STIXsmashtimesSymbol}

\usetikzlibrary{positioning,trees}
\newtheorem{Theorem}{Theorem}[subsection]

\newtheorem{Lemma}[Theorem]{Lemma}

\newtheorem*{Claim}{Claim}

\newtheorem{Proposition}[Theorem]{Proposition}

\newtheorem{Corollary}[Theorem]{Corollary}

\newtheorem{Question}[Theorem]{Question}
\theoremstyle{definition}
\newtheorem*{Remark}{Remark}
\newtheorem{RemarkNum}[Theorem]{Remark}

\newtheorem{Definition}[Theorem]{Definition}
\newcounter{MainTheorem}
\renewcommand{\theMainTheorem}{\arabic{MainTheorem}}
\newenvironment{MainTheoremDisplay}{%
    \refstepcounter{MainTheorem}%
    \par\medskip\noindent\textbf{Theorem~\theMainTheorem. }\itshape
}{%
    \par\medskip
}

\newcommand{\ACCN}{\mathsf{ACC}_\mathbb{N}}

\newcommand{\Q}{\mathbb{Q}}
\newcommand{\W}{\mathrm{W}}
\newcommand{\sW}{\mathrm{sW}}

\newcommand{\N}{\mathbb{N}}

\newcommand{\mc}[1]{\mathcal{#1}}
\newcommand{\mb}[1]{\mathbf{#1}}
\newcommand{\tto}{\rightrightarrows}
\newcommand{\llb}{\llbracket}
\newcommand{\fop}[1]{{^1}{#1}}
\newcommand{\rrb}{\rrbracket}
\newcommand{\B}{\N^\N}
\newcommand{\TS}{\mathsf{TS}}

\newcommand{\A}{\forall}
\newcommand{\E}{\exists}

\DeclareMathOperator{\dom}{dom}
\DeclareMathOperator{\rank}{rank}
\DeclareMathOperator{\rankp}{rankp}
\DeclareMathOperator{\range}{range}
\newcommand{\converges}{{\downarrow}}

\newcommand{\cat}{{^\frown}}
\newcommand{\DIS}{\mathsf{DIS}}

\newcommand{\repr}[1]{\delta_{\mathbf{#1}}}
\newcommand{\deltarep}[1]{\repr{#1}}
\newcommand{\tup}[1]{\langle #1 \rangle}
\newcommand{\realizerver}[1]{#1^{\mathrm{r}}}
\newcommand{\extendedBaireSpace}{(\N \cup \{\hash\})^\N}

\newcommand{\hash}{\#}
\newcommand{\RT}{\mathsf{RT}}
\newcommand{\id}{\mathsf{id}}
\newcommand{\closure}{\mathrm{cl}}

\newcommand{\MC}{\mathrm{MC}}
\renewcommand{\vec}[1]{\bar{#1}}

\begin{document}
\maketitle

\begin{abstract}
We study discontinuity of multivalued functions (also known as problems) $P$ on Baire space by assigning an ordinal rank to points in the domain of $P$ that have no local realizers. For each countable ordinal $\alpha$,  let $\mathsf{ACC}_{\mathbb N}^{\alpha}$ be the problem of solving $\mathsf{ACC}_{\mathbb N}$ in at most $\alpha$ many attempts, with a new instance provided for each attempt. Our main theorem shows that, for any problem $P$, the following are equivalent: (i) $P$ is discontinuous on some set all of whose points have Cantor--Bendixson rank at most $\alpha$, and (ii) $\mathsf{ACC}_{\mathbb N}^{\alpha}\leq_{\mathrm W}^{*}P$. This extends to points: $P \geq_{\mathrm{W}}^* \mathsf{ACC}_{\mathbb{N}}^\alpha$ via a forward function that sends $\#^{\mathbb{N}}$ to $p \in \operatorname{dom}(P)$ if and only if $P$ is discontinuous on a set $A$ with $\operatorname{rank}_A(p) \leq \alpha$ and $P$ has no continuous realizer for any neighborhood of $p$. 
We also characterize these properties via a Wadge-style discontinuity game for $P$.

We apply this framework to the thin set and achromatic Ramsey theorems. Extending $\mathsf{RT}^{n}_{k,j}$ to ordinal parameters, we define $\mathsf{RT}^{n}_{\alpha,\beta}$ and compute their ranks of discontinuity. The problems $\mathsf{RT}^n_{\alpha,\beta}$ provide examples of problems with discontinuities of each countable rank which are not reducible to $\mathsf{ACC}_{\mathbb{N}}$. The separation from $\mathsf{ACC}_{\mathbb{N}}$ is obtained via the notion of guessability with identified errors.
\end{abstract}
\tableofcontents

\section{Introduction and Contents}
\subsection{Summary of Main Results}
It is well known that transfinite ordinals can be used to represent the number of attempts in which an agent promises to complete a task (see, e.g., \cite{ToyUses,FeirvaldSmithProcastination,luoMindChangeEfficient2006,debrechtLevelsDiscontinuityLimitcomputability2014}) in the following sense. If an agent promises to complete a task in at most $\alpha$ many attempts, then if they fail their next attempt they will provide a new ordinal $\beta < \alpha$ and subsequently promise to complete the task in at most $\beta$ many attempts. Pauly and Sold\`a \cite{SoldaPaulySequentialDiscontinuity} show that solving the problem $\ACCN$ in one attempt is the least difficult (in the sense of continuous Weihrauch reducibility $\leq_\W^*$) discontinuous task whose instances all have Cantor-Bendixson rank $\leq 1$. They conjectured that this result is the $n=1$ case of a more general phenomenon. In this paper, we show that this is indeed the case: there is a $\leq_\W^*$-least discontinuous task whose instances all have Cantor-Bendixson rank $\leq \alpha$; namely, the task of solving $\ACCN$ in at most $\alpha$ many attempts, in the sense that a new instance of $\ACCN$ is given for each attempt. We refer to this task as $\ACCN^\alpha$. 

\begin{MainTheoremDisplay}\label{Thm:MainTheorem}
    Let $P: \subset \B \tto \B$ be a problem, and let $\alpha$ be a countable ordinal. The following are equivalent. 
    \begin{enumerate}
        \item \label{MainTheoremItem:DiscontinuousOnPointWiseRankedSet} There is a set $A \subset \B$ such that $P|_A$ is discontinuous and each point $p \in A$ has Cantor-Bendixson rank $\rank_A(p) \leq \alpha$.     
        \item \label{MainTheoremItem:AboveACCN} $ \ACCN^\alpha \leq_\W^* P$.\footnote{We work with a version of Weihrauch reducibility which allows problems to have instances without solutions. See Section~\ref{Sec:WeihrauchReducibility}.}
    \end{enumerate}
\end{MainTheoremDisplay}

Problems for which Theorem~\ref{Thm:MainTheorem} is most relevant generally occur below a substantial piece of the Weihrauch degrees. Indeed, it immediately follows from Theorem~\ref{Thm:MainTheorem} and Pauly and Sold\`a's result that for any discontinuous first order problem $P$ we have $P >_\W^* \ACCN^\alpha$ for every countable $\alpha > 1$. Hence, the entirety of our $\ACCN^\alpha$ hierarchy is below every problem with non-trivial first order part.

We also obtain a local account of how points of discontinuity (i.e., points with no local realizers) manifest in multivalued problems in Theorem~\ref{Thm:ExtendingSolutions}. This observation leads us to a Cantor-Bendixson style ordinal ranking of points of discontinuity (Definition~\ref{Def:RankOfDiscontinuity}). These ranks play nicely with Weihrauch reducibility (Corollary~\ref{Cor:RanksAreWeihrauchInvariant}) and are also characterized by the rank-$\alpha$ discontinuity game, a two player game (Definition~\ref{def:DiscontinuityGame}) in which players take turns witnessing Theorem~\ref{Thm:ExtendingSolutions}. 

Brattka \cite{BrattkaDiscontinuity} uses a Wadge style game developed by Nobrega and Pauly \cite{nobregaGameCharacterizationsLower2019} to show, assuming dependent choice and enough determinacy, that the discontinuity problem $\DIS$ is the least discontinuous problem in the Weihrauch degrees. Pauly and Sold\`a  \cite{SoldaPaulySequentialDiscontinuity} generalize this result to show that the inequality problem for a represented space $\mathbf{X}$ is the least discontinuous problem whose codomain is $\mathbf{X}$. In fact, in the case of the standard representation of $\N$, no determinacy assumptions are needed. 

The rank-$\alpha$ discontinuity game for $P$ is similar to the Wadge game for $P$ in that players adversarially compete to demonstrate the continuity (or lack thereof) of $P$.
Similarly to the Wadge game, Player~1 having a winning strategy in the rank-$\alpha$ discontinuity game for $P$ implies that $P$ is discontinuous. Unlike the Wadge game, however, Player~2 having a winning strategy in the rank-$\alpha$ game for $P$ does \emph{not} imply that $P$ is continuous. Instead, it implies that $P$ is continuous on every subset of its domain whose points all have Cantor-Bendixson rank less than or equal to $\alpha$ (Corollary~\ref{Cor:Player2WinConsequence}). 

Theorem~\ref{Thm:MainTheoremPointWise} restates Theorem~\ref{Thm:MainTheorem} with reference to this game, points of discontinuity, and their ranking.

\begin{MainTheoremDisplay}\label{Thm:MainTheoremPointWise}
    Let $P: \subset \B \tto \B$ be a problem, let $p \in \dom(P)$, and let $\alpha$ be a countable ordinal. The following are equivalent. 
    \begin{enumerate}
        \item \label{MainTheoremPointwiseItem:DiscontinuousOnPointWiseRankedSet} There is a set $A \subset \B$ such that $P|_A$ is discontinuous, $p$ is a point of discontinuity of $P|_A$, and $p$ has Cantor-Bendixson rank $\rank_A(p) \leq \alpha$.     
        \item \label{MainTheoremPointwiseItem:AboveACCN} $P \geq_\W^* \ACCN^\alpha$ via forward function $\Phi$ such that $\Phi(\hash^\N) = p$.
        \item \label{MainTheoremPointwiseItem:WinningDiscontinuityGameInAlphaTurns} Player~1 has a winning strategy in the rank-$\alpha$ discontinuity game for $P$ with initial move $p$.
        \item \label{MainTheoremPointwiseItem:HaveARankLeqAlphaDiscontinuity} $p$ is a rank-$\beta$ discontinuity of $P$ for some $\beta \leq \alpha$.
    \end{enumerate}
\end{MainTheoremDisplay}

Since Condition~\ref{MainTheoremItem:DiscontinuousOnPointWiseRankedSet} (resp., Condition~\ref{MainTheoremItem:AboveACCN}) of Theorem~\ref{Thm:MainTheorem} is equivalent to there existing a $p$ satisfying Condition~\ref{MainTheoremPointwiseItem:DiscontinuousOnPointWiseRankedSet} (resp., Condition~\ref{MainTheoremPointwiseItem:AboveACCN}) of Theorem~\ref{Thm:MainTheoremPointWise},  we have that Theorem~\ref{Thm:MainTheoremPointWise} implies Theorem~\ref{Thm:MainTheorem}. 
Theorem~\ref{Thm:WinningStrategyImpliesAboveACCN} shows (\ref{MainTheoremPointwiseItem:WinningDiscontinuityGameInAlphaTurns}) $\Rightarrow$ (\ref{MainTheoremPointwiseItem:AboveACCN}). 
Theorem~\ref{Thm:FollowPhiOnACompactSubsetOfDiscontinuity} shows that (\ref{MainTheoremPointwiseItem:AboveACCN}) $\Rightarrow$ (\ref{MainTheoremPointwiseItem:DiscontinuousOnPointWiseRankedSet}).
Theorem~\ref{Thm:RankDiscontinuityEquivalence} shows (\ref{MainTheoremPointwiseItem:DiscontinuousOnPointWiseRankedSet}) $\Rightarrow$ (\ref{MainTheoremPointwiseItem:WinningDiscontinuityGameInAlphaTurns}). 
Finally, we prove that (\ref{MainTheoremPointwiseItem:HaveARankLeqAlphaDiscontinuity}) $\iff$ (\ref{MainTheoremPointwiseItem:WinningDiscontinuityGameInAlphaTurns}) in Theorem~\ref{Thm:DiscontinuityGameCharacterization}.

\subsection{Problems with Trivial First Order Part}

Dzhafarov, Solomon and Yokoyama \cite{dzhafarovFirstorderPartsProblems2024} introduce the notion of the first order part of a problem.  We provide the definition below.
\begin{Definition}
	\label{Def:FirstOrderPart}
	Let $P: \subset \B \tto \B$ be a problem. We define the first order part of $P$, $\fop{P}$, to be the problem with instances $\dom(\fop{P}) = \{(p, \Psi): p \in \dom(P), \text{ } \Psi: \subset \B \to \N \text{ is a continuous function} \text{ with }P(p) \subset \dom(\Psi)\}$ and solutions $\fop{P}(p, \Psi) = \{\Psi(q): q \in P(p)\}$.
\end{Definition}
Essentially, the first order part of a problem $P$ is the problem of, given an instance $p$ of $P$, to output a sufficiently long initial segment of a $P$-solution to $p$, with the length determined by $\Psi$. 

Dzhafarov, Solomon and Yokoyama show that any problem below $P$ whose codomain is $\N$ is Weihrauch reducible to $\fop{P}$. As previously mentioned, Pauly and Sold\`a \cite{SoldaPaulySequentialDiscontinuity} show that $\ACCN$ is Weihrauch reducible to every problem with non-trivial first order part. Hence, the entirety of our $\ACCN^\alpha$ hierarchy is below every problem with non-trivial first order part. 

Sold\`a and Valenti show that the set of problems with trivial first order part forms a sublattice of the Weihrauch degrees \cite[cf. Proposition~4.1]{soldaAlgebraicPropertiesFirstorder2023}. We can view Theorems~\ref{Thm:MainTheorem} and \ref{Thm:MainTheoremPointWise} as providing some additional structure to this sublattice by characterizing the discontinuities of problems with trivial first order part.

\subsubsection{Problems Which Have no Rank-\texorpdfstring{$\alpha$}{alpha} Discontinuities for Any Countable \texorpdfstring{$\alpha$}{alpha}}

In light of Theorem~\ref{Thm:MainTheoremPointWise}, the class of problems whose discontinuities all have rank $\infty$ (in the sense of Definition~\ref{Def:RankOfDiscontinuity}) is a natural object of consideration.  Equivalently, this is the class of problems that are continuous on any scattered subset of their domain. It turns out that this is a relatively weak condition, being implied by several separate notions which are already present in the literature. 

In the process of performing Weihrauch analysis on classical computability theory, Brattka, Hendtlass, and Kreuzer \cite{HendtlassKreuzerBrattkaUniformComputationalContent2017} study \emph{densely realized} problems, those problems whose solution sets are always dense in $\N^\N$. Since Player~1 can only win the rank-$\alpha$ discontinuity game for a problem $P$ if $P$ has an instance with no solution above a certain $\rho \in \N^{<\N}$, densely realized problems are continuous on any scattered subset of their domain by Theorem~\ref{Thm:MainTheoremPointWise}. 

Hirschfeldt and Jockusch \cite{hirschfeldtJockuschNotionsComputabilitytheoreticReduction2016} introduce the notion of \emph{undiagonalizable} problems, those problems for which there is a uniform algorithm for determining whether a particular $\sigma \in \N^{<\N}$ is an initial segment of a solution to a given instance. Dzhafarov, Solomon and Yokoyama show that undiagonalizable problems have trivial first order part. In Theorem~\ref{Thm:undiagonalizableImpliesPlayer2Win}, we strengthen this result to show that they are continuous on any scattered subset of their domain.

Hirschfeldt and Jockusch also introduce the contrasting notion of a problem having \emph{diagonalization opportunities}. A problem $P$ being undiagonalizable implies that $P$ does not have diagonalization opportunities, but the converse does not hold. While we have not obtained a direct connection between having diagonalization opportunities and having a rank-$\alpha$ discontinuity for some countable $\alpha$ (and indeed, having diagonalization opportunities is not Weihrauch invariant so such a connection would not be straightforward), it turns out that having diagonalization opportunities characterizes a slight strengthening of Player~1 having a winning strategy in the rank-$\alpha$ discontinuity game for some countable $\alpha$. By allowing Player~2 to play any $q \in \B$ instead of only playing solutions to the most recent instance of $P$ given by Player~1, we obtain a game in which Player~1 having a winning strategy is equivalent to $P$ having diagonalization opportunities. We discuss this in more detail in Section~\ref{Sec:DiagonalizationOpps}.

The class of problems with trivial first order part which are also continuous on all scattered subsets of their domain also contains the very bottom of the discontinuous Weihrauch degrees, assuming enough determinacy. Brattka \cite{BrattkaDiscontinuity} shows that, assuming the determinacy of the Wadge game, the discontinuity problem $\DIS$ is the least discontinuous problem in the Weihrauch degrees. Since $\DIS$ is densely realized, for all countable ordinals $\alpha$ we have that $\DIS$ does not have a rank-$\alpha$ discontinuity and that $\DIS <_\W \ACCN^\alpha$. Furthermore, in a construction relying on the axiom of choice, Brattka provides examples of problems strictly below $\DIS$. By Corollary~\ref{Cor:RanksAreWeihrauchInvariant}, any such problems also do not have any rank-$\alpha$ discontinuities for any countable $\alpha$ and hence are also part of this class.

\subsubsection{Thin Set and Achromatic Ramsey Theorems}
\label{Sec:ThinSetTheorems}
\paragraph{Thin Set Theorem}
The thin set theorems provide natural intermediate examples between problems with nontrivial first order part (i.e., having a rank-$1$ discontinuity) and problems with only rank-$\beta$ discontinuities for $\infty \geq \beta \geq 2$.

Fix $n \in \N$ and $k \in \N \cup \{\N\}$. For a set of natural numbers $A \subset \N$, we write $[A]^n$ for the set of $n$-element subsets of $A$. For a coloring $c: [\N]^n \to k$, we say that a set $A \subset \N$ is \emph{thin} for $c$ if $c([A]^n) \neq k$. The thin set theorem says that every coloring $c: [\N]^n \to \N$ has an infinite thin set. We write $\TS^n_k$ for the problem with instances
\[
    \dom(\TS^n_k) = \{c| c: [\N]^n \to k\}
\]
and solutions $\TS^n_k(c) = \{A \subset \N: A \text{ is infinite and thin for } c\}$.

The reverse mathematical analysis of the thin set theorem was initiated by Friedman \cite{FriedmanBooleanRelationTheory}. Dorais, Dzhafarov, Hirst, Mileti and Shafer \cite{doraisUniformRelationshipsCombinatorial2015} show that, for all $n$,
\[
\TS^n_2 \geq_\sW \TS^n_3 \geq_\sW  \TS^n_4 \geq_\sW\dots \geq_\sW \TS^n_\N
\] 
and furthermore that these reductions are strict. They also show for $n = 1$ the stronger statement that $\TS^1_{k+1} \not\geq_\W \TS^1_k$. 
Hirschfeldt and Jockusch \cite{hirschfeldtJockuschNotionsComputabilitytheoreticReduction2016} extend this result to all $n$.  In fact, the proofs in \cite{doraisUniformRelationshipsCombinatorial2015} and \cite{hirschfeldtJockuschNotionsComputabilitytheoreticReduction2016} relativize to arbitrary oracles, so they actually show that $\TS^n_{k} \not\leq_\W^* \TS^n_{k+1}$. 

Patey \cite{pateyWeaknessBeingCohesive2016} proves these non-reductions in the non-uniform setting, showing that for all $n \geq 2$,
\[
\TS^n_2 \not\leq_\mathsf{c} \TS^n_3 \not\leq_\mathsf{c} \TS^n_4 \not\leq_\mathsf{c} \dots \not\leq_\mathsf{c} \TS^n_\N
\]
with $\not\leq_\mathsf{c}$ replaced by $\not\leq_\mathsf{sc}$ for $n = 1$ (since $\TS^1_k$ is computably true, this weakening is essential). Here, $\leq_\mathsf{c}$ represents computable reducibility. 

In Section~\ref{Sec:AchromaticRamseyTheorems}, we show that $\TS^1_{k+1}$ has a rank $k$ discontinuity but no rank $k - 1$ discontinuity, while $\TS^1_\N$ has only rank $\infty$ discontinuities. The main idea of the proof is essentially the same as the proof in \cite[Theorem~5.27]{doraisUniformRelationshipsCombinatorial2015} of the non-reduction of $\TS^1_k$ to $\TS^1_{k+1}$. On the other hand, for all $n \geq 2$, the problem $\TS^n_k$ has a rank $1$ discontinuity. This quantifies a way in which the additional complexity in the proofs of \cite{hirschfeldtJockuschNotionsComputabilitytheoreticReduction2016} and \cite{pateyWeaknessBeingCohesive2016} for the $n \geq 2$ case is essential.

\paragraph{Achromatic Ramsey Theorems for Ordinal-Valued Numbers of Colors}
\label{sec:AchromaticRamseyTheoremsIntro}

The achromatic Ramsey theorem was introduced by Erd\H{o}s, Hajnal and Rado \cite{erdosPartitionRelationsCardinal1965}. Its reverse mathematical analysis was proposed by Joe Miller (see \cite[Section 2.2.3]{montalbanOpenQuestionsReverse2011}) and studied by Wang \cite{wangLogicallyWeakRamseyan2014} in the context of reverse mathematics. 

We extend the definition of $\RT^n_{k,j}$ to allow for both the number of colors in $c$ and the number of colors in the solution to be ordinal-valued. For each ordinal $\beta < \omega_1$ and each $\alpha \in \{\N\} \cup \omega_1$ we define the problem $\RT^n_{\alpha,\beta}$ (see Definition~\ref{def:AchromaticRamseyTheorem}). 
If $\alpha = \N$, then instances of $\RT^n_{\alpha,\beta}$ are colorings $c:[\N]^n \to \N$ with finite range. If $\alpha \in \omega_1$, then instances are colorings $c:[\N]^n \to \N$ coupled with a decreasing sequence of ordinals strictly less than $\alpha$ whose decreases are in one-to-one correspondence with each new color appearing in $c([\N]^n)$. 
The set of solutions to such an instance is an infinite set $A \subset \N$ coupled with a decreasing sequence of ordinals strictly less than $\beta$ whose decreases are in one-to-one correspondence with each new color appearing in $c([A]^n)$. 
For both the instances and solutions, once the sequence reaches $0$, no new colors can appear. Then, $\TS^n_k$ is just $\RT^n_{k, k-1}$. Wang \cite{wangLogicallyWeakRamseyan2014} gives conditions on $n$ and $j$ for which $\RT^n_{\omega,j}$ admits strong cone avoidance. Cholak and Patey \cite{cholakThinSetTheorems2020} tighten these conditions to be exact and obtain specific information on which cones cannot be avoided when those conditions are not met.

In Theorems~\ref{thm:rankOfRT1}, \ref{thm:rankOfRTn}, and \ref{Thm:RTNotGuessableWithErrors}, we show that these problems provide examples of problems with discontinuities of each countable rank which are not reducible to $\ACCN$, and in particular are not equivalent to $\ACCN^\alpha$ for any countable $\alpha$. The non-reduction is accomplished by showing for $\alpha > 0$ that $\ACCN^\alpha$ is $2$-guessable with identified errors while $\RT^n_{\alpha,\beta}$ is not $2$-guessable with identified errors if $\beta < \alpha$.

\section{Background}
\label{Sec:Background}
\input{Background.tex}

\section{The \texorpdfstring{$\ACCN^\alpha$}{ACCN-alpha} Hierarchy}
\label{Sec:TheProblemsACCN}
\input{TheProblemsACCN.tex}
\section{Local Discontinuities of Problems}
\label{Sec:LocalDiscontinuities}
\input{LocalDiscontinuities.tex}




\section{Achromatic Ramsey Theorems}
\label{Sec:AchromaticRamseyTheorems}
\input{AchromaticRamseyTheorems.tex}

\section{Concluding Remarks}
\label{Sec:Conclusion}
\input{Conclusion.tex}

\section{Acknowledgements}

The author thanks Jun Le Goh, Arno Pauly, and Giovanni Sold\`a for helpful discussions.

Generative AI suggestions were used to assist in writing and editing this paper. The author accepted suggestions provided by OpenAI, Anthropic, and DeepSeek models through GitHub Copilot. In most cases, these suggestions were typographical in nature. In a notable exception, the proof of Theorem~\ref{Thm:undiagonalizableImpliesPlayer2Win} was initially produced by generative AI and completely correct. It was then modified by the author to both match style with the rest of this paper and adjust the proof to a modified statement. The author takes full responsibility for the content of this paper. Later, the paper was proofread by GPT6. The large language model discovered a mistake in a previous proof of Proposition~\ref{Prop:WinningStratNotRepeating} and provided the argument to repair it. 

\printbibliography
\end{document}

%% file: Background.tex
\subsection{Notation}
\label{sec:Notation}
We typically use the letters $p$ and $q$, sometimes using $a$ and $b$, for elements of $\B$, the set of infinite sequences of natural numbers (sometimes we also use $a$ and $b$ for natural numbers). We use Greek letters $\sigma, \tau$, and $\rho$ for elements of $\N^{<\N}$, the set of finite sequences of natural numbers. For $\sigma, \tau \in \N^{<\N}$, we write $\sigma \sqsubset \tau$ if $\sigma$ is an initial segment of $\tau$ (not necessarily a proper initial segment; it is possible that $\sigma = \tau$ as well). For $\sigma \in \N^{<\N}$ and $p \in \B$, we write $\sigma \sqsubset p$ if $\sigma$ is an initial segment of $p$. For $\sigma \in \N^{<\N}$, we write $\llb \sigma \rrb$ for the set of $p \in \B$ such that $\sigma \sqsubset p$. 

Let $X$ and $Y$ be spaces and let $A \subset X$ and $B \subset Y$. We use letters $P$ and $Q$ to denote partial multifunctions $P: \subset X \tto Y$. For a multivalued partial function $P: \subset X \tto Y$, we write $P|[A \times B]$ for the function with domain $\dom(P) \cap A$ and such that 
\[
	P|[A \times B](x) = P(x) \cap B.
\]
Note that it is possible for $x \in X$ to be in the domain of $P|[A \times B]$ but for $P|[A \times B](x) = \emptyset$.

We will borrow the symbol $\hash$ from the language learnability literature to represent ``waiting'' while producing an element of $\B$ (see e.g. \cite{luoMindChangeEfficient2006,brechtTopologicalPropertiesConcept2010}). Sometimes we will also use $\hash$ to represent a pause in a sequence to insert information from a different sequence. We identify $\B$ with $\extendedBaireSpace$ via the natural bijection induced by $\hash \mapsto 0$ and $n \mapsto (n + 1)$. 

We use Greek letters $\alpha, \beta$, and $\gamma$ to denote ordinals. We use $\omega$ for the least infinite ordinal and $\omega_1$ for the least uncountable ordinal. For an ordinal $\alpha$, we identify ordinals with the set of their predecessors, so that $\beta < \alpha$ if and only if $\beta \in \alpha$. We include natural numbers in this notation, so in the following, $[k]^n$ denotes the set of subsets of $\{0, \ldots, k-1\}$ with size $n$.

The following notation will be relevant in Section~\ref{Sec:AchromaticRamseyTheorems}.
For $n \in \N$ and $A \subset \N$, we write $[A]^n$ for the set of all subsets of $A$ with size $n$. Fix an enumeration $\langle \cdot \rangle$ of size $n$ subsets of $\N$. We code colorings $c: [\N]^n \to k$ as functions $\hat{c}: \N \to k$ by setting $c(a_1, \ldots, a_n) = \hat{c}(\langle a_1, \ldots, a_n \rangle)$. We identify $c$ with $\hat{c}$. We reserve the variable $\theta$ for an initial segment $\theta \sqsubset \hat{c}$ of a coloring. 

 We identify a subset $A \subset \N$ of natural numbers with the $\{0,1\}$-valued sequence that is the characteristic function $\chi_A \in 2^{\N}$ of $A$. We reserve the variable $\phi$ for finite $\{0,1\}$-valued sequences and identify $\phi \in 2^{<\N}$ with $\{i: \phi(i) = 1\}$. Thus, for an initial segment $\theta \in \N^{<\N}$ of a coloring and $\phi \in 2^{<\N}$, we write $\theta([\phi]^n)$ for
\[
\theta([\phi]^n) = \{k: (\E a_0, \dots, a_{n-1})((\A i < n)(\phi(a_i) = 1) \text{ and } \theta(a_0, \dots, a_{n-1}) = k)\}.
\]

\subsection{Weihrauch Reducibility}
\label{Sec:WeihrauchReducibility}
We give a brief introduction to Weihrauch reducibility. See \cite{brattkaGherardiPaulyWeihrauchComplexityComputable2021} and \cite{dzhafarovMummertReverseMathematicsProblems2022} for a more comprehensive account. 

\emph{Problems} are the object of study in Weihrauch reducibility. Many problems studied in Weihrauch reducibility originate from statements $P$ of the form 
\begin{equation}
	\label{Line:StatementFormOfProblem}
	P: \forall x (\Phi(x) \rightarrow \exists y\Psi(x,y)). 
\end{equation}
The statement form of $P$ naturally gives rise to a multifunction form $P: \{x: \Phi(x)\} \tto \{y: (\E x)\Psi(x,y)\}$, where for $x \in \dom(P)$, 
\[
	P(x) = \{y: \Psi(x,y)\}.
\]
We call $\dom(P)$ the set of \emph{instances} of $P$ and, for $x \in \dom(P)$, we call $P(x)$ the set of $P$-\emph{solutions} to $x$. We also call $P$ a \emph{problem} in the sense that solving an instance of $P$ means finding a $P$-solution to that instance. 

Usually, it is assumed that the statement form of $P$ is true, so that each instance of $P$ has a $P$-solution. However, in this paper we will adopt a different convention by also considering problems with instances that have no solutions. This is essential to our analysis, in which we take restrictions of problems on both their domain and codomain.

Weihrauch reducibility compares the uniform computational strength of problems by asking ``If we could solve an instance of $Q$, could we `make use' of the solution to solve an instance of $P$?'' The precise meaning of `make use' varies depending on the version of Weihrauch reducibility one is working with. We define a few variants below.

\begin{figure}[ht]
	\begin{center}
		\tikzcdset{diagrams={row sep={3.75em,between origins}}}
		\begin{tikzcd}
			|[label = P]| p \arrow[rr,"\Phi"] \arrow[d, dashed,"P"] & & |[label = Q]| \hat{p} := \Phi(p) \arrow[d, "Q"]\\
			q & & \hat{q} \arrow[ll, "\Psi(p{,}\hat{q})"] 
		\end{tikzcd}
	\end{center}
	\caption{$P \leq_\W Q$ via $\Phi$ and $\Psi$}
	\label{fig:WeihrauchReducibility}
\end{figure}

\begin{Definition}
	\label{Def:WeihrauchReducibility}
	Let $P: \subset \B \tto \B$ and $Q: \subset \B \tto \B$ be problems.
	\begin{itemize}
		\item We say that $P$ is \emph{Weihrauch reducible} to $Q$, denoted $P \leq_\W Q$, if there are computable functions $\Phi, \Psi: \subset \B \to \B$ such that for all $p \in \dom(P)$ we have that $\Phi(p)\converges \in \dom(Q)$ and for all $\hat{q} \in Q(\Phi(p))$ we have $\Psi(p, \hat{q}) \in P(p)$.
		\item  We say that $P$ is \emph{continuously Weihrauch reducible} to $Q$, denoted $P \leq_\W^* Q$, if there are continuous functions $\Phi, \Psi: \subset \B \to \B$ such that for all $p \in \dom(P)$ we have that $\Phi(p)\converges \in \dom(Q)$ and for all $\hat{q} \in Q(\Phi(p))$, we have $\Psi(p, \hat{q}) \in P(p)$.
		\item We say that $P$ is \emph{strongly Weihrauch reducible} to $Q$, denoted $P \leq_\sW Q$, if there are computable functions $\Phi, \Psi: \subset \B \to \B$ such that for all $p \in \dom(P)$ we have that $\Phi(p)\converges \in \dom(Q)$ and for all $\hat{q} \in Q(\Phi(p))$ we have that $\Psi(\hat{q}) \in P(p)$.
		\item We say that $P$ is \emph{continuously strongly Weihrauch reducible} to $Q$, denoted $P \leq_\sW^* Q$, if there are continuous functions $\Phi, \Psi: \subset \B \to \B$ such that for all $p \in \dom(P)$ we have that $\Phi(p)\converges \in \dom(Q)$ and for all $\hat{q} \in Q(\Phi(p))$ we have $\Psi(\hat{q}) \in P(p)$.
	\end{itemize} 
\end{Definition}
The implications in Figure~\ref{fig:TikzcdDiamond} are well known and easy to check. 
\begin{figure}[ht]
	\begin{center}
		\begin{tikzcd}
			& P \leq_\sW Q \arrow[dr] \arrow[dl] & \\
			P \leq_\W Q \arrow[dr]  & & P \leq_\sW^* Q \arrow[dl]\\
			& P \leq_\W^* Q  &
		\end{tikzcd}
	\end{center}
	\caption{The implications between the different versions of Weihrauch reducibility.}
	\label{fig:TikzcdDiamond}
\end{figure}

Each of $\leq_\W$, $\leq_\W^*$, $\leq_\sW$, and $\leq_\sW^*$ is transitive and therefore induces a degree structure on the set of all problems. Lempp, Marcone, and Valenti \cite{lempp.marcone.valentiCHAINSANTICHAINSWEIHRAUCH2026} study the possible lengths of chains and antichains in the $\leq_\W$-degrees. The problems $\ACCN^\alpha$ form a reverse embedding of $\omega_1$ into the $\leq_\W^*$-degrees, providing an initial result for the continuous case. 

Although algebraic features of the degree structure do not impact the results of the present paper, we make the following note. Since we are allowing problems to have instances with no solutions, following Definition~\ref{Def:WeihrauchReducibility} adds a degree at the top of each of these hierarchies. In the case of $\leq_\W$ and $\leq_{\sW}$, even more degrees are added because a problem may have instances $p$ without solutions for $p$ of arbitrary Turing complexity. 

Brattka and Pauly \cite{brattkaAlgebraicStructureWeihrauch2018} consider a form of Weihrauch reducibility which has a top degree if and only if the Axiom of Choice fails for Baire space. This occurs when Weihrauch reducibility is defined in terms of realizers and when problems exist with instances and solutions \emph{but no realizers}. However, the top degree $\infty$ they consider satisfies $0 \times \infty = \infty$, where $0$ is the degree of the problem with empty domain. On the other hand, the top degree $\infty$ obtained by allowing problems to have instances with no solutions satisfies $0 \times \infty = 0$. We mention this as a cautionary note although it does not affect our analysis. 

For a problem $P: \subset \B \tto \B$, we say that $P$ is \emph{continuous} if there is a continuous function $\Phi: \subset \B \to \B$ such that for all $p \in \dom(P)$ we have $\Phi(p) \in P(p)$. In this case, we call $\Phi$ a \emph{continuous realizer} for $P$. The cone below the problem $\id: \B \to \B$ defined by $\id(p) = \{p\}$ characterizes continuity: a problem $P$ is continuous if and only if $P \leq_\W^* \id$. 

So far, we have only discussed problems with domain and codomain $\B$. However, statements of the form in Line~\ref{Line:StatementFormOfProblem} can have their quantifiers range over many different sets, some with different topologies than $\B$ or even different cardinality than $\B$. To accommodate a wider range of contexts, we use names given by elements of $\B$ to represent points in other sets. 

\begin{Definition}
	A represented space $\mathbf{X} = (X, \delta_\mathbf{X})$ is a pair consisting of a set $X$ and a surjective map 
	\[
		\delta_\mathbf{X}: \subset \N^\N \to X. 
	\]
\end{Definition}

Suppose that we are understood to be working with represented spaces $\mathbf{X} = (X, \delta_\mathbf{X})$ and $\mathbf{Y} = (Y, \delta_\mathbf{Y})$ of sets $X$ and $Y$. If $P$ is a partial multifunction $P: \subset X \tto Y$, we define the realizer version $\realizerver{P}: \subset \B \tto \B$ of $P$ by 
\[ 
	\dom(\realizerver{P}) = \{p \in \B: \deltarep{X}(p) \in \dom(P)\}
\]
and
\[
	\realizerver{P}(p) = \{q \in \B: \deltarep{Y}(q) \in P(\deltarep{X}(p))\}.
\]

We now define Weihrauch reducibility for problems between represented spaces.
\begin{Definition}
	Let $\mathbf{X} = (X, \delta_\mathbf{X})$, $\widehat{\mathbf{X}} = (\widehat{X}, \deltarep{\widehat{\mathbf{X}}})$, $\mathbf{Y} = (Y, \delta_\mathbf{Y})$, and $\widehat{\mathbf{Y}} = (\widehat{Y}, \deltarep{\widehat{\mathbf{Y}}})$ be represented spaces. Let $P: \subset X \tto Y$ and $Q: \subset \widehat{X} \tto \widehat{Y}$ be problems. We say that $P$ \emph{is (continuously) [strongly] Weihrauch reducible to} $Q$ if $\realizerver{P}$ is (continuously) [strongly] Weihrauch reducible to $\realizerver{Q}$ in the sense of Definition~\ref{Def:WeihrauchReducibility}.
\end{Definition} 

Since each problem $P$ between represented spaces is strongly Weihrauch equivalent to its realizer version $\realizerver{P}$, statements about the Weihrauch degrees of problems  $P: \subset \B \tto \B$ on Baire space maintain full generality.

\subsection{Topology of Baire Space}

Recall that an open set $U$ isolates a point $x$ in $A$ if $U \cap A = \{x\}$. Since the results of the present paper are presented in terms of the cones above initial segments of points (particularly Theorem~\ref{Thm:ExtendingSolutions}), we find it convenient to work with an equivalent formulation of Cantor-Bendixson rank obtained by a generalization of isolating neighborhoods rather than derivatives. 
\begin{Definition}
	\label{CBRank}
	Let $p \in A \subset \N^\N$. 
	\begin{itemize}
		\item We say that an open set $U$ is a \emph{$\leq \!\!0$-isolating neighborhood of $p$ in $A$} if $U \cap A = \{p\}$.
	
		\item For $\alpha > 0$, we say that a neighborhood $U$ of $p$ \emph{is a $\leq \!\!\alpha$-isolating neighborhood of $p$ in $A$} if $U$ is open and, for all points $q \in A \cap U$ with $q \neq p$, there is $\beta < \alpha$ and a neighborhood $V \subset U$ of $q$ such that $V$ is a $\leq \!\!\beta$-isolating neighborhood of $q$ in $A$.
		\item If $p \in A$ has a $\leq \!\! \alpha$-isolating neighborhood in $A$, then we say that $\rank_A(p) \leq \alpha$. 
		\item By a temporary abuse of notation, we define $\rank_A(p) := \alpha$ for the least ordinal $\alpha$ such that $\rank_A(p) \leq \alpha$. If no such $\alpha$ exists then we write $\rank_A(p) = \infty$.
		\item For a set $A$, if $\alpha$ is the least ordinal such that $\rank_A(p) \leq \alpha$ for all $p \in A$ then we say that the pointwise Cantor-Bendixson rank of $A$, $\rankp(A)$, is equal to $\alpha$. If no such $\alpha$ exists then we write $\rankp(A) = \infty$.
	\end{itemize}
\end{Definition}

Of course, this definition is equivalent to the standard definition of Cantor-Bendixson rank using derivatives. 

We will need a few basic facts about rank, which we now prove.

\begin{Lemma}
	\label{Lem:SurroundedByLowerRanks}
	Fix a subset $A \subset \B$, and a point $p \in A$. Suppose that $\alpha$ is a countable ordinal and that $U$ is an open neighborhood of $p$ in $A$. Then, $U$ is a $\leq\!\! \alpha$-isolating neighborhood of $p$ in $A$ if and only if, for all $q \in A \cap U$ with $q \neq p$, we have that $\rank_A(q) < \alpha$. 
\end{Lemma}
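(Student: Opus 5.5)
The proof unfolds the definitions in both directions; the only nontrivial point is the case $\alpha = 0$ and the monotonicity of ranks. First observe that the notion is monotone: if $V$ is a $\leq\!\!\beta$-isolating neighborhood of $q$ in $A$ and $\beta \leq \gamma$, then $V$ is also a $\leq\!\!\gamma$-isolating neighborhood of $q$ in $A$. I would prove this by a short induction on $\gamma$.
\begin{itemize}
\item If $\beta = 0$, then $V \cap A = \{q\}$. The universal condition in the definition for $\gamma > 0$ is then vacuous, so $V$ is $\leq\!\!\gamma$-isolating for every $\gamma$.
\item If $\beta > 0$, every witness ordinal $\beta' < \beta$ satisfies $\beta' < \gamma$, so the same witnesses work for $\gamma$.
\end{itemize}
This justifies reading $\rank_A(q) \leq \beta$ as ``the least such ordinal is at most $\beta$,'' and shows that $\rank_A(q) < \alpha$ holds exactly when $q$ has a $\leq\!\!\beta$-isolating neighborhood for some $\beta < \alpha$.

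\textbf{Case $\alpha = 0$.} The right-hand side says that no $q \in A \cap U$ with $q \neq p$ exists, because no ordinal is $< 0$. That is, $U \cap A = \{p\}$, which is exactly the definition of a $\leq\!\!0$-isolating neighborhood. So the equivalence is immediate.

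\textbf{Case $\alpha > 0$, forward direction.} Suppose $U$ is $\leq\!\!\alpha$-isolating. For each $q \in A \cap U$ with $q \neq p$, the definition supplies some $\beta < \alpha$ and an open $V \subset U$ that is $\leq\!\!\beta$-isolating for $q$. Hence $\rank_A(q) \leq \beta < \alpha$.

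\textbf{Case $\alpha > 0$, converse direction.} Suppose every such $q$ has $\rank_A(q) = \beta < \alpha$. Then there is a $\leq\!\!\beta$-isolating neighborhood $W$ of $q$. The definition, however, requires the witness to satisfy $V \subset U$, so I would take $V = W \cap U$, which is still an open neighborhood of $q$. This step needs a shrinking lemma: if $W$ is $\leq\!\!\beta$-isolating for $q$ and $V \subset W$ is an open neighborhood of $q$, then $V$ is $\leq\!\!\beta$-isolating for $q$. I would prove it by induction on $\beta$.
\begin{itemize}
\item For $\beta = 0$: $V \cap A \subseteq W \cap A = \{q\}$ and $q \in V$, so $V \cap A = \{q\}$.
\item For $\beta > 0$: any $r \in A \cap V$ with $r \neq q$ lies in $W$, so it has a witness $V' \subset W$ that is $\leq\!\!\beta'$-isolating with $\beta' < \beta$. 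By the induction hypothesis, $V' \cap V$ is $\leq\!\!\beta'$-isolating and is contained in $V$.
\end{itemize}

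The main obstacle, mild as it is, is exactly this shrinking step together with monotonicity. Both are needed so that the ``least $\alpha$'' convention for $\rank_A$ agrees with the existential definition and so that witnesses can be placed inside $U$. Once these two inductions are in place, the lemma follows directly from the definitions.
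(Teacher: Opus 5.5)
Your proof is correct and takes the same route as the paper, unfolding the definition of a $\leq\!\!\alpha$-isolating neighborhood in each direction. You are in fact more careful than the paper: its converse direction takes an arbitrary $\leq\!\!\beta$-isolating neighborhood of $q$ without checking the requirement $V \subset U$, and your shrinking lemma applied to $W \cap U$ supplies exactly that step, whereas your monotonicity remark is not needed for this lemma, since ``the least rank of $q$ is $<\alpha$'' and ``some $\beta<\alpha$ admits a $\leq\!\!\beta$-isolating neighborhood of $q$'' are trivially equivalent.
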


\begin{proof}
	Suppose first that $U$ is a $\leq\!\!\alpha$-isolating neighborhood of $p$ in $A$. Fix $q \in A \cap U$ with $q \neq p$. By definition of $\leq\!\!\alpha$-isolating, there is some $\beta < \alpha$ and some neighborhood $V$ of $q$ such that $V$ is a $\leq\!\!\beta$-isolating neighborhood of $q$ in $A$. Therefore $\rank_A(q) \leq \beta < \alpha$.
	
	Conversely, suppose that for all $q \in A \cap U$ with $q \neq p$, we have $\rank_A(q) < \alpha$. Fix $q \in A \cap U$ with $q \neq p$. Then there is some $\beta < \alpha$ such that $\rank_A(q) = \beta$. By definition of rank, $q$ has a $\leq\!\!\beta$-isolating neighborhood in $A$. Hence the defining condition for $U$ to be a $\leq\!\!\alpha$-isolating neighborhood of $p$ in $A$ is satisfied.
\end{proof}

Next we use Lemma~\ref{Lem:SurroundedByLowerRanks} to show that our definition of $\rank_A(p)$ corresponds with the standard definition of the Cantor-Bendixson rank of a point. 

\begin{Proposition}
	Let $A \subset \B$. For each $p \in A$, $\rank_A(p)$ is the least ordinal $\alpha$ such that $p \in A^{(\alpha)} \setminus A^{(\alpha + 1)}$, where $A^{(\alpha)}$ is the $\alpha$'th derivative of $A$. 
\end{Proposition}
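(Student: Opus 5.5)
We show by transfinite induction on $\alpha$ the equivalence
\[
\rank_A(p) \leq \alpha \iff p \notin A^{(\alpha+1)},
\]
for every $p \in A$. Here the derivatives are $A^{(0)} = A$, $A^{(\gamma+1)} = (A^{(\gamma)})'$ (where $X'$ is the set of non-isolated points of $X$, i.e.\ $X$ minus its isolated points), and $A^{(\lambda)} = \bigcap_{\gamma<\lambda} A^{(\gamma)}$ at limits $\lambda$. Since $\rank_A(p)$ is the least $\alpha$ with $\rank_A(p) \leq \alpha$, and the derivatives are decreasing, the equivalence gives the proposition. Indeed, it says $\rank_A(p)$ is the least $\alpha$ with $p \notin A^{(\alpha+1)}$. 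For that $\alpha$ we have $p \in A^{(\gamma+1)}$ for all $\gamma<\alpha$, so $p \in A^{(\alpha)}$ (take $\gamma$ the predecessor if $\alpha$ is a successor, intersect at limits, trivial if $\alpha=0$). Hence $p \in A^{(\alpha)}\setminus A^{(\alpha+1)}$, and this $\alpha$ is unique because the sets $A^{(\alpha)}\setminus A^{(\alpha+1)}$ are pairwise disjoint. The case $\rank_A(p)=\infty$ corresponds to $p$ lying in every derivative, i.e.\ in the perfect kernel, where no such $\alpha$ exists.

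\emph{Base case $\alpha = 0$.} $\rank_A(p) \leq 0$ means some open $U$ has $U\cap A=\{p\}$, i.e.\ $p$ is isolated in $A$, i.e.\ $p \notin A' = A^{(1)}$.

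\emph{Inductive step.} Assume the equivalence for all $\beta<\alpha$ with $\alpha>0$, and use Lemma~\ref{Lem:SurroundedByLowerRanks}. Then $\rank_A(p)\le\alpha$ holds if and only if there is an open $U \ni p$ with $\rank_A(q)<\alpha$ for all $q\in A\cap U\setminus\{p\}$. By the induction hypothesis, $\rank_A(q)<\alpha$ means $q\notin A^{(\beta+1)}$ for some $\beta<\alpha$, which is equivalent to $q\notin A^{(\alpha)}$: if $\alpha$ is a successor $\beta_0+1$, the decreasing chain gives $\bigcup_{\beta<\alpha} (A\setminus A^{(\beta+1)}) = A\setminus A^{(\alpha)}$; if $\alpha$ is a limit, $\bigcap_{\beta<\alpha}A^{(\beta+1)} = A^{(\alpha)}$. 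So $\rank_A(p)\le\alpha$ if and only if some open $U\ni p$ satisfies $U\cap A^{(\alpha)}\subseteq\{p\}$.

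It remains to check that this last condition is equivalent to $p\notin A^{(\alpha+1)}$, i.e.\ $p\notin A^{(\alpha)}$ or $p$ is isolated in $A^{(\alpha)}$. If $p\in A^{(\alpha)}$, the condition says exactly that $p$ is isolated in $A^{(\alpha)}$. If $p\notin A^{(\alpha)}$, then $p\notin A^{(\alpha+1)}$ trivially, but we must still produce such a $U$. For this we apply the induction hypothesis to $p$ itself: $p \notin A^{(\alpha)}$ gives $p\notin A^{(\beta+1)}$ for some $\beta<\alpha$, hence $\rank_A(p)\le\beta$. A $\le\beta$-isolating neighborhood $U$ of $p$ is then $\le\alpha$-isolating by Lemma~\ref{Lem:SurroundedByLowerRanks}, since ranks below $\beta$ are below $\alpha$.

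The main obstacle is this bookkeeping at limits and the case $p\notin A^{(\alpha)}$, where we need the monotonicity ``$\le\beta$-isolating implies $\le\alpha$-isolating'' for $\beta\le\alpha$, obtained from the lemma as above. Care is also needed that the induction hypothesis quantifies over all points of $A$ simultaneously, so the statement should be proved for all $p\in A$ at each stage $\alpha$.
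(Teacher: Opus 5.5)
Your proof is correct, and it is organized differently from the paper's.

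The paper fixes $p$ and inducts on the value $\alpha=\rank_A(p)$, proving only the direction ``$\rank_A(p)=\alpha$ implies $p\in A^{(\alpha)}\setminus A^{(\alpha+1)}$''. It first gets $p\in A^{(\alpha)}$: in every neighborhood of $p$ and for every $\beta<\alpha$ it finds a point $q\neq p$ with $\rank_A(q)\not<\beta$ and places it in $A^{(\beta)}$. It then observes that a $\leq\!\!\alpha$-isolating neighborhood isolates $p$ in $A^{(\alpha)}$.

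You instead prove the biconditional $\rank_A(p)\le\alpha\iff p\notin A^{(\alpha+1)}$ by induction on $\alpha$, uniformly in $p$. Using Lemma~\ref{Lem:SurroundedByLowerRanks} and the identity $\bigcap_{\beta<\alpha}A^{(\beta+1)}=A^{(\alpha)}$, you reduce it to the condition $U\cap A^{(\alpha)}\subseteq\{p\}$. You then read off $p\in A^{(\alpha)}\setminus A^{(\alpha+1)}$ from minimality.

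The second half of the paper's argument is essentially your forward direction; what your formulation adds is the converse. This buys two things:
\begin{itemize}
\item A complete treatment of rank-$\infty$ points. The paper's induction on the rank of $p$ says nothing about them, so it does not rule out that such a point lies in some $A^{(\alpha)}\setminus A^{(\alpha+1)}$. Your equivalence shows they are exactly the points lying in every derivative.
\item The paper's step ``$\rank_A(q)\not<\beta$, hence $q\in A^{(\beta)}$'' becomes automatic for arbitrary $q$. In the paper's rank-by-rank induction, that step is covered by the hypothesis only if $q$ is chosen inside a $\leq\!\!\alpha$-isolating neighborhood of $p$, so that $\rank_A(q)<\alpha$.
\end{itemize}

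The paper's route is shorter and closer to the pointwise intuition. Yours costs some limit-stage bookkeeping and a separate case $p\notin A^{(\alpha)}$, both of which you handle correctly.
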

\begin{proof}
	Fix $p \in A$. We argue by induction on $\alpha = \rank_A(p)$. If $\alpha = 0$ then $p$ is isolated in $A$. Then, $p$ is removed by the first Cantor-Bendixson derivative and therefore has Cantor-Bendixson rank 0 in $A$.

	Suppose that the claim is true for all $\beta < \alpha$. Since $\rank_A(p) = \alpha$, we know that $p$ has no $\leq\!\!\beta$-isolating neighborhood for any $\beta < \alpha$. Hence, for each $\beta < \alpha$ and neighborhood $V$ of $p$, there is $q \in V \cap A$ with $q \neq p$ and $\rank_A(q) \not< \beta$. By the induction hypothesis, this implies that $q$ is not removed by the $\beta$'th derivative. So, for each neighborhood $V$ of $p$ and each $\beta < \alpha$, there is $q \in V \cap A^{(\beta)}$ with $q \neq p$. So, $p$ is not isolated in $A^{(\beta)}$ for any $\beta < \alpha$. Therefore, $p \in A^{(\alpha)}$.
	
	Now, let $U$ be a $\leq\!\!\alpha$-isolating neighborhood of $p$ in $A$. By Lemma~\ref{Lem:SurroundedByLowerRanks} we have that each $q \in A \cap U$ with $q \neq p$ has $\rank_A(q) < \alpha$. By the induction hypothesis, all such $q$ are removed by the $\alpha$'th derivative. Therefore, $U$ isolates $p$ in $A^{(\alpha)}$. Hence, the $(\alpha + 1)$'th derivative removes $p$, so $\alpha$ is the least ordinal such that $p \in A^{(\alpha)} \setminus A^{(\alpha + 1)}$.
\end{proof}

By contrast, pointwise rank does not correspond to the usual Cantor-Bendixson rank; sets containing only isolated points have pointwise rank $0$ but Cantor-Bendixson rank $1$. On the other hand, a set with a point of each rank $\alpha < \omega$ but no points of rank $\omega$ has pointwise rank equal to its Cantor-Bendixson rank. In general, we have the inequality
\[
	\rankp(A) \leq \rank(A) \leq \rankp(A) + 1,
\]
where $\rank(A)$ denotes the usual Cantor-Bendixson rank of $A$. 

Another basic fact about rank relative to a represented space that is carried over from the standard notion of rank is that rank is preserved by open sets. 

\begin{Lemma}
	\label{Lem:OpenSetsPreserveRank}
	Suppose $A \subset \B$ and let $U \subset A$ be an open set. Then, for all $p \in A \cap U$, 
	\[
		\rank_A(p) = \rank_{A \cap U}(p).
	\]
\end{Lemma}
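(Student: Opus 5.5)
We argue by transfinite induction on $\alpha$ that for every $p \in A \cap U$ and every ordinal $\alpha$, $\rank_A(p) \leq \alpha$ holds if and only if $\rank_{A \cap U}(p) \leq \alpha$. This gives equality of the least such ordinals, and also that both ranks are $\infty$ simultaneously. (The hypothesis is meant as: $U$ is an open subset of $\B$, so that $A \cap U$ is relatively open in $A$.)

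First suppose $\rank_A(p) \leq \alpha$, witnessed by a $\leq\!\!\alpha$-isolating neighborhood $V$ of $p$ in $A$. We show that $V \cap U$ is a $\leq\!\!\alpha$-isolating neighborhood of $p$ in $A \cap U$. It is open and contains $p$.

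If $\alpha = 0$, then $(V \cap U) \cap (A \cap U) \subset V \cap A = \{p\}$, and $p$ lies in the left-hand side, so we are done.

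If $\alpha > 0$, we use Lemma~\ref{Lem:SurroundedByLowerRanks}. Every $q \in (A \cap U) \cap (V \cap U)$ with $q \neq p$ lies in $A \cap V$, so $\rank_A(q) < \alpha$. Since $q \in A \cap U$, the induction hypothesis applied to $q$ at ordinal $\rank_A(q)$ gives $\rank_{A \cap U}(q) \leq \rank_A(q) < \alpha$. Lemma~\ref{Lem:SurroundedByLowerRanks}, applied in the set $A \cap U$, then gives the claim.

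Conversely, suppose $V$ is a $\leq\!\!\alpha$-isolating neighborhood of $p$ in $A \cap U$. We show that $V \cap U$ is a $\leq\!\!\alpha$-isolating neighborhood of $p$ in $A$. The key observation is $(V \cap U) \cap A = V \cap (A \cap U)$. Hence the $\alpha = 0$ case is immediate. For $\alpha > 0$, take $q \in A \cap V \cap U$ with $q \neq p$. Then $\rank_{A \cap U}(q) < \alpha$, and since $q \in A \cap U$, the induction hypothesis gives $\rank_A(q) < \alpha$. Applying Lemma~\ref{Lem:SurroundedByLowerRanks} in $A$ finishes this direction.

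The only delicate point is organizing the induction correctly. The statement proved at stage $\alpha$ must be the two-sided equivalence for all points of $A \cap U$ simultaneously. At stage $\alpha$ we apply the inductive hypothesis to other points $q$ at smaller ordinals $\beta < \alpha$, namely $\beta = \rank_A(q)$ or $\beta = \rank_{A\cap U}(q)$. We also need that intersecting with the open set $U$ preserves being a neighborhood of $p$, which is why openness of $U$ is used.
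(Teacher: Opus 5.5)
Your proposal is correct and matches the paper's proof essentially step for step. Both argue by induction on $\alpha$ for the two-sided statement $\rank_A(p)\le\alpha \iff \rank_{A\cap U}(p)\le\alpha$ for all $p\in A\cap U$, use $V\cap U$ as the witnessing neighborhood in each direction, handle $\alpha=0$ directly, and for $\alpha>0$ combine Lemma~\ref{Lem:SurroundedByLowerRanks} with the inductive hypothesis applied to the other points $q$ at their smaller ranks; your reading of $U$ as open in $\B$ is also what the paper's argument implicitly uses.
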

\begin{proof}
	We prove, by induction on $\alpha$, that for every $p \in A \cap U$, 
	\[
		\rank_A(p) \leq \alpha \iff \rank_{A\cap U}(p) \leq \alpha.
	\]
	This is sufficient because equality of ranks is defined by the least such ordinal $\alpha$.
	
	For $\alpha = 0$, a point $p$ has $\rank_{A \cap U}(p) = 0$ if and only if there is an open set $V$ such that $V \cap A \cap U = \{p\}$. Then, $V \cap U$ witnesses that $\rank_A(p) = 0$. Conversely, if $\rank_A(p) = 0$ then there is an open set $V$ such that $V \cap A = \{p\}$. Then, $V \cap U$ witnesses that $\rank_{A\cap U}(p) = 0$. Hence, the claim holds for $\alpha = 0$.

	Now fix $\alpha > 0$ and assume the claim holds for every $\beta < \alpha$.
	
	Suppose first that $\rank_A(p) \leq \alpha$, and let $W$ be a $\leq\!\!\alpha$-isolating neighborhood of $p$ in $A$. Then $W \cap U$ is an open neighborhood of $p$ in $A \cap U$. For each $q \in A \cap W \cap U$ with $q \neq p$, Lemma~\ref{Lem:SurroundedByLowerRanks} gives that $\rank_A(q) < \alpha$, so by the induction hypothesis $\rank_{A\cap U}(q) < \alpha$. Applying Lemma~\ref{Lem:SurroundedByLowerRanks} again, now inside $A \cap U$, it follows that $W \cap U$ is a $\leq\!\!\alpha$-isolating neighborhood of $p$ in $A \cap U$. Hence $\rank_{A\cap U}(p) \leq \alpha$.
	
	Conversely, suppose that $\rank_{A\cap U}(p) \leq \alpha$, and let $W$ be a $\leq\!\!\alpha$-isolating neighborhood of $p$ in $A \cap U$. For each $q \in A \cap W \cap U$ with $q \neq p$, Lemma~\ref{Lem:SurroundedByLowerRanks} gives $\rank_{A\cap U}(q) < \alpha$, so by the induction hypothesis, we obtain $\rank_A(q) < \alpha$. Applying Lemma~\ref{Lem:SurroundedByLowerRanks} in $A$, it follows that $W \cap U$ is a $\leq\!\!\alpha$-isolating neighborhood of $p$ in $A$. Hence $\rank_A(p) \leq \alpha$.
\end{proof}

The core argument for showing (\ref{MainTheoremItem:AboveACCN}) implies (\ref{MainTheoremItem:DiscontinuousOnPointWiseRankedSet}) in Theorem~\ref{Thm:MainTheorem} is due to Cenzer and Smith \cite{CenzerSmithRanks1989}. We provide an alternative proof in terms of ranked isolating neighborhoods.
\begin{Lemma}[\cite{CenzerSmithRanks1989}]
	\label{Lem:ContinuousImageMonotone}
	Suppose that $A \subset \N^\N$ is compact, countable, and that $\rankp(A) \leq \alpha$. Let $\Phi:A \to \N^\N$ be a continuous function. Then, $\rankp(\Phi(A)) \leq \alpha$.
\end{Lemma}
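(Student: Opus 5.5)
We argue by induction on $\alpha$, proving the pointwise statement: for every $y \in \Phi(A)$ we have $\rank_{\Phi(A)}(y) \leq \alpha$. Since $A$ is compact and $\Phi$ is continuous, $\Phi(A)$ is compact, and for each $y \in \Phi(A)$ the fiber $F_y = \Phi^{-1}(\{y\}) \cap A$ is a closed, hence compact, subset of $A$. The idea is to bound the rank of $y$ by the ranks of the points in $F_y$, and to use compactness to pass from the finitely many highest-rank points of the fiber to a single neighborhood of $y$.

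\textbf{Step 1 (structure of the fiber).} Fix $y \in \Phi(A)$. Each $x \in F_y$ has $\rank_A(x) \leq \alpha$, so it has a $\leq\!\!\alpha$-isolating neighborhood $U_x$ in $A$; we may take $U_x$ to be a basic clopen set $\llb \sigma_x \rrb$. By compactness, finitely many of these, say $U_{x_1}, \dots, U_{x_m}$, cover $F_y$.

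\textbf{Step 2 (choosing a neighborhood of $y$).} The set $K = A \setminus \bigcup_i U_{x_i}$ is compact and disjoint from $F_y$, so $\Phi(K)$ is a compact set not containing $y$. Choose an open $V \ni y$ with $V \cap \Phi(K) = \emptyset$. Then every $x \in A$ with $\Phi(x) \in V$ lies in some $U_{x_i}$.

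\textbf{Step 3 (the ranks of nearby points drop).} Let $z \in V \cap \Phi(A)$ with $z \neq y$. We must show $\rank_{\Phi(A)}(z) < \alpha$; Lemma~\ref{Lem:SurroundedByLowerRanks} then shows that $V$ is a $\leq\!\!\alpha$-isolating neighborhood of $y$. The fiber $F_z$ is contained in $\bigcup_i U_{x_i}$ and does not contain any $x_i$, since $\Phi(x_i) = y \neq z$. Hence every point of $F_z$ has $\rank_A < \alpha$ by Lemma~\ref{Lem:SurroundedByLowerRanks}. Moreover, $F_z \cap U_{x_i}$ is clopen in the compact set $F_z$, and the union of these pieces is $F_z$.

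We then apply the induction hypothesis locally. Consider $A' = A \cap \bigcup_i U_{x_i} \setminus W$, where $W$ is a small clopen neighborhood of $\{x_1,\dots,x_m\}$ chosen with $\Phi(W)$ avoiding a neighborhood of $z$; this is possible by continuity, since $\Phi(x_i) = y \neq z$. The set $A'$ is compact, and all of its points have rank $< \alpha$ in $A$, hence, by Lemma~\ref{Lem:OpenSetsPreserveRank} and the fact that $A'$ is clopen in $A$, also in $A'$. By compactness, the ranks of points of $A'$ are bounded by some $\beta < \alpha$. This uses the fact that if the ranks were cofinal in a limit $\alpha$, then an accumulation point in the compact set $A'$ would have rank $\geq \alpha$. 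Therefore $\rankp(A') \leq \beta$. By the induction hypothesis, $\rankp(\Phi(A')) \leq \beta$. Near $z$, the sets $\Phi(A)$ and $\Phi(A')$ agree, and Lemma~\ref{Lem:OpenSetsPreserveRank} then gives $\rank_{\Phi(A)}(z) \leq \beta < \alpha$.

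\textbf{Main obstacle.} The expected difficulty is Step 3: showing that the rank of $z$ is \emph{strictly} below $\alpha$, uniformly enough to invoke the induction hypothesis. The key points are the excision of a clopen neighborhood of the top-rank points $x_i$, which is possible because they all map to $y \neq z$, and the compactness argument that a compact set of points each of rank $< \alpha$ has ranks bounded below $\alpha$. Countability of $A$ ensures all ranks are countable ordinals but is otherwise not essential. Base case $\alpha = 0$: $A$ is then compact and discrete, hence finite, so $\Phi(A)$ is finite and every point has rank $0$.
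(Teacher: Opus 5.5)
Your proof is correct, but it finds the isolating neighborhood of the image point by a different decomposition than the paper.

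The paper fixes $q\in\Phi(A)$ and uses a global fact about $A$: since it is compact with $\rankp(A)\le\alpha$, it has only finitely many points of rank $\alpha$. It then chooses $\tau\sqsubset q$ beyond every place where the image of such a point (other than those mapping to $q$) branches off $q$. As a result, for every $\rho$ with $\tau\sqsubset\rho\not\sqsubset q$, the compact set $\Phi^{-1}(\llb\rho\rrb)$ contains no rank-$\alpha$ point. The induction hypothesis is applied to these preimages. Because $\Phi(\Phi^{-1}(\llb\rho\rrb))=\Phi(A)\cap\llb\rho\rrb$ is already an open piece of $\Phi(A)$, Lemma~\ref{Lem:OpenSetsPreserveRank} transfers the bound immediately.

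You instead localize at the fiber $F_y$. Covering it by finitely many clopen $\le\!\!\alpha$-isolating neighborhoods and using compactness of $K$ produces $V$. Then, for each $z$, you excise a clopen neighborhood $W$ of the centers $x_i$ and check by hand that $\Phi(A)$ and $\Phi(A')$ agree near $z$.

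Comparing the two routes:
\begin{itemize}
\item Your route never needs the finiteness of the rank-$\alpha$ level, which the paper simply asserts from compactness. You only ever use isolating neighborhoods directly.
\item The cost is a construction that depends on $z$. The paper's single cone $\llb\tau\rrb$ with its uniform family of pieces is more economical and exploits the tree structure of $\N^\N$.
\item Both proofs rely on the fact that a compact set all of whose points have rank $<\alpha$ has all ranks bounded by some $\beta<\alpha$; this only matters for limit $\alpha$. You prove it with your accumulation-point argument. The paper uses it tacitly when it passes from finitely many rank-$\alpha$ points to only finitely many cones branching off $q$ whose preimages have pointwise rank $\alpha$.
\end{itemize}
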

\begin{proof}
	We argue by induction on $\alpha$. For the base case $\alpha = 0$, since $A$ is compact and $\rankp(A) = 0$, we have that $A$ is the union of finitely many isolating neighborhoods. Hence, $A$ is finite, so $\Phi(A)$ is finite. Therefore, $\rankp(\Phi(A)) = 0$.

	Assume that the lemma is true for all $\beta < \alpha$. Let $q \in \Phi(A)$. We will show that $\rank_{\Phi(A)}(q) \leq \alpha$. Since $A$ is compact, there are only finitely many points $p \in A$ with $\rank_A(p) = \alpha$.	
	For incomparable strings $\rho$ and $\rho'$, $\Phi^{-1}(\llb \rho \rrb)$ and $\Phi^{-1}(\llb\rho'\rrb)$ are disjoint sets, hence only finitely many $\tau \sqsubset q$ have an extension $\rho\not\sqsubset q$ such that $\rankp(\Phi^{-1}(\llb \rho \rrb)) = \alpha$.
	Let $\tau \sqsubset q$ be such that for all $\rho$ with $\tau \sqsubset \rho \not\sqsubset q$ we have that $\rankp(\Phi^{-1}(\llb \rho \rrb)) < \alpha$. 
	
	We will show that $\llb\tau\rrb$ is a $\leq \!\!\alpha$-isolating neighborhood of $q$ in $\Phi(A)$.
	Let $r \sqsupset \tau$ be such that $r \neq q$. Let $\rho$ be such that $\tau \sqsubset \rho \sqsubset r$ and $\rho \not\sqsubset q$. By our assumption on $\tau$, there is $\beta < \alpha$ such that $\rankp(\Phi^{-1}(\llb \rho \rrb)) \leq \beta.$  
	
	Since $\llb \rho \rrb$ is closed, so is $\Phi^{-1}(\llb \rho \rrb)$. Hence, $\Phi^{-1}(\llb \rho \rrb) \subset A$ is a closed subset of a compact set, so is also compact. By the induction hypothesis and by Lemma~\ref{Lem:OpenSetsPreserveRank}, we have that 
	\[
	\rankp(\Phi(\Phi^{-1}(\llb \rho \rrb))) = \rankp(\Phi(A) \cap \llb \rho \rrb) \leq \beta.
	\] 
	Hence, $\rank_{\Phi(A) \cap \llb \rho \rrb}(r) \leq \beta$. Since $r$ is an arbitrary point in $\Phi(A) \cap \llb \tau \rrb$ with $r \neq q$, we have that $\llb \tau \rrb$ is a $\leq\!\!\alpha$-isolating neighborhood of $q$ in $\Phi(A)$ by Lemma~\ref{Lem:SurroundedByLowerRanks}. Hence, $\rank_{\Phi(A)}(q) \leq \alpha$. This is true for each $q \in \Phi(A)$, so we conclude that $\rankp(\Phi(A)) \leq \alpha$. 
\end{proof}

We will also need the following lemma, which is essentially Brouwer's bar theorem (see \cite{troelstraConstructivismMathematicsIntroduction1988}). We provide a self-contained proof for the reader's convenience.  

\begin{Lemma}\label{Lemma:CoverByNodes}
    Let $A \subset \N^\N$. Suppose that for all $a \in A$ there is a $\sigma \sqsubset a$ such that $R(\sigma)$ holds for some predicate $R$. Then, there are (possibly finitely many) incomparable strings $\rho_1, \rho_2, \dots$ such that 
    \[
    A = \bigsqcup_{i \in \N} A \cap \llb \rho_i \rrb
    \]
    and such that $R(\rho_i)$ holds for each $i \in \N$. 
\end{Lemma}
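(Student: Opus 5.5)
The plan is to take the minimal strings satisfying $R$ along each point of $A$. For each $a \in A$, the hypothesis gives some $\sigma \sqsubset a$ with $R(\sigma)$. Among the finitely many initial segments of $a$ we then choose the shortest one satisfying $R$, and call it $\sigma_a$. Set
\[
S = \{\sigma_a : a \in A\} \subset \N^{<\N}.
\]
Since $\N^{<\N}$ is countable, $S$ is countable, possibly finite. Enumerate it as $\rho_1, \rho_2, \dots$. Each $\rho_i$ satisfies $R$ by construction.

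Next we show the $\rho_i$ are pairwise incomparable. Suppose $\rho_i = \sigma_a$ and $\rho_j = \sigma_b$ with $\rho_i \sqsubset \rho_j$ and $\rho_i \neq \rho_j$. Then $\rho_i \sqsubset \rho_j \sqsubset b$, so $\rho_i$ is a proper initial segment of $b$ that satisfies $R$ and is shorter than $\sigma_b$. This contradicts the minimality of $\sigma_b$. Hence distinct elements of $S$ are incomparable, and therefore the cylinders $\llb \rho_i \rrb$ are pairwise disjoint.

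Finally, we check the decomposition. Every $a \in A$ lies in $\llb \sigma_a \rrb$, and $\sigma_a$ is one of the $\rho_i$. So $A = \bigcup_i A \cap \llb \rho_i \rrb$, and this union is disjoint by the previous step.

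There is no real obstacle. The only point needing care is to use minimality along each individual point; choosing arbitrary witnesses could produce comparable strings. Note also that the cylinders need not cover all of $\B$, only $A$, which is all the statement requires.
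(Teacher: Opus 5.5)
Your proof is correct and is essentially the paper's argument: the paper's greedy construction, run over an enumeration of $\N^{<\N}$ in which prefixes come first, selects exactly the $\sqsubset$-minimal strings satisfying $R$, which is what you obtain directly by taking the shortest $R$-prefix $\sigma_a$ of each point. The only difference is that the paper's list contains all minimal $R$-strings, so it is independent of $A$ and effective relative to $R$, while yours keeps only those lying along points of $A$; either choice gives the required disjoint decomposition.
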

\begin{proof}
	Let $\sigma_1, \sigma_2, \dots$ be an enumeration of $\N^{<\N}$ such that, for all $i,j \in \N$ we have that $\sigma_i \sqsubset \sigma_j$ implies that $i \leq j$. We construct $\rho_1, \rho_2, \dots$ in stages. At the end of stage $n$, we will have defined $\rho_1, \dots, \rho_{k_n}$, where $k_n$ is the number of $\rho_i$ which have been enumerated after stage $n$. Before the construction starts, no $\rho_i$ have been chosen, so $k_0 = 0$. 
    
    At stage $n \geq 1$, suppose that $\rho_1, \dots, \rho_{k_{n-1}}$ have already been chosen. If $R(\sigma_n)$ holds and $\rho_i \not\sqsubset \sigma_n$ for all $i \leq k_{n-1}$ then let $\rho_{k_{n-1} + 1} = \sigma_n$ and let $k_n = k_{n-1} + 1$ and move on to stage $n + 1$. Otherwise, let $k_n = k_{n-1}$ and move on to stage $n + 1$.
    
	We claim that $\rho_1, \rho_2, \dots$ satisfy the lemma. 
    By construction, we have that $R(\rho_i)$ holds for each $i$. 
    Furthermore, $\rho_i$ is incomparable with $\rho_j$ for each $j \neq i \in \N$, so the $\llb \rho_i \rrb$ are pairwise disjoint and hence so are the $A \cap \llb \rho_i \rrb$. It remains to show that the $A \cap \llb \rho_i \rrb$ cover $A$.
    
	Fix $a \in A$. Then there is a string $\sigma \sqsubset a$ such that $R(\sigma)$ holds. Let $t$ be such that $\sigma = \sigma_t$. Then, we have that either $\rho_{k_t} = \sigma$ or that there is $i < k_t$ such that $\rho_i \sqsubset \sigma$. In either case, there is $j$ such that $a \in A \cap \llb \rho_j \rrb$, as desired. 
\end{proof}

Finally, we will need the following application of Lemma~\ref{Lemma:CoverByNodes} in Section~\ref{Sec:Conclusion}. 

\begin{Lemma}
	\label{Lem:CoverByIsolating}
	Suppose that $A \subset \N^\N$ has $\rankp(A) \leq \alpha$. Then, there are (possibly finitely many) $\sigma_1, \sigma_2, \dots$ and $p_1 \in \llb \sigma_1 \rrb$, $p_2 \in \llb \sigma_2 \rrb$, $\dots$ such that for each $i \in \N$, $\sigma_i$ is a $\rank_A(p_i)$-isolating neighborhood of $p_i$ in $A$ and such that $A = \bigsqcup_{i \in \N} A \cap \llb \sigma_i \rrb$.
\end{Lemma}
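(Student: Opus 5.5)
The plan is to apply Lemma~\ref{Lemma:CoverByNodes} with a predicate $R$ that encodes being a ranked isolating neighborhood. For a string $\sigma$, let $R(\sigma)$ hold if there is a point $p \in A \cap \llb \sigma \rrb$ such that $\llb \sigma \rrb$ is a $\leq\!\!\rank_A(p)$-isolating neighborhood of $p$ in $A$.

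The first step is to check the hypothesis of Lemma~\ref{Lemma:CoverByNodes}, namely that every $a \in A$ has some $\sigma \sqsubset a$ with $R(\sigma)$.
\begin{itemize}
\item Since $\rankp(A) \leq \alpha$, every $a \in A$ has $\rank_A(a) \leq \alpha < \infty$.
\item So $a$ has some $\leq\!\!\rank_A(a)$-isolating neighborhood $U$, and $U$ is open.
\item There is therefore a basic cylinder $\llb \sigma \rrb \subset U$ with $\sigma \sqsubset a$.
\item The defining property is inherited by smaller open neighborhoods of the same point. By Lemma~\ref{Lem:SurroundedByLowerRanks}, it says exactly that every $q \in A \cap U$ with $q \neq a$ has $\rank_A(q) < \rank_A(a)$; for $\rank_A(a)=0$ it says $U \cap A = \{a\}$. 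Both conditions pass to the subset $\llb \sigma \rrb$.
\item Hence $\llb\sigma\rrb$ is a $\leq\!\!\rank_A(a)$-isolating neighborhood of $a$, and $R(\sigma)$ holds with witness $a$.
\end{itemize}

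Lemma~\ref{Lemma:CoverByNodes} then yields pairwise incomparable strings $\sigma_1, \sigma_2, \dots$ with $A = \bigsqcup_i A \cap \llb \sigma_i \rrb$ and $R(\sigma_i)$ for each $i$. For each $i$, choose a witness $p_i \in A \cap \llb \sigma_i \rrb$ for $R(\sigma_i)$; this gives exactly the conclusion of the lemma.

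One subtlety needs checking: the witness for $\sigma_i$ should have rank exactly the rank for which $\llb\sigma_i\rrb$ isolates it. The definition of $R$ builds this in, since it uses $\rank_A(p)$ itself. The witness is also unique: if $p$ and $p'$ both witnessed $R(\sigma_i)$ with $p \neq p'$, Lemma~\ref{Lem:SurroundedByLowerRanks} would give both $\rank_A(p') < \rank_A(p)$ and $\rank_A(p) < \rank_A(p')$. The rank-$0$ case is similar, since then $\llb\sigma_i\rrb \cap A$ is a singleton.

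This gives well-definedness without any use of choice beyond a countable selection. I expect no real obstacle; the only point needing care is the monotonicity of isolating neighborhoods under shrinking.
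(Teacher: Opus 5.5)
Your proposal is correct and follows the paper's proof: the same predicate $R$ is fed into Lemma~\ref{Lemma:CoverByNodes}, and the witnesses $p_i$ are read off from $R(\sigma_i)$. You also check, via Lemma~\ref{Lem:SurroundedByLowerRanks}, that a ranked isolating neighborhood can be shrunk to a basic cylinder $\llb\sigma\rrb$, which fills in a step the paper leaves implicit; your observation that the witness for each $\sigma_i$ is unique is correct but not needed.
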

\begin{proof}
	Apply Lemma~\ref{Lemma:CoverByNodes} to the predicate 
	\[
		R(\sigma) := \text{ there is } p \in A \cap \llb \sigma \rrb \text{ such that } \sigma \text{ is a } \rank_A(p)\text{-isolating neighborhood of } p \text{ in } A.
	\]
	Since every point $p \in A$ has a $\rank_A(p)$-isolating neighborhood, the predicate $R(\sigma)$ holds for some $\sigma \sqsubset p$. Hence, by Lemma~\ref{Lemma:CoverByNodes}, the lemma is proved. 
\end{proof}

%% file: TheProblemsACCN.tex
In this section, we define the problem $\ACCN^\alpha$ for each countable ordinal $\alpha$. Informally, $\ACCN^\alpha$ is the problem of successfully solving $\ACCN$ at least once in a finite sequence of attempts indexed by $\alpha$, with the indexing chosen by the solver. We freely refer to notions and results from Section~\ref{Sec:LocalDiscontinuities}. At the same time, the problems defined in this section are the main examples in Section~\ref{Sec:LocalDiscontinuities}. 
We recommend reading the present section and Section~\ref{Sec:LocalDiscontinuities} in tandem, starting with the definition of the problems $\ACCN^\alpha$ and then reading Section~\ref{Sec:LocalDiscontinuities} before returning to the rest of this section. 

\subsection{Definition and Basic Properties}

To give a formal account of $\ACCN^\alpha$, we must fix a representation of $\alpha$. At first, it may seem natural to use an enumeration of $\alpha$. However, this does not allow the solver of $\ACCN^\alpha$ to choose the next ordinal. Instead, we model the way we represent ordinals after the way ordinals are manifested as a Cantor-Bendixson rank. 

\begin{Definition}
    \label{def:RankWitnessing}
    Fix an ordinal $\alpha$. We say that a function $\ell_\alpha: \N \to \alpha$ is \emph{rank-witnessing} for $\alpha$ if and only if 
    \begin{itemize}
        \item For each $i < j \in \N$ we have $\ell_\alpha(i) \leq \ell_\alpha(j)$. 
        \item For each $\beta < \alpha$ there is $i \in \N$ such that $\ell_\alpha(i) \geq \beta$. 
    \end{itemize}
    Furthermore, we say that a set of functions $\mc{L} = \{\ell_\beta: 0 < \beta \leq \alpha\}$ is a \emph{rank-witnessing representation} of $\alpha$ if and only if for each $\beta \leq \alpha$ we have that $\ell_\beta$ is a rank-witnessing function for $\beta$.
\end{Definition}

A rank-witnessing function can be obtained for any countable ordinal by skipping decreases in its enumeration. On the other hand, a rank-witnessing function for $\alpha$ is cofinal in $\alpha$, so if $\aleph_1$ is a regular cardinal then there is no rank-witnessing representation of $\omega_1$. Hence, if $\aleph_1$ is regular then an ordinal $\alpha$ has a rank-witnessing representation if and only if $\alpha$ is countable. 

We now define the problems $\ACCN^\alpha$ as multi-functions from $\B$ to $\B$. While we could define represented spaces for this purpose, we find it more illustrative to define the realizer version directly. We represent both the problem-poser and problem-solver waiting by the character $\hash$ and we code the ordinal for the next instance of $\ACCN$ by taking the number $j$ from the prefix $\hash^j1$ and interpreting it as the ordinal $\ell_\alpha(j)$. Since $\ell_\alpha$ is rank-witnessing for $\alpha$, the solver may then ensure that they have at least $\beta$-many guesses remaining by searching the instance until finding $\hash^j$ such that $\ell_\alpha(j) \geq \beta$.  

\begin{Definition}
    \label{Def:ACCNAlpha}
    Fix a countable ordinal $\alpha$ and a rank-witnessing representation $\mc{L} = \{\ell_\beta: 0 < \beta \leq \alpha\}$. We define the problem $\ACCN^\alpha: \subset (\N \cup \{\hash\})^\N \tto (\N \cup \{\hash\})^\N$ recursively as follows.
    \begin{itemize}
        \item $\dom(\ACCN^0) = \{\hash^\N\}$
        \item For $\alpha > 0$, 
        \[
            \dom(\ACCN^\alpha) = \{\hash^\N\} \cup  \{\hash^j n w: j,n \in \N \text{ and } w \in \dom\left(\ACCN^{\ell_{\alpha}(j)}\right)\}.
        \]
    \end{itemize}
    \begin{itemize}
        \item $\ACCN^0(\hash^\N) = \emptyset 
        $
        \item and for $\alpha > 0$,
        \[ 
            \ACCN^\alpha(p) = \begin{cases}
               \{m q:m \in \N \text{ and } q \in (\N \cup \{\hash\})^\N\} &\text{if } p = \hash^\N \\
                \{m q:  n \neq m\in \N \text{ and } q \in (\N \cup \{\hash\})^\N\}\cup  &\text{if } p = \hash^j n w \text{ for some } n \in \N.\\
                 \{n\hash^k y: k \in \N \text{ and } y \in \ACCN^{\ell_\alpha(j)}(w)\}  & 
            \end{cases}
        \]
    \end{itemize}
\end{Definition}

Informally, to solve $\ACCN^\alpha$, an agent is allowed $\alpha$ many attempts to solve $\ACCN$. This intuition directly leads to the fact that $\ACCN^\beta$ is discontinuous for each countable $\beta$.

\begin{Theorem}
    For each countable ordinal $\alpha$ and rank-witnessing representation of $\alpha$, the associated problem $\ACCN^\alpha$ is not continuous.  
\end{Theorem}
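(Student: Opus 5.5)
The plan is to prove, by induction on the countable ordinal $\alpha$, that $\ACCN^\alpha$ has no continuous realizer. The induction is carried out simultaneously for all $\beta \leq \alpha$, each $\ACCN^\beta$ being defined from the restriction $\{\ell_\gamma : 0<\gamma\leq\beta\}$ of the fixed representation $\mc{L}$. The idea is to use continuity at the single point $\hash^\N$ to commit the realizer to a first guess $m$, feed it an instance whose first attempt refutes that guess, and then extract a continuous realizer for a strictly smaller $\ACCN^\beta$.

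\textbf{Base case $\alpha = 0$.} Here $\dom(\ACCN^0)=\{\hash^\N\}$ and $\ACCN^0(\hash^\N)=\emptyset$. No function at all can choose a solution, so in particular there is no continuous realizer.

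\textbf{Inductive step, $\alpha>0$.} Assume $\ACCN^\beta$ has no continuous realizer for every $\beta<\alpha$, and suppose toward a contradiction that $\Phi$ is a continuous realizer for $\ACCN^\alpha$.
\begin{enumerate}
\item Since $\hash^\N \in \dom(\ACCN^\alpha)$, the value $\Phi(\hash^\N)$ lies in $\ACCN^\alpha(\hash^\N)$, so it begins with some number $m\in\N$.
\item By continuity there is $j$ such that every $p \in \dom(\ACCN^\alpha) \cap \llb \hash^j \rrb$ has $\Phi(p)$ beginning with $m$.
\item Put $\beta := \ell_\alpha(j) < \alpha$, which holds because $\ell_\alpha$ maps into $\alpha$. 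For every $w \in \dom(\ACCN^\beta)$, the string $\hash^j m w$ is an instance of $\ACCN^\alpha$ lying in $\llb \hash^j \rrb$, so $\Phi(\hash^j m w)$ begins with $m$.
\item By the definition of $\ACCN^\alpha(\hash^j m w)$, a solution beginning with the refuted number $m$ must have the form $m\hash^k y$ with $y \in \ACCN^\beta(w)$.
\end{enumerate}

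The point needing care is that recovering $y$ is a continuous operation. Every element of $\ACCN^\beta(w)$ begins with a natural number, never with $\hash$. This is immediate from the definition when $\beta>0$. When $\beta=0$ the solution set is empty, so no output exists and $\Phi$ already fails on $\hash^j m \hash^\N$. Hence the map
\[
\Psi(w) := \text{the result of deleting the leading } m \text{ and the following maximal block of } \hash\text{'s from } \Phi(\hash^j m w)
\]
is well defined on $\dom(\ACCN^\beta)$. It is continuous, because the block of $\hash$'s is finite and is detected at the first non-$\hash$ symbol, and $w \mapsto \hash^j m w$ is continuous. By construction $\Psi(w) = y \in \ACCN^\beta(w)$, so $\Psi$ is a continuous realizer for $\ACCN^\beta$ with $\beta<\alpha$. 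This contradicts the induction hypothesis.

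The main obstacle is exactly this extraction step: checking that the wrapper solution $m\hash^k y$ can be unwrapped continuously. That in turn rests on the observation that solutions never start with $\hash$, together with the fact that the recursion always lands on an ordinal $\ell_\alpha(j)$ strictly below $\alpha$.
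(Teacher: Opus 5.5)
Your proof is correct and follows the paper's argument essentially step for step. Both use transfinite induction and continuity at $\hash^\N$ to lock in the first guess $m$ on $\llb \hash^j \rrb$. Both then strip the prefix $m\hash^k$ from $\Phi(\hash^j m w)$ to obtain a continuous realizer of $\ACCN^{\ell_\alpha(j)}$. You also note that solutions of $\ACCN^\beta$ for $\beta>0$ never begin with $\hash$, so the unwrapping is well defined; the paper's map $n\hash^k y \mapsto y$ leaves this implicit.
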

\begin{proof}
    We prove the claim by transfinite induction on $\alpha$. For $\alpha=0$, the problem $\ACCN^0$ has no realizer, so it is not continuous.

    Fix $\alpha>0$, and assume that $\ACCN^\beta$ is not continuous for every $\beta<\alpha$. Suppose, for contradiction, that $\Phi$ is a continuous realizer of $\ACCN^\alpha$.

    Since $\Phi(\hash^\N)\in \ACCN^\alpha(\hash^\N)$, there is $n\in\N$ with $\Phi(\hash^\N)(0)=n$. By continuity, there is $j\in\N$ such that
    \[
        \Phi(\llb \hash^j\rrb)\subseteq \llb n\rrb.
    \]
    In particular, for every $w\in\dom\left(\ACCN^{\ell_\alpha(j)}\right)$, the input $\hash^j n w$ is in $\dom(\ACCN^\alpha)$ and
    \[
        \Phi(\hash^j n w)\in \ACCN^\alpha(\hash^j n w).
    \]
    Because $\Phi(\llb\hash^j\rrb) \subset \llb n \rrb$, the first branch in the definition of $\ACCN^\alpha(\hash^j n w)$ is impossible. Hence,
    \[
        \Phi(\hash^j n w)=n\hash^{k(w)}y_w
    \]
    for some $k_w \in\N$ and $y_w \in \ACCN^{\ell_\alpha(j)}(w)$.

    Define the partial continuous function $T: n\hash^k y \mapsto y$. Then define
    \[
        \Psi(w):=T\left(\Phi(\hash^j n w)\right).
    \]
    Then $\Psi$ is continuous, and for every $w\in\dom(\ACCN^{\ell_\alpha(j)})$ we have
    \[
        \Psi(w)\in\ACCN^{\ell_\alpha(j)}(w).
    \]
    So $\Psi$ is a continuous realizer of $\ACCN^{\ell_\alpha(j)}$. But $\ell_\alpha(j)<\alpha$, contradicting the induction hypothesis.

    Therefore $\ACCN^\alpha$ is not continuous.
\end{proof}

The discontinuity of $\ACCN^\alpha$ is witnessed by $\hash^\N$, the sole point of discontinuity of $\ACCN^\alpha$. 

\begin{Proposition}\label{Prop:ContinuityOffLine}
    Let $\tau \in (\N \cup \{\hash\})^{<\N}$ and let $\alpha$ be a countable ordinal. Then,  $\ACCN^\alpha|_{\llb \tau \rrb}$ is continuous if and only if $\tau \not\sqsubset \hash^\N$ .
\end{Proposition}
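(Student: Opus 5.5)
We prove the two directions separately. For ($\Leftarrow$), suppose $\tau \not\sqsubset \hash^\N$. Then $\tau$ contains a first natural number entry, so $\tau = \hash^j n \tau'$ for some $j,n \in \N$ and some (possibly empty) string $\tau'$. The only point of $\dom(\ACCN^\alpha)$ that is not of the form $\hash^j n w$ is $\hash^\N$, and $\hash^\N \notin \llb \tau \rrb$. Hence every instance $p \in \dom(\ACCN^\alpha) \cap \llb \tau \rrb$ has the form $\hash^j n w$ with this fixed $n$. (The case $\alpha = 0$ is trivial, since then the restricted domain is empty.)

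By the first branch of Definition~\ref{Def:ACCNAlpha}, the constant function $p \mapsto (n+1)\hash^\N$, or any $m q$ with $m \neq n$, is a solution for all such $p$. So a constant map is a continuous realizer of $\ACCN^\alpha|_{\llb \tau \rrb}$. (For $\alpha > 0$ the first branch is available; for the $w$-part we never need to recurse.) This direction is routine.

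For ($\Rightarrow$), suppose $\tau \sqsubset \hash^\N$, so $\tau = \hash^i$. We show that $\ACCN^\alpha|_{\llb \hash^i \rrb}$ is discontinuous by repeating the induction from the preceding theorem. If $\alpha = 0$, then $\hash^\N \in \llb \tau \rrb$ has no solution, so there is no realizer.

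If $\alpha > 0$ and $\Phi$ is a continuous realizer on $\llb \hash^i \rrb$, let $n = \Phi(\hash^\N)(0)$. By continuity there is $j \geq i$ with $\Phi(\llb \hash^j \rrb \cap \dom) \subseteq \llb n \rrb$. For each $w \in \dom(\ACCN^{\ell_\alpha(j)})$ the point $\hash^j n w$ lies in $\llb \hash^i \rrb$. The first branch is excluded, so $\Phi(\hash^j n w) = n \hash^k y$ with $y \in \ACCN^{\ell_\alpha(j)}(w)$. Composing with the stripping map $T$ then gives a continuous realizer of $\ACCN^{\ell_\alpha(j)}$ on its whole domain. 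This contradicts the preceding theorem, since $\ell_\alpha(j) < \alpha$.

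The only point needing care is that the argument of the preceding theorem localizes. This requires choosing $j \geq i$ so that the witnessing instances stay inside $\llb \tau \rrb$, which is possible because continuity lets us take $j$ as large as we like. We expect no real obstacle beyond this bookkeeping.
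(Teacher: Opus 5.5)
Your proof is correct. The ($\Leftarrow$) direction is the same as the paper's: a constant realizer $p \mapsto (n+1)\hash^\N$, with $\alpha = 0$ holding vacuously.

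For ($\Rightarrow$) you take a different route. The paper does not re-run the diagonalization. It argues as follows:
\begin{enumerate}
\item By the ($\Leftarrow$) direction, $\ACCN^\alpha$ is continuous at every point of its domain other than $\hash^\N$.
\item $\ACCN^\alpha$ is globally discontinuous, so by Theorem~\ref{Thm:ContinuityIsContinuityAtAllPoints} it has a point of discontinuity, which must be $\hash^\N$.
\item Hence no neighborhood $\llb \tau \rrb$ with $\tau \sqsubset \hash^\N$ carries a continuous realizer.
\end{enumerate}

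You instead localize the proof of the preceding theorem to the cone $\llb \hash^i \rrb$. You choose $j \geq i$ so the witnessing instances $\hash^j n w$ stay in the cone, and you invoke that theorem only at the smaller ordinal $\ell_\alpha(j)$.

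The paper's argument is shorter and fits its local-versus-global theme. However, it relies on the pasting result Theorem~\ref{Thm:ContinuityIsContinuityAtAllPoints}, and hence on the covering Lemma~\ref{Lemma:CoverByNodes}. Yours is elementary and self-contained, and it makes explicit that the original diagonalization never needs to leave any basic neighborhood of $\hash^\N$. The cost is that you repeat that argument.
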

\begin{proof}
    ($\impliedby$): Since $\tau \not\sqsubset \hash^\N$, we have that $\tau \sqsupset \hash^j n$ for some $j \geq 0$ and some $n \in \N$. For $\alpha = 0$, we have $\dom(\ACCN^0) \cap \llb \tau \rrb = \emptyset$, so any partial continuous function is a continuous realizer for $\ACCN^0|_{\llb \tau \rrb}$. For $\alpha > 0$ we have that for each $p\in\dom(\ACCN^\alpha)$ it is the case that $\hash^j n \sqsubset p$ implies that $(n+1)\hash^\N \in\ACCN^\alpha(p)$ by Definition~\ref{Def:ACCNAlpha}. Hence the constant map $p\mapsto (n+1)\hash^\N$ is a continuous realizer of $\ACCN^\alpha|_{\llb\tau\rrb}$.

    ($\implies$): By the previous direction, $\ACCN^\alpha$ is continuous at each element of its domain other than $\hash^\N$. Since $\ACCN^\alpha$ is discontinuous, it has a point of discontinuity by Theorem~\ref{Thm:ContinuityIsContinuityAtAllPoints}, which can only be $\hash^\N$. 
\end{proof}

In fact, we can say something slightly stronger about the subsets on which $\ACCN^\alpha$ is discontinuous. 

\begin{Proposition} \label{Prop:ACCNContOnLowRank}
    Let $\alpha$ be a countable ordinal and let $A \subset \dom(\ACCN^\alpha)$. Then,  $\ACCN^\alpha|_A$ being discontinuous implies that $\hash^\N \in A$ and $\rank_A(\hash^\N) = \alpha$.
\end{Proposition}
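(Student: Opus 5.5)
The plan is to prove both parts by transfinite induction on $\alpha$, splitting the statement into three claims: (a) $\hash^\N \in A$; (b) $\rank_A(\hash^\N) \geq \alpha$; and (c) $\rank_A(\hash^\N) \leq \alpha$. Claim (c) does not use discontinuity.

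\textbf{Claim (c).} First I will show that $\rank_{\dom(\ACCN^\alpha)}(\hash^\N) \leq \alpha$, by induction on $\alpha$. Every other point of the domain has the form $\hash^j n w$ with $w \in \dom(\ACCN^{\ell_\alpha(j)})$. The cylinder $\llb \hash^j n \rrb$ is open, and the map $w \mapsto \hash^j n w$ is a homeomorphism onto $\dom(\ACCN^\alpha)\cap\llb\hash^j n\rrb$. So by Lemma~\ref{Lem:OpenSetsPreserveRank} and the induction hypothesis (applied to every point, not only $\hash^\N$), such a point has rank at most $\ell_\alpha(j) < \alpha$. Lemma~\ref{Lem:SurroundedByLowerRanks} then gives the bound for $\hash^\N$. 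Next I will use monotonicity: if $A \subset B$ then $\rank_A(p) \leq \rank_B(p)$, which follows by a routine induction using Lemma~\ref{Lem:SurroundedByLowerRanks}. Together these give $\rank_A(\hash^\N) \leq \alpha$.

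\textbf{Claim (a).} Suppose $\hash^\N \notin A$. Then every $p \in A$ extends some string $\hash^j n$. On each cylinder $\llb \hash^j n \rrb$ the constant map to $(n+1)\hash^\N$ realizes $\ACCN^\alpha$, as in Proposition~\ref{Prop:ContinuityOffLine}. These cylinders are pairwise disjoint and clopen, so gluing the constant maps (via Lemma~\ref{Lemma:CoverByNodes}) gives a continuous realizer of $\ACCN^\alpha|_A$, a contradiction.

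\textbf{Claim (b).} Suppose instead that $\rank_A(\hash^\N) = \gamma < \alpha$. Take a $\leq\!\gamma$-isolating neighborhood $\llb \hash^{j_0} \rrb$ of $\hash^\N$ in $A$. Since $\ell_\alpha$ is rank-witnessing and nondecreasing, I can choose $j_1 \geq j_0$ with $\ell_\alpha(j) \geq \gamma$ for all $j \geq j_1$. I then build a realizer piece by piece.
\begin{itemize}
\item Outside $\llb \hash^{j_1}\rrb$, use the constant realizers from Claim (a).
\item On $\llb\hash^{j_1}\rrb$, the output begins with $0$.
\item For inputs $\hash^j n w$ with $j \geq j_1$ and $n \neq 0$, the output is $0\hash^\N$.
\item For inputs $\hash^j 0 w$ with $j\ge j_1$, set $A_j = \{w : \hash^j 0 w \in A\}$. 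By Lemma~\ref{Lem:OpenSetsPreserveRank} and the shift homeomorphism, every point of $A_j$ has rank $<\gamma \leq \ell_\alpha(j)$ in $A_j$. So by the induction hypothesis for $\ell_\alpha(j) < \alpha$ (contrapositive: if $\ACCN^{\ell_\alpha(j)}|_{A_j}$ were discontinuous, $\hash^\N$ would have rank exactly $\ell_\alpha(j)$ in $A_j$), the restriction $\ACCN^{\ell_\alpha(j)}|_{A_j}$ has a continuous realizer $\Psi_j$. The output is then $0\Psi_j(w)$, that is, $k=0$ and $y=\Psi_j(w)$.
\end{itemize}
At $\hash^\N$ the output is $0$ followed by $\hash^\N$. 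All the pieces live on disjoint clopen cylinders together with the single point $\hash^\N$.

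\textbf{Main obstacle.} The hardest step is checking that the glued map is continuous at $\hash^\N$. Inputs close to $\hash^\N$ have the form $\hash^j n w$ with $j$ large, and their outputs begin with $0$ and then possibly nontrivial data. To keep this continuous, I will delay the output: on input $\hash^j 0 w$, the map outputs $0\hash^{j}$ before starting $\Psi_j(w)$, i.e.\ it uses $k=j$ instead of $k=0$; the case $n\neq 0$ already outputs $0\hash^\N$. With this padding, the output agrees with $0\hash^\N$ for more and more places as $j$ grows. This contradicts discontinuity, so $\rank_A(\hash^\N)\geq\alpha$, which together with (c) gives equality.
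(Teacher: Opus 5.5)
Your proposal is correct and follows essentially the same route as the paper: induction on $\alpha$, pasting the constant realizers off $\hash^\N$ to force $\hash^\N\in A$, and, assuming $\rank_A(\hash^\N)<\alpha$, gluing the continuous realizers of the slices given by the induction hypothesis with $\hash$-padding so the map is continuous at $\hash^\N$ (the paper's padded output is $0\hash^{i-j}\Phi_{i,n}(w)$). Two differences are in your favor. Your Claim (c) proves the upper bound $\rank_A(\hash^\N)\leq\alpha$, which the paper's proof leaves implicit. Your use of a $\leq\!\gamma$-isolating cylinder to bound the ranks of all nearby points also finds the cutoff index more cleanly than the paper's argument through the points $\hash^i n\hash^\N$.
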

\begin{proof}
    We argue by induction on $\alpha$. The base case $\alpha = 0$ is trivial: $\dom(\ACCN^0) = \{\hash^\N\}$, so there are only two cases: either $A = \{\hash^\N\}$, in which case $\rank_A(\hash^\N) = 0$, or $A = \emptyset$, in which case $\ACCN^0|_A$ is continuous.
    
    Now suppose that $\alpha > 0$. We can always build a continuous realizer of $\ACCN^\alpha|_{\dom(\ACCN^\alpha) \setminus \{\hash^\N\}}$ by pasting together the realizers of $\ACCN^\alpha|_{\llb \hash^i n \rrb}$ obtained from Proposition~\ref{Prop:ContinuityOffLine}. Hence, we may assume that $\hash^\N \in A$.
    
    Suppose that $\rank_A(\hash^\N) = \beta < \alpha$. We show that $\ACCN^\alpha|_A$ is continuous. Now, suppose for contradiction that for each $j$ there is $i > j$ and $n \in \N$ such that $\rank_{A}(\hash^i n \hash^\N) \geq \ell_\alpha(j)$. Since $\ell_\alpha$ is rank-witnessing, this implies that $\rank_A(\hash^\N) = \alpha$, a contradiction. Hence, there is $j$ such that for each $i > j$ and $n \in \N$ we have that $\rank_A(\hash^i n \hash^\N) < \ell_\alpha(j)$. For each such $i,n$, let
    \[
        B_{i,n} = \{p \in \dom(\ACCN^{\ell_\alpha(i)}): \hash^i n p \in A\}. 
    \]
    Then, $\rank_{B_{i,n}}(\hash^\N) = \rank_A(\hash^i n \hash^\N) < \ell_\alpha(j)$.
    
    By the inductive hypothesis, $\ACCN^{\ell_\alpha(i)}$ is continuous on $B_{i,n}$ for each $i > j$ and $n \in \N$, say via continuous realizer $\Phi_{i,n}$. Define $\Phi: \subset \B \to \B$ by $\Phi(\hash^\N) = 0 \hash^\N$ and
    \[
        \Phi(\hash^i n w) = \begin{cases}
            (n+1)\hash^\N &\text{if } i \leq j\\
            0\hash^{i - j}\Phi_{i,n}(w) &\text{if } i  > j.
        \end{cases}
    \]
    Then, $\Phi$ is a continuous realizer for $\ACCN^\alpha|_A$. 
\end{proof}


\begin{Proposition}
    	\label{Thm:Seperation}
	Suppose $\beta < \alpha$. Then, 
	\[
		\ACCN^\alpha <_\W^* \ACCN^\beta.  
	\]
\end{Proposition}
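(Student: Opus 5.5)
The statement has two halves: the reduction $\ACCN^\alpha \leq_\W^* \ACCN^\beta$ and the non-reduction $\ACCN^\beta \not\leq_\W^* \ACCN^\alpha$. I would prove the first by a direct construction and the second using Proposition~\ref{Prop:ACCNContOnLowRank} together with the Cenzer--Smith Lemma~\ref{Lem:ContinuousImageMonotone}.

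\emph{The reduction $\ACCN^\alpha \leq_\W^* \ACCN^\beta$.} Intuitively, being allowed fewer attempts is harder, so I would give a strong reduction.
\begin{enumerate}
\item The forward functional copies a $\ACCN^\alpha$-instance into a $\ACCN^\beta$-instance by translating each level's index $j$ into a suitable index $j'$ of the smaller representation.
\item Formally, I would prove by induction on $\beta$ that for all $\gamma \geq \beta$ we have $\ACCN^\gamma \leq_\sW^* \ACCN^\beta$, uniformly. Both problems have a unique point of discontinuity, namely $\hash^\N$.
\item Given $\hash^j n w$ with $w \in \dom(\ACCN^{\ell_\gamma(j)})$, output the first $j$ hashes unchanged, then $n$, then recursively handle $w$.
\item The index $j$ in the $\beta$-instance refers to $\ell_\beta(j)$. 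Monotonicity of rank-witnessing functions gives no comparison between $\ell_\beta(j)$ and $\ell_\gamma(j)$.
\item Simplest fix: output $\hash^0 n$, i.e.\ index $0$, and then simulate the smaller ordinal $\ell_\beta(0)$ against $\ell_\gamma(j)$.
\end{enumerate}
Step 5 runs into the ordering problem again. The paper likely intends the cleaner alternative of a trivial backward function: a solution to the harder instance, which has fewer attempts, can be converted into a solution to the easier one.

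\emph{Non-reduction (the main obstacle).} I would argue by contradiction, supposing $\ACCN^\beta \leq_\W^* \ACCN^\alpha$ via $\Phi,\Psi$.
\begin{enumerate}
\item Build a compact countable set $A \subset \dom(\ACCN^\beta)$ with $\rank_A(\hash^\N) = \beta$ and $\rankp(A) \leq \beta$ on which $\ACCN^\beta|_A$ is still discontinuous. Take $A$ to be the closure of nested points $\hash^{j_1} n_1 \hash^{j_2} n_2 \cdots$, choosing finitely many $n$ per level but enough to defeat any realizer (two values of $n$ suffice at each node).
\item By Lemma~\ref{Lem:ContinuousImageMonotone}, $\rankp(\Phi(A)) \leq \beta < \alpha$.
\item Proposition~\ref{Prop:ACCNContOnLowRank} says $\ACCN^\alpha|_{\Phi(A)}$ is discontinuous only if $\rank_{\Phi(A)}(\hash^\N) = \alpha$. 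Hence $\ACCN^\alpha|_{\Phi(A)}$ has a continuous realizer $R$.
\item Then $p \mapsto \Psi(p, R(\Phi(p)))$ is a continuous realizer of $\ACCN^\beta|_A$, a contradiction.
\end{enumerate}

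\emph{Where the difficulty lies.}
\begin{itemize}
\item The main obstacle is step 1: exhibiting a compact witness set of pointwise rank $\beta$ on which $\ACCN^\beta$ is discontinuous. I would build it by induction on $\beta$, mimicking the proof that $\ACCN^\beta$ is discontinuous. At $\hash^\N$, include for cofinally many $j$ the points $\hash^j n\, w$ for $n \in \{0,1\}$, with $w$ ranging over the inductively built witness set for $\ell_\beta(j)$. Compactness holds since these branches converge to $\hash^\N$.
\item The reduction direction needs care with the ordinal-representation indices. If a direct simulation fails, I would instead derive it from the main theorem's (2)$\Leftarrow$(1) direction. $\ACCN^\beta$ is discontinuous on $\dom(\ACCN^\beta)$, a set whose points have rank $\leq \beta \leq \alpha$, so Theorem~\ref{Thm:MainTheorem} yields $\ACCN^\alpha \leq_\W^* \ACCN^\beta$ immediately.
\end{itemize}
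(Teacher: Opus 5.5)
There is a genuine gap in step~1 of your non-reduction argument: the witness set, as you specify it, does not work. If only $n\in\{0,1\}$ is allowed at each level, then for $\beta>0$ the constant map $x\mapsto 2\hash^\N$ is a continuous realizer of $\ACCN^\beta|_A$. Indeed, $2\hash^\N\in\ACCN^\beta(\hash^\N)$, and $2\hash^\N\in\ACCN^\beta(\hash^j n w)$ whenever $n\neq 2$. So $\ACCN^\beta|_A$ is continuous, and step~4 yields no contradiction. The discontinuity proof you want to mimic refutes the realizer's first guess $m=\Phi(\hash^\N)(0)$ using an instance $\hash^j m w$ with $j$ arbitrarily large. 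Hence $A$ must contain, for every $m\in\N$, points $\hash^j m w$ for cofinally many $j$, and two fixed values of $n$ cannot achieve this.

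To keep compactness you need finitely many $n$ per level but unboundedly many overall, for example $n<j$ at level $j$. This is exactly the set $A_\beta$ built in the proof of Theorem~\ref{Thm:FollowPhiOnACompactSubsetOfDiscontinuity}, where an induction shows that $\hash^\N$ is a discontinuity of $\ACCN^\beta|_{A_\beta}$. With $A=A_\beta$, your steps 2--4 (Lemma~\ref{Lem:ContinuousImageMonotone}, then Proposition~\ref{Prop:ACCNContOnLowRank}, then composing with $\Psi$) are correct. They are then just the (2)$\Rightarrow$(1) direction of Theorem~\ref{Thm:MainTheorem} inlined.

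The paper avoids the construction altogether. If $\ACCN^\beta\leq_\W^*\ACCN^\alpha$, then Theorem~\ref{Thm:MainTheorem} gives a set of pointwise rank $\leq\beta$ on which $\ACCN^\alpha$ is discontinuous. Since $\beta<\alpha$, this contradicts Proposition~\ref{Prop:ACCNContOnLowRank}.

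For the reduction $\ACCN^\alpha\leq_\W^*\ACCN^\beta$, your direct simulation is inconclusive. Your guess that a strong reduction with a trivial backward functional exists is unsupported, and the paper does not use one. Your fallback is exactly the paper's proof: apply Theorem~\ref{Thm:MainTheorem} to $A=\dom(\ACCN^\beta)$, whose points all have rank $\leq\beta<\alpha$.
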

The most direct proof of this proposition would rehash many of the details of the proof of Theorem~\ref{Thm:MainTheorem}. Instead, we give a more concise proof by directly invoking Theorem~\ref{Thm:MainTheorem}. We do not use this result to prove either Theorem~\ref{Thm:MainTheorem} or Theorem~\ref{Thm:MainTheoremPointWise}. 
\begin{proof}
    To see that $\ACCN^\alpha \leq_\W^* \ACCN^\beta$, note that $\ACCN^\beta$ is discontinuous and that $\rank_{\dom(\ACCN^\beta)}(\hash^\N) = \beta < \alpha$ and apply Theorem~\ref{Thm:MainTheorem}.

    To see that $\ACCN^\alpha \not\geq_\W^* \ACCN^\beta$, suppose that $\ACCN^\alpha|_A$ is discontinuous for some $A \subset \dom(\ACCN^\alpha)$. Then, \[\rank_A(\hash^\N) = \alpha > \beta\] by Proposition~\ref{Prop:ACCNContOnLowRank}. Hence, $\ACCN^\alpha$ is not discontinuous on a set of pointwise Cantor-Bendixson rank $\leq \beta$, not satisfying Condition~\ref{MainTheoremItem:DiscontinuousOnPointWiseRankedSet} of Theorem~\ref{Thm:MainTheorem}.
\end{proof}

The set-theoretic identity of $\ACCN^\alpha$ depends on our choice of rank-witnessing representation of $\alpha$. However, this is not true for its continuous Weihrauch degree. This is because if the agent has $\beta$ many attempts remaining, they may ensure that they retain at least $\gamma < \beta$ attempts in the next round by checking the instance of $\ACCN$ up to some $j$ with $\ell_\beta(j) \geq \gamma$, so solutions for different rank-witnessing representations of $\alpha$ are continuously equivalent. We may also view this equivalence as a corollary of Theorem~\ref{Thm:MainTheorem}. We only invoke Theorem~\ref{Thm:EllIndependent} establish the robustness of our definition of $\ACCN^\alpha$; we do not use it in any proofs. 

\begin{Theorem} \label{Thm:EllIndependent}
	Suppose that the problems $\{{\ACCN^\beta}\}_{\beta \leq \alpha}$ are defined based on rank-witnessing representation $\mc{L} = \{{\ell_\beta}: \beta \leq \alpha\}$, while the problems  $\{{(\ACCN^\beta)}'\}_{ \beta \leq \alpha}$ are defined based on $\mc{L}' = \{{\ell_\beta}': \beta \leq \alpha\}$. Then,
	\[
	\ACCN^\beta \equiv_\W^* (\ACCN^\beta)'
	\]
	for all $0 \leq \beta \leq \alpha$. 
\end{Theorem}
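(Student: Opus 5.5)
By symmetry it suffices to show $\ACCN^\beta \leq_\W^* (\ACCN^\beta)'$ for every $\beta \leq \alpha$; the reverse reduction is the same argument with $\mc{L}$ and $\mc{L}'$ swapped. There are two routes. The quick one invokes Theorem~\ref{Thm:MainTheorem}: $(\ACCN^\beta)'$ is discontinuous, and its domain has pointwise rank at most $\beta$. The latter follows from an easy induction using Lemma~\ref{Lem:SurroundedByLowerRanks}, exactly as in the proof of Proposition~\ref{Prop:ACCNContOnLowRank}. Condition~\ref{MainTheoremItem:DiscontinuousOnPointWiseRankedSet} then gives $\ACCN^\beta \leq_\W^* (\ACCN^\beta)'$.

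Since the text says the theorem should also follow from the informal ``skip ahead'' argument, I would also give the direct proof. It is a transfinite induction on $\beta$, constructing continuous $\Phi_\beta, \Psi_\beta$ such that:
\begin{itemize}
\item $\Phi_\beta(\hash^\N) = \hash^\N$;
\item $\Phi_\beta$ maps each instance $\hash^j n w$ of $\ACCN^\beta$ to an instance $\hash^{j'} n w'$ of $(\ACCN^\beta)'$ whose first \emph{value} is the same $n$;
\item $\Psi_\beta$ translates solutions back.
\end{itemize}
The case $\beta=0$ is trivial, since both problems coincide.

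For $\beta>0$, the forward function reads the input. While it sees $\hash$'s it outputs $\hash$'s, but it delays so that when it finally reads $\hash^j n$ it has already output some $\hash^{j'}$ with $\ell'_\beta(j') \leq \ell_\beta(j)$. It then outputs $n$, followed by $\Phi$ of a reduction $\ACCN^{\ell_\beta(j)} \leq (\ACCN^{\ell'_\beta(j')})'$ applied to $w$.

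The delay works as follows: output the $k$th $\hash$ only after reading enough input $\hash$'s, say $m_k$ of them, with $\ell'_\beta(k) \leq \ell_\beta(m_k)$. Such an $m_k$ exists because $\ell_\beta$ is cofinal in $\beta$ and $\ell'_\beta(k) < \beta$. Monotonicity then gives the required inequality on the output index.

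Because the target rank is \emph{at most} the source rank, the reduction $\ACCN^{\gamma} \leq_\W^* (\ACCN^{\gamma'})'$ for $\gamma' \leq \gamma$ is needed. So I would strengthen the induction hypothesis to: for all $\gamma' \leq \gamma \leq \alpha$, $\ACCN^\gamma \leq_\W^* (\ACCN^{\gamma'})'$, uniformly in $(\gamma,\gamma')$ via the index data. Handling $\gamma'<\gamma$ needs a further step: at the top level, match indices $j \mapsto j'$ with $\ell'_{\gamma'}(j') \leq \ell_\gamma(j)$. The cofinality condition still provides such $j'$ since $\ell'_{\gamma'}$ takes values below $\gamma' \leq \gamma$.

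The backward function reads a solution $m q$ of the target. If $m \neq n$, or if the instance is $\hash^\N$, it outputs $m$ followed by $\hash$'s. If $m = n$ it outputs $n$ followed by the recursive $\Psi$ applied to the tail. Note that whether $m=n$ depends on $p$, which is why $\leq_\W^*$ rather than strong reducibility is used.

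The main obstacle is verifying continuity of $\Phi$ and $\Psi$ at $\hash^\N$. The forward function is continuous because on $\llb \hash^{m_k}\rrb$ it has committed to $\hash^k$. The backward function commits to its first symbol immediately. The matching index $j'$ is not a continuous function of the ordinal; it must be chosen by the discrete search over indices, and the ordinal-level inequality must hold whichever $\hash^j n$ arrives. That compatibility is what the staged delay guarantees.
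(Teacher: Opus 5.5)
Your quick route is the paper's proof. The paper likewise invokes the main theorem, in its pointwise form (noting that $\hash^\N$ is a rank-$\beta$ discontinuity of both problems), and observes that the argument works for either representation, which gives both directions. Checking Condition~\ref{MainTheoremItem:DiscontinuousOnPointWiseRankedSet} of Theorem~\ref{Thm:MainTheorem} through the pointwise rank of the whole domain, as you do, is a fine way to discharge the hypothesis.

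Your direct proof, however, has a gap at instances whose number appears early. The delay controls $\ell'_\beta(j')$ only once some $\hash$ has been output. With $j'=0$ you need $\ell'_\beta(0) \leq \ell_\beta(j)$. When $\ell_\beta(j) < \ell'_\beta(0)$, no target index $j'$ works at all, because $\ell'_\beta$ is nondecreasing.

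Concretely, take $\beta=\omega$, $\ell_\omega(j)=j$ and $\ell'_\omega(j)=j+1$.
\begin{itemize}
\item The source instance $0\hash^\N$ has tail in $\dom(\ACCN^0)$, so its solutions are exactly the $mq$ with $m\neq 0$.
\item Every instance of $(\ACCN^\omega)'$ has a solution beginning with $0$. For $\hash^{j'}0w'$, take $0\hash^k y$ with $y\in(\ACCN^{j'+1})'(w')$, which is nonempty.
\item Your backward function copies the oracle's first symbol immediately.
\end{itemize}
So whatever $\Phi$ does, the reduction fails on $0\hash^\N$.

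The repair is the device from the proof of Theorem~\ref{Thm:WinningStrategyImpliesAboveACCN}. Let $m_0$ be least with $\ell_\beta(m_0)\geq\ell'_\beta(0)$; it exists by cofinality, since $\ell'_\beta(0)<\beta$. Have $\Psi$ first read $x$ up to position $m_0$. If it finds $\hash^j n$ with $j<m_0$, it outputs $(n+1)\hash^\N$ without consulting the oracle solution. Only otherwise does it copy the oracle's first symbol $m$. For $j\geq m_0$ you then get $\ell'_\beta(j')\leq\ell_\beta(j)$ even when $j'=0$.

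The same fix is needed in your $\gamma'<\gamma$ step, choosing $m_0$ with $\ell_\gamma(m_0)\geq\ell'_{\gamma'}(0)$. The case $\gamma'=0$ must be handled separately by a constant forward map, since $\dom((\ACCN^0)')=\{\hash^\N\}$ contains no instance of the form $\hash^{j'}nw'$.
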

\begin{proof}
	The point $\hash^\N$ is a rank-$\beta$ discontinuity of both $\ACCN^\beta$ and $(\ACCN^\beta)'$. Therefore, by Theorem~\ref{Thm:MainTheoremPointWise}, 
	\[
		\ACCN^\beta \leq_\W^* (\ACCN^\beta)'.
	\]
	The proof of Theorem~\ref{Thm:MainTheoremPointWise} works for any $\mc{L}'$ as well,  so 
	\[
	\ACCN^\beta \geq_\W^* (\ACCN^\beta)'
	\]
	follows as well. 
\end{proof}

%% file: LocalDiscontinuities.tex
\subsection{Continuity at a Point}
In computable analysis, continuity of multivalued functions between represented spaces is defined in terms of the existence of continuous realizers (see Section~\ref{Sec:Background}). This is a global notion of continuity. We can restrict it to local neighborhoods of points in the domain to obtain a local notion of continuity. 

\begin{Definition}\label{def:continuous_at_a_point}
    Let $P: \subset \N^\N \tto \N^\N$ be a problem on Baire space. 
    For $p \in \dom(P)$, we say that $P$ is \emph{continuous at} $p$ if there is an open neighborhood $U$ of $p$ such that $P|_U$ is continuous. We say that $P$ is \emph{discontinuous} at $p \in \dom(P)$ if $P$ is not continuous at $p$. 
\end{Definition}
\begin{Remark}
For a single valued function $f: \subset \N^\N \to \N^\N$, $f$ is continuous at $p$ in the sense of Definition~\ref{def:continuous_at_a_point} if and only if $f$ is ``locally continuous'' at $p$ in the sense that there is an open neighborhood $U$ of $p$ such that $f|_U$ is continuous. This, of course, is not equivalent to the classical definition of continuity at a point. Indeed, more descriptive terms for Definition~\ref{def:continuous_at_a_point} might be \emph{local continuity} or \emph{local realizability}. However, our notion of rank of discontinuity collapses to only three levels for single valued functions (see Section~\ref{sec:SingleValuedFunctions}). Hence, this is a minor terminological issue and we will stick with the current terminology for the sake of brevity.
\end{Remark}

While Definition~\ref{def:continuous_at_a_point} gives a notion of discontinuity at a point, it is not self-contained in the sense that it makes reference to the global notion of continuity in terms of realizers. In Section~\ref{Sec:RankingDiscontinuities}, we give a purely local definition of continuity at a point for functions which are discontinuous on scattered sets. For now, we will work with Definition~\ref{def:continuous_at_a_point}. 

In this section, we will often reprove results on continuity at a point of multi-valued functions which are classical for single-valued functions. In particular, in Theorem~\ref{Thm:ContinuityIsContinuityAtAllPoints}, we show that $P$ is continuous if and only if $P$ is continuous at each point of its domain. This is true for single valued functions between any pair of topological spaces. However, we only prove it for multivalued functions on Baire space, as our proof relies on every subspace being a disjoint union of open sets.

\begin{Lemma}\label{Lem:DiscontinuityPreservedByOpenRestrictions}
	Let $P: \subset \B \tto \B$ be a problem and let $p \in \dom(P)$. Then $P$ is discontinuous at $p$ if and only if for all open sets $U \subset \B$ with $p \in U$ we have that $P|_U$ is discontinuous at $p$. 
\end{Lemma}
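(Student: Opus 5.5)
The plan is to unfold Definition~\ref{def:continuous_at_a_point} on both sides and compare, using only the fact that restricting a problem to an open set, and then restricting again, is the same as restricting once to the intersection. I first record the elementary identity $(P|_U)|_V = P|_{U \cap V}$ for open $U, V \subset \B$. This holds because both sides have domain $\dom(P) \cap U \cap V$ and the same solution sets $P(x)$. Alongside it I record monotonicity: if $W \subset W'$ and $P|_{W'}$ has a continuous realizer, then restricting that realizer to $W$ realizes $P|_W$. Note also that for $p \in U$ we have $p \in \dom(P|_U)$, so the phrase ``$P|_U$ is discontinuous at $p$'' makes sense.

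For the forward direction, suppose $P$ is discontinuous at $p$ and fix an open $U \ni p$. Suppose for contradiction that $P|_U$ is continuous at $p$. Then there is an open $V \ni p$ such that $(P|_U)|_V = P|_{U\cap V}$ is continuous. But $U \cap V$ is an open neighborhood of $p$, so $P$ is continuous at $p$, which is a contradiction.

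For the reverse direction, suppose $P|_U$ is discontinuous at $p$ for every open $U \ni p$. Taking $U = \B$ gives $P|_{\B} = P$, so $P$ is discontinuous at $p$.

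Alternatively, the contrapositive can be proved directly. If $P$ is continuous at $p$ via an open $V$, then for every open $U \ni p$ the same $V$ witnesses that $P|_U$ is continuous at $p$, by monotonicity applied to $U \cap V \subset V$. I expect no real obstacle here. The only point needing care is the paper's convention that $P|_U$ keeps instances without solutions, and the identity and monotonicity above hold regardless of that convention.
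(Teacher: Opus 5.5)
Your proof is correct and matches the paper's argument essentially verbatim: the forward direction goes by contradiction using $(P|_U)|_V = P|_{U \cap V}$, with $U \cap V$ an open neighborhood of $p$, and the reverse direction takes $U = \B$. The monotonicity remark and the alternative contrapositive argument are also correct, but they are not needed.
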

\begin{proof}
	($\implies$): Suppose that $P$ is discontinuous at $p$. Fix open $U$ with $p \in U$. Since $p \in U$, we have that either $P|_U$ is discontinuous at $p$ or that $P|_U$ is continuous at $p$. Suppose that $P|_U$ is  continuous at $p$. Then there is a neighborhood $V$ of $p$ such that $P|_{U \cap V}$ is continuous. Since $U \cap V$ is open, this is a contradiction with the fact that $P$ is  discontinuous at $p$. 

	($\impliedby$): Follows immediately from the fact that $\B$ is an open set. 
\end{proof}

A basic fact about continuous solvability of a problem $P: \subset \mathbf{X} \tto \mathbf{Y}$ on represented spaces is that if the set of representations of each point in $X$ is open and each instance of $P$ has a solution, then $P$ is continuous. Similarly, we obtain a local analogue of this fact.

\begin{Proposition}\label{prop:ContinuousAtOpenPoints}
	Let $P: \mathbf{X} \tto \mathbf{Y}$ be a problem with domain $\mathbf{X} = (X,\delta_{\mathbf{X}})$ and fix $x \in X$. Suppose that $\repr{X}^{-1}(x) \subset \B$ is an open set in the subspace topology of $\dom(\repr{X}) \subset \B$ and that $P(x)$ is nonempty. Then, $P^\mathbf{r}$ is continuous at each $p \in \repr{X}^{-1}(x)$. 
\end{Proposition}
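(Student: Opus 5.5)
Since $\repr{X}^{-1}(x)$ is open in the subspace topology of $\dom(\repr{X})$, I will produce a single open neighborhood of each $p \in \repr{X}^{-1}(x)$ on which $P^\mathbf{r}$ has a constant realizer.

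Fix $p \in \repr{X}^{-1}(x)$. By the hypothesis on the subspace topology, there is an open $W \subset \B$ with
\[
W \cap \dom(\repr{X}) = \repr{X}^{-1}(x).
\]
Since $p \in W$ and $W$ is a union of basic cones, I choose $\sigma \sqsubset p$ with $\llb \sigma \rrb \subset W$ and set $U = \llb \sigma \rrb$. Then
\[
U \cap \dom(\repr{X}) \subset \repr{X}^{-1}(x),
\]
so every $p' \in U \cap \dom(P^\mathbf{r})$ satisfies $\repr{X}(p') = x$.

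Next, since $P(x)$ is nonempty, I pick $y \in P(x)$. Because $\repr{Y}$ is surjective, I can pick a name $q \in \B$ with $\repr{Y}(q) = y$. By the definition of the realizer version, $q \in P^\mathbf{r}(p')$ for every $p' \in \dom(P^\mathbf{r}) \cap U$. So the constant map $p' \mapsto q$, restricted to $U$, is a continuous realizer of $P^\mathbf{r}|_U$. By Definition~\ref{def:continuous_at_a_point}, $P^\mathbf{r}$ is continuous at $p$.

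The argument has essentially no hard step. The one point needing care is that openness is only relative to $\dom(\repr{X})$, which is why I intersect with $\dom(\repr{X})$ rather than claim $U \subset \repr{X}^{-1}(x)$. This is harmless, because $\dom(P^\mathbf{r}) \subset \dom(\repr{X})$. The choice of $y$ and $q$ uses only a single instance of choice.
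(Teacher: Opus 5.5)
Your proposal is correct and follows essentially the same argument as the paper: use openness relative to $\dom(\repr{X})$ to find a cone $\llb \sigma \rrb$ around $p$ all of whose names in $\dom(\repr{X})$ denote $x$, and then take the constant map at a fixed name $q$ of some element of $P(x)$ as a continuous realizer of $P^\mathbf{r}|_{\llb \sigma \rrb}$. Your explicit handling of the subspace topology matches the paper's intersection with $\dom(\repr{X})$.
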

\begin{proof}
	Fix $p\in \B$ such that $\repr{X}(p) = x$. Since $\deltarep{X}^{-1}(x)$ is open, there is $\sigma \sqsubset p$ such that $\deltarep{X}(\llb \sigma \rrb \cap \dom(\deltarep{X})) = \{x\}$. Let $q \in \B$ be such that $\deltarep{Y}(q) \in P(x)$. Then, the constant function $\Phi(r) = q$ is a realizer of $\realizerver{P}|_{\llb \sigma \rrb}$. 
\end{proof}
For example, we can use Proposition~\ref{prop:ContinuousAtOpenPoints} to conclude that all discontinuities of a problem with domain $\overline{\N}$ are names for $\bot$. 

Discontinuity at a point is Weihrauch invariant in the sense that it is preserved by the forward functional of any Weihrauch reduction.

\begin{Theorem}\label{Thm:DiscontinuityToDiscontinuity}
	Let $P: \subset \B \tto \B$ and $Q: \subset \B \tto \B$ be problems on Baire space. Suppose that $a \in \dom(P)$ is a discontinuity of $P$ and that $P \leq_\mathrm{W}^* Q$ via $\Phi$ and $\Psi$. Then, $\Phi(a)$ is a discontinuity of $Q$. 
\end{Theorem}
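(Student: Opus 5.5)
We argue by contraposition: assuming $Q$ is continuous at $\Phi(a)$, we build a continuous realizer of $P$ on a neighborhood of $a$, so that $P$ is continuous at $a$. Concretely, suppose there is an open neighborhood $V$ of $\Phi(a)$ and a continuous $\Gamma: \subset \B \to \B$ with $\Gamma(\hat p) \in Q(\hat p)$ for all $\hat p \in \dom(Q) \cap V$. Since $\Phi$ is continuous on $\dom(P)$, we have $\Phi^{-1}(V) \cap \dom(P) = U \cap \dom(P)$ for some open $U \ni a$. The plan is to show that $P|_U$ is continuous, which contradicts the hypothesis that $a$ is a discontinuity of $P$.

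The candidate realizer is
\[
	\Theta(p) := \Psi\bigl(p, \Gamma(\Phi(p))\bigr).
\]
For $p \in \dom(P) \cap U$, the reduction gives $\Phi(p) \in \dom(Q)$, and by the choice of $U$ we have $\Phi(p) \in V$. Hence $\Gamma(\Phi(p)) \in Q(\Phi(p))$, and the backward functional then yields $\Theta(p) \in P(p)$. The map $\Theta$ is a composition and pairing of continuous partial functions, so it is continuous on its domain, which contains $\dom(P) \cap U$. Thus $\Theta$ restricted to $U$ is a continuous realizer of $P|_U$, so $P$ is continuous at $a$, a contradiction.

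The only step needing care is bookkeeping. Since $\Phi$ is merely a partial continuous function, the preimage $\Phi^{-1}(V)$ is only relatively open in $\dom(\Phi)$, and we must extract a genuinely open $U \subset \B$ as Definition~\ref{def:continuous_at_a_point} requires. Relative openness in the subspace topology supplies exactly such a $U$ with $\Phi^{-1}(V) = U \cap \dom(\Phi)$. We also note that $\Phi(a) \in \dom(Q)$ holds by the definition of $\leq_\W^*$, so speaking of continuity of $Q$ at $\Phi(a)$ makes sense. Instances of $Q$ with empty solution sets cause no trouble: a realizer of $Q|_V$ is only required on points of $\dom(Q) \cap V$ that actually have solutions, and in that case the argument for them is vacuous or handled uniformly by $\Gamma$.
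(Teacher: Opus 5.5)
Your proof is correct and is essentially the paper's argument: the paper likewise takes a continuous realizer $F$ of $Q$ on a neighborhood of $\Phi(a)$ and notes that $\Psi \circ \langle \id, F \circ \Phi \rangle$ realizes $P$ on $\Phi^{-1}(U)$. Your relative-openness bookkeeping just makes explicit what the paper leaves implicit. Your closing remark about empty solution sets is slightly off, since under the paper's convention a realizer must land in $Q(\hat p)$ for every $\hat p \in \dom(Q)\cap V$; this is harmless, because the existence of $\Gamma$ already rules out solutionless instances there.
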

\begin{proof}
	Suppose that $F$ is a continuous realizer of $Q$ on some open neighborhood $U$ of $\Phi(a)$. Then $\Psi \circ \langle \id ,F \circ \Phi \rangle$ is a realizer of $P$ on $\Phi^{-1}(U)$, which is an open neighborhood of $a$. 
\end{proof}

Theorem~\ref{Thm:DiscontinuityToDiscontinuity} narrows the scope of Weihrauch reductions to those which map discontinuities to discontinuities. 

As is the case for single valued problems, the existence of a global realizer is equivalent to the existence of local realizers. We now show that continuity of $P$ is equivalent to $P^\mathrm{r}$ being continuous at all points of its domain. 

\begin{Theorem}\label{Thm:ContinuityIsContinuityAtAllPoints}
    Let $P: \subset \B \tto \B$ be a problem on Baire space. The following are equivalent.
    
    \begin{enumerate}
        \item \label{item:continuous} $P$ is continuous.
        
        \item \label{item:continuous_at_a_point} For all $p \in \dom(P)$, $P$ is continuous at $p$.
    \end{enumerate}
\end{Theorem}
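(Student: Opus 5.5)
The direction (\ref{item:continuous}) $\Rightarrow$ (\ref{item:continuous_at_a_point}) is immediate. If $\Phi$ is a continuous realizer for $P$, then for any $p \in \dom(P)$ the neighborhood $U = \B$ works, since $\Phi$ restricted to $U$ still realizes $P|_U$.

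For (\ref{item:continuous_at_a_point}) $\Rightarrow$ (\ref{item:continuous}), I would paste together local realizers along a disjoint clopen cover. For each $p \in \dom(P)$, choose an open neighborhood $U_p$ of $p$ such that $P|_{U_p}$ has a continuous realizer. Shrinking $U_p$ to a basic open set $\llb \sigma \rrb \subset U_p$ with $\sigma \sqsubset p$ keeps the restriction continuous, because the realizer of $P|_{U_p}$ restricts to a realizer of $P|_{\llb\sigma\rrb}$. So define the predicate
\[
R(\sigma) :\iff P|_{\llb \sigma \rrb} \text{ is continuous}.
\]
Every $p \in \dom(P)$ has some $\sigma \sqsubset p$ with $R(\sigma)$. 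Lemma~\ref{Lemma:CoverByNodes} applied to $A = \dom(P)$ then yields pairwise incomparable strings $\rho_1, \rho_2, \dots$ with $\dom(P) = \bigsqcup_i \dom(P) \cap \llb \rho_i \rrb$ and with each $P|_{\llb \rho_i \rrb}$ continuous. For each $i$, fix a continuous realizer $\Phi_i$ of $P|_{\llb\rho_i\rrb}$; this is a countable choice of realizers. Then define $\Phi(q) = \Phi_i(q)$ for $q \in \dom(P) \cap \llb \rho_i \rrb$.

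It remains to check that $\Phi$ is well defined, realizes $P$, and is continuous. Well-definedness follows from incomparability: the cylinders $\llb\rho_i\rrb$ are pairwise disjoint, so each $q$ lies in at most one of them. $\Phi$ realizes $P$ because every $q \in \dom(P)$ lies in exactly one $\llb\rho_i\rrb$, and there $\Phi_i(q) \in P(q)$. The step needing a little care is continuity. Each $\llb \rho_i \rrb$ is open in $\B$, and on the open subset $\dom(P) \cap \llb\rho_i\rrb$ of the domain of $\Phi$, the map $\Phi$ agrees with the continuous $\Phi_i$. A function that is continuous on each member of an open cover of its domain is continuous. This is exactly where we use the fact that subspaces of Baire space can be partitioned into relatively clopen pieces of the form $A \cap \llb\rho\rrb$, as remarked before the theorem.
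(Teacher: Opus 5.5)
Your proposal is correct and matches the paper's proof: it uses the same predicate $R(\sigma)$ (``$P|_{\llb\sigma\rrb}$ is continuous''), the same application of Lemma~\ref{Lemma:CoverByNodes} to $\dom(P)$, and the same pasting of the local realizers $\Phi_i$ over the disjoint relatively clopen pieces $\dom(P)\cap\llb\rho_i\rrb$. In the easy direction, your choice $U=\B$ is slightly cleaner than the paper's appeal to ``the open neighborhood $\dom(P)$,'' since $\dom(P)$ need not be open.
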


\begin{proof}
    (\ref{item:continuous} $\Rightarrow$ \ref{item:continuous_at_a_point}) Suppose that $P$ is continuous. Then, there is a continuous realizer $\Phi$ of $P$. $\Phi$ witnesses that $P$ is continuous on the open neighborhood $\dom(P)$ of any $p \in \dom(P)$.
    
    (\ref{item:continuous_at_a_point} $\Rightarrow$ \ref{item:continuous}) Let $R$ be the relation
		
		\[
			R(\sigma) \iff P \text{ is continuous on } \llb \sigma \rrb.
		\]

        Set $A = \dom(P)$. Fix $a \in A$. By assumption, $P$ is continuous at $a$, so there is an open neighborhood $U$ of $a$ such that $P$ restricted to $U$ is continuous. Let $\sigma \sqsubset a$ be such that $\llb \sigma \rrb \subset U$. Then, $P$ is continuous on $\llb \sigma \rrb$, so $R(\sigma)$ holds. 

        By Lemma~\ref{Lemma:CoverByNodes}, there are incomparable strings $\rho_1, \rho_2, \dots$ such that 
        \[
		A = \bigsqcup_{i \in \N} A \cap \llb \rho_i \rrb
        \]
		and such that $P|_{\llb \rho_i \rrb}$ is continuous for each $i \in \N$. For each $i \in \N$, let $\Phi_i: \subset \llb \rho_i \rrb\to \B$ be a continuous realizer of $P|_{\llb \rho_i \rrb}$.
        
        Define $\Phi: \subset \B \to \B$ by
		\[ 
		\Phi = \bigcup_{i \in \N} \Phi_i.
		\]
		
		We claim that $\Phi$ is a continuous realizer of $P$. We have that $\Phi$ realizes $P$ since for each $a \in A$ there is $\rho_i$ such that $a \in A \cap \llb \rho_i \rrb$, so $a \in \dom(\Phi_i)$. Since $\Phi_i$ is a realizer of $P|_{\llb \rho_i \rrb}$, we have that 
		\[
		\Phi(a) = \Phi_i(a) \in P(a).
		\]	
		$\Phi$ is continuous because the union of continuous partial functions with disjoint clopen domains is continuous.  
\end{proof}

Pauly and Sold\`a prove the following lemma in their proof of the main theorem of \cite{SoldaPaulySequentialDiscontinuity}. 

\begin{Lemma}[\cite{SoldaPaulySequentialDiscontinuity}]\label{Lemma:SoldaPaulyLemma}
	Let $P: \subset \B \tto \B$ be a multivalued function whose instances all have solutions. Suppose that there is a convergent sequence of points $a_1, a_2, \dots \rightarrow a \in \B$ such that $P|_{\{a\} \cup \{a_i: i \in \N\}}$ is not continuous. Then, for each $y \in P(a)$ there is $\rho \sqsubset y$ such that for all $\sigma \sqsubset a$ there is $a_i \sqsupset \sigma$ such that $P(a_i) \cap \llb \rho \rrb = \emptyset$. 
\end{Lemma}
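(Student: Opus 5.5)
We argue by contraposition. Fix $y \in P(a)$ and suppose that for every $\rho \sqsubset y$ there is $\sigma_\rho \sqsubset a$ such that every $a_i \sqsupset \sigma_\rho$ has $P(a_i) \cap \llb \rho \rrb \neq \emptyset$. From this we will build a continuous realizer $\Phi$ of $P|_{\{a\} \cup \{a_i : i \in \N\}}$. Write $A = \{a\} \cup \{a_i : i \in \N\}$ and $\rho_n = y \upharpoonright n$. We may take the $\sigma_{\rho_n}$ to be increasing in $n$: replace $\sigma_{\rho_n}$ by the longest of $\sigma_{\rho_0}, \dots, \sigma_{\rho_n}$, since any extension of it is still an initial segment of $a$. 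We may also assume $|\sigma_{\rho_n}| \to \infty$, by lengthening along $a$. This only shrinks the cones, so the property is preserved.

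Define $\Phi(a) = y$. For $a_i \neq a$, let $n_i$ be the largest $n \le |a_i \wedge a|$ such that $\sigma_{\rho_n} \sqsubset a_i$. Here $a_i \wedge a$ is the longest common initial segment, which is finite since $a_i \neq a$, so $n_i$ is well defined. If $\sigma_{\rho_0}$ fails, set $n_i = 0$; note $\sigma_{\rho_0}$ can be taken to be the empty string, since $\rho_0$ is empty and every $a_i$ has a solution. By hypothesis $P(a_i) \cap \llb \rho_{n_i} \rrb \neq \emptyset$, so choose $\Phi(a_i)$ in this set. If $a_i = a$, set $\Phi(a_i) = y$. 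The points $a_i \neq a$ are handled pointwise, and the choice is single-valued on each point. The standing hypothesis that all instances have solutions is needed only for the base case $n = 0$.

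Continuity at points $a_i \neq a$ is automatic, because each such $a_i$ is isolated in $A$ or is a repeated value. Here we should be careful: a convergent sequence in $\B$ has $a$ as its only accumulation point, so each $a_i \neq a$ is isolated in $A$. Continuity at $a$ is the key step. Given $n$, take $\sigma = \sigma_{\rho_n}$. Any $a_i \in A \cap \llb \sigma \rrb$ with $a_i \neq a$ satisfies $|a_i \wedge a| \geq |\sigma_{\rho_n}| \geq n$, using the lengthening assumption, and it satisfies $\sigma_{\rho_n} \sqsubset a_i$, so $n_i \geq n$. Thus $\Phi(a_i) \in \llb \rho_{n_i} \rrb \subset \llb \rho_n \rrb$. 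Hence $\Phi$ is continuous at $a$, so $\Phi$ is a continuous realizer of $P|_A$, contradicting the hypothesis.

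The main obstacle is making $n_i$ monotone and well defined so that continuity at $a$ holds uniformly. This is exactly what the normalization of the $\sigma_{\rho_n}$ to be increasing, with lengths tending to infinity, achieves. Everything else is routine, since $A$ is countable with a single limit point.
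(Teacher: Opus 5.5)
Your proof is correct. The only slip is that the continuity step uses $|\sigma_{\rho_n}|\ge n$ rather than merely $|\sigma_{\rho_n}|\to\infty$; the same lengthening along $a$ gives this while keeping $\sigma_{\rho_0}$ empty. The paper does not reprove this lemma (it is cited from Pauly and Sold\`a), but your argument is exactly its proof of (\ref{item:Discont})$\Rightarrow$(\ref{item:NodeCharacterization}) in Theorem~\ref{Thm:ExtendingSolutions}, specialized to this setting: your $n_i$ plays the role of $\ell(k)$ at the branching level $k=|a_i\wedge a|$, and the pasted local realizers $\Phi_{i,k}$ reduce to single choices because every $a_i\neq a$ is isolated in $A$.
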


For a single valued function $P: \subset \B \to \B$, the conclusion of Lemma~\ref{Lemma:SoldaPaulyLemma} is exactly the $\epsilon$-$\delta$ definition of discontinuity at $a$. Even in the multivalued case, Lemma~\ref{Lemma:SoldaPaulyLemma} gives us an $\epsilon$-$\delta$ characterization of discontinuity at a point which does not require reference to a global notion of continuity. However, this makes essential use of $P$ being discontinuous on a set of Cantor-Bendixson rank $1$. 

Theorem~\ref{Thm:ExtendingSolutions} extends Lemma~\ref{Lemma:SoldaPaulyLemma} to points of discontinuity in general. On one hand, Theorem~\ref{Thm:ExtendingSolutions} circularly references discontinuity in a way which can only be grounded via the global notion of continuity given by realizers. On the other hand, by ranking discontinuities, we shall obtain a self-contained definition of continuity at a point for functions which are discontinuous on scattered sets --- that is, on sets which admit a Cantor-Bendixson rank. 

\begin{Theorem} \label{Thm:ExtendingSolutions}
    Let $P: \subset \B \tto \B$ be a problem and let $p \in \dom(P)$. The following are equivalent. 		
    \begin{enumerate}
        \item \label{item:Discont}$p$ is a discontinuity of $P$. 
        \item \label{item:NodeCharacterization}For each $q \in P(p)$ there is $\rho \sqsubset q$ such that for all $\sigma \sqsubset p$ there is $\tau$ with $\sigma  \sqsubset \tau \not\sqsubset p$ such that $P|\left[\llb \tau \rrb \times \llb\rho\rrb \right]$ is not continuous. In symbols,
        \[
            (\A q \in P(p))(\E \rho \sqsubset q)(\A \sigma \sqsubset p)(\E \tau)(\sigma \sqsubset\tau \not\sqsubset p \wedge P|\left[\llb \tau \rrb \times \llb\rho\rrb \right] \text{\emph{is not continuous}})
        \]
    \end{enumerate} 
\end{Theorem}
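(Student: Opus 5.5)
Both directions are proved by contraposition, using Lemma~\ref{Lemma:CoverByNodes} and the pasting argument from the proof of Theorem~\ref{Thm:ContinuityIsContinuityAtAllPoints}.

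\emph{(\ref{item:NodeCharacterization}) $\Rightarrow$ (\ref{item:Discont}).} Suppose $p$ is not a discontinuity. Then there is $\sigma_0 \sqsubset p$ and a continuous realizer $F$ of $P|_{\llb \sigma_0 \rrb}$. Put $q = F(p) \in P(p)$ and fix any $\rho \sqsubset q$. By continuity of $F$ at $p$ there is $\sigma$ with $\sigma_0 \sqsubset \sigma \sqsubset p$ and $F(\llb \sigma \rrb \cap \dom(P)) \subset \llb \rho \rrb$. Now let $\tau \sqsupset \sigma$ be arbitrary. For every $x \in \dom(P) \cap \llb \tau \rrb$ we have $F(x) \in P(x) \cap \llb \rho \rrb$. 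Hence $F$ restricted to $\llb \tau \rrb$ is a continuous realizer of $P|[\llb \tau \rrb \times \llb \rho \rrb]$. So (\ref{item:NodeCharacterization}) fails for this $q$.

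\emph{(\ref{item:Discont}) $\Rightarrow$ (\ref{item:NodeCharacterization}).} Suppose (\ref{item:NodeCharacterization}) fails. Then there is $q \in P(p)$ such that for every $\rho \sqsubset q$ there is $\sigma_\rho \sqsubset p$ with $P|[\llb \tau \rrb \times \llb \rho \rrb]$ continuous for every $\tau \sqsupset \sigma_\rho$ with $\tau \not\sqsubset p$. We may choose the $\sigma_\rho$ so that $|\sigma_\rho|$ is nondecreasing in $|\rho|$. Write $\rho_n = q|_n$ and $\sigma_n = \sigma_{\rho_n}$; then $\sigma_0 \sqsubseteq \sigma_1 \sqsubseteq \cdots$, all initial segments of $p$. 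We show that $P$ is continuous on $\llb \sigma_0 \rrb$.

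The set $\llb \sigma_0 \rrb \setminus \{p\}$ is the disjoint union of the cones $\llb \tau \rrb$, where $\tau \sqsupset \sigma_0$ is a string that agrees with $p$ except at its last entry. For such a $\tau$, let $n$ be the largest index with $\sigma_n \sqsubset \tau$. Choose a continuous realizer $\Phi_\tau$ of $P|[\llb \tau \rrb \times \llb \rho_n \rrb]$.

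\begin{itemize}
\item If the $\sigma_n$ stabilize, that is, $n$ is unbounded while $|\sigma_n|$ is bounded, the $\sigma_n$ are eventually equal to some $\sigma^*$. Then on cones $\tau \sqsupset \sigma^*$ we get realizers landing in every $\llb \rho_n \rrb$. I handle this case by choosing $n$ as a function of $|\tau|$ that goes to infinity; any fixed choice of $n$ per cone is allowed, since each $\Phi_\tau$ is only required to work on its own cone.
\item Define $\Phi(p) = q$ and $\Phi = \Phi_\tau$ on each $\llb \tau \rrb$.
\end{itemize}

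$\Phi$ realizes $P$ on $\llb \sigma_0 \rrb$ because $q \in P(p)$ and each $\Phi_\tau$ realizes $P$ on its cone, so $P|_{\llb \sigma_0 \rrb}$ is continuous and $p$ is not a discontinuity.

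\textbf{Main obstacle.} The delicate step is continuity of $\Phi$ at $p$. Every $x \in \dom(P)$ close to $p$ lies in a cone $\tau$ that extends $\sigma_n$ for large $n$, so $\Phi(x) \in \llb \rho_n \rrb$ with $\rho_n \sqsubset q = \Phi(p)$. This requires the chosen index $n(\tau)$ to tend to infinity as $\tau$ branches off $p$ at deeper and deeper positions. The nondecreasing choice of $\sigma_\rho$, together with the unbounded choice of $n$ in the stabilizing case, secures exactly this. Continuity of $\Phi$ away from $p$ is immediate, because the cones are disjoint clopen sets, as in Theorem~\ref{Thm:ContinuityIsContinuityAtAllPoints}.
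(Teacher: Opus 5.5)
Your proposal is correct and follows the paper's proof essentially step for step. Both directions go by contraposition, and the harder direction pastes $\Phi(p)=q$ together with realizers of $P|[\llb p|_k\cat i\rrb\times\llb\rho_n\rrb]$ on the cones branching off $p$. The only difference is cosmetic: the paper uses the fact that witnesses $\sigma$ are upward closed along $p$ to make the $\sigma_i$ strictly increasing. This is the same fact that justifies your unproved nondecreasing choice, and it guarantees that the largest admissible index always exists and tends to infinity, so your separate stabilization case becomes unnecessary.
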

\begin{proof}
	(\ref{item:NodeCharacterization}) $\implies$ (\ref{item:Discont}): Suppose for contradiction that $P$ is continuous on a neighborhood $U \subset \dom(P)$ of $p$, witnessed by continuous realizer $\Phi: U \to \B$. By (\ref{item:NodeCharacterization}), there is $\rho \sqsubset \Phi(p)$ such that for all $\sigma \sqsubset p$ there is $\tau$ with $\sigma \sqsubset \tau \not \sqsubset p$ such that $P|\left[{\llb \tau \rrb \times \llb \rho \rrb}\right]$ is not continuous. By continuity of $\Phi$, there is $\sigma' \sqsubset p$ such that $\Phi(\llb \sigma' \rrb) \subset \llb \rho \rrb$. Let $\tau$ witness (\ref{item:NodeCharacterization}) for $\sigma = \sigma'$. Since $\sigma' \sqsubset \tau$, we have that  
	\[
		\Phi(\llb \tau \rrb) \subset \llb \rho \rrb. 
	\]
	Hence, $\Phi$ is a realizer for $P|\left[{\llb \tau \rrb \times \llb \rho \rrb}\right]$, contradicting that $P|\left[{\llb \tau \rrb \times \llb \rho \rrb}\right]$ is not continuous. 
	
	(\ref{item:Discont}) $\implies$ (\ref{item:NodeCharacterization}): Let $p$ be a discontinuity of $P$ and suppose that (\ref{item:NodeCharacterization}) is false. Then, there is $q \in P(p)$ such that for all $\rho \sqsubset q$ there is $\sigma \sqsubset p$ such that for all $\tau$ with $\sigma \sqsubset \tau \not\sqsubset p$ we have that $P|\left[{\llb \tau \rrb \times \llb \rho \rrb}\right]$ is continuous. In symbols, 
	\begin{equation}
		\label{cond:star}
		\tag{$\ast$}
		(\exists q \in P(p))(\forall \rho \sqsubset q)(\exists \sigma \sqsubset p)(\forall\tau\sqsupset\sigma)(\tau \not \sqsubset p \rightarrow P|\left[{\llb \tau \rrb \times \llb \rho \rrb}\right] \text{ is continuous}).
	\end{equation}
	We will construct a continuous realizer for $P$ on a neighborhood of $p$. 
	
	Let $q\in P(p)$ witness (\ref{cond:star}). For each $i \in \N$, let $\rho_i = q|_{i}$. Let $\sigma_i$ be a witness to (\ref{cond:star}) for $\rho_i$. Since $\sigma$ being a witness to (\ref{cond:star}) for $\rho$ implies that each $\sigma' \sqsupset \sigma$ is a witness to (\ref{cond:star}) for $\rho$, we can assume that 
	\[
	\sigma_0 \sqsubset \sigma_1 \sqsubset \dots
	\]
	
	is a strictly increasing sequence of strings.
	For each $k \in \N$, let $\ell(k) \in \N$ be greatest such that $\sigma_{\ell(k)} \sqsubset  p|_k$. 
	We deduce from ($\ast$)  that for each $i,k \in \N$ with $i \neq p(k)$, we have that $P|\left[ \llb p|_k \cat i \rrb \times \llb \rho_{\ell(k)}\rrb\right]$ is continuous. Let $\Phi_{i,k}$ be a continuous realizer for $P|\left[\llb{p|_{k}}{^\frown} i \rrb \times \llb  \rho_{\ell(k)} \rrb \right]$. Then $\Phi_{i,k}(x) \in \llb \rho_{\ell(k)} \rrb$ for all $x \in \dom(P) \cap \llb p|_k \cat i \rrb$. In fact, we assume without loss of generality that $\Phi_{i,k}(x) \in \llb \rho_{\ell(k)} \rrb$ for all $x \in \dom(\Phi_{i,k})$. 
	
	We now define continuous realizer $\Phi$ of $P|_{\llb \sigma_0 \rrb}$ by
	\[
	\Phi(x) = \begin{cases}
		q &\text{if } x = p\\
		\Phi_{i,k}(x) &\text{if } x = {p|_{k}}{^\frown} i {^\frown} r \text{ for some } i,k \in \N \text{ and } r \in \B.
	\end{cases}
	\]
	
	Since sets of the form $\llb{p|_{k}}{^\frown} i\rrb$ form a disjoint cover of $\llb \sigma_0 \rrb \setminus \{p\}$, we have that $\Phi$ is well defined and that $\dom(\Phi) \supset \dom(P)$. 
	The function $\Phi$ realizes $P_{\llb \sigma_0\rrb}$ because $q \in P(p)$ and the $\Phi_{i,k}$ each realize $P$ on $\llb p|_k{^\frown} i \rrb$. 
	The function $\Phi$ is continuous because its value on $p$ can be represented by sending $p|_{k}$ to $\rho_{\ell(k)}$. 
	This is consistent with each $\Phi_{i,k}$ because the range of $\Phi_{i,k}$ is contained in the cone above $\rho_{\ell(k)}$. Then, $\llb \sigma_0 \rrb$ is a neighborhood of $p$ on which $P$ has a continuous realizer, contradicting that $p$ is a discontinuity of $P$.
\end{proof}

To show that (\ref{MainTheoremPointwiseItem:AboveACCN}) implies (\ref{MainTheoremPointwiseItem:DiscontinuousOnPointWiseRankedSet}) in Theorem~\ref{Thm:MainTheoremPointWise}, we construct a compact subset of $\dom(\ACCN^\alpha)$ on which $\ACCN^\alpha$ is discontinuous. Conveniently, showing that $\ACCN^\alpha$ is discontinuous on this set is well suited to illustrating Theorem~\ref{Thm:ExtendingSolutions}. 

\begin{Theorem}\label{Thm:FollowPhiOnACompactSubsetOfDiscontinuity}
    Let $P: \subset \B \tto \B$ be a problem and let $\alpha$ be a countable ordinal. Then, $P \geq_\W^* \ACCN^\alpha$ implies that there is a set $A \subset \dom(P)$ such that $P|_A$ is discontinuous and $\rankp(A) \leq \alpha$. Furthermore, if $P \geq_\W^* \ACCN^\alpha$ is witnessed by forward function $\Phi$ and backward function $\Psi$, then $\Phi(\hash^\N)$ is a point of discontinuity of $P|_A$. 
\end{Theorem}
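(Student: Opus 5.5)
The plan is to build a compact countable set $K \subset \dom(\ACCN^\alpha)$ with $\rankp(K) \leq \alpha$ on which $\ACCN^\alpha$ is discontinuous at $\hash^\N$, and then take $A = \Phi(K)$. Lemma~\ref{Lem:ContinuousImageMonotone} gives $\rankp(A) \leq \alpha$. Since a Weihrauch reduction restricts, we get $\ACCN^\alpha|_K \leq_\W^* P|_A$ via the same $\Phi,\Psi$. Theorem~\ref{Thm:DiscontinuityToDiscontinuity} then shows that $\Phi(\hash^\N)$ is a point of discontinuity of $P|_A$, and in particular $P|_A$ is discontinuous. One caveat: if $P$ has instances without solutions, $A$ must lie in $\dom(P)$, and this holds because $\Phi(K) \subset \dom(P)$.

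\textbf{Constructing $K$.} I define $K_\alpha$ by recursion on $\alpha$. Set $K_0 = \{\hash^\N\}$. For $\alpha > 0$, set
\[
K_\alpha = \{\hash^\N\} \cup \{\hash^j n w : j \in \N,\ n \leq j,\ w \in K_{\ell_\alpha(j)}\}.
\]
The bound $n \leq j$ is what makes $K_\alpha$ compact: for each length only finitely many first-level branches occur, and each $K_{\ell_\alpha(j)}$ is compact by induction. Any sequence either converges to $\hash^\N$ or eventually lies in finitely many branches. $K_\alpha$ is countable, by induction. For the rank, the points of $K_\alpha \cap \llb \hash^j n\rrb$ have rank in $K_\alpha$ equal to their rank in the copy of $K_{\ell_\alpha(j)}$ (Lemma~\ref{Lem:OpenSetsPreserveRank}), so that rank is at most $\ell_\alpha(j) < \alpha$ by induction. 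Lemma~\ref{Lem:SurroundedByLowerRanks} then gives $\rank_{K_\alpha}(\hash^\N) \leq \alpha$, hence $\rankp(K_\alpha) \leq \alpha$.

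\textbf{Discontinuity of $\ACCN^\alpha|_{K_\alpha}$ at $\hash^\N$.} This is the main obstacle, and I would prove it by induction, mimicking the earlier proof that $\ACCN^\alpha$ is not continuous. Suppose $F$ is a continuous realizer on some $\llb \hash^{j_0}\rrb \cap K_\alpha$, and let $F(\hash^\N)(0) = m$. By continuity, $F(\llb \hash^{j}\rrb \cap K_\alpha) \subset \llb m \rrb$ for some $j \geq j_0, m$. Now take the instance $\hash^j m w$ with $w \in K_{\ell_\alpha(j)}$; this is allowed since $m \leq j$. On this instance $F$ may not answer $n \neq m$, so it must output $m\hash^k y$ with $y \in \ACCN^{\ell_\alpha(j)}(w)$. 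Stripping the prefix gives a continuous realizer of $\ACCN^{\ell_\alpha(j)}|_{K_{\ell_\alpha(j)}}$, contradicting the induction hypothesis, which applies on the whole of $K_{\ell_\alpha(j)}$. The base case $\alpha = 0$ holds because there is no solution at $\hash^\N$.

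The delicate points are choosing $j$ large enough that $m \leq j$ (this is why the bound $n \leq j$ is used rather than $n = 0$) and confirming that discontinuity at a point of the domain (Definition~\ref{def:continuous_at_a_point}) is exactly what Theorem~\ref{Thm:DiscontinuityToDiscontinuity} needs.
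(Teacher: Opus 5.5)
Your proposal is correct and follows essentially the same route as the paper: the paper uses the same compact set (with $n < j$ where you use $n \leq j$) and then pushes it forward via Lemma~\ref{Lem:ContinuousImageMonotone} and Theorem~\ref{Thm:DiscontinuityToDiscontinuity}. The one difference is in certifying discontinuity at $\hash^\N$. The paper verifies Condition~\ref{item:NodeCharacterization} of Theorem~\ref{Thm:ExtendingSolutions} via $(\ACCN^\alpha|_{A_\alpha})|[\llb \hash^{j'}n \rrb \times \llb n \rrb] \equiv_{\sW} \ACCN^{\ell_\alpha(j')}|_{A_{\ell_\alpha(j')}}$, whereas you unpack that into a direct local-realizer contradiction, and you also check the countability and rank of $K_\alpha$, which the paper leaves implicit.
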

\begin{proof}
    First we show that the domain of $\ACCN^\alpha$ contains a compact subset $A_\alpha \subset \dom(\ACCN^\alpha)$ such that $\ACCN^\alpha|_{A_\alpha}$ is discontinuous. Define $A_\alpha$ recursively by
    \begin{itemize}
        \item $A_0 = \{\hash^\N\}$
        \item For $\alpha > 0$, 
        \[
            A_\alpha = \{\hash^\N\} \cup  \{\hash^j n w: n < j \in \N \text{ and } w \in A_{\ell_\alpha(j)}\}.
        \]
    \end{itemize}
    The sets $A_\alpha$ are compact because they are each the paths through a finitely branching tree. We show that $\ACCN^\alpha|_{A_\alpha}$ is discontinuous by showing that $\hash^\N$ satisfies Condition~\ref{item:NodeCharacterization} of Theorem~\ref{Thm:ExtendingSolutions}.

	We argue by induction. In the base case, $\hash^\N$ is a discontinuity of $\ACCN^0|_{A_0}$ because $\ACCN^0(\hash^\N) = \emptyset$. In the inductive step, fix $\alpha > 0$ and suppose that $\hash^\N$ is a discontinuity of $\ACCN^\beta|_{A_\beta}$ for each $\beta < \alpha$. 

	Fix a solution $y \in \ACCN^\alpha(\hash^\N)$. Then, $y = nq$ for some $n \in \N$ and $q \in (\N \cup \{\hash\})^\N$. Let $\rho = y|_1 = n$ and fix a $\sigma = \hash^j \subset \hash^\N$. Fix a $j' > j,n$ and let $\tau = \hash^{j'}n$. Then, $\tau \not\sqsubset \hash^\N$ and $\tau \sqsupset \sigma$. Furthermore, $\llb \tau \rrb \cap A_\alpha = \{\tau q : q \in A_{\ell_\alpha(j')}\}$. Thus, 
	\[
	(\ACCN^\alpha|_{A_\alpha})|\left[{\llb \tau \rrb \times \llb \rho \rrb}\right] \equiv_{\sW} \ACCN^{\ell_\alpha(j')}|_{A_{\ell_\alpha(j')}},
	\]
	witnessed by forward function $q \mapsto \tau q$ and backward function $\rho \hash^m q \mapsto q$ in the $\geq_{\sW}$ reduction and witnessed by forward $\tau q \mapsto q$ and backward $w \mapsto \rho w$ for the $\leq_{\sW}$ reduction.
		
	By the inductive hypothesis, the point $\hash^\N$ is a discontinuity of $\ACCN^{\ell_\alpha(j')}|_{A_{\ell_\alpha(j')}}$, so, in particular, $(\ACCN^\alpha|_{A_\alpha})|\left[{\llb \tau \rrb \times \llb \rho \rrb}\right]$ is discontinuous. Hence, $\hash^\N$ satisfies Condition~\ref{item:NodeCharacterization} of Theorem~\ref{Thm:ExtendingSolutions} and is a discontinuity of $\ACCN^\alpha|_{A_\alpha}$.

	Now suppose that $P \geq_\W^* \ACCN^\alpha$ via continuous functionals $\Phi$ and $\Psi$. Then, the same functionals witness that $P \geq_\W^* \ACCN^\alpha|_{A_\alpha}$. By Lemma~\ref{Lem:ContinuousImageMonotone} the set $A = \Phi(A_{\alpha})$ is compact and has $\rankp(A) \leq \rankp(A_\alpha) = \alpha$. By Theorem~\ref{Thm:DiscontinuityToDiscontinuity}, $p = \Phi(\hash^\N)$ is a discontinuity of $P|_A$, as desired.
\end{proof}

The image of $\ACCN^{\alpha}|_{A_\alpha}$ under $\Phi$ in the above proof is also compact. Hence, we obtain the following corollary from Theorem~\ref{Thm:MainTheoremPointWise}.

\begin{Corollary}
	\label{Cor:CompactnessEquivalence}
	Let $P: \subset \B \tto \B$ be a problem and let $\alpha$ be a countable ordinal. The following are equivalent.
	\begin{enumerate}
		\item $P$ is discontinuous on a set of pointwise rank at most $\alpha$.
		\item $P$ is discontinuous on a compact set of pointwise rank at most $\alpha$.
	\end{enumerate}
\end{Corollary}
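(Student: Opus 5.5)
Corollary~\ref{Cor:CompactnessEquivalence} follows by running Theorem~\ref{Thm:MainTheorem} around a cycle of implications. The step (2)$\Rightarrow$(1) is trivial, since a compact set of pointwise rank at most $\alpha$ is in particular a set of pointwise rank at most $\alpha$. All the work is in (1)$\Rightarrow$(2), and here I will go through the continuous Weihrauch degree of $\ACCN^\alpha$ and not try to find a compact subset of the given set directly.

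Suppose $P|_A$ is discontinuous with $\rankp(A) \leq \alpha$. Then Condition~\ref{MainTheoremItem:DiscontinuousOnPointWiseRankedSet} of Theorem~\ref{Thm:MainTheorem} holds, so that theorem gives $\ACCN^\alpha \leq_\W^* P$ via continuous $\Phi$ and $\Psi$. The proof of this implication runs through Theorems~\ref{Thm:RankDiscontinuityEquivalence} and \ref{Thm:WinningStrategyImpliesAboveACCN}, which appear later, but we may take them as given. I now repeat the argument of Theorem~\ref{Thm:FollowPhiOnACompactSubsetOfDiscontinuity} and check compactness along the way. The set $A_\alpha \subset \dom(\ACCN^\alpha)$ built there is the set of paths through a finitely branching tree (at each $\hash^j$ only $n<j$ may branch off), so it is compact. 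It is countable, since each recursion level adds countably many branches. The proof there also shows that $\ACCN^\alpha|_{A_\alpha}$ is discontinuous, and that $\rankp(A_\alpha) \leq \alpha$; the latter is implicit in applying Lemma~\ref{Lem:ContinuousImageMonotone}, and one can confirm it directly by induction with Lemma~\ref{Lem:SurroundedByLowerRanks}, using $\ell_\alpha(j)<\alpha$ on each branch $\llb \hash^j n\rrb$.

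Now set $B = \Phi(A_\alpha)$. Since $\Phi$ is continuous on $\dom(\ACCN^\alpha) \supseteq A_\alpha$, the set $B$ is the continuous image of a compact set and hence compact. It lies inside $\dom(P)$. By Lemma~\ref{Lem:ContinuousImageMonotone}, $\rankp(B) \leq \alpha$. The same $\Phi,\Psi$ witness $\ACCN^\alpha|_{A_\alpha} \leq_\W^* P|_B$, because $\Phi$ maps $A_\alpha$ into $B$. If $P|_B$ had a continuous realizer $F$, then $\Psi\circ\langle \id, F\circ\Phi\rangle$ would be a continuous realizer for $\ACCN^\alpha|_{A_\alpha}$, which is impossible. (Equivalently, Theorem~\ref{Thm:DiscontinuityToDiscontinuity} shows that $\Phi(\hash^\N)$ is a discontinuity of $P|_B$.) So $B$ witnesses (2).

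The main obstacle is not in this argument but in its inputs. Everything depends on the full implication (1)$\Rightarrow$ $\ACCN^\alpha\leq_\W^*P$, which is the hard direction of the main theorem. The only extra point to verify is that the hypotheses of Lemma~\ref{Lem:ContinuousImageMonotone} (compact, countable, pointwise rank at most $\alpha$) really hold for $A_\alpha$. Countability and compactness are immediate from the tree description; the rank bound is the single induction noted above.
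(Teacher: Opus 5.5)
Your proposal is correct and follows the paper's own argument. The direction (2)$\Rightarrow$(1) is trivial. For (1)$\Rightarrow$(2), the main theorem gives $\ACCN^\alpha \leq_\W^* P$, and you then take the compact image $\Phi(A_\alpha)$ exactly as in the proof of Theorem~\ref{Thm:FollowPhiOnACompactSubsetOfDiscontinuity}, which is how the paper derives the corollary. Your checks of countability, compactness, and $\rankp(A_\alpha)\leq\alpha$ only spell out what the paper leaves implicit.
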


\subsection{Ranking Discontinuities and the Discontinuity Game}
\label{Sec:RankingDiscontinuities}
The phenomenon observed in Lemma~\ref{Lemma:SoldaPaulyLemma} is a special case of Theorem~\ref{Thm:ExtendingSolutions} in the following sense. Suppose that $P|[\llb \tau_0 \rrb \times \llb \rho_0 \rrb]$ is one of the witnesses that a point $a_0$ is a discontinuity of $P$ as in Condition~\ref{item:NodeCharacterization} of Theorem~\ref{Thm:ExtendingSolutions}. By Theorem~\ref{Thm:ContinuityIsContinuityAtAllPoints}, we have that $P|[\llb \tau_0  \rrb \times \llb \rho_0  \rrb]$ also has a point of discontinuity $a_1$. It is possible that $P(a_1) \cap \llb \rho_0  \rrb$ is empty, as in Lemma~\ref{Lemma:SoldaPaulyLemma}. If $P(a_1) \cap \llb \rho_0  \rrb$ is nonempty, then we may iteratively apply Theorem~\ref{Thm:ExtendingSolutions} to obtain sequences $a_2, a_3, \dots$ and $\rho_2,\rho_3, \dots$ as long as each $a_i$ has a solution which is contained in $\llb \rho_{i-1} \rrb$. If there is no such solution and the pair of sequences cannot be extended, then we are in the case of Lemma~\ref{Lemma:SoldaPaulyLemma}. We assign a rank to a point of discontinuity based on how long such a sequence can be extended. 

\begin{Definition}\label{Def:RankOfDiscontinuity}
	Let $P: \subset \B \tto \B$ be a problem and let $p \in \dom(P)$ be a discontinuity of $P$. We assign a countable ordinal rank of discontinuity to $p$ as follows.
	\begin{itemize}
		\item If $P(p)$ is empty then we say that $p$ is a rank-$0$ discontinuity of $P$.
		\item For countable $\alpha > 0$, $p$ is a rank-$\alpha$ discontinuity of $P$ if $\alpha$ is the least ordinal such that for all $q \in P(p)$ there is $\rho \sqsubset q$ such that for all $\sigma \sqsubset p$ there exists $p' \in \dom(P) \cap \llb \sigma \rrb$ with $p' \neq p$ such that $p'$ is a rank-$\beta$ discontinuity of $P|[\llb \sigma \rrb \times \llb \rho \rrb]$ for some $\beta < \alpha$.
		\item If there is no such ordinal $\alpha$ then we say that $p$ is a rank-$\infty$ discontinuity of $P$.
	\end{itemize}
\end{Definition}
For single valued functions $f \subset \B \to \B$, Definition~\ref{Def:RankOfDiscontinuity} collapses to only three levels (see Section~\ref{sec:SingleValuedFunctions}), but for multivalued functions, there can be discontinuities of arbitrarily high countable rank.

For each countable ordinal $\alpha$, the property of being a rank-$\alpha$ discontinuity can be characterized by a two player game, which we call the rank-$\alpha$ discontinuity game. 

\begin{Definition}[Rank-$\alpha$ Discontinuity Game] \label{def:DiscontinuityGame}
	Let $P: \subset \B \tto \B$ be a problem and let $\alpha$ be a countable ordinal. We recursively define the rank-$\alpha$ discontinuity game for $P$ (or the ``$\alpha$-$P$ game'' for short) as the two player game with the following steps.
	\begin{enumerate}[label=Step~\arabic*., ref=\arabic*]
		\item \label{GameStep:Player1Plays} Player~1 first chooses some $p \in \dom(P)$.
		\item \label{GameStep:Player2Playsq} If $P(p)$ is empty, then the game is concluded and Player~1 wins. Otherwise, Player 2 plays some $q \in P(p)$. 
		\item \label{GameStep:CheckAlphaGreaterThanZero} At this time, if $\alpha = 0$ and Player~1 has not already won then the game is concluded and Player~2 wins. Otherwise, Player 1 next plays some initial segment $\rho \sqsubset q$.
		\item \label{GameStep:Player2PlaysSigma} In response, Player 2 plays some initial segment $\sigma \sqsubset p$. 
		\item \label{GameStep:Player1ChoosesOrdinal} Player~1 then chooses a $\gamma < \alpha$ (which is possible since the prior Step~\ref{GameStep:CheckAlphaGreaterThanZero} ensures that $\alpha > 0$) and play proceeds by starting the rank-$\gamma$ discontinuity game for $P|[\llb\sigma\rrb \times \llb\rho\rrb]$. The rank-$\alpha$ game ends when the rank-$\gamma$ game ends, with the same winner. 
	\end{enumerate}
	We refer to Steps~\ref{GameStep:Player1Plays}--\ref{GameStep:Player1ChoosesOrdinal} as a \emph{round} of the game. When the ordinal parameter of the round (tested at Step~\ref{GameStep:CheckAlphaGreaterThanZero}) is $\beta$, we call it the \emph{$\beta$'th round}. Thus the initial round is the $\alpha$'th round; after Player~1 chooses $\gamma < \alpha$ at Step~\ref{GameStep:Player1ChoosesOrdinal}, the game continues to the $\gamma$'th round.
\end{Definition}

At this point, a reader who wants examples may skip to Section~\ref{sec:SingleValuedFunctions} to see how the rank-$\alpha$ discontinuity game simplifies in the case of single valued functions.

\begin{Remark}
	We pause to note some equivalent alternative formulations of the discontinuity game. Firstly, we could have defined the discontinuity game without reference to an ordinal $\alpha$. In this alternative version of the game, play proceeds as in the ranked discontinuity game except that Player~1 does not choose an ordinal $\gamma$ at Step~5. Instead, play continues forever until Player~1 wins by playing an instance of $P$ with no solutions above the current value of $\rho$; Player~2 wins a run of this game if the run is infinitely long. 

	We could then apply the theory of transfinite game values developed by Evans and Hamkins \cite{evansHamkinsTransfiniteGameValues2014} to infinite chess (which has recently been shown to be universal for Gale-Stewart games by Bolan and Tsevas \cite{bolanTsevasUniversalityInfiniteChess2026}) to assign ranks to winning positions for Player~1. However, since we only want to assign ranks at Step~\ref{GameStep:Player1Plays} of the game (rather than also assigning ranks at Step~\ref{GameStep:CheckAlphaGreaterThanZero}), we opt for the more ad-hoc approach of making the game-rank part of Player~1's moves. 
	
	Secondly, it may be desirable for the set of possible moves by one or both players to be countable. We may restrict the set of possible Player~2 moves to a countable set by changing the rules to have Player~1 preemptively play a set of initial segments $\rho_0, \rho_1, \dots$ that cover $P(p)$ in addition to playing $p$ at Step~\ref{GameStep:Player1Plays}. Then, after confirming that $P(p)$ has solutions, we can skip straight to Step~\ref{GameStep:Player2PlaysSigma}, having Player~2 choose a $\rho_i$ as well as an initial segment $\sigma \sqsubset p$. This version of the game is equivalent to the version given in Definition~\ref{def:DiscontinuityGame}.
\end{Remark}

We will show that Player~1 has a winning strategy in the rank-$\alpha$ discontinuity game for $P$ whose first move is $p$ if and only if $p$ is a rank-$\beta$ discontinuity of $P$ for some $\beta \leq \alpha$. To show this, we will need to use the fact that winning strategies do not need to play any instance of $P$ twice in a row. Essentially, this justifies the absence of the $\tau$ of Theorem~\ref{Thm:ExtendingSolutions} in Definition~\ref{Def:RankOfDiscontinuity}. 

\begin{Proposition}	\label{Prop:WinningStratNotRepeating}
	Fix $P: \subset \N^\N \tto \N^\N$ and a countable ordinal $\alpha$. Suppose that Player~1 has a winning strategy  $\Gamma$ in the rank-$\alpha$ discontinuity game for $P$. Then, Player~1 has a winning strategy $\widehat{\Gamma}$ whose first move is the same as $\Gamma$'s and whose first two instances of $P$ are always different.  
\end{Proposition}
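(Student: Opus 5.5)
The plan is to build $\widehat{\Gamma}$ by running $\Gamma$ as a simulation and collapsing any immediate repetitions of the first instance $p$. In the simulation a fictitious Player~2 answers each repeated $p$ by copying moves the real Player~2 has already made. The justification is transfinite induction on $\alpha$, using that the ordinals Player~1 names strictly decrease, so only finitely many repetitions can occur along any play.

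Concretely, $\widehat{\Gamma}$ first plays $p$, the first move of $\Gamma$. Player~2 answers with some $q \in P(p)$, and $\widehat{\Gamma}$ answers with the same $\rho_0 \sqsubset q$ that $\Gamma$ plays. Player~2 then plays $\sigma \sqsubset p$, and $\Gamma$ names $\gamma_0<\alpha$ together with the next instance $p_1$, taken in the $\gamma_0$-game for $P|[\llb\sigma\rrb\times\llb\rho_0\rrb]$. If $p_1 \neq p$ nothing needs changing.

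If $p_1 = p$, I feed $\Gamma$ the fictitious answer $q$ again. This is legal, since $q \in P(p)\cap\llb\rho_0\rrb$, so Player~1 cannot win outright at Step~2. $\Gamma$ returns some $\rho_1 \sqsubset q$, which we may take to extend $\rho_0$ because both are initial segments of $q$. The fictitious Player~2 then replays $\sigma$, which is legal because $\sigma \sqsubset p$ and $\llb\sigma\rrb\cap\llb\sigma\rrb = \llb\sigma\rrb$. $\Gamma$ then names $\gamma_1<\gamma_0$ and an instance $p_2$. Iterating, we get a strictly decreasing sequence $\gamma_0>\gamma_1>\cdots$, so there is a least $k$ with $p_{k+1}\neq p$. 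The result is a legal $\Gamma$-play in which, after a block of repetitions of $p$, the game has reached the $\gamma_k$-game for $P|[\llb\sigma\rrb\times\llb\rho_k\rrb]$ with the new instance $p_{k+1}\neq p$. From that point on $\widehat{\Gamma}$ plays $p_{k+1}$ and follows $\Gamma$ along this simulated history; it also names the same ordinals, possibly the larger $\gamma_0$ in place of $\gamma_k$, which only helps Player~1.

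The main obstacle is that $\widehat{\Gamma}$ must commit to its first segment $\rho_0$ before the real $\sigma$ is known, while $\rho_k$ depends on $\sigma$. So the real restriction is $P|[\llb\sigma\rrb\times\llb\rho_0\rrb]$, which is larger than the simulated $P|[\llb\sigma\rrb\times\llb\rho_k\rrb]$. A real Player~2 answer $q'\in P(p_{k+1})\cap\llb\rho_0\rrb$ with $q'\notin\llb\rho_k\rrb$ is therefore not covered by $\Gamma$. My first attempt at a repair is to fold the block of repetitions into a \emph{single} later round in which $\widehat{\Gamma}$ names $\gamma_{k-1}$ instead of $\gamma_k$, which leaves one spare ordinal level. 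If Player~2 answers $p_{k+1}$ with some $q'$ outside $\llb\rho_k\rrb$, I would use the spare level to restart the simulation of $\Gamma$ from the repeated position with fictitious answer $q'$ in place of $q$. To make this work I would invoke the induction hypothesis for the restricted problem $P|[\llb\sigma\rrb\times\llb\rho_0\rrb]$ at ordinal $\gamma_0<\alpha$, where $\Gamma$'s continuation is winning and starts with $p$.

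If that bookkeeping cannot be made to close, the fallback is to prove instead that the set of positions winning for Player~1 in the $\beta$-game is monotone under shrinking the restriction $\llb\rho\rrb$. Using this together with Theorem~\ref{Thm:ExtendingSolutions} to replace the repeated $p$ by a genuinely different point $p'\in\llb\sigma\rrb$ would yield the required second instance directly.
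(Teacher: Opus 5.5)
Your construction fails at the point you flag, and neither repair closes the gap. You commit to $\rho_0=\Gamma(q)$ and then run the block of repetitions using the \emph{real} $\sigma$, so both $k$ and $\rho_k$ depend on $\sigma$. The real game is then the $\gamma$-game for $P|[\llb\sigma\rrb\times\llb\rho_0\rrb]$, whereas $\Gamma$ only guarantees a win with codomain $\llb\rho_k\rrb$. With $\rho_0$ fixed in advance, there is no reason a suitable new instance exists at all.

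The first repair is not well defined. The answer $q'\in P(p_{k+1})$ need not lie in $P(p)$, so it cannot be fed to $\Gamma$ as an answer to the repeated $p$. Invoking the induction hypothesis for $P|[\llb\sigma\rrb\times\llb\rho_0\rrb]$ at $\gamma_0$ does not help either. The strategy it provides lives in a game that only exists once $\sigma$ is known, so its Step~3 reply to $q$ depends on $\sigma$ and may be strictly longer than $\rho_0$. It controls repetition of instances, but not the mismatch between the segment you committed to and the one $\Gamma$ needs, so the obstacle reappears one level down.

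The fallback runs in the wrong direction. Shrinking the codomain restriction only removes Player~2's options, so it transfers Player~1 wins from $\llb\rho_0\rrb$ to $\llb\rho_k\rrb$, not back. Moreover, Theorem~\ref{Thm:ExtendingSolutions} yields only an unranked discontinuity of $P|[\llb\tau\rrb\times\llb\rho\rrb]$. Bounding its rank below $\alpha$ is the content of Theorem~\ref{Thm:DiscontinuityGameCharacterization}, whose proof uses this very proposition.

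The missing idea is to decouple the repetition block from the real $\sigma$, and to let $\widehat{\Gamma}$ commit up front to a segment that may be longer than $\Gamma(q)$. For each $q$, consider all partial runs with the following features:
\begin{itemize}
\item Player~1 follows $\Gamma$ and keeps playing $p$;
\item Player~2 always answers $q$ and makes arbitrary Step~4 moves;
\item the run is stopped right after a Step~4 move.
\end{itemize}
By well-foundedness, choose one such run, $\mathcal{R}$, whose current round ordinal $\beta$ is least.

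Suppose replacing the last Step~4 move of $\mathcal{R}$ by some $\sigma\sqsubset p$ made $\Gamma$ reply $(\gamma,p)$. Answering $q$ again would then either let Player~2 win (if $\gamma=0$) or produce such a run with ordinal $\gamma<\beta$. Hence every $\sigma$ placed in that last slot yields an instance $r_{q,\sigma}\neq p$. Meanwhile, the longest Step~3 move $\rho_q$ of $\mathcal{R}$ is made before that slot, so it is independent of $\sigma$.

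Now define $\widehat{\Gamma}$: it plays $p$, then $\rho_q$. On the real $\sigma$ it plays $\Gamma$'s reply in $\mathcal{R}$ with the last Step~4 move replaced by $\sigma$, and afterwards it follows $\Gamma$. The real and simulated codomain restrictions agree: both are $\llb\rho_q\rrb$, then the same further segments. The simulated domain is $\llb\sigma\rrb$ intersected with the fictitious earlier Step~4 moves, which is a subset of the real one. This is harmless, since Player~1 chooses the instances. No induction on $\alpha$ is needed.
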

\begin{proof}
	The case $\alpha = 0$ is trivial since Player~1 wins immediately after the first move. For $\alpha > 0$, let $\Gamma$ be a winning strategy for Player~1 in the rank-$\alpha$ discontinuity game for $P$. 
	
	Let $p$ be $\Gamma$'s first move. Fix a Player~2 move $q \in P(p)$. The key idea is that, if Player~2 always plays $q$ at Step~\ref{GameStep:Player2Playsq} then there must be a gamestate in which $\Gamma$'s next Step~\ref{GameStep:Player1Plays} move is something different from $p$ no matter what Player~2 plays at the next Step~\ref{GameStep:Player2PlaysSigma}. 
	
	Consider all partial runs of the game in which Player~1, while playing according to $\Gamma$, plays $p$ at every Step~\ref{GameStep:Player1Plays} while Player~2 plays $q$ at every Step~\ref{GameStep:Player2Playsq}, ending the partial run after Player~2's Step~\ref{GameStep:Player2PlaysSigma} move before $\Gamma$ plays something other than $p$ at Step~\ref{GameStep:Player1Plays}. This set of partial runs has an element $\mathcal{R}$ with minimal most-recent ordinal $\beta$ played by Player~1 at Step~\ref{GameStep:Player1ChoosesOrdinal}. We have $\beta > 0$ since $\beta = 0$ implies then Player~2 has won. Since $\mathcal{R}$ has minimal $\beta$, we know that no matter what Player~2 plays at Step~\ref{GameStep:Player2PlaysSigma}, $\Gamma$'s next Step~\ref{GameStep:Player1Plays} move will be different from $p$ (see Figure~\ref{run:OriginalGammaTurns}). Let $\rho_q \sqsubset q$ be $\Gamma$'s longest Step~\ref{GameStep:CheckAlphaGreaterThanZero} move in $\mathcal{R}$. For each $\sigma \sqsubset p$, let $\gamma_{q,\sigma}$ and $r_{q,\sigma}$ be $\Gamma$'s next moves when we replace Player~2's most recent Step~\ref{GameStep:Player2PlaysSigma} move in $\mathcal{R}$ with $\sigma$. Then, continue playing as $\Gamma$ does. 

	\begin{figure}[!htbp]
            \[
            \begin{array}[t]{@{}c@{\qquad\qquad}c@{\qquad\qquad}c@{}}
                \mathcal{R} &
                \mathcal{R} \text{ with } \eta \text{ replaced by } \sigma &
                \phantom{\text{Player 1 }(\widehat\Gamma)}\\
                \begin{array}[t]{c|c}
                    \text{Player 1 }(\Gamma) & \text{Player 2}\\
					p & q\\
                    \vdots & \vdots\\
                    \beta, p & q\\
                    \rho =: \rho_q & \eta
                \end{array}
                &
                \begin{array}[t]{c|c}
                    \text{Player 1 }(\Gamma) & \text{Player 2}\\
                    p & q\\
                    \vdots & \vdots\\
                    \beta, p & q\\
                    \rho_q & \sigma \\
                    \gamma =: \gamma_{q,\sigma},r =: r_{q,\sigma} &
                \end{array}
                &
                \begin{array}[t]{c|c}
                    \text{Player 1 }(\widehat\Gamma) & \text{Player 2}\\
                    p & q\\
                    \rho_q & \sigma\\
                    \gamma_{q,\sigma},r_{q,\sigma} &
                \end{array}
            \end{array}
            \]
        \caption{Tables tracking the three game runs in the proof of Proposition~\ref{Prop:WinningStratNotRepeating}. On the left is $\mathcal{R}$. In the center is $\mathcal{R}$ with its last Step~\ref{GameStep:Player2PlaysSigma} move replaced by $\sigma$. On the right is the $\alpha$'th round played according to $\widehat{\Gamma}$.}\label{run:OriginalGammaTurns}
    \end{figure}
\end{proof}

We are now ready to show that the rank-$\alpha$ discontinuity game characterizes rank-$\alpha$ discontinuity.

\begin{Theorem}\label{Thm:DiscontinuityGameCharacterization}
	Let $P: \subset \B \tto \B$ be a problem, let $p \in \dom(P)$ and let $\alpha$ be a countable ordinal. Player~1 has a winning strategy with first move $p$ in the rank-$\alpha$ discontinuity game for $P$ if and only if $p$ is a rank-$\beta$ discontinuity of $P$ for some $\beta \leq \alpha$.
\end{Theorem}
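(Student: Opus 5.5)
The plan is transfinite induction on $\alpha$, proving both directions together for all problems $P$ and all $p \in \dom(P)$. One preliminary remark is needed. Although Definition~\ref{Def:RankOfDiscontinuity} is stated for a discontinuity $p$, the defining clause ``for all $q \in P(p)$ there is $\rho \sqsubset q$ such that for all $\sigma \sqsubset p$ there is $p' \neq p$ in $\llb \sigma \rrb$ that is a lower-rank discontinuity of $P|[\llb\sigma\rrb\times\llb\rho\rrb]$'' already forces $p$ to be a discontinuity. Indeed, taking $\tau \sqsupset \sigma$ with $\tau \sqsubset p'$ and $\tau \not\sqsubset p$, Lemma~\ref{Lem:DiscontinuityPreservedByOpenRestrictions} shows that $P|[\llb\tau\rrb\times\llb\rho\rrb]$ is discontinuous, and then Theorem~\ref{Thm:ExtendingSolutions} applies. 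So I will treat ``$p$ is a rank-$\beta$ discontinuity for some $\beta \le \alpha$'' as equivalent to the clause $C_\alpha(P,p)$: either $P(p)=\emptyset$, or $\alpha>0$ and for all $q\in P(p)$ there is $\rho\sqsubset q$ such that for all $\sigma\sqsubset p$ there are $p'\in\dom(P)\cap\llb\sigma\rrb$ with $p'\ne p$ and $\gamma<\alpha$ such that $p'$ is a rank-$\le\gamma$ discontinuity of $P|[\llb\sigma\rrb\times\llb\rho\rrb]$. The clause is monotone in $\alpha$, so ``least'' only matters for bookkeeping.

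\textbf{($\Leftarrow$).} Suppose $C_\alpha(P,p)$ holds. If $P(p)=\emptyset$, Player~1 wins at Step~\ref{GameStep:Player2Playsq}. Otherwise Player~2 plays some $q$, and Player~1 answers with the $\rho$ given by the clause. Whatever $\sigma$ Player~2 then plays, Player~1 chooses the $\gamma<\alpha$ and the $p'$ supplied by the clause. By the induction hypothesis applied to the problem $P|[\llb\sigma\rrb\times\llb\rho\rrb]$ at $\gamma$, Player~1 has a winning strategy in the rank-$\gamma$ game for that problem with first move $p'$, and Player~1 continues with it. This uses dependent choices to select the strategies, which is harmless here.

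\textbf{($\Rightarrow$).} Let $\Gamma$ be a winning strategy with first move $p$. If $P(p)=\emptyset$ we are done. If $\alpha=0$ and $P(p)\neq\emptyset$, Player~2 wins, which is impossible. So assume $\alpha>0$. By Proposition~\ref{Prop:WinningStratNotRepeating}, replace $\Gamma$ by $\widehat\Gamma$, whose second instance always differs from $p$. Given $q\in P(p)$, let $\rho$ be $\widehat\Gamma$'s response. For each $\sigma\sqsubset p$, let $\gamma<\alpha$ and $p'$ be $\widehat\Gamma$'s moves after Player~2 plays $\sigma$. The rule at Step~\ref{GameStep:Player1Plays} of the subgame forces $p'\in\dom(P|[\llb\sigma\rrb\times\llb\rho\rrb])\subset\dom(P)\cap\llb\sigma\rrb$, and $p'\neq p$ by the choice of $\widehat\Gamma$. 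The remainder of $\widehat\Gamma$ is a winning strategy in the rank-$\gamma$ game for $P|[\llb\sigma\rrb\times\llb\rho\rrb]$ with first move $p'$. By the induction hypothesis, $p'$ is a rank-$\le\gamma$ discontinuity there, so $C_\alpha(P,p)$ holds. Finally, take the least $\beta\le\alpha$ with $C_\beta(P,p)$; then $p$ is a rank-$\beta$ discontinuity.

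\textbf{Main obstacle.} The delicate point is matching the condition $p'\ne p$ in Definition~\ref{Def:RankOfDiscontinuity}, which is exactly why Proposition~\ref{Prop:WinningStratNotRepeating} is needed. A smaller issue is to check carefully that ``rank-$\beta$ for some $\beta\le\alpha$'' coincides with $C_\alpha$, which requires the monotonicity of $C_\alpha$ and the argument above that $C_\alpha$ implies $p$ is a discontinuity.
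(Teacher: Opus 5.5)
Your proposal is correct and matches the paper's proof: both use transfinite induction on $\alpha$, with ($\Leftarrow$) reading off $\rho$, $\gamma$ and $p'$ from the definition and then continuing with the inductively supplied strategy, and ($\Rightarrow$) invoking Proposition~\ref{Prop:WinningStratNotRepeating} to guarantee $p'\neq p$ before applying the induction hypothesis to the restricted problem. Two points the paper passes over silently are handled explicitly in your write-up: your preliminary check, via Lemma~\ref{Lem:DiscontinuityPreservedByOpenRestrictions} and Theorem~\ref{Thm:ExtendingSolutions}, that the defining clause already forces $p$ to be a discontinuity, and your monotone reformulation $C_\alpha$, which covers the ``least ordinal'' bookkeeping and the passage from the rank-$\beta$ game to the rank-$\alpha$ game.
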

\begin{proof}
	We argue by induction on $\alpha$. For $\alpha = 0$, Player~1 has a winning strategy in the rank-$0$ discontinuity game for $P$ if and only if $P$ has an instance with no solution, which is equivalent to $P$ having a rank-$0$ discontinuity. 
	
	($\implies$): Suppose that the theorem is true for all $\gamma < \alpha$. Let $\Gamma$ be a winning strategy for Player~1 in the $\alpha$-$P$ game with first move $\Gamma(\lambda) = p$. If $P(p) = \emptyset$, then $p$ is a rank-$0$ discontinuity of $P$, so we are done. 
	
	Otherwise, fix $q \in P(p)$ and $\sigma \sqsubset p$. Let $\rho \sqsubset q$, $\gamma < \alpha$, and $p' \sqsupset \sigma$ be the moves chosen by $\Gamma$ in the following partial run of the $\alpha$-$P$ game in which Player~1 follows $\Gamma$.
	\[
		\begin{array}{c|c}
			\text{Player~1 } (\Gamma)& \text{Player~2} \\
			p & q\\
			\rho & \sigma\\
			\gamma, p' &
		\end{array}
	\] 

	By Proposition~\ref{Prop:WinningStratNotRepeating}, we may assume that $p' \neq p$.  Since $\Gamma$ is a winning strategy, its continuation on $P|[\llb \sigma \rrb \times \llb \rho \rrb]$ is a winning strategy for Player~1 in the $\gamma$-$P|[\llb \sigma \rrb \times \llb \rho \rrb]$ game. By the induction hypothesis, $p'$ is a rank-$\gamma'$ discontinuity of $P|[\llb \sigma \rrb \times \llb \rho \rrb]$ for some $\gamma' \leq \gamma < \alpha$.  Since $q$ and $\sigma$ were arbitrary, $\alpha$ satisfies the condition in the definition of $p$ being at most a rank-$\alpha$ discontinuity of $P$ in Definition~\ref{Def:RankOfDiscontinuity}, so $p$ is a rank-$\beta$ discontinuity of $P$ for some $\beta \leq \alpha$, as desired.  

	($\impliedby$): Suppose that the theorem is true for all $\gamma < \alpha$. Let $p$ be a rank-$\beta$ discontinuity of $P$ for some $\beta \leq \alpha$. We show that Player~1 has a winning strategy in the rank-$\beta$ discontinuity game for $P$. Since $p$ is a rank-$\beta$ discontinuity of $P$, we have that for all $q \in P(p)$ there is $\rho_q \sqsubset q$ such that for all $\sigma \sqsubset p$ there is $\gamma_{q,\sigma} < \beta$ and $p_{q,\sigma} \in \dom(P) \cap \llb \sigma \rrb$ with $p_{q,\sigma} \neq p$ such that $p_{q,\sigma}$ is a rank-$\gamma_{q,\sigma}$ discontinuity of $P|[\llb \sigma \rrb \times \llb \rho_q \rrb]$.
	
	We define a winning strategy $\Gamma$ for Player~1 in the rank-$\beta$ discontinuity game for $P$ with the following first turn for any $q \in P(p)$ and $\sigma \sqsubset p$. 
	\[
		\begin{array}{c|c}
		\text{Player~1 } (\Gamma)& \text{Player~2} \\
		p & q\\
		\rho_q & \sigma\\
		\gamma_{q,\sigma}, p_{q,\sigma} &
		\end{array}
	\]
	Since $p_{q,\sigma}$ is a rank-$\gamma_{q,\sigma}$ discontinuity of $P|[\llb \sigma \rrb \times \llb \rho_q \rrb]$, by the induction hypothesis there is a winning strategy for Player~1 in the rank-$\gamma_{q,\sigma}$ discontinuity game for $P|[\llb \sigma \rrb \times \llb \rho_q \rrb]$. We extend $\Gamma$ by following such a winning strategy. Hence, $\Gamma$ is a winning strategy for Player~1 in the rank-$\beta$ discontinuity game for $P$. It is therefore also a winning strategy for Player~1 in the rank-$\alpha$ discontinuity game for $P$, and its first move is $p$, as desired.
\end{proof}

Having a discontinuity of rank $\leq\alpha$ is invariant under Weihrauch reducibility. This fact extends to points; if $p$ is a rank-$\alpha$ discontinuity of $P$ and $P \leq_\W^* Q$ via forward function $\Phi$ and backward function $\Psi$, then $\Phi(p)$ is a rank-$\beta$ discontinuity of $Q$ for some $\beta \leq \alpha$. 

\begin{Theorem}\label{Thm:DiscontinuityGameInvariant}
	Let $P: \subset \B \tto \B$ and $Q: \subset \B \tto \B$ be problems and suppose that $P \leq_\W^* Q$ via forward function $\Phi$ and backward function $\Psi$. Then,
	\begin{enumerate}
		\item  Player~1 having a winning strategy in $\alpha$-$P$-game with first move $p$ implies that Player~1 has a winning strategy in the $\alpha$-$Q$ game with first move $\Phi(p)$.
		\item  Player~2 having a winning strategy in the $\alpha$-$Q$ game implies that Player~2 has a winning strategy in the $\alpha$-$P$ game. 
	\end{enumerate} 
\end{Theorem}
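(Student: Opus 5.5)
Both parts come from a single construction: a strategy-translation argument by induction on $\alpha$, in which a run of the $\alpha$-$Q$ game is simulated by a run of the $\alpha$-$P$ game. The key observation is that the reduction also localizes. Suppose $P \leq_\W^* Q$ via $\Phi,\Psi$, and fix $p \in \dom(P)$ and $\hat q \in Q(\Phi(p))$, so that $q := \Psi(p,\hat q) \in P(p)$. By continuity of $\Psi$, for each $\rho \sqsubset q$ there are $\hat\rho \sqsubset \hat q$ and $\sigma_0 \sqsubset p$ with $\Psi(\llb \sigma_0 \rrb \times \llb \hat\rho \rrb) \subset \llb \rho \rrb$. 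By continuity of $\Phi$, for each $\hat\sigma \sqsubset \Phi(p)$ there is $\sigma \sqsubset p$, which we may take to extend $\sigma_0$, with $\Phi(\llb \sigma \rrb) \subset \llb \hat\sigma \rrb$. Then the same $\Phi$ and $\Psi$, restricted to $\llb\sigma\rrb$, witness
\[
P|[\llb \sigma \rrb \times \llb \rho \rrb] \leq_\W^* Q|[\llb \hat\sigma \rrb \times \llb \hat\rho \rrb].
\]
To see this, take $x \in \dom(P) \cap \llb\sigma\rrb$. Then $\Phi(x) \in \llb\hat\sigma\rrb \cap \dom(Q)$. Any $y \in Q(\Phi(x)) \cap \llb\hat\rho\rrb$ gives $\Psi(x,y) \in P(x) \cap \llb\rho\rrb$. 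This localization claim is the lemma I would prove first.

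For part 1, I would induct on $\alpha$ and transfer strategies. Suppose Player~1 wins the $\alpha$-$P$ game with a strategy $\Gamma$ whose first move is $p$. In the $\alpha$-$Q$ game, Player~1 plays $\Phi(p)$.
\begin{itemize}
\item If $Q(\Phi(p)) = \emptyset$, Player~1 wins immediately.
\item If $\alpha = 0$, then $\Gamma$ wins at once, so $P(p) = \emptyset$. Since $\Phi(p) \in \dom(Q)$ and any $\hat q \in Q(\Phi(p))$ would yield $\Psi(p,\hat q) \in P(p)$, we get $Q(\Phi(p)) = \emptyset$, and Player~1 wins.
\item Otherwise Player~2 plays some $\hat q$. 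Player~1 feeds $q = \Psi(p,\hat q)$ to $\Gamma$ and reads off $\Gamma$'s response $\rho$, then answers with the $\hat\rho$ given by the lemma. Player~2 replies with some $\hat\sigma$. Player~1 chooses the corresponding $\sigma$ from the lemma, extending $\sigma_0$, and feeds it to $\Gamma$. This produces an ordinal $\gamma < \alpha$ and a first move $p'$ of a winning strategy in the $\gamma$-$P|[\llb\sigma\rrb\times\llb\rho\rrb]$ game.
\end{itemize}
Player~1 now plays the same $\gamma$ in the $Q$-game. By the lemma and the induction hypothesis applied to the localized reduction, Player~1 has a winning strategy in the $\gamma$-$Q|[\llb\hat\sigma\rrb\times\llb\hat\rho\rrb]$ game with first move $\Phi(p')$, and Player~1 continues with that strategy. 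Since $\alpha$ may be transfinite, the induction hypothesis should be stated uniformly for all pairs of problems and all reductions, so that it applies to the restricted pair.

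For part 2, I would run the dual argument, again by induction on $\alpha$ and uniformly over reductions. Suppose Player~1 plays $p$ in the $\alpha$-$P$ game. Player~2 simulates Player~1 playing $\Phi(p)$ in the $\alpha$-$Q$ game, and the winning $Q$-strategy answers with some $\hat q$. We have $Q(\Phi(p)) \neq \emptyset$, because otherwise Player~2's winning strategy would lose immediately. Player~2 therefore plays $q = \Psi(p,\hat q) \in P(p)$; if $\alpha = 0$, Player~2 has now won. Otherwise Player~1 plays some $\rho \sqsubset q$, and Player~2 feeds the corresponding $\hat\rho$ to the $Q$-strategy, which answers with some $\hat\sigma$. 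Player~2 plays the $\sigma$ from the lemma. When Player~1 then chooses $\gamma$, Player~2 continues with the winning strategy for the $\gamma$-$Q|[\llb\hat\sigma\rrb\times\llb\hat\rho\rrb]$ game, transported through the localized reduction by the induction hypothesis.

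The main obstacle is bookkeeping the localization correctly. The string $\sigma$ must extend $\sigma_0$, so that $\Psi$ maps into $\llb\rho\rrb$ on all of $\llb\sigma\rrb$ and not just at $p$. Player~1's choice of $\hat\rho$ must depend only on $\hat q$ and $\rho$, which is legitimate because $\rho$ is determined by $\Gamma$ once $q$ is fixed. Also, solutions outside $\llb\hat\rho\rrb$ are irrelevant, because the restricted problem only asks for solutions inside that cone.
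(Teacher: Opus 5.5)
Your proof is correct. Part 1 is essentially the paper's argument. The paper's $\widehat{\Gamma}$ plays $\Phi(p_0)$, answers $\hat q$ with a $\hat\rho_q$ satisfying $\Psi(\llb p_0|_n\rrb,\llb\hat\rho_q\rrb)\subset\llb\rho\rrb$, and answers $\hat\sigma$ by feeding $\Gamma$ some $\sigma\sqsupseteq p_0|_n$ with $\Phi(\llb\sigma\rrb)\subset\llb\hat\sigma\rrb$. It then applies the induction hypothesis to exactly your localized reduction $P|[\llb\sigma\rrb\times\llb\rho\rrb]\leq_\W^* Q|[\llb\hat\sigma\rrb\times\llb\hat\rho\rrb]$. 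Your $\alpha=0$ case, which tracks that it is specifically $Q(\Phi(p))$ that is empty, is a little more careful than the paper's.

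You differ from the paper in part 2. The paper never constructs a Player~2 strategy. It argues by contraposition: a Player~2 win in the $\alpha$-$Q$ game rules out a Player~1 win there, so by part 1 Player~1 has no winning strategy in the $\alpha$-$P$ game. It then invokes determinacy of the $\alpha$-$P$ game, which holds because every run ends after finitely many rounds as the ordinal strictly decreases. That makes part 2 a two-line corollary. The cost is an appeal to determinacy for a game whose moves range over $\B$, and no description of the resulting strategy.

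Your dual induction reuses the same localization lemma. It pushes Player~1's moves into the $Q$-game ($p\mapsto\Phi(p)$, $\rho\mapsto\hat\rho$) and pulls the $Q$-strategy's answers back ($\hat q\mapsto\Psi(p,\hat q)$, $\hat\sigma\mapsto\sigma$). It is slightly longer, but it gives an explicit transformation of Player~2 strategies and does not need determinacy. That is the more robust form if you later want to control the complexity of Player~2's strategy, or to work with variants of the game where determinacy is not automatic.
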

\begin{proof}
	(1) We argue by induction. For $\alpha = 0$, Player~1 wins the rank-$0$ discontinuity game for $P$ if and only if $P$ has an instance with no solution. If $P \leq_\W^* Q$ then $Q$ must also have an instance with no solution, so Player~1 has a winning strategy in the rank-$0$ discontinuity game for $Q$. 

	Suppose that the claim is true for all $\beta < \alpha$. Let $\Gamma$ be a winning strategy for Player~1 in the rank-$\alpha$ discontinuity game for $P$. Suppose that $P \leq_\W^* Q$ via forward function $\Phi$ and backwards function $\Psi$. Let $p_0 = \Gamma(\lambda)$ be $\Gamma$'s first move. Let $\widehat{\Gamma}$ be a strategy for the first three Player~1 moves of the rank-$\alpha$ game for $Q$ given by
	\begin{itemize}
		\item $\widehat{\Gamma}(\lambda) = \hat{p}_0 := \Phi(p_0)$.
		\item $\widehat{\Gamma}(\hat{q}) = \hat{\rho}_q$ such that there exists $n$ such that $\Psi(\llb p_0|_n \rrb ,\llb \hat{\rho}_q \rrb) \subset \llb \rho \rrb$ for $\rho := \Gamma(\Psi(p_0,\hat{q}))$. Such a $\rho$ and $n$ exist by the fact that $\Gamma(\Psi(p_0, \hat{q})) \sqsubset \Psi(p_0, \hat{q})$ and by continuity of $\Psi$.  
		\item $\widehat{\Gamma}(\hat{q},\hat{\sigma}) = \gamma := \Gamma(\Psi(p_0, \hat{q}), \sigma)$ for some $\sigma$ such that $\Phi(\llb \sigma \rrb) \subset \llb \hat{\sigma }\rrb$ and $p_0|_n \sqsubseteq \sigma $. Such a $\sigma$ exists by the fact that $\hat{\sigma} \sqsubset \hat{p_0}$ and by continuity of $\Phi$. 
	\end{itemize}

	\begin{figure}[h!t]
		\begin{equation*}
			\begin{array}{c|c}
				\text{Player~1 } (\Gamma)& \text{Player~2} \\
				p_0 & q\\
				\rho & \sigma \\
				\gamma &
		\end{array} \quad
			\begin{array}{c|c}
				\text{Player~1 } (\widehat{\Gamma})& \text{Player~2} \\
				\hat{p}_0 & \hat{q}\\
				\hat{\rho}_q & \hat{\sigma} \\
				\gamma & 
			\end{array} 
		\end{equation*}
		\caption{The first three moves of $\Gamma$ and $\widehat{\Gamma}$ in their respective games.}
		\label{fig:FirstThreeMovesOfDelta}
	\end{figure}

	Suppose that Player~1 plays the first three moves of the $\alpha$-$Q$ game according to $\widehat{\Gamma}$, as in Figure~\ref{fig:FirstThreeMovesOfDelta}. 
	Then, play proceeds by beginning the rank-$\gamma$ game for $Q|[\llb\hat{\sigma}\rrb \times \llb \hat{\rho}_q \rrb]$. 
	Then there is a corresponding partial run of the $\alpha$-$P$ game depicted in Figure~\ref{fig:FirstThreeMovesOfDelta} such that the following statements hold.
	\begin{enumerate}
		\item $\hat{p}_0 = \Phi(p_0)$.
		\item $q = \Psi(p_0, \hat{q})$.
		\item \label{DiscontinuityGameInvariantItem:PsiCorrect}$\Psi(\llb p_0|_n \rrb ,\llb \hat{\rho}_q \rrb) \subset \llb \rho \rrb$ for some $n \in \N$.
		\item \label{DiscontinuityGameInvariantItem:SigmaLongEnough}$p_0|_n \sqsubseteq \sigma$
		\item \label{DiscontinuityGameInvariantItem:PhiCorrect}$\Phi(\llb \sigma \rrb) \subset \llb \hat{\sigma} \rrb$.
	\end{enumerate}
	We claim that
	\[
		P|[\llb \sigma \rrb \times \llb\rho\rrb] \leq_\W^* Q|[\llb\hat{\sigma}\rrb \times \llb \hat{\rho}_q \rrb].
	\]
	Continuing to follow $\Gamma$ provides a winning Player~1 strategy for the rank-$\gamma$ game for $P|[\llb \sigma \rrb \times \llb \rho \rrb]$. Hence, if the claim is true then we can extend $\widehat{\Gamma}$ by the winning strategy promised by the induction hypothesis, which is enough to finish the current proof.  

	Statement~\ref{DiscontinuityGameInvariantItem:PhiCorrect} implies that $\Phi$ sends instances of $P|_{\llb\sigma\rrb}$ to instances of $Q|_{\llb \hat{\sigma}\rrb}$.  Furthermore, Statements~\ref{DiscontinuityGameInvariantItem:PsiCorrect} and \ref{DiscontinuityGameInvariantItem:SigmaLongEnough} imply that $\Psi$ is a partial map from $\llb\sigma\rrb \times \llb \hat{\rho}_q \rrb$ to $\llb \rho \rrb$. Since $\Phi$ and $\Psi$ give a reduction from $P$ to $Q$ this implies that $\Phi$ and $\Psi$ also give a reduction of $P|[\llb \sigma \rrb \times \llb\rho\rrb]$ to $Q|[\llb\hat{\sigma}\rrb \times \llb \hat{\rho}_q \rrb]$, as required. 

	(2) If Player~2 has a winning strategy in the $\alpha$-$Q$ game then Player~1 does not have a winning strategy in the $\alpha$-$Q$ game. By (1), we can conclude that Player~1 also does not have a winning strategy in the $\alpha$-$P$ game. Since the $\alpha$-$P$ game is determined, we may conclude that Player~2 has a winning strategy.
\end{proof}

Unlike the Wadge game developed by Nobrega and Pauly \cite{nobregaGameCharacterizationsLower2019} to characterize Weihrauch reducibility, Player~2 winning the $\alpha$-$P$ discontinuity game does not imply that $P$ is continuous (even if this is true for all $\alpha$, or even in a version of the game with no ordinal constraining Player~1 at all). This can be seen by the fact that every instance of $\DIS$ has solutions above every string $\rho$ but is still discontinuous. However, Player~2 winning the $\alpha$-$P$ does imply that $P$ is continuous on each set of pointwise rank $\alpha$. We prove this as a corollary of Theorem~\ref{Thm:MainTheoremPointWise} and do not use it in the proof. 

\begin{Corollary}\label{Cor:Player2WinConsequence}
	Let $P: \subset \B \tto \B$ be a problem and suppose that Player~2 wins the $\alpha$-$P$ game.  Then, for all sets $A$ with $\rankp(A) \leq \alpha$, we have that $P|_A$ is continuous.
\end{Corollary}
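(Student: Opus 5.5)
We argue by contraposition, using Theorem~\ref{Thm:MainTheoremPointWise} (as the text indicates) together with the determinacy of the game. Suppose there is a set $A$ with $\rankp(A) \leq \alpha$ such that $P|_A$ is discontinuous. We aim to show that Player~1 has a winning strategy in the $\alpha$-$P$ game. Since both players cannot have winning strategies in the same game, this contradicts the hypothesis that Player~2 wins.

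\textbf{Step 1: find a point of discontinuity of $P|_A$.} Since $P|_A$ is discontinuous, Theorem~\ref{Thm:ContinuityIsContinuityAtAllPoints} gives a point $p \in \dom(P|_A) = \dom(P) \cap A$ at which $P|_A$ is discontinuous. Because $\rankp(A) \leq \alpha$, we have $\rank_A(p) \leq \alpha$. Thus Condition~\ref{MainTheoremPointwiseItem:DiscontinuousOnPointWiseRankedSet} of Theorem~\ref{Thm:MainTheoremPointWise} holds for $p$.

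One subtlety is that Theorem~\ref{Thm:MainTheoremPointWise} is phrased for subsets $A \subset \B$, not necessarily subsets of $\dom(P)$. The points of $A$ outside $\dom(P)$ do not affect whether $P|_A$ is discontinuous. If needed, we pass to $A \cap \dom(P)$. Removing points can only lower Cantor--Bendixson ranks, which is immediate from Lemma~\ref{Lem:SurroundedByLowerRanks} by induction.

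\textbf{Step 2: obtain a winning strategy for Player~1.} By the equivalence of Conditions~\ref{MainTheoremPointwiseItem:DiscontinuousOnPointWiseRankedSet} and \ref{MainTheoremPointwiseItem:WinningDiscontinuityGameInAlphaTurns} of Theorem~\ref{Thm:MainTheoremPointWise}, Player~1 has a winning strategy in the rank-$\alpha$ discontinuity game for $P$ with initial move $p$.

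An alternative route goes through Condition~\ref{MainTheoremPointwiseItem:AboveACCN}. That condition gives $P \geq_\W^* \ACCN^\alpha$ with $\Phi(\hash^\N) = p$. Player~1 wins the $\alpha$-$\ACCN^\alpha$ game starting from $\hash^\N$, since $\hash^\N$ is a rank-$\alpha$ discontinuity of $\ACCN^\alpha$ and Theorem~\ref{Thm:DiscontinuityGameCharacterization} applies. Theorem~\ref{Thm:DiscontinuityGameInvariant}(1) then transfers this winning strategy to the $\alpha$-$P$ game.

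\textbf{Step 3: conclude.} A winning strategy for Player~1 cannot coexist with a winning strategy for Player~2: playing the two strategies against each other would produce a run won by both players. This contradicts the assumption, so $P|_A$ is continuous.

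The main point requiring care is that the corollary only needs the implication from Condition~\ref{MainTheoremPointwiseItem:DiscontinuousOnPointWiseRankedSet} to Condition~\ref{MainTheoremPointwiseItem:WinningDiscontinuityGameInAlphaTurns}, which the paper obtains as Theorem~\ref{Thm:RankDiscontinuityEquivalence}. It also needs the basic fact that "Player~2 wins" means Player~2 has a winning strategy. No determinacy assumption is required in this direction.
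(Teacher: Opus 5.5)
Your proposal is correct and is essentially the paper's proof. You argue by contradiction, extract a point of discontinuity $p$ of $P|_A$ via Theorem~\ref{Thm:ContinuityIsContinuityAtAllPoints}, use $\rank_A(p)\le\alpha$ to apply the implication (\ref{MainTheoremPointwiseItem:DiscontinuousOnPointWiseRankedSet})$\Rightarrow$(\ref{MainTheoremPointwiseItem:WinningDiscontinuityGameInAlphaTurns}) of Theorem~\ref{Thm:MainTheoremPointWise} (that is, Theorem~\ref{Thm:RankDiscontinuityEquivalence}), and note that the resulting Player~1 winning strategy cannot coexist with Player~2's. Your direct appeal to Condition~(\ref{MainTheoremPointwiseItem:DiscontinuousOnPointWiseRankedSet}) and your remark about passing to $A\cap\dom(P)$ are slightly more precise than the paper's wording, which at that step loosely calls $p$ a ``rank-$\beta$ discontinuity''.
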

\begin{proof}
	Suppose that there is an $A$ with $\rankp(A) \leq \alpha$ such that $P|_{A}$ is discontinuous. Then, $P|_A$ has a point of discontinuity $p$. Since $\rankp(A) \leq \alpha$, we have that $p$ is a rank-$\beta$ discontinuity for some $\beta \leq \alpha$. By Theorem~\ref{Thm:MainTheoremPointWise}, Player~1 has a winning strategy in the $\alpha$-$P$ game, a contradiction. 
\end{proof}

Next we show that Condition~\ref{MainTheoremPointwiseItem:WinningDiscontinuityGameInAlphaTurns} of Theorem~\ref{Thm:MainTheoremPointWise} implies Condition~\ref{MainTheoremPointwiseItem:AboveACCN}.

\begin{Theorem} \label{Thm:WinningStrategyImpliesAboveACCN}
	Let $P: \subset \B \tto \B$ be a problem and let $\alpha$ be a countable ordinal. Fix $p \in \dom(P)$. Suppose that Player~1 has a winning strategy for the rank-$\alpha$ discontinuity game for $P$ whose first move is $p$. Then, $P \geq_\W^* \ACCN^\alpha$ via forward function $\Phi$ such that $\Phi(\hash^\N) = p$. 
\end{Theorem}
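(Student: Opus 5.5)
We argue by transfinite induction on $\alpha$, proving the slightly stronger statement that the reduction can be chosen so that the forward function $\Phi$ sends $\hash^\N$ to the first move $p$ of the given winning strategy $\Gamma$.

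\textbf{Base case.} For $\alpha = 0$, winning means $P(p) = \emptyset$. Since $\dom(\ACCN^0) = \{\hash^\N\}$, we take $\Phi$ to be the constant map with value $p$ and $\Psi$ to be arbitrary. The reduction holds vacuously because $P(p)$ has no solutions.

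\textbf{Inductive step, forward function.} Let $\alpha > 0$. By Proposition~\ref{Prop:WinningStratNotRepeating}, we may assume that $\Gamma$'s second instance always differs from $p$. The forward function on an input $\hash^j n w$ will simulate a run of the $\alpha$-$P$ game in which Player~2 has played some $q \in P(p)$. In that run $\Gamma$ answers with $\rho_q$, Player~2 answers with $\sigma \sqsubset p$, and $\Gamma$ answers with $\gamma, p'$.

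The difficulty is that $q$ is not known at forward time. It is known only once a $P$-solution is available, and Weihrauch reductions need the forward function to commit first. Instead we exploit $\ACCN$'s structure: the solver's first output $m$ encodes a guess. So we need the instance $\hash^j n w$ to be mapped in a way that, for whichever $q$ is later returned, the backward function can interpret it.

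The plan is to let the forward function output $p|_j$ while it reads $\hash^j$. This is consistent with $\Phi(\hash^\N) = p$ and is continuous. On reading $n$ at position $j$, it must commit to an instance in $\llb p|_j \rrb$. For this we need a single $p'$ that works simultaneously for all $\rho$ that $\Gamma$ might play.

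\textbf{Main obstacle: fixing the level of the next instance.} I would resolve this by encoding $\rho$ via the $\ACCN$ answer. The backward function reads $\hat q \in P(\Phi(x))$. Because $\Gamma$ wins, every solution $q$ of $p$ determines a prefix $\rho_q$. By Lemma~\ref{Lemma:CoverByNodes}, the set $P(p)$ is covered by disjoint cones $\llb\rho_i\rrb$ with $\rho_i = \rho_q$ for $q$ in that cone. Fix a bijection between these cones and $\N$ that the $\ACCN$ answer can name.

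Given $\sigma = p|_j$, the instance must be a point $p'$ with $P(p') \cap \llb \rho_i \rrb$ handled for each $i$. Using the alternative countable form of the game from the Remark, I would have $\Phi$ on $\hash^j n w$ play $p' = p_{\rho_n,\sigma}$, namely $\Gamma$'s response when the cone chosen is $\rho_n$. The idea is that an answer $\hat q$ lying outside $\llb \rho_n \rrb$ is decoded by $\Psi$ as the $\ACCN$ answer ``some $m \ne n$'', namely the index of the cone containing it. An answer lying inside $\llb\rho_n\rrb$ is decoded as $n$ followed by a recursive solution.

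We set $j$-indexed ordinals via $\ell_\alpha(j)$. Since $\Gamma$ chooses $\gamma_{n,\sigma} < \alpha$, we must arrange $\ell_\alpha(j) \le$ the rank available. I would handle this by going through Theorem~\ref{Thm:DiscontinuityGameCharacterization}. The point $p$ has rank $\le\alpha$, and we can choose the $\sigma$ (delaying, i.e.\ making $\sigma$ longer than $p|_j$) so that the responses have rank $\ge \ell_\alpha(j)$ when possible. When that is not possible, we instead reduce $\ACCN^{\ell_\alpha(j)}$ to a lower $\ACCN^{\gamma}$ using the monotonicity $\ACCN^\gamma \ge_\W^* \ACCN^{\gamma'}$ for $\gamma \le \gamma'$; this monotonicity is obtained by the same induction.

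\textbf{Inductive step, backward function, and gluing.} Then the inductive hypothesis, applied to $P|[\llb\sigma\rrb\times\llb\rho_n\rrb]$ with strategy $\Gamma$'s continuation, gives $\Phi',\Psi'$ with $\Phi'(\hash^\N)=p'$. We set $\Phi(\hash^j n w) = \Phi'_{n,j}(w)$. The padding $p|_j$ already output must be compatible with this, which holds because $p' \sqsupset \sigma \sqsupseteq p|_j$, with the delay absorbing the gap. The backward function $\Psi$ outputs $m$ for the cone index when $\hat q \notin \llb\rho_n\rrb$. Otherwise it outputs $n\hash^k\Psi'_{n,j}(w,\hat q)$, which is continuous since membership in the cone is decided by a finite prefix.

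Continuity at $\hash^\N$ is the remaining check. There $\Phi$ produces $p$, and $\Psi(\hash^\N,\hat q)$ outputs the cone index of $\hat q$, which is continuous. Making the index assignment and the decreasing-rank bookkeeping uniform in $j$ is where I expect the real difficulty to lie.
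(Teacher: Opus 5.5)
Your core mechanism matches the paper's. On input $\hash^j n w$ the forward function plays $\Gamma$'s response for the $n$-th solution cone $\llb\rho_n\rrb$ and recurses. The backward function answers with a cone index, which is automatically a correct $\ACCN$ answer whenever the returned solution avoids $\llb\rho_n\rrb$. However, the step you flag at the end as the real difficulty is the heart of the proof and is left unresolved, and your concrete design cannot be completed.

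To recurse you need $\beta(n,\sigma)\le\ell_\alpha(j)$, where $\beta(n,\sigma)<\alpha$ is the ordinal $\Gamma$ chooses after $\rho_n$ and $\sigma$; then $\ACCN^{\ell_\alpha(j)}\le_\W^*\ACCN^{\beta(n,\sigma)}\le_\W^*P|[\llb\sigma\rrb\times\llb\rho_n\rrb]$. Your plan to choose $\sigma$ so that the responses have rank $\ge\ell_\alpha(j)$ points the wrong way. Moreover, lengthening $\sigma$ gives no control over $\beta(n,\sigma)$, even if you replace $\Gamma$'s ordinals by true ranks.

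Worse, outputting $p|_j$ while reading $\hash^j$ forces $\sigma\sqsupseteq p|_j$, and this alone can make the reduction impossible. Let $P$ be $\ACCN^\omega$ built from $\ell'_\omega(j)=2j$, let $p=\hash^\N$, and let the target $\ACCN^\omega$ use $\ell_\omega(j)=j$.
\begin{itemize}
\item If $\Psi(\hash^\N,0\hash^\N)$ begins with $c$, continuity gives $J,L$ with $\Psi(\llb\hash^J\rrb\times\llb 0\hash^L\rrb)\subset\llb c\rrb$.
\item For $j\ge\max(J,1)$, the maps $w\mapsto\Phi(\hash^j c w)$ and $\Psi$ (followed by stripping $c\hash^k$) would witness $\ACCN^j\le_\W^*P|[\llb\hash^j\rrb\times\llb 0\hash^L\rrb]$.
\item Every discontinuity of that restriction has rank $\ge 2j$, but $\ACCN^j$ has a rank-$j$ discontinuity. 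This contradicts Corollary~\ref{Cor:RanksAreWeihrauchInvariant}.
\end{itemize}
There is a second, independent obstruction. For fixed $\sigma$, $\sup_n\beta(n,\sigma)$ can equal $\alpha$ when $\alpha$ is a limit, so no single level $j$ can serve every cone $n$. Yet your $\Phi$ recurses for all $n$ at every $j$.

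The missing idea is the paper's delayed, diagonal construction. One builds $\sigma_0\sqsubset\sigma_1\sqsubset\cdots\sqsubset p$ and $n_0<n_1<\cdots$ in alternation, with $\ell_\alpha(n_{s+1})\ge\max_{i\le s}\beta(i,\sigma_s)$.
\begin{itemize}
\item The forward function moves along $p$ much more slowly than the input: it commits to $\sigma_{t+1}$ only after reading $\hash^{n_{t+2}}$.
\item On $\hash^k i w$ with $n_{s+1}\le k<n_{s+2}$, it applies the inductive reduction for cone $\rho_i$ at $\sigma_s$ and level $\ell_\alpha(k)$ only when $i\le s$. Otherwise it simply outputs $p$.
\item Dually, $\Psi$ dovetails a search for a non-$\hash$ entry of $x$ with a search for some $\rho_i\sqsubset\hat q$. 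This is needed because $\hat q$ solves $\Phi(x)$, not $p$, so for $x\ne\hash^\N$ it need not lie in any cone.
\item $\Psi$ answers with any other number whenever it sees the instance's value before committing, and it commits to a cone index $i$ only after reading $\hash^{n_{i+1}}$.
\end{itemize}
This coupling guarantees the following. If the instance turns out to be $\hash^k i w$ after $\Psi$ has committed to $i$, then $k\ge n_{i+1}$. Hence $i\le s(k)$ and $\ell_\alpha(k)\ge\beta(i,\sigma_{s(k)})$, so the recursive reduction is available exactly where it is needed.
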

\begin{proof}
	We argue by induction. For $\alpha = 0$, Player~1 has a winning strategy in the $0$-$P$ game whose first move is $p$ $\implies$ $P(p) = \emptyset$ $\implies$ $P \geq_\W^* \ACCN^0$ via some function $\Phi$ such that $\Phi(\hash^\N) = p$.

	Suppose that the theorem is true for all $\beta < \alpha$. Let $\Gamma$ be a winning strategy for Player~1 in the $\alpha$-$P$ game. Let $p := \Gamma(\lambda)$ be the first move made by $\Gamma$. Let $\rho_1, \rho_2, \dots$ be an enumeration of all moves $\Gamma(q)$ made by $\Gamma$ when Player~2 plays some $q \in P(p)$. In other words, let $\{\rho_i\}_{i \in \N}$ be an enumeration of the set $\{\Gamma(q): q \in P(p)\}$, which is countable since it is a subset of $\N^{<\N}$. Furthermore, for each $i \in \N$, fix a $q_i$ such that $\Gamma(q_i) = \rho_i$. For each $i \in \N$ and $\sigma \sqsubset p$, define $r(i, \sigma)$ and $\beta(i,\sigma)$ so $\Gamma(q_i, \sigma) = (\beta(i,\sigma), r(i,\sigma))$. By Proposition~\ref{Prop:WinningStratNotRepeating}, we may assume that $r(i,\sigma) \neq p$ for all $i$ and $\sigma.$ We summarize this discussion in Figure~\ref{fig:FirstTwoRoundsOfGamma}.

	\begin{figure}[h]
		\begin{equation*}
			\begin{array}{c|c}
				\text{Player~1 } (\Gamma)& \text{Player~2} \\
				p & q_i\\
				\rho_i & \sigma \\
				\beta(i,\sigma), r(i,\sigma) &
			\end{array}
		\end{equation*}
		\caption{Our definition of $\rho_i$, $r(i,\sigma)$ and $\beta(i,\sigma)$ in terms of $\Gamma$.}
		\label{fig:FirstTwoRoundsOfGamma}
	\end{figure}

	Since $\Gamma$ is a winning strategy for Player~1 in the $\alpha$-$P$ game, we have that the continuation of $\Gamma$ after $\Gamma$ has played $\Gamma(q_i, \sigma)$ is a winning strategy for Player~1 in the rank-$(\beta(i,\sigma))$ discontinuity game for $P|[\llb \sigma \rrb \times \llb \rho_i \rrb]$. By the induction hypothesis, for all $\gamma \geq \beta(i,\sigma)$, we have that
	\[
		P|[\llb \sigma \rrb \times \llb \rho_i \rrb] \geq_\W^* \ACCN^{\gamma}.
	\]
	Let $\Phi_{i,\sigma,\gamma}$ and $\Psi_{i,\sigma,\gamma}$ be functionals witnessing this reduction. Without loss of generality, we may assume that for all $i$ and $\gamma$ we have that $\sigma \sqsubset \Phi_{i,\sigma,\gamma}(x)$ for all $x \in \dom(\Phi_{i,\sigma,\gamma})$. This can be achieved by removing any points not lying above $\sigma$ from the domain of $\Phi_{i,\sigma,\gamma}$ which does not affect the fact that $\Phi$ witnesses the reduction.

	We construct increasing sequences $\{n_i \in \N\}_{i \in \N}$ and $\{\sigma_i \sqsubset p \}_{i \in \N}$ of parameters used to define $\Phi$ and $\Psi$ witnessing that $P \geq_\W^* \ACCN^\alpha$. Let $\sigma_0 = \lambda$, the empty string. Once $\sigma_s$ has been defined, for each $i \leq s$ we define $\tau_{i,s + 1}$ to be the longest shared initial segment of $r(i, \sigma_s)$ and $p$. The string $\tau_{i,s+1}$ exists by our assumption that $r(i,\sigma_s) \neq p$. Note that we also have $\sigma_s \sqsubseteq \tau_{i,s+1}$ for all $i \leq s$. 

	Then, we may select any $\sigma_{s+1} \sqsubset p$ such that $\sigma_{s} \sqsubset \sigma_{s+1}$ and for all $i \leq s$,
	\[
		\tau_{i,s + 1} \sqsubset \sigma_{s+1}. 
	\]
	We illustrate the positions of $\sigma_i$, $\tau_{i,j}$ and $r(i,\sigma_i)$ along $p$ in Figure~\ref{fig:tau-sigma-tree}.
	\begin{figure}[ht]
	\centering
	\begin{tikzpicture}[
		every node/.style={font=\small},
		sigmadot/.style={circle,fill={rgb,1:red,0.00;green,0.45;blue,0.70},inner sep=1.5pt},
		taudot/.style={circle,fill={rgb,1:red,0.84;green,0.37;blue,0.00},inner sep=1.5pt},
		main/.style={grow'=30,level distance=1.15cm,sibling distance=8mm},
		leftbranch/.style={grow'=150,level distance=1.85cm,sibling distance=0mm}
	]
		\node[sigmadot, label={below left:{$\sigma_0 = \lambda$}}] (s0) {}
			child[main] { node[taudot, label={above:{$\tau_{0,1}$}}] (t01) {}
				child[leftbranch] { node[coordinate] (b01) {} edge from parent node[pos=1, above left, yshift=3pt, xshift=-1pt] {$r(0,\sigma_0)$} }
				child[main] { node[sigmadot, label={above:{$\sigma_1$}}] (s1) {}
					child[main] { node[taudot, label={above:{$\tau_{0,2}$}}] (t02) {}
						child[leftbranch] { node[coordinate] (b02) {} edge from parent node[pos=1, above left, yshift=3pt, xshift=-1pt] {$r(0,\sigma_1)$} }
						child[main] { node[taudot, label={above:{$\tau_{1,2}$}}] (t12) {}
							child[leftbranch] { node[coordinate] (b12) {} edge from parent node[pos=1, above left, yshift=3pt, xshift=-1pt] {$r(1,\sigma_1)$} }
							child[main] { node[sigmadot, label={above:{$\sigma_2$}}] (s2) {}
								child[main] { node[taudot, label={above:{$\tau_{0,3}$}}] (t03) {}
									child[leftbranch] { node[coordinate] (b03) {} edge from parent node[pos=1, above left, yshift=3pt, xshift=-1pt] {$r(0,\sigma_2)$} }
									child[main] { node[taudot, label={above:{$\tau_{1,3}$}}] (t13) {}
										child[leftbranch] { node[coordinate] (b13) {} edge from parent node[pos=1, above left, yshift=3pt, xshift=-1pt] {$r(1,\sigma_2)$} }
										child[main] { node[taudot, label={above:{$\tau_{2,3}$}}] (t23) {}
											child[leftbranch] { node[coordinate] (b23) {} edge from parent node[pos=1, above left, yshift=3pt, xshift=-1pt] {$r(2,\sigma_2)$} }
											child[main] { node[sigmadot, label={above:{$\sigma_3$}}] (s3) {}
												child[main] { node[coordinate] (endp) {} }
											}
										}
									}
								}
							}
						}
					}
				}
			};

		\node[font=\Large, above left=-2pt and -0.5pt of endp, rotate=29] {$\cdots$};
		\node[above right=1pt and 1pt of endp] {$p$};
	\end{tikzpicture}
	\caption{The positions of $\sigma_i$, $\tau_{i,j}$, and $r(i,\sigma_i)$ along $p$.}
	\label{fig:tau-sigma-tree}
\end{figure}

	Let $n_0 = 0$. To define $n_{s+1}$ from $n_s$, let 
	\[
	\gamma_{s} = \max\{\beta(i,\sigma_{s}): 0 \leq i \leq s \}.
	\]
	Since each $\beta(i,\sigma_s) < \alpha$, we have that $\gamma_s < \alpha$. Hence, we may pick the least $n_{s+1} > n_s$ such that
	\[
		\ell_\alpha(n_{s+1}) \geq \gamma_s.
	\]
	For each $k \geq n_1$, let $s(k)$ be the least stage number such that $n_{(s(k) + 1)} \leq k < n_{(s(k)+2)}$, so that we have a guarantee that $\ell_\alpha(k) \geq \gamma_{s(k)}$. Then, for all $k \geq n_1$ and $i \leq s(k)$ we have that $\ell_\alpha(k) \geq \beta(i,\sigma_{s(k)})$, so
	\begin{equation}
		\label{eq:InductiveStepAppliedToCorrectOnes}
		P|[\llb\sigma_{s(k)}\rrb \times \llb \rho_i \rrb] \geq_\W^* \ACCN^{\ell_\alpha(k)},
	\end{equation}
	witnessed by $\Phi_{i, \sigma_{s(k)}, \ell_\alpha(k)}$ and $\Psi_{i,\sigma_{s(k)},\ell_\alpha(k)}$.
	We now define $\Phi$ and $\Psi$.  
	
	We compute $\Phi$ as follows. The computation for $\Phi(x)$ operates in stages $t \geq 0$.
	At stage $t$, the computation checks $x|_{n_{(t + 2)}}$ for a non-$\hash$ entry. 
	If it does not find one, then the computation commits to outputting $\sigma_{s(n_{(t + 2)})} = \sigma_{t + 1}$. 
	If the computation does find a non-$\hash$ entry $i$, then $x = \hash^k i w$ for some $k < n_{(t + 2)}$ and $w \in (\N \cup \{\hash\})^\N$. Then, $\Phi$ checks whether $k \geq n_1$ and $i \leq s(k)$. If either of these is false, then $\Phi$ outputs $p$ (knowing that $\Psi$ will find the $i$ and solve it without the help of a $P$-solution.) If both are true, then $\Phi$ extends the current output $\sigma_{t}$ to  $\Phi_{i, \sigma_{t}, \ell_\alpha(k)}(w)$.

	Since $n_{t + 1} \leq k < n_{(t+2)}$, we have that $s(k) = t$, so $\sigma_t = \sigma_{s(k)}$. Hence, we obtain the formula
	\[
		\Phi(x) = \begin{cases}
			p	&\text{if }  x = \hash^\N\\
			p   &\text{if } x = \hash^k i w \text{, } k < n_1 \text{ or } i > s(k)\\
			\Phi_{i, \sigma_{s(k)}, \ell_\alpha(k)}(w)	&\text{if } x = \hash^k i w \text{, } k \geq n_1 \text{ and } i \leq s(k).
		\end{cases}
	\]
	Since $\sigma_t \sqsubset \Phi_{i, \sigma_{t}, \ell_\alpha(k)}(w)$ for all $w \in \dom(\Phi_{i, \sigma_{t}, \ell_\alpha(k)})$, we have that $\Phi$ is continuous. 
	
	Next, we define the backward function $\Psi$. We compute $\Psi(x,q)$ by first simultaneously searching for a non-$\hash$ entry of $x$ or a $\rho_i \sqsubset q$. This search must terminate because if $x = \hash^\N$ then $\Gamma(q) \in \{\rho_i: i \in \N\}$, so there exists an $i$ such that $\rho_i = \Gamma(q) \sqsubset q$. If $\Psi$ first finds $\rho_i \sqsubset q$ then it continues searching $x|_{n_{(i+1)}}$ for a non-$\hash$ entry.  
	
	In either search mode, if $\Psi$ finds that $x = \hash^k i w$ for some $i \in \N$ and $w \in \extendedBaireSpace$ then setting $\Psi(x,q) = (i+1)\hash^\N$ successfully solves $\ACCN^\alpha(x)$.

	Otherwise, we have that $\rho_i \sqsubset q$ and $\hash^{n_{(i+1)}} \sqsubset x$. Then $\Psi$ outputs $i$ and then starts outputting $\hash$. It continues doing so forever unless it finds a $k \geq n_{(i+1)}$ such that $\hash^k i \sqsubset x$, and hence that $x = \hash^k i w$ for some $w \in \extendedBaireSpace$. Since $k \geq n_{(i+1)}$ we have that $s(k) \geq i$ and $k \geq n_1$. Hence, the conditions for the reduction in Line~\ref{eq:InductiveStepAppliedToCorrectOnes} are satisfied, so we know that $\Phi_{i, \sigma_{s(k)}, \ell_\alpha(k)}$ and $\Psi_{i, \sigma_{s(k)}, \ell_\alpha(k)}$ witness that
	\[
		P|[\llb\sigma_{s(k)} \rrb \times \llb \rho_i \rrb] \geq_\W^* \ACCN^{\ell_\alpha(k)}.
	\] 
	By our definition of $\Phi$, we have that $\Phi(\hash^k i w) = \Phi_{i, \sigma_{s(k)}, \ell_\alpha(k)}(w)$. Hence, having $\Psi$ begin outputting $\Psi_{i, \sigma_{s(k)}, \ell_\alpha(k)}(w,q)$ successfully solves $\ACCN^\alpha(x)$. Thus, $\Phi$ and $\Psi$ witness that $P \geq_\W^* \ACCN^\alpha$. 

	\begin{Remark} By using Lemma~\ref{Lemma:CoverByNodes} to assume that the $\rho_i$ are incomparable, we obtain an explicit formula for $\Psi$,
	\[
		\Psi(\hash^\N, q) = \begin{cases}
			i\hash^\N &\text{if } \rho_i\sqsubset q
		\end{cases}
	\]
	and 
	\[
		\Psi(\hash^k i w, q) = \begin{cases}
			(i + 1) \hash^\N &\text{if } (\A j < s(k))(\rho_j \not\sqsubset q)\\
			j \hash^\N &\text{if } (\E j < s(k))((\rho_j \sqsubset q) \wedge (j \neq i))\\
			i \hash^k \Psi_{i, \sigma_{s(k)}, \ell_\alpha(k)} (w,q) &\text{if }( \rho_i \sqsubset q )\wedge (i \leq s(k))
		\end{cases}
	\]
	The first case of the $\Psi(\hash^k i w, q)$ formula occurs when $\Psi$ finds the $i$ in $\hash^k i w$ before it finds any $\rho_j \sqsubset q$. The second case occurs when $\Psi$ finds some $\rho_j \sqsubset q$ before it finds the $i$ in $\hash^k i w$ and $j \neq i$. The third case occurs when $\Psi$ finds the $\rho_j \sqsubset q$ before it finds the $i$ in $\hash^k i w$ and $j = i$.
	\end{Remark}
\end{proof}

In a presentation at CCR 2022 \cite{SoldaMultifunctionsTalk}, Sold\`a announced joint work with Pauly which extends the main theorem of \cite{SoldaPaulySequentialDiscontinuity} to finite ordinals. They define represented spaces $\mathbf{Z}_n$ and prove the following theorem.
\begin{Theorem}[\cite{SoldaMultifunctionsTalk}]
	The following are equivalent for a problem $P: \subset \B \tto \B$ and $n \in \N$.
	\begin{enumerate}
		\item $\ACCN^{+n} \leq_\W^* P$.
		\item There is discontinuous $Q: Z_n \tto \B$ such that $Q \leq_\W^* P$.
	\end{enumerate}
\end{Theorem}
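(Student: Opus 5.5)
The plan is to derive this statement from Theorem~\ref{Thm:MainTheoremPointWise} rather than reprove it directly. Since $\mathbf{Z}_n$ and $\ACCN^{+n}$ are only defined in \cite{SoldaMultifunctionsTalk}, I am assuming the following, which I have not verified. First, $\ACCN^{+n}$ is the problem of solving $\ACCN$ in at most $n$ attempts, so that $\ACCN^{+n}\equiv_\W^*\ACCN^n$. Second, $\mathbf{Z}_n$ is a represented space whose points are stratified into levels $0,\dots,n$, where a point of level $k$ is a limit of points of level $<k$. The first step is to fix these identifications. The equivalence $\ACCN^{+n}\equiv_\W^*\ACCN^n$ should follow in the same way as Theorem~\ref{Thm:EllIndependent}: $\hash^\N$ is a rank-$n$ discontinuity of both problems, and Theorem~\ref{Thm:MainTheoremPointWise} can be applied in both directions.

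For (1)$\Rightarrow$(2), I would exhibit one fixed discontinuous $Q_n:\mathbf{Z}_n\tto\B$ with $Q_n\equiv_\W^*\ACCN^n$, and then compose reductions. The natural candidate transports $\ACCN^n$ along a map sending $\hash^\N$ to the top point of $\mathbf{Z}_n$ and sending the instances $\hash^j m w$ into the level-$(n-1)$ part. The reduction in both directions is then essentially the identity on names. Discontinuity of $Q_n$ follows from the discontinuity theorem for $\ACCN^\alpha$ at the start of Section~\ref{Sec:TheProblemsACCN}, together with Theorem~\ref{Thm:DiscontinuityToDiscontinuity}.

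For (2)$\Rightarrow$(1), take a discontinuous $Q:\mathbf{Z}_n\tto\B$ with $Q\leq_\W^*P$. By Theorem~\ref{Thm:ContinuityIsContinuityAtAllPoints}, $\realizerver{Q}$ has a point of discontinuity $p$. It then suffices to show that $p$ is a rank-$\beta$ discontinuity of $\realizerver{Q}$ for some $\beta\le n$. Granting this, Theorem~\ref{Thm:DiscontinuityGameCharacterization} gives Player~1 a winning strategy in the $n$-$\realizerver{Q}$ game, and Theorem~\ref{Thm:DiscontinuityGameInvariant} transfers it to the $n$-$P$ game. Theorem~\ref{Thm:WinningStrategyImpliesAboveACCN} then yields $\ACCN^n\leq_\W^*P$.

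The main obstacle is the rank bound on discontinuities of $\realizerver{Q}$. I would prove by induction on $k$ the claim: every discontinuity of $\realizerver{Q}$ (or of any restriction $\realizerver{Q}|[\llb\sigma\rrb\times\llb\rho\rrb]$) that names a point of level $k$ has rank $\le k$.
\begin{itemize}
\item Base case $k=0$: level-$0$ points should have open sets of names, so Proposition~\ref{prop:ContinuousAtOpenPoints} shows that a discontinuity there must have no solutions, i.e.\ it has rank $0$.
\item Inductive step: use the characterization in Theorem~\ref{Thm:ExtendingSolutions}. The witnesses $p'\in\llb\tau\rrb\setminus\{p\}$ appearing in Definition~\ref{Def:RankOfDiscontinuity} must name points of strictly lower level. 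Otherwise a neighborhood of names of a single level-$k$ point would carry the discontinuity, and I expect this to contradict the choice of $\rho\sqsubset q$ via the same pasting argument as in Theorem~\ref{Thm:ExtendingSolutions}.
\end{itemize}
If the actual definition of $\mathbf{Z}_n$ is instead given by a scattered subset of $\B$ of pointwise rank $n$, this step becomes immediate from Theorem~\ref{Thm:MainTheorem}, condition~(\ref{MainTheoremItem:DiscontinuousOnPointWiseRankedSet}), applied to $A=\dom(\realizerver{Q})$.
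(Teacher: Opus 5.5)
The paper does not prove this statement. It quotes it from Sold\`a and Pauly, and the only fact it records about $\mathbf{Z}_n$ is that $\rankp(\dom(\delta_{\mathbf{Z}_n}))=n$. It combines that fact with Theorem~\ref{Thm:MainTheorem} to get $\ACCN^{+n}\equiv_\W^*\ACCN^n$, which it then proves directly in the corollary right after. Measured against this, your proposal has two genuine gaps.

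\emph{The step from $\ACCN^n$ back to $\ACCN^{+n}$ is misjustified.} $\ACCN^{+n}$ is the $n$-fold sum: $n$ instances are given in parallel and a correct answer to at least one is required. It is not $\ACCN^n$ under a different rank-witnessing representation, so the argument of Theorem~\ref{Thm:EllIndependent} does not transfer. Theorem~\ref{Thm:MainTheoremPointWise} only ever produces lower bounds of the form $\ACCN^\alpha\leq_\W^*(\cdot)$. It gives $\ACCN^n\leq_\W^*\ACCN^{+n}$, but never $\ACCN^{+n}\leq_\W^*\ACCN^n$. That missing direction is exactly what your (2)$\Rightarrow$(1) needs to pass from $\ACCN^n\leq_\W^*P$ to statement (1).

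You need the direct reduction, as in the paper: feed the $i$-th summand to the $i$-th attempt of $\ACCN^n$, pad with $\hash$ until the index $j$ has $\ell_n(j)$ large enough, and read off the successive guesses. With that repaired, (2)$\Rightarrow$(1) follows from your fallback alone. The set $\dom(\realizerver{Q})\subseteq\dom(\delta_{\mathbf{Z}_n})$ has pointwise rank at most $n$, since Cantor--Bendixson rank is monotone under subsets, so Theorem~\ref{Thm:MainTheorem} applies directly.

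Drop the induction on levels of points. Transferring a point-level rank on a represented space to ranks of discontinuities of $\realizerver{Q}$ is exactly what the paper reports failing in general (Definition~\ref{Def:RankOfSpace}, Question~\ref{Question:Rank1Counterexample}, and Goh's counterexample). So the contradiction you expect in the inductive step cannot come from a general pasting argument.

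\emph{(1)$\Rightarrow$(2) is not established, and the rank of $\dom(\delta_{\mathbf{Z}_n})$ alone cannot give it.} Suppose a representation had the same domain but named a single point. Every $Q$ on it with nonempty values has a constant realizer, and a $Q$ with an empty value cannot reduce to $\ACCN^{+n}$. Yet (1) holds for $P=\ACCN^{+n}$. So this direction encodes specific information about $\delta_{\mathbf{Z}_n}$ that you have not supplied.

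Your candidate $Q_n$ is not checked against any definition. Transporting $\ACCN^n$ requires a continuous map into names that is compatible with $\delta_{\mathbf{Z}_n}$. Only then is the transported problem well defined on points, and only then are the identity-on-names reductions valid for every name of a point. Sending all inputs $\hash^j m w$ into the level-$(n-1)$ part is also not rank-correct: $\hash^j m\hash^\N$ has rank $\ell_n(j)$ in $\dom(\ACCN^n)$, and $\hash^j m w$ with $w\neq\hash^\N$ has lower rank. Without the actual definition of $\mathbf{Z}_n$, this half of the equivalence remains unproved.
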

Here, we write $\ACCN^{+n} = \ACCN + \ACCN + \dots + \ACCN$ for the problem which takes $n$ instances of $\ACCN$ and outputs a solution to at least one of them. See \cite{brattkaGherardiPaulyWeihrauchComplexityComputable2021} for a more precise definition. 

For $n \in \N$ with $n \geq 1$, the set $\dom(\delta_{\mathbf{Z}_n})$ has pointwise Cantor-Bendixson rank $\rankp(\dom(\delta_{\mathbf{Z}_n})) = n$, so together with Corollary~\ref{Cor:CompactnessEquivalence} and Theorem~\ref{Thm:MainTheorem}, the above theorem implies that $\ACCN^{+n} \equiv_\W^* \ACCN^n$ for all such $n$. In fact, they are equivalent under computable Weihrauch reducibility. We sketch a proof of this fact. The reduction from $\ACCN^n$ to $\ACCN^{+n}$ is nontrivial and requires much of the power of our proof of Theorem~\ref{Thm:WinningStrategyImpliesAboveACCN}, although it can be slightly simplified because no limit ordinals are involved. However, we apply the argument as-is.
\begin{Corollary}
	For each natural number $n$, $\ACCN^n \equiv_\W \ACCN^{+n}$. 
\end{Corollary}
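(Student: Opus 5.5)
We prove both reductions, $\ACCN^n \leq_\W \ACCN^{+n}$ and $\ACCN^{+n} \leq_\W \ACCN^n$, and in each we track that the witnessing functionals are computable rather than merely continuous. Throughout we fix the rank-witnessing representation with $\ell_m(j) = m-1$ for all $j$ and all $1 \leq m \leq n$. This is computable, and by Theorem~\ref{Thm:EllIndependent} (whose forward and backward maps we would check are computable for finite ordinals) the choice does not matter.

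\emph{Reduction $\ACCN^{+n} \leq_\W \ACCN^n$.} This direction is direct. Given $n$ instances $x_1,\dots,x_n$ of $\ACCN$, we build an instance of $\ACCN^n$ that keeps printing $\hash$ until some $x_i$ reveals its non-$\hash$ value. At that point it commits to the value of the first such revealed instance as the current $\ACCN$ instance, and then recursively continues with the remaining $n-1$ instances as an $\ACCN^{n-1}$ instance. The backward map reads the solution. If the solver answers some $m$ at the level belonging to instance $x_i$, then either that instance eventually revealed $m' \neq m$, so $m$ solves $x_i$; or the solver's guess was refuted and the next nested level begins. The nesting depth is at most $n$, so the solution handed back is correct for at least one $x_i$. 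The one point to check is that the backward map can decide which instance a given level corresponds to; it can, by inspecting the input tuple up to the finite position recorded in the forward output. Both maps are plainly computable.

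\emph{Reduction $\ACCN^n \leq_\W \ACCN^{+n}$.} This is the harder direction, and we get it by applying Theorem~\ref{Thm:WinningStrategyImpliesAboveACCN} with $P = \ACCN^{+n}$ and $p = \hash^\N$, the tuple of $n$ empty instances. First we exhibit a winning Player~1 strategy $\Gamma$ in the rank-$n$ discontinuity game for $\ACCN^{+n}$ with first move $\hash^\N$, arguing by induction on $n$. Player~2 answers with a solution $q$ naming some index $i$ and value $m$. Player~1 plays $\rho$ of length long enough to fix the pair $(i,m)$. Player~2 then plays $\sigma \sqsubset \hash^\N$. Player~1 responds with the point in which instance $i$ becomes $\hash^{|\sigma|} m \hash^\N$ and all other instances stay empty, together with the ordinal $n-1$. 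The restricted problem above $\rho$ is then an $\ACCN^{+(n-1)}$ on the other coordinates, and the same move is available there, so the induction hypothesis applies. When no coordinates remain, no solution lies above $\rho$, and Player~1 wins in rank $0$.

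The main obstacle is effectivity. Theorem~\ref{Thm:WinningStrategyImpliesAboveACCN} only yields continuous functionals, so we must re-read its proof and verify that every ingredient is computable for this particular $\Gamma$. The ingredients are: the enumeration $\rho_i$ of $\{\Gamma(q)\}$, which is a computable list of finite strings indexed by pairs $(i,m)$; the moves $r(i,\sigma)$ and $\beta(i,\sigma)$, which are computable, with $\beta(i,\sigma)$ constant equal to $n-1$; the strings $\tau_{i,s+1}$ and $\sigma_{s+1}$; the indices $n_s$; and the sub-reductions $\Phi_{i,\sigma,\gamma}$ and $\Psi_{i,\sigma,\gamma}$. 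For the sub-reductions we strengthen the induction hypothesis to produce computable reductions uniformly in the parameters. Since the ordinals involved are finite and $\beta(i,\sigma)$ is constant, the $n_s$ are trivial to compute. Given all this, the proof of Theorem~\ref{Thm:WinningStrategyImpliesAboveACCN} goes through verbatim with computable $\Phi$ and $\Psi$.
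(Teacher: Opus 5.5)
Your direction $\ACCN^n \leq_\W \ACCN^{+n}$ follows the paper's argument. You use the same computable Player~1 strategy in the rank-$n$ game for $\ACCN^{+n}$ with first move $\tup{\hash^\N,\dots,\hash^\N}$, feed it into the proof of Theorem~\ref{Thm:WinningStrategyImpliesAboveACCN}, and check that every ingredient is computable.

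The gap is in the direction you call direct, $\ACCN^{+n} \leq_\W \ACCN^n$, and it comes from ordering the levels by reveal time. The point you single out, that the backward map can tell which $x_i$ a given level belongs to, is false. A level is attributed to an instance only once some remaining instance reveals a value, and that may never happen.

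Take the input where every $x_i$ is $\hash^\N$. The forward output is $\hash^\N$, and the solver may answer $m\hash^\N$. No level is ever attributed to any $x_i$, so your backward map never writes a number into any coordinate. Its output is therefore not an $\ACCN^{+n}$-solution, since some coordinate must contain a correct guess.

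Any repair must commit the first guess to some coordinate $c$ after finitely many steps, while no $x_i$ has revealed anything. By continuity, the same commitment is made on inputs where a different $x_{c'}$ later reveals first, with a value $a\neq m$; on those inputs $m\hash^\N$ is still a valid $\ACCN^n$-solution. So the routing rule ``the level-$k$ guess goes to the coordinate of the level-$k$ instance'' cannot be followed. The same issue recurs at every later level whose instance is still undetermined when the solver's guess arrives. Making the dynamic ordering work would need different bookkeeping, such as hedging guesses across several coordinates, plus a proof that it still succeeds. The proposal supplies neither.

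The fix is to keep the order fixed, as the paper does: level $k$ always encodes $x_k$.
\begin{itemize}
\item \emph{Forward map.} Copy the $\hash$'s of $x_1$. Only once $x_1$ has revealed its value $a_1$, write $a_1$ and continue with an $\ACCN^{n-1}$-instance built the same way from $x_2,\dots,x_n$. If $x_1$ never reveals, the instance is $\hash^\N$ and the solver's first guess already solves $x_1$, so never looking at $x_2,\dots,x_n$ costs nothing.
\item \emph{Backward map.} Write the $k$-th number occurring in the $\ACCN^n$-solution into coordinate $k$. If the solver is first correct at level $k$, its earlier guesses were $a_1,\dots,a_{k-1}$ and its $k$-th guess solves $x_k$.
\end{itemize}
This works for any computable rank-witnessing representation if the forward map pads with $\hash$'s until it reaches an index $j$ with $\ell_n(j)=n-1$ before writing a revealed value. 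So you do not need Theorem~\ref{Thm:EllIndependent}, which in any case is only proved for $\equiv_\W^*$ and via the main theorem.
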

\begin{proof}
	First note that the statement of this corollary makes sense since, for each finite ordinal $n$, the set of rank-witnessing representations of $n$ is uniformly computable. 
	
	We have that $\ACCN^{n} \geq_\W \ACCN^{+n}$ by sending $\ACCN^{+n}$'s $i$'th instance of $\ACCN$ to $\ACCN^{n}$'s $i$'th instance of $\ACCN$. Then, $\ACCN^{+n}$ can guess each of $\ACCN^{n}$'s answers in order and at least one of them is correct. The same approach does not work in the other direction because $\ACCN^{+n}$ may never provide a guess for its first instance, while $\ACCN^n$ must make its guesses in the order the instances appear.

	Instead, we argue via Theorem~\ref{Thm:MainTheorem} and the proof of Theorem~\ref{Thm:WinningStrategyImpliesAboveACCN}. Since $\ACCN^{+n}$ has \[\tup{\hash^\N, \hash^\N, \dots, \hash^\N}\] as its sole point of discontinuity and \[\rank_{\dom(\ACCN^{+n})}(\tup{\hash^\N, \hash^\N, \dots, \hash^\N}) = n,\] we immediately obtain $\ACCN^n \leq_\W^* \ACCN^{+n}$ by Theorem~\ref{Thm:MainTheorem}. However, we want a computable reduction.  
	
	The $\Phi$ and $\Psi$ given in the proof of Theorem~\ref{Thm:WinningStrategyImpliesAboveACCN} are computable if the $\rho_i$ are computably enumerable and the functions $\beta(i,\sigma)$, $r(i,\sigma)$, $\Phi_{i,\sigma,\gamma}$, and $\Psi_{i,\sigma,\gamma}$ are computable. In the case of showing that $\ACCN^{+n} \geq_\W \ACCN^{n}$, these functions are computable from the following computable winning Player~1 strategy for the $n$-$\ACCN^{+n}$ game. 
	\begin{itemize}
		\item First play $\tup{\hash^\N,\hash^\N, \dots, \hash^\N}$.
		\item Player~2's move $q$ must include some guess $i \in \N$ at some coordinate. Search $q$ until such a guess is found and then play some $\rho$ which locks in this guess. 
		\item In response to any $\sigma \sqsubset \tup{\hash^\N,\hash^\N, \dots, \hash^\N}$, play $n-1$ and some \[p' = \tup{\hash^\N,\hash^\N, \dots, \hash^n i \hash^\N, \hash^\N, \dots, \hash^\N},\] with the $\hash^n i \hash^\N$ at the coordinate which $q$ guessed $i$ for and $n$ so that $\sigma \sqsubset p'$. 
		\item Continue with a similar strategy for $\ACCN^{+{(n-1)}}$. 
	\end{itemize}
\end{proof}

Next, we show that undiagonalizable problems have the weakest form of discontinuity in our hierarchy. 

\begin{Definition}
	Let $P: \subset \B \tto \B$ be a problem. We say that $P$ is  \emph{undiagonalizable} if there is a computable procedure which, for all $\rho \in \N^{<\N}$ and $p \in \dom(P)$, decides whether $P(p) \cap \llb \rho \rrb = \emptyset$.
\end{Definition}

\begin{Theorem}\label{Thm:undiagonalizableImpliesPlayer2Win}
	Let $P: \subset \B \tto \B$ be undiagonalizable and suppose that every instance of $P$ has a solution. Then, Player~2 has a winning strategy in the infinite discontinuity game for $P$. 
\end{Theorem}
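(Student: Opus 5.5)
The plan is to give Player~2 an explicit strategy that keeps the game going forever. At every step it maintains the invariant that the current restricted problem has a solution for every legal Player~1 move. We work with the unranked version of the discontinuity game described in the Remark after Definition~\ref{def:DiscontinuityGame}. In that version Player~1 wins only by playing an instance with no solution inside the current solution constraint, and Player~2 wins every infinite run.

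The key input is a continuity property of the undiagonalizability procedure. Fix the computable procedure deciding, for $\rho \in \N^{<\N}$ and $p \in \dom(P)$, whether $P(p) \cap \llb \rho \rrb = \emptyset$. Since the procedure is computable, on input $(\rho,p)$ it halts after reading only a finite prefix of $p$. So whenever it answers ``nonempty'' on $(\rho, p)$, there is $\sigma \sqsubset p$ such that it gives the same answer on $(\rho,p')$ for every $p' \in \dom(P) \cap \llb \sigma \rrb$. In other words,
\[
(\A p' \in \dom(P) \cap \llb\sigma\rrb)\,\bigl(P(p') \cap \llb \rho \rrb \neq \emptyset\bigr).
\]
We may lengthen $\sigma$ freely along $p$ without losing this property.

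Player~2's strategy is then as follows.
\begin{enumerate}
\item In the first round Player~1 plays some $p_0 \in \dom(P)$. By hypothesis $P(p_0) \neq \emptyset$, and Player~2 answers with any $q_0 \in P(p_0)$.
\item Suppose Player~1 answers with $\rho_0 \sqsubset q_0$. Then $P(p_0)\cap\llb\rho_0\rrb \ni q_0$ is nonempty, so Player~2 plays a $\sigma_0 \sqsubset p_0$ as above for $\rho_0$.
\item In a later round the current problem is $P|[\llb \sigma_m \rrb \times \llb \rho^*_m \rrb]$. Here $\sigma_m$ is the last $\sigma$ played, and $\rho^*_m$ is the longest of the $\rho$'s played so far. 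These $\rho$'s are pairwise comparable, since each $\rho_{k+1}$ is an initial segment of some $q_{k+1} \in \llb \rho^*_k\rrb$.
\item Any legal Player~1 move $p_{m+1}$ lies in $\dom(P) \cap \llb \sigma_m \rrb$. By the invariant it has a solution $q_{m+1} \in P(p_{m+1}) \cap \llb \rho^*_m \rrb$, which Player~2 plays.
\item When Player~1 then plays $\rho_{m+1} \sqsubset q_{m+1}$, the new constraint $\rho^*_{m+1}$ is still an initial segment of $q_{m+1}$. So the procedure answers ``nonempty'' on $(\rho^*_{m+1}, p_{m+1})$. Player~2 plays $\sigma_{m+1} \sqsubset p_{m+1}$ extending $\sigma_m$ and witnessing the continuity property for $\rho^*_{m+1}$.
\end{enumerate}
This restores the invariant for the next round. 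By induction Player~1 can never play an instance with no admissible solution, so every run is infinite and Player~2 wins.

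I expect the main obstacle to be bookkeeping rather than mathematics. First, one must check that iterated restrictions $P|[\llb\sigma\rrb\times\llb\rho\rrb]$ compose to the restriction by the longest $\sigma$ and the longest $\rho$. Second, one must make precise that ``computable procedure'' yields the finite-use property for each fixed $\rho$. If the intended reading of undiagonalizability only asks for computability relative to names, the same use-principle argument still applies, since any computable functional is continuous. As a corollary, the paper presumably combines this with Theorem~\ref{Thm:DiscontinuityGameCharacterization}: Player~2 also wins every rank-$\alpha$ game, so $P$ is continuous on all scattered subsets of its domain.
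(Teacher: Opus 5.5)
Your proof is correct and follows the same idea as the paper's. Player~2 answers each $\rho$ with a $\sigma \sqsubset p$ longer than the use of the decision procedure's ``nonempty'' answer, so every later instance in $\llb \sigma \rrb$ still has a solution in the current cone. Your re-application of this step in every round, using the longest $\rho$ played so far, is the bookkeeping the paper's one-round sketch (``from now on, $\Delta$ always has access to a legal move'') actually needs. Nonemptiness of $P(p')\cap\llb\rho\rrb$ alone does not give nonemptiness above a proper extension $\rho' \sqsupset \rho$ that Player~1 may play later.
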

\begin{proof}
	Let $\Phi: \dom(P) \times \N^{<\N} \to \{0,1\}$ be a computable functional such that $\Phi(p,\sigma) = 1$ if and only if $\llb \sigma \rrb \cap P(p) \neq \emptyset$. We define a winning strategy $\Delta$ for Player~2 in the infinite discontinuity game for $P$ as follows. In round one, when Player~1 plays $p \in \dom(P)$, Player~2 responds with some arbitrary $q \in P(p)$. When Player~1 plays $\rho \sqsubset q$, we have that $p$ has $P$-solutions above $\rho$, so $\Phi(p,\rho) = 1$. Then, we have $\Delta$ respond with $\sigma \sqsubset p$ longer than the use of $\Phi(p,\rho) = 1$. 
	\begin{equation*}
		\begin{array}{c|c}
			\text{Player~1 }& \text{Player~2} (\Delta) \\
			p & q\\
			\rho & \sigma \text{ such that } \Phi(\llb \sigma \rrb, \rho) = \{1\} \\
		\end{array}
	\end{equation*}
	Since $\Phi(\llb \sigma \rrb, \rho) = \{1\}$, we have that $P(p') \cap \llb \rho' \rrb \neq \emptyset$ for all $p' \in \llb \sigma \rrb$ and $\rho' \sqsupset \rho$. Hence, from now on, $\Delta$ always has access to a legal move and wins the game. \footnote{The proof of Theorem~\ref{Thm:undiagonalizableImpliesPlayer2Win} was produced by generative AI. See the acknowledgements section.}

\end{proof}

Finally, we finish the proof of Theorems~\ref{Thm:MainTheorem} and \ref{Thm:MainTheoremPointWise} by proving that (\ref{MainTheoremPointwiseItem:DiscontinuousOnPointWiseRankedSet}) implies (\ref{MainTheoremPointwiseItem:WinningDiscontinuityGameInAlphaTurns}) in the statement of Theorem~\ref{Thm:MainTheoremPointWise}. We will use the following characterizations of discontinuity on some set of a certain rank.

\begin{Proposition}	\label{Prop:NbhdRankingOfDiscontinuities}
	Let $P: \subset \B \tto \B$ be a problem, let $p \in \dom(P)$ be an instance of $P$ and let $\alpha$ be a countable ordinal. The following are equivalent. 
	\begin{enumerate}
		\item \label{RankDiscontinuityDef:Local}There exists a set $A \subset \dom(P)$ with $\rank_A(p) \leq \alpha$ and such that $p$ is a discontinuity of $P|_A$.  
		\item \label{RankDiscontinuityDef:Neighborhood}There exists a set $A \subset \dom(P)$ and an open neighborhood $U$ of $p$ such that $p$ is a discontinuity of $P|_{A \cap U}$ and $U$ is a $\leq\!\!\alpha$-isolating neighborhood of $p$ in $A$.
		\item \label{RankDiscontinuityDef:DiscontinuousAndIsolating} There exists a set $A \subset \dom(P)$ such that $p$ is a discontinuity of $P|_A$, and such that $\rankp(A) \leq \alpha$.
	\end{enumerate}
\end{Proposition}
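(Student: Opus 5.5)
I will prove the cycle (\ref{RankDiscontinuityDef:Local}) $\Rightarrow$ (\ref{RankDiscontinuityDef:Neighborhood}) $\Rightarrow$ (\ref{RankDiscontinuityDef:DiscontinuousAndIsolating}) $\Rightarrow$ (\ref{RankDiscontinuityDef:Local}). The last implication is immediate: if $\rankp(A) \leq \alpha$ then every point of $A$, in particular $p$, has $\rank_A(p) \leq \alpha$.

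For (\ref{RankDiscontinuityDef:Local}) $\Rightarrow$ (\ref{RankDiscontinuityDef:Neighborhood}), take the same $A$. Since $\rank_A(p) \leq \alpha$, Definition~\ref{CBRank} gives a $\leq\!\!\alpha$-isolating neighborhood $U$ of $p$ in $A$. Since $p$ is a discontinuity of $P|_A$ and $U$ is an open set containing $p$, Lemma~\ref{Lem:DiscontinuityPreservedByOpenRestrictions} shows that $p$ is a discontinuity of $(P|_A)|_U = P|_{A\cap U}$.

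For (\ref{RankDiscontinuityDef:Neighborhood}) $\Rightarrow$ (\ref{RankDiscontinuityDef:DiscontinuousAndIsolating}), the plan is to pass to $B := A \cap U$.
\begin{enumerate}
\item $p$ is a discontinuity of $P|_B$ by hypothesis.
\item For $\rank_B(p)$: by Lemma~\ref{Lem:SurroundedByLowerRanks}, every $q \in A \cap U$ with $q \neq p$ satisfies $\rank_A(q) < \alpha$. Lemma~\ref{Lem:OpenSetsPreserveRank} (read with $U$ open in the ambient space and the ranks computed in $A$ versus $A\cap U$) gives $\rank_B(q) = \rank_A(q) < \alpha$ for all such $q$.
\item Likewise $\rank_B(p) = \rank_A(p) \leq \alpha$, since $U$ itself witnesses $\rank_A(p) \le \alpha$.
\end{enumerate}
Hence every point of $B$ has rank at most $\alpha$ in $B$, so $\rankp(B) \leq \alpha$, and $B$ witnesses (\ref{RankDiscontinuityDef:DiscontinuousAndIsolating}).

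The only delicate point is applying Lemma~\ref{Lem:OpenSetsPreserveRank}. Its hypothesis is stated as ``$U \subset A$ open,'' but its proof uses only that $U$ is open in $\B$ and that $p \in A \cap U$. So I will invoke it in that form, or else reprove the one needed inequality directly. That inequality is $\rank_{A\cap U}(q) \le \rank_A(q)$, and it follows by induction on rank: intersecting an isolating neighborhood with $U$ keeps it isolating for the smaller set $A\cap U$. Nothing else requires real work.
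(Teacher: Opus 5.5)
Your proof is correct and follows the paper's argument essentially step for step. You use the same cycle of implications, Lemma~\ref{Lem:DiscontinuityPreservedByOpenRestrictions} for (1)$\Rightarrow$(2), and the passage to $B = A \cap U$ via Lemmas~\ref{Lem:SurroundedByLowerRanks} and~\ref{Lem:OpenSetsPreserveRank} for (2)$\Rightarrow$(3), with only cosmetic differences: the paper splits off $\alpha = 0$ as a separate case, whereas you correctly let Lemma~\ref{Lem:SurroundedByLowerRanks} hold vacuously there, and you note that Lemma~\ref{Lem:OpenSetsPreserveRank} is really needed with $U$ open in $\B$, which is exactly how the paper applies it.
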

\begin{proof}
	(\ref{RankDiscontinuityDef:DiscontinuousAndIsolating}) $\implies$ (\ref{RankDiscontinuityDef:Local}): The set $A$ which witnesses (\ref{RankDiscontinuityDef:DiscontinuousAndIsolating}) also witnesses (\ref{RankDiscontinuityDef:Local}). 

	(\ref{RankDiscontinuityDef:Local}) $\implies$ (\ref{RankDiscontinuityDef:Neighborhood}): Let $A$ be such that $p$ is a discontinuity of $P|_A$ and such that $\rank_A(p) \leq \alpha$. By definition of rank, there is an open neighborhood $U$ of $p$ which is a $\leq\!\!\alpha$-isolating neighborhood of $p$ in $A$. Since $p \in U$ we have that $P|_{A \cap U}$ is discontinuous at $p$ by Lemma~\ref{Lem:DiscontinuityPreservedByOpenRestrictions}. Hence, $A$ and $U$ witness (\ref{RankDiscontinuityDef:Neighborhood}).

	(\ref{RankDiscontinuityDef:Neighborhood}) $\implies$ (\ref{RankDiscontinuityDef:DiscontinuousAndIsolating}): Let $A$ and $U$ witness (\ref{RankDiscontinuityDef:Neighborhood}) and set $B := A \cap U$. Then $p$ is a discontinuity of $P|_B$. 
	If $\alpha = 0$, then $U$ isolates $p$ in $A$, so $B = \{p\}$ and therefore $\rankp(B) = 0$. 
	Now suppose that $\alpha > 0$. Since $U$ is a $\leq\!\!\alpha$-isolating neighborhood of $p$ in $A$, Lemma~\ref{Lem:SurroundedByLowerRanks} gives $\rank_A(q) < \alpha$ for all $q \in B$ with $q \neq p$. By Lemma~\ref{Lem:OpenSetsPreserveRank}, for all such $q$, we have $\rank_B(q) = \rank_A(q)$. Hence $\rank_B(p) \leq \alpha$, and for every $q \in B$ with $q \neq p$ we have $\rank_B(q) < \alpha$. Therefore $\rankp(B) \leq \alpha$, so $B$ witnesses (\ref{RankDiscontinuityDef:DiscontinuousAndIsolating}).
\end{proof}

\begin{Theorem}	\label{Thm:RankDiscontinuityEquivalence}
	Let $P: \subset \B \tto \B$ be a problem, let $p \in \dom(P)$, and let $\alpha$ be a countable ordinal. Then, (\ref{RankDiscontinuityEquivalence:Item1}) $\implies$ (\ref{RankDiscontinuityEquivalence:Item2}) in the following list of statements.
	\begin{enumerate}
		\item \label{RankDiscontinuityEquivalence:Item1} There is a set $A \subset \dom(P)$ such that  $P|_A$ is discontinuous, $p$ is a discontinuity of $P|_A$ and $\rank_A(p) \leq \alpha$.
		\item \label{RankDiscontinuityEquivalence:Item2} Player~1 has a winning strategy for the rank-$\alpha$ discontinuity game for $P$ with initial move $p$.
	\end{enumerate}
\end{Theorem}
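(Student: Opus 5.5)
We argue by induction on $\alpha$, proving the statement simultaneously for all problems $P$ and all points $p$, since the inductive step will be applied to restrictions $P|[\llb\sigma\rrb\times\llb\rho\rrb]$.

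\textbf{Reduction.} By Proposition~\ref{Prop:NbhdRankingOfDiscontinuities}, Lemma~\ref{Lem:DiscontinuityPreservedByOpenRestrictions} and Lemma~\ref{Lem:OpenSetsPreserveRank}, we may replace $A$ by $A\cap U$ for a $\leq\!\!\alpha$-isolating neighborhood $U$ of $p$. Hence we may assume that $p$ is a discontinuity of $P|_A$ and that every $p'\in A\setminus\{p\}$ has $\rank_A(p')<\alpha$. A winning strategy for Player~1 in a game for $P|_A$ (or for a further restriction of it) is also a winning strategy in the corresponding game for $P$. This is because the restricted game only limits Player~1's moves at Step~\ref{GameStep:Player1Plays}, and it leaves Player~2's options unchanged. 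We will use this monotonicity freely.

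\textbf{Base case.} For $\alpha=0$, the set $U$ isolates $p$, so $P|_{\{p\}}$ is discontinuous. This forces $P(p)=\emptyset$, so Player~1 wins immediately by playing $p$.

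\textbf{Inductive step, first round.} Let $\alpha>0$. Player~1 first plays $p$, and Player~2 answers with some $q\in P(p)$. Since $p$ is a discontinuity of $P|_A$, Theorem~\ref{Thm:ExtendingSolutions} applied to $P|_A$ gives $\rho\sqsubset q$ such that for every $\sigma\sqsubset p$ there is $\tau$ with $\sigma\sqsubset\tau\not\sqsubset p$ and $P|_A[\llb\tau\rrb\times\llb\rho\rrb]$ not continuous. Player~1 plays this $\rho$. When Player~2 answers with $\sigma$, Player~1 takes the corresponding $\tau$. By Theorem~\ref{Thm:ContinuityIsContinuityAtAllPoints}, the discontinuous problem $P|_A[\llb\tau\rrb\times\llb\rho\rrb]$ has a point of discontinuity $p'$. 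This point lies in $A\cap\llb\tau\rrb$, so $p'\neq p$, and therefore $\gamma:=\rank_A(p')<\alpha$.

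\textbf{Inductive step, rank bookkeeping and continuation.} By Lemma~\ref{Lem:OpenSetsPreserveRank}, $\rank_{A\cap\llb\tau\rrb}(p')=\gamma$. Hence $p'$ is a discontinuity of $(P|[\llb\sigma\rrb\times\llb\rho\rrb])|_{A\cap\llb\tau\rrb}$ of rank $\le\gamma$, using that $\llb\tau\rrb\subset\llb\sigma\rrb$. The induction hypothesis now yields a winning strategy for Player~1 in the rank-$\gamma$ game for $P|[\llb\sigma\rrb\times\llb\rho\rrb]$ with first move $p'$. So Player~1 chooses $\gamma$ at Step~\ref{GameStep:Player1ChoosesOrdinal} and follows that strategy, which completes the construction.

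\textbf{Main obstacle.} The delicate point is that the induction hypothesis needs its own witness set in the precise form of item~(\ref{RankDiscontinuityEquivalence:Item1}). That means we must confirm that $P$ restricted to $A\cap\llb\tau\rrb$ and to the codomain $\llb\rho\rrb$ is globally discontinuous, which holds by the choice of $\tau$. We must also confirm that restricting both the domain and the codomain does not raise the rank of $p'$. Monotonicity of the games under restriction then closes the argument. The construction also uses choice (of $\rho$, $\tau$, $p'$ and the sub-strategies), which is acceptable here.
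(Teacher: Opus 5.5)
Your proposal is correct and follows essentially the same argument as the paper: induction on $\alpha$, localizing to a $\leq\!\!\alpha$-isolating neighborhood via Proposition~\ref{Prop:NbhdRankingOfDiscontinuities}, taking $\rho$ from Theorem~\ref{Thm:ExtendingSolutions}, extracting a discontinuity $p' \neq p$ above $\tau$ with $\rank_A(p') < \alpha$, and invoking the induction hypothesis. The differences are cosmetic: you shrink $A$ to $A \cap U$ up front rather than extending $\sigma$ into $U$, and you apply the induction hypothesis directly to $P|[\llb\sigma\rrb\times\llb\rho\rrb]$ with witness set $A\cap\llb\tau\rrb$, which is slightly more careful than the paper's appeal to $P|[\llb\tau\rrb\times\llb\rho\rrb]$.
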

\begin{proof}
	We argue by induction. For $\alpha = 0$, we have that Player~1 wins the $0$-$P$ game with initial move $p$ if and only if $P(p)$ is empty, which is also equivalent to there being a set $A \subset \dom(P)$ (not necessarily an open subset of $\dom(P)$) such that $p$ is isolated on $A$ and $p$ is a point of discontinuity of $P|_{A}$. 

	Now suppose that $\alpha > 0$ and assume the theorem for all $\gamma < \alpha$. 

	Assume that (\ref{RankDiscontinuityEquivalence:Item1})  holds for some $\beta \leq \alpha$. By Proposition~\ref{Prop:NbhdRankingOfDiscontinuities}, there are a set $A \subset \dom(P)$ and an open neighborhood $U$ of $p$ such that $p$ is a discontinuity of $P|_{A \cap U}$ and $U$ is a $\leq\!\!\beta$-isolating neighborhood of $p$ in $A$. We construct a Player~1 strategy $\Gamma$. 

	Set Player~1's first move $\Gamma(\lambda)$ equal to $p$. Let $q \in P(p)$. Let $\rho \sqsubset q$ be a witness to Theorem~\ref{Thm:ExtendingSolutions} for $q$ and set $\Gamma(q) = \rho$. Fix $\sigma \sqsubset p$. Let $\sigma' \sqsubset p$ be an extension of $\sigma$ such that $\llb \sigma'\rrb \subset U$. By our assumption on $\rho$, there is $\tau \sqsupset \sigma'$ with $\tau \not \sqsubset p$ such that $P|[\llb \tau \rrb \times \llb \rho \rrb]$ is not continuous. 
	By Theorem~\ref{Thm:ContinuityIsContinuityAtAllPoints}, $P|[\llb \tau \rrb \times \llb \rho \rrb]$ has a point of discontinuity $p'$ on $A \cap \llb \tau \rrb$. Since $\llb \tau \rrb \subset U$ and $\tau \not\sqsubset p$, we have $p' \in A \cap U$ and $p' \neq p$. Thus Lemma~\ref{Lem:SurroundedByLowerRanks} gives $\rank_A(p') < \beta$. Let $\gamma = \rank_A(p')$. Then $\gamma < \beta \leq \alpha$. Set $\Gamma(q,\sigma) := \gamma$. 
	
	By Lemma~\ref{Lem:OpenSetsPreserveRank}, we have that $\rank_{A \cap \llb \tau \rrb}(p') = \rank_A(p') = \gamma$. Hence, (\ref{RankDiscontinuityEquivalence:Item1}) holds for $A$, $p'$, $\gamma$, and  $P|[\llb \tau \rrb \times \llb \rho \rrb]$. By induction, Player~1 has a winning strategy in the $\gamma$-$P|[\llb \tau  \rrb\times \llb \rho \rrb]$ game, which we set $\Gamma$ to follow. 
	
	Since this works for any Player~2 choices of $q$ and $\sigma$, we may conclude that $\Gamma$ is a winning strategy for Player~1 in the $\alpha$-$P$ game. 
\end{proof}
This completes the proof of Theorems~\ref{Thm:MainTheorem} and \ref{Thm:MainTheoremPointWise}.  As a consequence, we also get an extension of Theorem~\ref{Thm:DiscontinuityToDiscontinuity} to ranked discontinuities. 

\begin{Corollary}\label{Cor:RanksAreWeihrauchInvariant}
	Suppose that $P \leq_\W^* Q$ via $\Phi$ and $\Psi$ and suppose that $p \in \dom(P)$ is a rank-$\alpha$ discontinuity of $P$. Then $\Phi(p)$ is a rank-$\beta$ discontinuity of $Q$ for some $\beta \leq \alpha$.
\end{Corollary}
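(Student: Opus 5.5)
The plan is to route this through the game characterization rather than work with Definition~\ref{Def:RankOfDiscontinuity} directly. Let $p$ be a rank-$\alpha$ discontinuity of $P$.

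\textbf{Step 1: from rank to a winning strategy for $P$.} Apply the ($\impliedby$) direction of Theorem~\ref{Thm:DiscontinuityGameCharacterization} with $\beta = \alpha$. This gives Player~1 a winning strategy in the rank-$\alpha$ discontinuity game for $P$ whose first move is $p$.

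\textbf{Step 2: transport the strategy to $Q$.} Apply part~(1) of Theorem~\ref{Thm:DiscontinuityGameInvariant} to the reduction $P \leq_\W^* Q$ via $\Phi$ and $\Psi$. This yields a winning Player~1 strategy in the rank-$\alpha$ discontinuity game for $Q$ whose first move is $\Phi(p)$. That theorem already handles the delicate bookkeeping: choosing $\hat\rho$ via continuity of $\Psi$ and $\sigma$ via continuity of $\Phi$, so that $P|[\llb\sigma\rrb\times\llb\rho\rrb] \leq_\W^* Q|[\llb\hat\sigma\rrb\times\llb\hat\rho\rrb]$.

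\textbf{Step 3: from the strategy back to a rank for $Q$.} Apply the ($\implies$) direction of Theorem~\ref{Thm:DiscontinuityGameCharacterization} to $Q$ and $\Phi(p)$. This shows $\Phi(p)$ is a rank-$\beta$ discontinuity of $Q$ for some $\beta \leq \alpha$.

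One point needs a check: Definition~\ref{Def:RankOfDiscontinuity} presupposes that $\Phi(p)$ is a discontinuity of $Q$. This follows from Theorem~\ref{Thm:DiscontinuityToDiscontinuity}, since $p$ is a discontinuity of $P$. Alternatively, it follows because a winning Player~1 strategy from $\Phi(p)$ already gives condition~\ref{item:NodeCharacterization} of Theorem~\ref{Thm:ExtendingSolutions}.

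\textbf{Alternative route.} Chain through Theorem~\ref{Thm:MainTheoremPointWise} instead. By (\ref{MainTheoremPointwiseItem:HaveARankLeqAlphaDiscontinuity})$\Rightarrow$(\ref{MainTheoremPointwiseItem:AboveACCN}), there is a reduction $\ACCN^\alpha \leq_\W^* P$ with forward function $\Phi'$ satisfying $\Phi'(\hash^\N)=p$. Composing with $(\Phi,\Psi)$ gives $\ACCN^\alpha \leq_\W^* Q$, and the composed forward function sends $\hash^\N$ to $\Phi(p)$. Then (\ref{MainTheoremPointwiseItem:AboveACCN})$\Rightarrow$(\ref{MainTheoremPointwiseItem:HaveARankLeqAlphaDiscontinuity}) for $Q$ finishes. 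The only real verification here is that the composition of continuous Weihrauch reductions has forward function $\Phi\circ\Phi'$, which is immediate from Definition~\ref{Def:WeihrauchReducibility}.

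I expect no serious obstacle, since the substantive work sits in the cited theorems. The point most worth checking is the ($\implies$) direction of Theorem~\ref{Thm:DiscontinuityGameCharacterization}. It relies on Proposition~\ref{Prop:WinningStratNotRepeating} to guarantee $p' \neq \Phi(p)$, and the transported strategy from Step~2 might repeat the instance $\Phi(p)$. That proposition applies to arbitrary winning strategies, so this is harmless. I would still state explicitly that the strategy is first modified by Proposition~\ref{Prop:WinningStratNotRepeating} while keeping the same first move $\Phi(p)$.
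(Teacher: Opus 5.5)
Your main route is correct and is essentially the paper's proof. Both arguments obtain a winning Player~1 strategy from $p$, transport it to a strategy with first move $\Phi(p)$ via Theorem~\ref{Thm:DiscontinuityGameInvariant}(1), and read off the rank with Theorem~\ref{Thm:DiscontinuityGameCharacterization}; the paper cites Theorem~\ref{Thm:MainTheoremPointWise} for the first step, but that implication is exactly Theorem~\ref{Thm:DiscontinuityGameCharacterization}, and your concern about repeated instances is already handled inside that theorem's proof, which invokes Proposition~\ref{Prop:WinningStratNotRepeating} itself.
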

\begin{proof}
	By Theorem~\ref{Thm:MainTheoremPointWise}, we have that Player 1 wins the $\alpha$-$P$ game with first move $p$. By Theorem~\ref{Thm:DiscontinuityGameInvariant}, we have that Player~1 wins the $\alpha$-$Q$ game with first move $\Phi(p)$. By Theorem~\ref{Thm:DiscontinuityGameCharacterization} we have that $\Phi(p)$ is a rank-$\beta$ discontinuity of $Q$ for some $\beta \leq \alpha$.
\end{proof}

\subsection{A Note  on Single Valued Functions}
\label{sec:SingleValuedFunctions}
\input{SingleValued.tex}

%% file: SingleValued.tex
We briefly discuss the ranks of local discontinuities of single valued functions. When referring to discontinuities of single valued functions in this subsection, we mean the classical $\epsilon$-$\delta$ definition of discontinuity at a point: $p$ is a discontinuity of $f: \B \to \B$ if there exists $\rho \sqsubset f(p)$ such that for all $\sigma \sqsubset p$ we have $f(\llb \sigma \rrb) \not\subset \llb \rho \rrb$. 

We say that a single-valued function $f: \B \to \B$ is \emph{locally continuous at} $p \in \B$ if there is an open neighborhood $U$ of $p$ such that $f|_U$ is continuous. We say that $f$ is \emph{locally discontinuous} at $p \in \B$ if $f$ is not locally continuous at $p$. 

It is well known that $f$ is continuous if and only if $f$ is locally continuous at each point of its domain. It is also well known that this equivalence does not extend to points.  Indeed, the common pedagogical example of a function which is continuous at exactly one point
\[
    f(p) = \begin{cases}
        0^\omega & \text{if } p \in \B \setminus \Q\\
        p & \text{if } p \in \Q,
    \end{cases}
\]
is continuous at $0^\omega$ but also locally discontinuous at $0^\omega$. 

Local continuity at $p$ implies continuity at $p$. On the other hand, the converse is partially true in the sense that $f$ is locally discontinuous at $p$ if and only if $p$ is in the closure of the set of discontinuities of $f$. This implies that local discontinuities of single valued functions are all of rank either $1$ or $2$, which we prove below. 

\begin{Proposition}
    \label{Prop:DiscontinuitiesOfSingleValuedFunctions}
    Let $f: \B \to \B$ be a single valued function and let $p \in \dom(f)$ be a discontinuity of $f$. Then, $p$ is a rank-$1$ discontinuity of $f$. 
\end{Proposition}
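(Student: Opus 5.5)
Here $p$ is a discontinuity of $f$ in the classical $\epsilon$-$\delta$ sense. Since $f$ is total, $f(p)$ exists, so $f(p)$ is the unique element of $f(p)$-as-a-set and $p$ is not a rank-$0$ discontinuity. It therefore suffices to verify the rank-$1$ clause of Definition~\ref{Def:RankOfDiscontinuity} directly. By the ``least ordinal'' requirement, this gives rank exactly $1$.

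First, choose $\rho \sqsubset f(p)$ witnessing the $\epsilon$-$\delta$ discontinuity: for every $\sigma \sqsubset p$ we have $f(\llb \sigma \rrb) \not\subset \llb \rho \rrb$. The only solution to $p$ is $q = f(p)$, so this one $\rho$ is what we play for the universal quantifier over $q \in P(p)$.

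Next, fix $\sigma \sqsubset p$. By the choice of $\rho$ there is $p' \in \llb \sigma \rrb$ with $f(p') \notin \llb \rho \rrb$. We have $p' \neq p$, because $f(p) \in \llb \rho \rrb$. We also have $p' \in \dom(f) = \B$. Consider the restricted problem $f|[\llb \sigma \rrb \times \llb \rho \rrb]$. Its value at $p'$ is $\{f(p')\} \cap \llb \rho \rrb = \emptyset$, so $p'$ is an instance with no solutions. We then check that $p'$ is a discontinuity of this restricted problem in the sense of Definition~\ref{def:continuous_at_a_point}: no neighborhood $U$ of $p'$ carries a realizer, since a realizer would have to output an element of the empty set at $p'$. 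By the first clause of Definition~\ref{Def:RankOfDiscontinuity}, $p'$ is a rank-$0$ discontinuity of $f|[\llb \sigma \rrb \times \llb \rho \rrb]$, and $0 < 1$.

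Combining the two steps, $\alpha = 1$ satisfies the condition in the definition. Since $p$ is not of rank $0$, the least such $\alpha$ is $1$. The only point needing care is the implicit requirement that $p$ be a discontinuity in the paper's local sense for Definition~\ref{Def:RankOfDiscontinuity} to apply at all. This follows because classical discontinuity at $p$ prevents any realizer on a neighborhood of $p$: such a realizer would have to equal $f$, as $f$ is single valued, and would then be continuous at $p$. Alternatively, it follows from Theorem~\ref{Thm:ExtendingSolutions}, using the same $\rho$ and $\tau$ chosen as an initial segment of $p'$ long enough to leave $p$.
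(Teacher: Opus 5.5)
Your proof is correct and uses the same witnesses as the paper: $\rho \sqsubset f(p)$ from the failure of $\epsilon$-$\delta$ continuity, and then $p' \in \llb \sigma \rrb$ with $f(p') \notin \llb \rho \rrb$, which is an instance with no solutions of the restricted problem. The paper instead packages this as a Player~1 winning strategy in the rank-$1$ discontinuity game and implicitly relies on Theorem~\ref{Thm:DiscontinuityGameCharacterization}, whereas you verify Definition~\ref{Def:RankOfDiscontinuity} directly and also make explicit three points the paper leaves implicit: that $p' \neq p$, that $p$ is not of rank $0$, and that classical discontinuity implies discontinuity in the sense of Definition~\ref{def:continuous_at_a_point}.
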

\begin{proof}
    We show that Player~1 has a winning strategy in the rank-$1$ discontinuity game for $f$ with initial move $p$. Suppose that Player~1 plays $p$ on Step~\ref{GameStep:Player1Plays}. Since $f$ is single-valued, Player~2 must play $f(p)$ on Step~\ref{GameStep:Player2Playsq}. Since $p$ is a discontinuity of $f$, there is an open set $V \ni f(p)$ such that for all neighborhoods $U \ni p$, $f(U) \not \subset V$. Then, Player~1 can play some $\rho$ at Step~\ref{GameStep:CheckAlphaGreaterThanZero} such that $f(p) \in \llb \rho \rrb \subset V$. Then, for any $\sigma \sqsubset p$ played by Player~2 at Step~\ref{GameStep:Player2PlaysSigma}, we have that $f(\llb \sigma \rrb) \not\subset V$. In particular, there is some $p' \in \llb \sigma \rrb$ such that $f(p') \not\in V$. Then, Player~1 can play $p'$ at Step~\ref{GameStep:Player1Plays} and win the game. 
\end{proof}

\begin{Proposition}
    Let $f: \B \to \B$ be a single valued function and let $p \in \dom(f)$ be a local discontinuity of $f$ but not a discontinuity of $f$. Then, $p$ is a rank-$2$ discontinuity of $f$.
\end{Proposition}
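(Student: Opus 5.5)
The plan is to show two things: $p$ is not a rank-$\le 1$ discontinuity of $f$, and $p$ is a rank-$\le 2$ discontinuity of $f$. By Theorem~\ref{Thm:DiscontinuityGameCharacterization}, both can be phrased through the discontinuity game with first move $p$. Throughout, "discontinuity of $f$" in the sense of Definition~\ref{Def:RankOfDiscontinuity} means local discontinuity. Since $f$ is total on $\B$, no instance lacks a solution, so there are no rank-$0$ discontinuities.

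\emph{Upper bound.} First I will show that Player~1 wins the rank-$2$ game starting from $p$. Player~2 must answer $f(p)$, and Player~1 plays $\rho$. Here I would try $\rho$ equal to the empty string, or any $\rho \sqsubset f(p)$, since the restriction $f|[\llb\sigma\rrb \times \llb\rho\rrb]$ only shrinks the relevant solution sets. Player~2 then plays some $\sigma \sqsubset p$.

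Local discontinuity at $p$ means $f|_{\llb\sigma\rrb}$ is not continuous. By the classical fact recalled above, $p$ lies in the closure of the set of classical discontinuities of $f$. So there is a classical discontinuity $p' \in \llb\sigma\rrb$. Because $p$ itself is a classical continuity point, $p' \neq p$.

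Now $p'$ should be a classical discontinuity of $g := f|[\llb\sigma\rrb \times \llb\rho\rrb]$. The main subtlety is that $g$ is only partially defined with possibly empty values. I will take $\rho = \lambda$ so that $g = f|_{\llb\sigma\rrb}$, a single-valued function on an open set. Then $p'$ is still a classical discontinuity of $g$, because discontinuity is witnessed inside every small neighborhood. I will check that the proof of Proposition~\ref{Prop:DiscontinuitiesOfSingleValuedFunctions} works verbatim for functions defined on an open set $\llb\sigma\rrb$. That gives Player~1 a winning strategy in the rank-$1$ game for $g$ from $p'$, so Player~1 can choose $\gamma = 1$ and follow it.

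\emph{Lower bound.} Next I will show that $p$ is not a rank-$1$ discontinuity. Suppose Player~1 plays $p$, Player~2 plays $f(p)$, and Player~1 plays some $\rho \sqsubset f(p)$. Since $f$ is classically continuous at $p$, Player~2 can choose $\sigma \sqsubset p$ with $f(\llb\sigma\rrb) \subset \llb\rho\rrb$. Player~1 must then choose $\gamma = 0$ and play some $p''$ with empty solution set in $f|[\llb\sigma\rrb \times \llb\rho\rrb]$, i.e.\ $f(p'') \notin \llb\rho\rrb$. No such $p''$ exists, so Player~2 wins.

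Hence Player~1 has no winning strategy in the rank-$1$ game. By Theorem~\ref{Thm:DiscontinuityGameCharacterization}, $p$ is not a rank-$\beta$ discontinuity for any $\beta \le 1$, and combined with the upper bound its rank is exactly $2$.

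The main obstacle I expect is in the upper bound: justifying that a classical discontinuity point of $f$ inside $\llb\sigma\rrb$ exists, and that it is a rank-$1$ discontinuity of the restricted problem. I would prove the existence directly. If $f|_{\llb\sigma\rrb}$ had no classical discontinuity, it would be continuous on the open set $\llb\sigma\rrb$, contradicting local discontinuity at $p$.
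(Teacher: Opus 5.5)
Your proposal is correct and follows the paper's proof. Player~1 wins the rank-$2$ game from $p$ by playing $\rho=\lambda$, then $\gamma=1$ together with a classical discontinuity $p'\in\llb\sigma\rrb$; Player~2 wins the rank-$1$ game by answering any $\rho$ with a $\sigma$ satisfying $f(\llb\sigma\rrb)\subset\llb\rho\rrb$. You also check that the continuation is the rank-$1$ game for $f|_{\llb\sigma\rrb}$ and that classical discontinuity of $f$ at $p'$ carries over to this restriction, a detail the paper leaves implicit.
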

\begin{proof}
    First, we show that Player~1 has a winning strategy in the rank-$2$ discontinuity game for $f$ with initial move $p$. Suppose that Player~1 plays $p$ on Step~\ref{GameStep:Player1Plays}. Since $f$ is single-valued, Player~2 must play $f(p)$ on Step~\ref{GameStep:Player2Playsq}. Player~1's Step~\ref{GameStep:CheckAlphaGreaterThanZero} move does not matter; we can have them play $\lambda$, the empty string. For any Player~2 move $\sigma \sqsubset p$, there is a discontinuity $p'$ of $f$ with $p' \in \llb \sigma \rrb$  since $p$ is a local discontinuity of $f$. Player~1 can then play $1$ at Step~\ref{GameStep:Player1ChoosesOrdinal} and then follow the winning strategy for $p'$ in the rank-$1$ discontinuity game for $f$ with initial move $p'$.

    Next, we show that Player~2 has a winning strategy in the rank-$1$ discontinuity game for $f$ with initial move $p$. Suppose that Player~1 plays $p$ on Step~\ref{GameStep:Player1Plays}. Since $f$ is single-valued, Player~2 must play $f(p)$ on Step~\ref{GameStep:Player2Playsq}. For any Player~1 move $\rho$ at Step~\ref{GameStep:CheckAlphaGreaterThanZero}, there is some $\sigma \sqsubset p$ such that $f(\llb \sigma \rrb) \subset \llb \rho \rrb$ by continuity of $f$ at $p$. We have Player~2 play $\sigma$ at Step~\ref{GameStep:Player2PlaysSigma}. Then, Player~1 must play $0 < 1$ at Step~\ref{GameStep:Player1ChoosesOrdinal}. For any Player~1 move $p'$ at Step~\ref{GameStep:Player1Plays}, we have that $f(p') \in \llb \rho \rrb$. Then, Player~2 can play $f(p')$ at Step~\ref{GameStep:Player2Playsq}. Since the ordinal is $0$, Player~2 wins at Step~\ref{GameStep:CheckAlphaGreaterThanZero}. 
\end{proof}

%% file: AchromaticRamseyTheorems.tex
For a coloring $c: \N \to \N$, recall that we defined $\TS^1_\N(c) = \{A \subset \N: A \text{ is infinite and } c(A) \neq \N\}$. Consider the following proof that $\TS^1_\N$ is not continuous. 

\begin{proof}
    Suppose that $\Phi$ is a continuous realizer for $\TS^1_\N.$ We construct a $c: \N \to \N$ such that $c(\Phi(c)) = \N$. Let $c_0 \equiv 0$ be the constant coloring which colors every element $0$. Then, $\Phi(c_0)$ is some infinite set $A_0 \in 2^\N$. Since $A_0$ is infinite, there is $\phi_0 \sqsubset A_0$ such that $|\{i: \phi_0(i) = 1\}| \geq 1$. Since $\Phi$ is continuous, there is $\theta_0 \sqsubset c_0$ such that $\Phi(\llb \theta_0 \rrb) \subset \llb \phi_0 \rrb$. Then, $0 \in c'(\Phi(c'))$ for any $c'$ extending $\theta_0$.
    
    We can repeat this process above $\theta_0$ and $\phi_0$ to obtain $\theta_1 \sqsupset \theta_0$ and $\phi_1 \sqsupset \phi_0$ such that $\{0,1\} \subset c'(\Phi(c'))$ for any $c'$ extending $\theta_1$. Continuing in this way, we obtain a sequence of strings $\theta_0 \sqsubset \theta_1 \sqsubset \dots$ such that $\{0, 1, \dots, k\} \subset c'(\Phi(c'))$ for any $c'$ extending $\theta_k$. Let $c = \bigcup_{k} \theta_k$. Then, $c$ is a coloring such that $c(\Phi(c)) = \N$, contradicting the assumption that $\Phi$ is a realizer for $\TS^1_\N$.
\end{proof}

We can view the proof above as a run of an infinite version of the discontinuity game in which, at the $i$'th stage, Player~1 plays $c_i$, Player~2 responds with $\Phi(c_i)$, Player~1 plays $\phi_i$, and Player~2 responds with $\theta_i$. At each stage, Player~1 forces Player~2 to add a specific color to their solution. On the other hand, Player~2 can ensure that Player~1 must adopt such a strategy by always playing a set with as few new colors as possible. 

A natural question is whether each stage of the above proof corresponds to a change in uniform strength. Since Player~1 can force Player~2 to add a new color at each stage, in $k$ stages of play they can build a partial coloring so that $|c(\Phi(c))| \geq k$. Furthermore, they can build such a coloring with exactly $k+1$ many colors. The need to use more stages materializes in the strictness of the reductions 
\[
    \TS^1_2 >_\W \TS^1_3 >_\W \TS^1_4 >_\W \dots >_\W \TS^1_\N.
\] 

One can go one step further: what if the number of times Player~2 can add a new color to their solution is determined dynamically? Instead of fixing the limit to the number of colors in the solution, we could allow the solver to increase the number of colors a number of times controlled by an ordinal $\beta$. At the same time, we could also allow the instance to have a number of colors controlled by an ordinal $\alpha$. This leads to a family of problems $\RT^n_{\alpha,\beta}$, the thin set and achromatic Ramsey theorems for transfinitely-valued number of colors. 

\subsection{Definition of \texorpdfstring{$\RT^n_{\alpha,\beta}$}{RT\^{}n\_\{alpha,beta\}}}

We now define a class of thin set Ramsey theorems with ordinal-valued indices controlling both the number of colors in the coloring and the number of colors allowed in the thin set. 
On both the instance and solution side, the ordinal controls the number of colors by decreasing each time a new color is added. Once the ordinal reaches zero, no new colors may be added. 

Before we can define $\RT^n_{\alpha,\beta}$, we define represented spaces $\mathbf{C}^n$ of colorings from $[\N]^n$ to $\N$; $\mathbf{S(\alpha)}$ of decreasing sequences of ordinals strictly less than $\alpha$; $\mathbf{C^n_\alpha}$ of pairs $(c,\vec{\eta})$ of colorings $c$ and decreasing sequences of ordinals $\vec{\eta}$ such that $\vec{\eta}$ decreases each time a new color is added to $c$; and $\mathbf{R}^c_\beta$ of pairs $(A,\vec{\zeta})$ of sets $A$ and decreasing sequences of ordinals $\vec{\zeta}$ such that $\vec{\zeta}$ must decrease each time a new color is added to $c([A]^n)$.

To keep track of when new colors are added to $c$, we need a concrete representation of colorings $C^n = \{c| c: [\N]^n \to \N\}$. For each $n \in \N$, fix a one-to-one enumeration $\langle \cdot \rangle: [\N]^n \to \N$ of size $n$ subsets of $\N$. We then define the represented space $\mathbf{C}^n = (C^n, \delta_{\mathbf{C}^n})$ of $\N$-colorings of $n$-element subsets of $\N$ by 
    \[
        \delta_{\mathbf{C}^n}(p)(a_0, a_1, \dots, a_{n-1}) = p(\langle a_0, a_1, \dots, a_{n-1} \rangle).
    \]
Note that $\deltarep{\mathbf{C}^n}$ is bijective. We identify $c \in C^n$ with $\delta_{\mathbf{C}^n}^{-1}(c)$. Thus, we write $c(i)$ for $c(\langle a_0, a_1, \dots, a_{n-1} \rangle)$ where $i = \langle a_0, a_1, \dots, a_{n-1} \rangle$.

While we needed rank-witnessing representations of countable ordinals to define $\ACCN^\alpha$, we only need enumerations of the ordinals below either $\alpha$ or $\beta$ to define $\RT^n_{\alpha,\beta}$. For each $0 < \gamma < \omega_1$, fix an enumeration $w_\gamma: \N \to \gamma$, possibly with repetitions. When $\gamma < \omega$, we also require that $w_\gamma$ be computable. 

Let $S(\alpha)$ be the set of strictly decreasing sequences of ordinals strictly less than $\alpha$.

For an ordinal $\alpha$ and a finite sequence $\vec{v} = v_0v_1\dots v_{k-1} \in\N^{<\N}$, we associate an element of $S(\alpha)$ by
\[
s_\alpha(\vec{v}) = \gamma_0 \gamma_1 \dots  \gamma_{k-1}
\]
where $\gamma_{-1}=\alpha$ and for $i \geq 0$,
\[
    \gamma_i = w_{\gamma_{i-1}}(v_i).
\]
Note that $s_\alpha$ is not total on $\N^{<\N}$: if $\gamma_i=0$ for some $i<k - 1$, then $s_\alpha(\vec{v})$ is not defined.

Given a coloring $c \in C^n$ and $i \in \N$, we say that $i = \langle a_0, \dots, a_{n-1} \rangle$ \emph{adds a new color} to $c$ if and only if $c(i) \not\in c(\{j: j < i\})$. Let $N(c) = \{i: i \text{ adds a new color to } c\}$ be the set of indices which add new colors to $c$. Note that $N(c)$ is infinite if and only if $c$ has infinitely many colors. Once we define $\RT^n_{\alpha,\beta}$ for $\alpha < \omega_1$, this will imply that all instances contain only finitely many colors.

Next, we describe how we encode $(c, \vec{\eta})$ in a way such that $\vec{\eta}$ decreases whenever a new color is added to $c$. We will require that our names interleave the information from $c$ and from $\vec{\eta}$. For each coloring $c \in C^n$ let
\[
    m_0<\dots<m_{k-1}
\]
be the stages at which we add a new color to $c$. Then, $c$ has form
\[
    c=\theta_0\theta_1\dots\theta_{k-1} p
\]
such that for $0\le i<k$,
\[
    \theta_0\theta_1\dots\theta_i = c|_{m_i + 1}. 
\]
In other words, each $\theta_i$ ends exactly when the $i$'th new color appears. For $\vec{v}=(v_0,\dots,v_{k-1})\in\N^{<\N}$ define the interleaving $c \smashtimes \vec{v}$ of $c$ and $\vec{v}$ by \footnote{We borrow the symbol $\smashtimes$ from topology, in which it is sometimes used to denote the smash product of two pointed topological spaces. While we claim no mathematical connection to the smash product of pointed spaces, the author believes that ``smash product'' is an apt name for what $\smashtimes$ does to its arguments as defined here.}
\[
    c \smashtimes \vec{v}
    :=
    \theta_0\hash v_0 \theta_1\hash v_1\cdots\theta_{k-1}\hash v_{k-1}p.
\]
We define the same operation $\theta \smashtimes \vec{v}$ for initial segments of colorings $\theta \sqsubset c \in C^n$ and $\vec{u} \sqsubset \vec{v}$ such that $|\vec{u}| = |\range(\theta)| =: r $ by
\[
    \theta \smashtimes \vec{u} = \theta_0\hash u_0 \theta_1\hash u_1\cdots\theta_{r-1}\hash u_{r-1}\theta_r,
\]
where $\theta = \theta_0\theta_1\dots\theta_{r}$ and, for $i < r$, the string $\theta_i$ ends when a new color is added to $\theta$, as above. 

Fix $\alpha<\omega_1$. We define 
\[
C^n \smashtimes S(\alpha) = \{c \smashtimes \vec{v}: c \in C^n, \vec{v} \in \dom(s_\alpha), \text{ and } c \smashtimes \vec{v} \text{ is defined}\}.
\] 
Hence, if $c \smashtimes \vec{v} \in C^n \smashtimes S(\alpha)$, then $s_\alpha(\vec{v})$ is a decreasing sequence of ordinals strictly less than $\alpha$ whose decreases correspond with new colors being added to $c$. 

We then define the representation $\deltarep{C^n_\alpha}: \subset \extendedBaireSpace \tto C^n \times S(\alpha)$ by
\[
    \dom(\deltarep{C^n_\alpha}) = C^n \smashtimes S(\alpha)
\]
and
\[
    \deltarep{C^n_\alpha}(c\smashtimes \vec{v})=\left(c,s_{\alpha}(\vec{v})\right).
\]
and the represented space
\[
    \mathbf{C^n_\alpha}=\left(\range(\deltarep{C^n_\alpha}),\deltarep{C^n_\alpha}\right)
\]
For $\alpha = \N$, we define $\mathbf{C}^n_\N$ to be the set of colorings of $[\N]^n$ with finitely many colors. 

For $c \smashtimes \vec{v} \in C^n \smashtimes S(\alpha)$, let $\theta \sqsubset c$ be an initial segment of $c$ and let $\vec{v}' \sqsubset \vec{v}$ be an initial segment of $\vec{v}$. In the same way that $c \smashtimes \vec{v}$ is a name for $(c, s_{\alpha}(\vec{v}))$, we say that $\theta \smashtimes \vec{v}'$ is a name for $(\theta, s_{\alpha}(\vec{v}'))$. 

Now fix $c\in C^n$ and infinite $A\subseteq\N$. Suppose that $|c([A]^n)|\leq k$. To get the correct strong Weihrauch degree of $\RT^n_{k,j}$ for $j < k \in \N$, we need to allow solutions to choose their ordinals without knowing exactly when new colors are added to $A$. For a sequence $\vec{v} = (v_0,\dots,v_{k-1})$, define a set $A \smashtimes_c \vec{v}$ of $c$-respecting names for $(A,\vec{v})$ by
\[
    A \smashtimes_c \vec{v} := \{\phi_0 \hash v_0 \phi_1 \hash v_1 \cdots \phi_{k-1} \hash v_{k-1} q: \phi_0 \phi_1 \cdots \phi_{k-1} q = A \text{ and } (\forall i \leq k)(|c([\phi_0 \phi_1 \cdots \phi_{i -1}]^n)| \leq i)\}.
\]
Hence, elements of $A \smashtimes_c \vec{v}$ can only add new colors to $A$ if $\vec{v}$ is long enough.  

For a fixed $c$ and $\beta < \omega_1$, we then define $2^\omega \smashtimes_c S(\beta) \subset \extendedBaireSpace$ by
\[
    2^\omega \smashtimes_c S(\beta) = \bigcup \{A \smashtimes_c \vec{v}: A \subset \N, \vec{v} \in \dom(s_\beta), \text{ and } A \smashtimes_c \vec{v} \text{ is defined}\}.
\]

We define the representation $\delta_{\mathbf{R}^c_\beta}: \subset \extendedBaireSpace \tto 2^\omega \times S(\beta)$ by $\dom(\delta_{\mathbf{R}^c_\beta}) = 2^\omega \smashtimes_c S(\beta)$ and for all $x \in A \smashtimes_c \vec{v}$, we set $\deltarep{R^c_\beta}(x) = (A, s_\beta(\vec{v}))$. Then, we have the represented space
\[
    \mathbf{R}^c_\beta = \left(\range(\deltarep{R^c_\beta}), \deltarep{R^c_\beta}\right).
\]

For $\phi \sqsubset A$ and $\vec{v'} = v_0 \dots v_{r-1}\sqsubset \vec{v}$, we define 
\[
\phi \smashtimes_c \vec{v}' = \{\rho: (\exists q \in A \smashtimes_c \vec{v})(\rho \sqsubset q) \text{ and }\rho = \phi_0\hash v_0 \dots \phi_{r-1}\hash v_{r-1} \phi_r \text{ such that } \phi_0\phi_1\dots \phi_r = \phi\}. 
\]
We say that $\rho$ is a name for $(\phi, s_\beta(\vec{v}'))$ in $\mathbf{R}^c_\beta$ if $\rho \sqsubset q$ for some $q \in A \smashtimes_c \vec{v}$ such that $\deltarep{R^c_\beta}(q) = (A, s_\beta(\vec{v}))$, $\phi \sqsubset A$ and $\vec{v}' \sqsubset \vec{v}$.

We are now ready to give the final coding of $\RT^n_{\alpha,\beta}$.
\begin{Definition}
    \label{def:AchromaticRamseyTheorem}
    Fix $\beta<\omega_1$ and $\alpha<\omega_1$ [respectively, $\alpha=\N$]. For each $n\in\N$, define
    \[
        \RT^n_{\alpha,\beta}: \mathbf{C^n_\alpha} \tto \extendedBaireSpace
    \]
by $\RT^n_{\alpha,\beta}(c,s) = \dom(\deltarep{R^c_\beta})$ [respectively, $\RT^n_{\N,\beta}(c) = \dom(\deltarep{R^c_\beta})$]. 
\end{Definition}

Thus, instances of $\RT^n_{\alpha,\beta}$ are names for elements of $C^n \smashtimes S(\alpha)$ and solutions to $(c,s)$ are names for elements of $\mb{R}^c_\beta$. 

The choice of enumerations $w_\gamma$ does not change the strong continuous Weihrauch degree of $\RT^n_{\alpha,\beta}$, since from any two enumerations $w$ and $w'$ of $\gamma$ one can computably transform names for $(c,s)$ in $\mathbf{C^n_\alpha}$ to a name for $(c,s)$ in the version of $\mathbf{C^n_\alpha}$ defined using $w'$, and similarly for names for $(A,s')$ in $\mathbf{R}^c_\beta$ (see Theorem~\ref{thm:BasicThinSetReductions}). For finite ordinals, such oracles are always computable, so choice of enumeration of $j$ and $k$ does not change the strong Weihrauch degree of $\RT^{n}_{k,j}$. For $\omega > k > j$, the problems $\RT^n_{k,j}$ as defined here are Weihrauch equivalent to their standard definitions. For the forward directions of these reductions, we send $c$ to $(c, k -1, \dots, 1, 0)$ and send $(c, s)$ to $c$. For the backward directions, we send $A$ to $ \hash(j-1)\hash(j-2)\hash \dots \#0 A \in A \smashtimes_c (j -1, \dots, 1, 0)$ (where we assume without loss of generality that $w_h(i) = i$ for all $i < h \in \N$) and send $(A, s)$ to $A$.

\begin{RemarkNum}
    \label{rem:DealingWithNames}
    We conclude this subsection with a notational note. When a $\mathbf{C}^n_\alpha$-name $p$ is given for $(c,\vec{\eta})$, each initial segment $\sigma \sqsubset p$ has a unique $\theta \sqsubset c$ and $\vec{\eta'} \sqsubset \vec{\eta}$ such that $\sigma$ is a name for $(\theta, \vec{\eta'})$. Similarly, when a $\mathbf{R}^c_\beta$-name $q$ is given for $(A,\vec{\zeta})$, each initial segment $\rho \sqsubset q$ has a unique $\phi \sqsubset A$ and $\vec{\zeta'} \sqsubset \vec{\zeta}$ such that $\rho$ is a name for $(\phi, \vec{\zeta'})$. In this paper, we will never refer to multiple names for the same instance or solution of $\RT^n_{\alpha,\beta}$, so when names $p$ and $q$ are given for $(c,\vec{\eta})$ and $(A,\vec{\zeta})$, respectively, we shall write $(\theta, \vec{\eta'}) \sqsubset (c,\vec{\eta})$ and $(\phi, \vec{\zeta'}) \sqsubset (A,\vec{\zeta})$ to refer to the unique initial segments of $\sigma \sqsubset p$ and $\rho \sqsubset q$ which are names for $(\theta, \vec{\eta'})$ and $(\phi, \vec{\zeta'})$, respectively.
    
    Note that for fixed $\theta \sqsubset c$ and $\vec{\eta'} \sqsubset \vec{\eta}$, the existence of a name $\sigma \sqsubset p$ for $(\theta, \vec{\eta'})$ depends on the structure of $p$. A similar statement holds for $\phi \sqsubset A$ and $\vec{\zeta'} \sqsubset \vec{\zeta}$. Nevertheless, it does hold that for each $\theta \sqsubset c$ there exists $\vec{\eta'} \sqsubset \vec{\eta}$ such that there is a name $\sigma \sqsubset p$ for $(\theta, \vec{\eta'})$, and for each $\vec{\eta'} \sqsubset \vec{\eta}$ there exists $\theta \sqsubset c$ such that there is a name $\sigma \sqsubset p$ for $(\theta, \vec{\eta'})$. Again, a similar statement holds for $\phi \sqsubset A$ and $\vec{\zeta'} \sqsubset \vec{\zeta}$. We will adhere to this limitation when referring to $(\theta, \vec{\eta'})$ as an initial segment of $(c,\vec{\eta})$ and $(\phi, \vec{\zeta'})$ as an initial segment of $(A,\vec{\zeta})$.
\end{RemarkNum}

\subsection{On the Size of the Thin Set}
In this section, we compute the least $\gamma$ such that $\RT^n_{\alpha,\beta}$ has a rank-$\gamma$ discontinuity. Our primary method is to construct winning strategies for Player~1 and Player~2 in the discontinuity game of different ranks. Throughout, we refer to the moves in Definition~\ref{def:DiscontinuityGame} as Step~1 through Step~5 moves.

We immediately obtain that no instances of $\RT^n_{\alpha,0}$ have a solution, since any infinite subset of $\N$ must have at least one color. Hence, $\RT^n_{\alpha,0}$ has a rank-0 discontinuity. Furthermore, if $\alpha \leq \beta$, then $\RT^n_{\alpha,\beta}$ is continuous via the continuous function which sends a name of $(c, (\gamma_0, \gamma_1, \dots, \gamma_k))$ to a name of $(\N, (\gamma_0, \gamma_1, \dots, \gamma_k))$. Thus, $\RT^n_{\alpha,\beta}$ has no discontinuities when $\alpha \leq \beta$. This leaves the cases when $\alpha > \beta > 0$.
 
Before proceeding, we establish some basic reductions between $\RT^n_{\alpha,\beta}$ and $\RT^n_{\nu,\mu}$.

\begin{Theorem}
    \label{thm:BasicThinSetReductions}
    Fix any $n \in \N$. Suppose $\beta \leq \mu < \nu \leq \alpha \in \omega_1 \cup \{\N\}$. Then,
    \[
        \RT^n_{\alpha,\beta} \geq_\sW^* \RT^n_{\nu,\mu}.
    \]
    Furthermore, if $\omega > \alpha$, then
    \[
        \RT^n_{\alpha,\beta} \geq_\sW \RT^n_{\nu,\mu}.
    \]
\end{Theorem}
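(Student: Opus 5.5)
We prove the reduction by a strong reduction whose forward functional $\Phi$ sends a name $c \smashtimes \vec{v}$ of an $\RT^n_{\nu,\mu}$-instance $(c,\vec{\eta})$ to a name of $(c,\vec{\eta}')$ with $\vec{\eta}'$ an $\alpha$-sequence, and whose backward functional $\Psi$ sends a name of an $\RT^n_{\alpha,\beta}$-solution $(A,\vec{\zeta})$ to a name of $(A,\vec{\zeta}')$ with $\vec{\zeta}'$ a $\mu$-sequence. It is convenient to factor the argument into two monotonicity steps, using transitivity of $\leq_\sW^*$:
\[
\RT^n_{\alpha,\beta} \geq_\sW^* \RT^n_{\nu,\beta} \geq_\sW^* \RT^n_{\nu,\mu}.
\]
The first step shrinks the instance ordinal from $\alpha$ to $\nu$; the second enlarges the solution ordinal from $\beta$ to $\mu$. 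Both are instances of one \emph{re-indexing lemma}. Let $\gamma \leq \gamma'$ (with $\gamma'$ possibly $\N$ in the instance case). Then there is a continuous map on $\N^{<\N}$, monotone under $\sqsubseteq$, sending $\vec{v} \in \dom(s_\gamma)$ to $\vec{v}' \in \dom(s_{\gamma'})$ with $|\vec{v}'| = |\vec{v}|$ and $s_{\gamma'}(\vec{v}')_i = s_\gamma(\vec{v})_i$ for every $i$.

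We construct $\vec{v}'$ letter by letter. Suppose we have read $v_0,\dots,v_i$ and computed $\gamma_i = w_{\gamma_{i-1}}(v_i)$, where $\gamma_{i-1} \leq \gamma'_{i-1}$ and in fact $\gamma_{i-1} = \gamma'_{i-1}$ for $i \geq 1$. Choose $v'_i$ with $w_{\gamma'_{i-1}}(v'_i) = \gamma_i$. Such a $v'_i$ exists because $\gamma_i < \gamma_{i-1} \leq \gamma'_{i-1}$ and $w_{\gamma'_{i-1}}$ enumerates all of $\gamma'_{i-1}$. Each letter depends only on the finite prefix read so far, so the map is continuous, with the fixed countable family of enumerations $w_\gamma$ serving as the oracle. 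When $\gamma' = \N$ there is no ordinal data and we simply delete it.

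Both $\Phi$ and $\Psi$ copy the $c$-part (respectively the $A$-part) unchanged, including every $\hash$ marker position, and replace each ordinal code $v_i$ by $v'_i$. Because $|\vec{v}'| = |\vec{v}|$ and the interleaving positions are untouched, the following hold:
\begin{itemize}
\item $\Phi$ maps $C^n \smashtimes S(\nu)$ into $C^n \smashtimes S(\alpha)$, and names the same coloring.
\item $\Psi$ maps $A \smashtimes_c \vec{v}$ into $A \smashtimes_c \vec{v}'$. The condition $|c([\phi_0\cdots\phi_{i-1}]^n)| \leq i$ depends only on the blocks $\phi_j$, not on the ordinals, so it is preserved.
\end{itemize}
The image sequence is still decreasing and is bounded by the larger ordinal. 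Moreover, $\Psi$ uses only the solution, not the instance, so the reduction is strong.

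The main care point is the partiality of $s_\gamma$ once the sequence hits $0$. Since the values agree letter by letter, $\gamma_i = 0$ exactly when $\gamma'_i = 0$, so both names stop adding ordinals at the same place and the map respects domains. For the computable version with $\alpha < \omega$, all ordinals involved are finite and each $w_\gamma$ is computable. The search for $v'_i$ with $w_{\gamma'_{i-1}}(v'_i) = \gamma_i$ is then a terminating computable search, so $\Phi$ and $\Psi$ are computable, which gives $\geq_\sW$. The case $\alpha = \N$ is excluded here because $\alpha < \omega$.
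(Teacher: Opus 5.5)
Your proposal is correct and matches the paper's argument: the paper also keeps the coloring and set data fixed and only re-indexes the ordinal codes, packaging your letter-by-letter re-indexing as the oracle $W_1 \oplus W_2$ (pairs of codes with $s_\alpha(s) = s_\nu(s')$, resp.\ $s_\beta(s) = s_\mu(s')$), which is computable when $\alpha < \omega$. Factoring through $\RT^n_{\nu,\beta}$ by transitivity is only a cosmetic difference, and your explicit checks that domains are preserved (equal length, unchanged block structure, and the partiality of $s_\gamma$ once it reaches $0$) supply details the paper leaves implicit.
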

\begin{proof}
    First suppose that $\alpha < \omega_1$. Let
    \[
        W_1 = \{(s,s')\in \N^{<\N}\times\N^{<\N}: s_\alpha(s)=s_\nu(s')\}
    \]
    and
    \[
        W_2 = \{(s,s')\in \N^{<\N}\times\N^{<\N}: s_\beta(s)=s_\mu(s')\}.
    \]
    We show that
    \[
        \RT^n_{\alpha,\beta} \geq_\sW^{W_1 \oplus W_2} \RT^n_{\nu,\mu}
    \]
    in the sense that the forward and backwards reductions can be computed from $W_1 \oplus W_2$. If $\alpha < \omega$ then $W_1 \oplus W_2$ is computable, yielding the strong computable Weihrauch reduction.

    Since $\alpha \geq \nu$, there is a $W_1$-computable function which takes $C^n_{\nu}$-names of $(c,\vec{\eta})$ to $C^n_{\alpha}$-names of $(c,\vec{\eta})$. Let our forward function $\Phi$ be such a function. Since $\beta \leq \mu$, there is a $W_2$-computable function which takes names for $(A, \vec{\zeta})$ in $\RT^n_{\alpha,\beta}(c,\vec{\eta})$ to names for $(A,\vec{\zeta})$ in $\RT^n_{\nu,\mu}(c,\vec{\eta})$. Let our backward function $\Psi$ be such a function. 

    The case for $\alpha = \N$ is similar, but we only need to send $(c,\vec{\eta})$ to $c$. 
\end{proof}

Now we are ready to prove the main theorems of this section.

\begin{Theorem}
    \label{thm:rankOfRT1}
    Let $\beta < \alpha \in \omega_1 \cup \{\N\}$. Then,
    \begin{enumerate}
        \item \label{itm:rank1-omega-beta} $\RT^1_{\alpha,\beta}$ has a rank-$\beta$ discontinuity and
        \item \label{itm:rank1-omega-gamma} $\RT^1_{\alpha,\beta}$ does not have a rank-$\gamma$ discontinuity for any $\gamma < \beta$.
    \end{enumerate}
\end{Theorem}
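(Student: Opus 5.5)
Both parts will be handled through the rank-$\gamma$ discontinuity game, using Theorem~\ref{Thm:DiscontinuityGameCharacterization}. By that theorem, having a discontinuity of rank $\le\gamma$ is the same as Player~1 winning the $\gamma$-$\RT^1_{\alpha,\beta}$ game from some first move. I will also use Theorem~\ref{thm:BasicThinSetReductions} together with Theorem~\ref{Thm:DiscontinuityGameInvariant}. Together these reduce part (\ref{itm:rank1-omega-beta}) to the hardest-to-break case, the smallest instance space $\alpha=\beta+1$, and part (\ref{itm:rank1-omega-gamma}) to the largest, $\alpha=\N$. Indeed, $\RT^1_{\N,\beta}\ge_\sW^*\RT^1_{\nu,\beta}$ for every $\nu>\beta$, and $\RT^1_{\alpha,\beta}\ge_\sW^*\RT^1_{\beta+1,\beta}$ for every $\alpha>\beta$. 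So Player~1 winning for $\RT^1_{\beta+1,\beta}$ transfers upward to all $\alpha>\beta$, and Player~2 winning for $\RT^1_{\N,\beta}$ transfers downward to all $\alpha>\beta$.

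\textbf{Part (\ref{itm:rank1-omega-beta}).} By induction on $\beta$, I will build a Player~1 winning strategy in the $\beta$-game for $\RT^1_{\alpha,\beta}$ for every $\alpha>\beta$. More generally the induction hypothesis covers the restricted problems $\RT^1_{\alpha,\beta}|[\llb\sigma\rrb\times\llb\rho\rrb]$ that arise in play, in which a finite coloring prefix $\theta$ and a solution prefix $(\phi,\vec\zeta')$ are already fixed.
\begin{enumerate}
\item Player~1 first plays the instance that extends the current $\theta$ using only colors already used in $\phi$ (so no new color is forced), with ordinal sequence unchanged. At the very start this is the constant coloring $0$ with ordinal sequence $(\eta_0)$, where $\eta_0<\alpha$ is chosen large enough that the instance still has at least $\beta$ further color-additions available.
\item Player~2 answers with a name for $(A,\vec\zeta)$. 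Player~1 chooses $\rho$ long enough that $\phi\sqsubset A$ contains an element of each color already in play and $\vec\zeta'$ has committed to its current ordinal $\zeta$.
\item Player~2 answers with $\sigma$. Player~1 then plays $p'$ extending $\sigma$, which colors every later point with a fresh color and lowers the instance ordinal. This is exactly the proof of non-continuity of $\TS^1_\N$ at the start of this section: any solution above $\rho$ must now add a new color, so its ordinal must drop strictly below $\zeta$.
\item Player~1 announces $\gamma=\zeta'$ for the new lower ordinal and recurses. When the solution ordinal reaches $0$ no new color is allowed, so $\RT^1(p')\cap\llb\rho\rrb=\emptyset$ and Player~1 wins, giving rank $0$.
\end{enumerate}
The bookkeeping to check is that the instance's ordinal budget never runs out before the solution's does. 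This holds because each round decreases the instance ordinal exactly once while the solution ordinal also decreases, and $\alpha>\beta$.

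\textbf{Part (\ref{itm:rank1-omega-gamma}).} I will give Player~2 a winning strategy in the $\gamma$-game for $\RT^1_{\N,\beta}$ for each $\gamma<\beta$, by induction on $\gamma$ and following the heuristic stated before the subsection. When Player~1 plays a coloring $c$, Player~2 plays the solution $A$ consisting of all points of a color already present in $\phi$, and plays a fresh color only when forced, namely when $\phi$ is empty and a color must be chosen. The accompanying ordinal is chosen to equal the current game ordinal. After Player~1 plays $\rho$, Player~2 plays $\sigma$ long enough to pin down $\rho$'s colors. Each round then costs Player~2 at most one new color, with a corresponding ordinal drop, and this matches Player~1's drop in the game ordinal. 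Hence Player~2 always keeps a solution ordinal strictly above $0$, or at least at $0$ with no new colors needed, when the game ordinal reaches $0$, and Player~2 wins at Step~\ref{GameStep:CheckAlphaGreaterThanZero}.

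\textbf{Main obstacle.} The delicate step is ensuring that Player~2's strategy stays legal under the name coding $\smashtimes_c$. Player~2 must commit to the ordinal $v_i$ before seeing all of $c$, while Player~1 may later recolor beyond $\sigma$. I would handle this by letting Player~2's solution ordinal track the game ordinal via the enumeration $w$, and by choosing $\sigma$ long enough that the finitely many colors in $\rho$ are frozen. The required invariant is ``solution ordinal $\ge$ current game ordinal'' after each round. I expect the limit case, where Player~1 drops to an arbitrary $\gamma'<\gamma$, to need the enumeration choice made only after $\gamma'$ is announced. This is legitimate because Player~2's new solution ordinal is chosen in the next round.
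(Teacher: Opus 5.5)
Your architecture matches the paper's. You reduce via Theorems~\ref{thm:BasicThinSetReductions} and \ref{Thm:DiscontinuityGameInvariant}, have Player~1 force one new color per round with the game ordinal tracking Player~2's ordinals, and have Player~2 add at most one color per round. However, the Player~2 strategy in part~(2) fails as written.

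\textbf{Part (2).} The set of all points whose color already occurs in $\phi$ need not be infinite. Against the Player~1 strategy of part~(1), which colors everything past a finite prefix with a fresh color, it is finite, so your $A$ is not a solution. Player~2 is therefore forced to add a color even when $\phi\neq\emptyset$, contrary to your ``namely when $\phi$ is empty''. Your claim that each round costs at most one new color, which is the whole content of part~(2), is left unproved.

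The missing step is pigeonhole, as in the paper. An instance of $\RT^1_{\N,\beta}$ has finitely many colors, so it has an infinite monochromatic set $H$. Replacing $H$ below $|\phi|$ by $\phi$ gives an infinite $A$ extending $\phi$ with $|c(A)|\le|c(\phi)|+1$. Player~2 then always appends the current game ordinal $\gamma$. This is allowed even if no new color appears, since the coding only bounds colors by ordinals. The invariant to carry is the strict one, $\vec{\zeta}\gamma\in S(\beta)$; with your ``$\ge$'', Player~2 could not append $\gamma$. The coding issue you single out as the main obstacle is not the crux, since Player~2 sees the whole instance at Step~2.

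\textbf{Part (1).} This is essentially the paper's argument, but three points need repair.
\begin{enumerate}
\item The ordinal $\gamma$ is chosen at Step~5, before $p'$ is played. So it cannot be Player~2's future ordinal in response to $p'$. Announce instead the ordinal $\zeta$ that Player~2 has just committed. It lies below the current game ordinal, either because this round's instance forced a new color or, in the first round, because $\zeta<\beta$. The game ordinal then always equals Player~2's latest ordinal.
\item Your budget argument (``decreases exactly once per round and $\alpha>\beta$'') does not work for transfinite ordinals unless you say what the instance ordinal drops to. A careless choice can exhaust the instance while Player~2 still has room. The paper gives the instance the sequence $\beta\zeta_1\cdots\zeta_k$, copying Player~2's own ordinals. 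This is always legal and makes the instance outlast the solution automatically.
\item Since Player~2's $\sigma$ may be shorter than $\phi$, the new instance $p'$ must reproduce the old coloring on the domain of $\phi$ before switching to the fresh color. This is the paper's $\hat\theta$ with $|\hat\theta(\phi)|=|\hat\theta(\N)|=k$. Otherwise $\phi$ can lose old colors, and no new color is forced.
\end{enumerate}
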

\begin{proof}
    The proof amounts to showing that in optimal play of the $\gamma$-$\RT^1_{\alpha,\beta}$ game, Player~2 must decrease their ordinal exactly once in each round. If $\beta > \gamma$, then Player~2 can stay above $\gamma$ while doing this, while if $\beta \leq \gamma$ then their ordinal reaches 0 before the game ordinal. We now provide the details. 

    Proof of (\ref{itm:rank1-omega-beta}): Since $\RT^1_{\N,\beta} \geq_\W^* \RT^1_{\alpha,\beta}$ for all $\alpha < \omega_1$, it suffices to argue for $\alpha < \omega_1$. First, we prove the following claim. 
    \begin{Claim}
        Let $\theta \in \N^{<\N}$ be an initial segment of a coloring and $\phi \in 2^{<\N}$ be an initial segment of a set such that $\theta$ has a finite extension $\hat{\theta} \sqsupset \theta$ such that $|\hat{\theta}(\phi)| = |\hat{\theta}(\N)| =: k$.  Let $\vec{\zeta} = \zeta_1 \zeta_2\dots \zeta_{r} \in S(\beta)$ be a nonempty decreasing sequence of ordinals with length $r \geq k$. Let $\vec{\eta} \in S(\alpha)$ be a proper initial segment of $\beta \zeta_1 \zeta_2 \dots \zeta_k$. 

        Then, Player~1 has a winning strategy in the $\zeta_k$-$\RT^1_{\alpha,\beta}|[\llb (\theta, \vec{\eta})\rrb \times \llb (\phi, \vec{\zeta} ) \rrb ]$ game.
    \end{Claim}
    \begin{proof}[Proof of Claim]
        We argue by induction on $\zeta_k$ and build a winning strategy $\Gamma$ for Player~1.
        Let $\hat{\theta} \sqsupset \theta$ be such that $|\hat{\theta}(\phi)| = |\hat{\theta}(\N)| = k$. Let $M \in \N$ be such that $M \not\in \hat{\theta}(\N)$.
        Let $c$ extend $\hat{\theta}$ by coloring each not already colored element with $M$. Then, $|c(\N)| = k + 1$. At Step~1, have Player~1 play $(c, \beta \zeta_1 \dots \zeta_k) \sqsupset (\theta, \vec{\eta})$. Since $\vec{\eta} \sqsubset \beta \zeta_1 \dots \zeta_{k}$ is a proper initial segment, this is a legal move. Let $(A, \hat{\vec{\zeta}}) \sqsupset (\phi, \vec{\zeta})$ be Player~2's Step~2 response. Since $A$ is infinite, there is an initial segment $\phi' \sqsubset A$ such that $M \in c(\phi')$. Since $ \hat{\theta} \sqsubset c$ and $\phi \sqsubset A$, we can further require that $|c(\phi')| = k+1$. Hence, it must be that $\hat{\vec{\zeta}}$ has at least $k + 1$ many entries. If $\zeta_k = 0$, this is impossible and Player~1 wins. 
        
        Otherwise, $\zeta_k > 0$ and we obtain no contradiction from the existence of $(A, \vec{\hat{\zeta}})$. Let $\vec{\zeta'}$ be such that $(\phi',\vec{\zeta'}) \sqsubset (A, \hat{\vec{\zeta}})$ (depending on the name for $(A, \hat{\vec{\zeta}})$, it is possible that $\vec{\zeta'}$ must have length greater than $k+1$) and have $\Gamma$ play $(\phi', \vec{\zeta'})$ at Step~\ref{GameStep:CheckAlphaGreaterThanZero}. Let $\zeta_{k+1}$ be the $(k + 1)$'st entry of $\vec{\zeta'}$ (note that this is consistent with our previous notation because, even if $|\vec{\zeta}| \geq k+1$, we have $\vec{\zeta} \sqsubset \vec{\zeta'}$).  Then, for any Step~\ref{GameStep:Player2PlaysSigma} move $(\theta', \vec{\eta'}) \sqsubset (c, \beta \zeta_1 \dots \zeta_k)$, have $\Gamma$ play $\zeta_{k + 1}$ at Step~\ref{GameStep:Player1ChoosesOrdinal}. 

        We check that the conditions of the claim are satisfied for the $\zeta_{k+1}$-$\RT^1_{\alpha,\beta}|[\llb (\theta',\vec{\eta'}) \rrb \times \llb (\phi',\vec{\zeta'})\rrb]$ game. Since $c(\phi')$ has $k + 1$ many colors, there is an extension $\hat{\theta'} \sqsupset \theta'$ such that $|\hat{\theta'}(\phi')| = |\hat{\theta'}(\N)| = k + 1$. Furthermore, $\vec{\eta'} = \beta \zeta_1 \dots \zeta_{k}$ is a proper initial segment of $\beta \zeta_1 \dots \zeta_{k+1}$. Thus, all conditions of the claim are satisfied, so by the induction hypothesis, Player~1 has a winning strategy in the $\zeta_{k+1}$-$\RT^1_{\alpha,\beta}|[\llb (\theta',\vec{\eta'}) \rrb \times \llb (\phi',\vec{\zeta'})\rrb]$ game. We extend $\Gamma$ by this strategy, completing the proof. 
    \end{proof}
    While it might be possible to reformulate the claim so that it can be directly applied to $\RT^1_{\alpha,\beta}$, it seems simpler to handle the $\beta$'th round separately. We define a Player~1 strategy $\Gamma$ for the $\beta$-$\RT^1_{\alpha,\beta}$ discontinuity game.

    Define $c_0 \equiv 0$ to be the constant coloring which colors every element of $\N$ with $0$. Have $\Gamma$ play $(c_0, \beta)$ at Step~\ref{GameStep:Player1Plays}. Let $(A,\vec{\zeta})$ be Player~2's Step~\ref{GameStep:Player2Playsq} response with $\vec{\zeta} = \zeta_1\dots\zeta_k$. Since $A$ has a color, $\vec{\zeta} \neq \lambda$. Pick $\phi \sqsubset A$ such that $\{i: \phi(i) = 1\} \neq \emptyset$ and have $\Gamma$ play $(\phi, \vec{\zeta})$ at Step~\ref{GameStep:CheckAlphaGreaterThanZero}. Then, for any Step~\ref{GameStep:Player2PlaysSigma} move $(\theta, \vec{\eta}) \sqsubset (c_0, \beta)$, have $\Gamma$ play $\zeta_1$ at Step~\ref{GameStep:Player1ChoosesOrdinal}. Now, we check that the conditions of the claim are satisfied for the $\zeta_1$-$\RT^1_{\alpha,\beta}|[\llb (\theta, \vec{\eta}) \rrb \times \llb (\phi, \vec{\zeta})\rrb]$ game. Since $\phi$ has a color and $c_0$ is constant, there is an extension $\hat{\theta} \sqsupset \theta$ such that $|\hat{\theta}(\phi)| = |\hat{\theta}(\N)| = 1$. Furthermore, $\vec{\eta} \sqsubset \beta$ is a proper initial segment of $\beta \zeta_1$. Thus, all conditions of the claim are satisfied, so by the claim, Player~1 has a winning strategy in the $\zeta_1$-$\RT^1_{\alpha,\beta}|[\llb (\theta, \vec{\eta}) \rrb \times \llb (\phi, \vec{\zeta})\rrb]$ game. We extend $\Gamma$ by this strategy, completing the proof of (\ref{itm:rank1-omega-beta}).

    (\ref{itm:rank1-omega-gamma}) Since $\RT^1_{\N,\beta} \geq_\W^* \RT^1_{\alpha,\beta}$, by Theorem~\ref{Thm:DiscontinuityGameInvariant} it suffices to show that $\RT^1_{\N,\beta}$ does not have a rank-$\gamma$ discontinuity for any $\gamma < \beta$. First, we prove the following claim.

    \begin{Claim}
        Fix $\gamma < \beta$. Let $\theta \in \N^{<\N}$ be an initial segment of a coloring and $\phi \in 2^{<\N}$ be an initial segment of a set such that $\theta|_\phi$ is defined and so that $|\theta(\phi)| \leq k$. Let $\vec{\zeta} = \zeta_1 \zeta_2\dots \zeta_{k} \in S(\beta)$ be a possibly empty decreasing sequence of ordinals with length $k$ such that $\vec{\zeta}\gamma \in S(\beta)$.  

        Then, Player~2 has a winning strategy in the $\gamma$-$\RT^1_{\N,\beta}|[\llb \theta \rrb \times \llb (\phi, \vec{\zeta} ) \rrb ]$ game.
    \end{Claim}
    \begin{proof}[Proof of Claim]
        We argue by induction on $\gamma$. If $\gamma = 0$, then Player~2 can win by playing any solution to the instance given by Player~1's Step~\ref{GameStep:Player1Plays} move. Now suppose $\gamma > 0$ and that the claim holds for all $\gamma' < \gamma$. We define a winning strategy $\Delta$ for Player~2 in the $\gamma$-$\RT^1_{\N,\beta}|[\llb \theta \rrb \times \llb (\phi, \vec{\zeta} ) \rrb ]$ game. For any Step~\ref{GameStep:Player1Plays} move $c\sqsupset \theta$, let $H$ be a monochromatic set for $c$ and let $A$ be the result of replacing $H|_{|\phi|}$ with $\phi$. Then, $|c(A)| \leq k + 1$. Thus, having $\Delta$ play $(A, \vec{\zeta}\gamma)$ at Step~\ref{GameStep:Player2Playsq} is a legal move. For any Step~\ref{GameStep:CheckAlphaGreaterThanZero} move $(\phi', \vec{\zeta'}) \sqsubset (A, \vec{\zeta}\gamma)$, let $\theta' \sqsubset c$ be such that $\theta'|_{\phi'} = c|_{\phi'}$ is defined. Have $\Delta$ play $\theta'$ as their Step~\ref{GameStep:Player2PlaysSigma}. Then, Player~1 plays some $\gamma' < \gamma$ at Step~\ref{GameStep:Player1ChoosesOrdinal}. 

        We check that the conditions of the claim hold for the $\gamma'$-$\RT^1_{\N,\beta}|[\llb \theta' \rrb \times \llb (\phi', \vec{\zeta'})\rrb]$ game. First, we have that $\theta'|_{\phi'}$ is defined. Since $|c(A)| \leq k+1$ and $\phi' \sqsubset A$, we also obtain that $|\theta'(\phi')| \leq k + 1$. Finally, $\vec{\zeta'} \sqsubset \vec{\zeta}\gamma$, so $\vec{\zeta'}\gamma' \in S(\beta)$. Thus, all conditions of the claim are satisfied, so by the induction hypothesis, Player~2 has a winning strategy in the $\gamma'$-$\RT^1_{\N,\beta}|[\llb \theta' \rrb \times \llb (\phi', \vec{\zeta'})\rrb]$ game. We extend $\Delta$ by this strategy, completing the proof of the claim.
    \end{proof}
    We check that claim applies directly to $\RT^1_{\N,\beta} = \RT^1_{\N,\beta}|[\llb \theta \rrb \times \llb (\phi, \vec{\zeta}) \rrb]$ with $\theta = \phi = \vec{\zeta} = \lambda$, the empty string. Since $\{i: \phi(i) = 1\} = \emptyset$, we have that $\theta|_\phi$ is defined and $|\theta(\phi)| = 0$.  Since $\vec{\zeta} = \lambda$ and $\gamma < \beta$, we have that $\vec{\zeta}\gamma = \gamma \in S(\beta)$. Hence, the claim applies and Player~2 has a winning strategy in the $\gamma$-$\RT^1_{\N,\beta}$ discontinuity game for each $\gamma < \beta$. Thus, $\RT^1_{\N,\beta}$ does not have a rank-$\gamma$ discontinuity for any $\gamma < \beta$, completing the proof of (\ref{itm:rank1-omega-gamma}).
\end{proof}

When $n \geq 2$, Player~1 can force Player~2 to add an arbitrary number of colors at each round after the initial round, whereas they could only force a single color when $n = 1$. Hence, the rank of the discontinuity of $\RT^n_{\alpha,\beta}$ for $n \geq 2$  depends on the least ordinal $\gamma$ such that $\beta \leq \omega \cdot \gamma$. 
\begin{Theorem}
    \label{thm:rankOfRTn}
    Let $n \geq 2$ and $\beta < \alpha \in \omega_1 \cup \{\N\}$. Fix an ordinal $\gamma$. 
    \begin{enumerate}
        \item\label{itm:rankn-omega-gamma-geq-beta} If $\omega \cdot \gamma \geq \beta$, then Player~1 has a winning strategy in the $\gamma$-$\RT^n_{\alpha,\beta}$ game.
        \item\label{itm:rankn-omega-gamma-lt-beta} If $\omega \cdot \gamma < \beta$, then Player~2 has a winning strategy in the $\gamma$-$\RT^n_{\alpha,\beta}$ game.
    \end{enumerate}
\end{Theorem}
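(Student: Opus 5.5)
The plan is to follow the structure of the proof of Theorem~\ref{thm:rankOfRT1}: prove a Player~1 claim by induction on the game ordinal for part (\ref{itm:rankn-omega-gamma-geq-beta}), and a Player~2 claim by induction for part (\ref{itm:rankn-omega-gamma-lt-beta}). As there, Theorem~\ref{thm:BasicThinSetReductions} and Theorem~\ref{Thm:DiscontinuityGameInvariant} let us work with $\RT^n_{\alpha,\beta}$ for $\alpha<\omega_1$ in part (\ref{itm:rankn-omega-gamma-geq-beta}), since a Player~1 win transfers upward to $\RT^n_{\N,\beta}$. In part (\ref{itm:rankn-omega-gamma-lt-beta}) we work with $\RT^n_{\N,\beta}$, since a Player~2 win transfers downward to all $\RT^n_{\alpha,\beta}$. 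Everything rests on the combinatorial point that for $n\ge 2$, once Player~2 has committed to a finite $\phi\sqsubset A$ and Player~1 has committed to a finite $\theta$, Player~1 can extend $\theta$ so that any infinite $A\sqsupset\phi$ realizes arbitrarily many prescribed new colors. For instance, color each pair $\{a,x\}$ with $a\in\phi$ and $x$ large by a fresh color depending on $a$ and on the block of $x$. Player~2, however, can always keep any infinite solution's new colors finite, by choosing a homogeneous-type set beyond $\phi$.

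For part (\ref{itm:rankn-omega-gamma-geq-beta}), I would prove by induction on $\gamma$ a claim of the following form. Let $(\theta,\vec\eta)$ and $(\phi,\vec\zeta)$ be positions in which Player~2's current ordinal is $\zeta$ with $\zeta\le\omega\cdot\gamma$, and in which the coloring $\theta$ still admits extensions with new colors, with $\vec\eta$ extendable far enough below $\alpha$; this last condition is where $\beta<\alpha$ is used. Then Player~1 wins the $\gamma$-game restricted to $\llb(\theta,\vec\eta)\rrb\times\llb(\phi,\vec\zeta)\rrb$. At Step~1 Player~1 plays a coloring extending $\theta$ whose remaining pairs all receive one new color $M$; this uses only one new instance color, so it is legal. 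Player~2 answers with $(A,\hat{\vec\zeta})$, and Player~1 chooses $\phi'\sqsubset A$ containing some element $a$ beyond $\phi$. Given Player~2's $\sigma$, giving $\theta'$ and $\zeta'$ (the current solution ordinal), Player~1 must choose $\gamma'<\gamma$ and a new instance. Since $\zeta'<\zeta\le\omega\cdot\gamma$, pick $\gamma'<\gamma$ and $m$ with $\zeta'<\omega\cdot\gamma'+m$. Player~1 extends $\theta'$ by giving pairs $\{a,x\}$ with $x$ in successive blocks $m$ distinct fresh colors, forcing any solution above $\phi'$ to drop its ordinal $m$ more times, to below $\omega\cdot\gamma'$. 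Then the induction hypothesis applies. When $\gamma=0$ we have $\beta=0$ and there is no solution. This forcing must be spread across the recursion (the fresh colors are forced in the next round's Step~1 instance), and the $\vec\eta$ bookkeeping must keep $m+1$ new instance colors legal. I would therefore make the instance ordinal part of the claim's hypothesis, requiring that it exceeds the solution ordinal.

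For part (\ref{itm:rankn-omega-gamma-lt-beta}), I would prove by induction on $\gamma$ that whenever the current Player~2 ordinal $\zeta$ satisfies $\zeta>\omega\cdot\gamma$ (strictly, with room to spare), Player~2 wins the restricted $\gamma$-game. Given Player~1's coloring $c\sqsupset\theta$, Player~2 applies Ramsey's theorem to get an infinite $c$-homogeneous $H$ above $\max\phi$, and sets $A=\phi\cup H$. Then $c([A]^n)$ is finite: colors inside $\phi$, the homogeneous color, and mixed tuples. But the mixed tuples need not be finitely many colors. So Player~2 should first thin $H$ so that for each tuple type mixing $\phi$ with $H$, the color depends only on the $\phi$-part; this is a canonical-Ramsey-style partition argument over finitely many types, and I expect it to be the main obstacle. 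Granting it, $A$ adds only finitely many, say $m$, new colors. Player~2 names the solution with a decreasing run of $m$ ordinals ending at $\omega\cdot\gamma'+\omega$ for the largest possible $\gamma'<\gamma$. That cannot be done uniformly before seeing $\gamma'$, so Player~2 should instead descend to $\omega\cdot\delta+k$ for suitable $\delta,k$ chosen to stay above $\omega\cdot\gamma'$ for every $\gamma'<\gamma$. For limit $\gamma$ this works because $\omega\cdot\gamma$ is then a limit of the $\omega\cdot\gamma'$ and $\zeta>\omega\cdot\gamma$ leaves room. Player~2 answers Step~4 with a long enough $\theta'$, and the induction applies. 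At $\gamma=0$, Player~2 simply needs a solution, which exists since $\zeta>0$.
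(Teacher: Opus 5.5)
Both parts have genuine gaps: the forcing step in part~(1) fails as written, and the invariant in part~(2) is too weak.

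\textbf{Part (1).} The combinatorial fact you rely on is false: above a \emph{fixed} finite $\phi$, Player~1 cannot force arbitrarily many new colors. Suppose $\phi$ has $k$ elements and $c$ has finite range. Player~2 can thin the part of $A$ beyond $\phi$, applying Ramsey's theorem once for each $F\subseteq\phi$ with $|F|<n$, so that the color of $F\cup x$ does not depend on $x$. This leaves at most $\sum_{j\leq n}\binom{k}{j}$ colors in $c([A]^n)$, and it is exactly what your own part~(2) exploits. Your single new element $a$, with $\{a,x\}$ colored by the block of $x$, therefore forces only one new color: some color class of $x\mapsto c(\{a,x\})$ is infinite, and Player~2 puts all large elements of $A$ there. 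With $\phi'$ equal to $\phi$ plus one point, you cannot force the $j+1$ colors needed to push Player~2 from $\omega\cdot\gamma'+j$ below $\omega\cdot\gamma'$, and $j$ is Player~2's choice.

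The missing idea, and the place where $n\geq 2$ really matters, is that Player~1 must size $\phi'$ \emph{after} reading the ordinal. At Step~3, read Player~2's name far enough to see the ordinal $\omega\cdot\mu'+N'$ it has committed to for the colors present. You learn it here, not from Player~2's $\sigma$. Then take $\phi'\sqsubset A$ with more than $N'$ elements, and play $\mu'$ at Step~5. At the next Step~1, fix a set $K$ of $N'$ elements of $\phi'$. Give each tuple not colored by $\theta'$ whose least element in $K$ is $a$ the fresh color $M+a+1$, and every other uncolored tuple the color $M$. Every infinite $A\sqsupset\phi'$ then has $N'+1$ new colors, so Player~2's ordinal drops below $\omega\cdot\mu'$. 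Player~1 spends only the instance ordinals $\omega\cdot\mu'+N',\dots,\omega\cdot\mu'$. This is the paper's claim, with hypotheses $\zeta_k\leq\omega\cdot\mu+N<\eta_m$ and $\phi$ having more than $N$ elements.

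\textbf{Part (2).} Your invariant $\zeta>\omega\cdot\gamma$ is too weak, and the claim is false as stated. Let $\gamma$ be a limit, $\zeta=\omega\cdot\gamma+1$, and $\phi$ nonempty. Player~1 forces two new colors as above, so Player~2's second new ordinal is some $\omega\cdot\delta+k'$ with $\delta<\gamma$. Player~1 then chooses $\gamma'=\delta+1<\gamma$. Player~2's ordinal is now below $\omega\cdot\gamma'$, and Player~1 wins by the corrected part~(1) strategy. Your remark about limits does not help: staying above every $\omega\cdot\gamma'$ with $\gamma'<\gamma$ means staying at or above $\omega\cdot\gamma$, which is impossible after two descents from $\omega\cdot\gamma+1$.

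The room needed is a whole $\omega$-block. Use the invariant $\zeta\geq\omega\cdot(\gamma+1)$, which is the paper's $\zeta_k\geq\omega\cdot\nu$ with $\mu<\nu$. Then Player~2 names its $m$ new colors with $\omega\cdot\gamma+(m-1),\dots,\omega\cdot\gamma$ and enters every later round $\gamma'<\gamma$ with ordinal at least $\omega\cdot(\gamma'+1)$. Since $\beta>\omega\cdot\gamma$ gives only $\beta\geq\omega\cdot\gamma+1$, the opening round must be handled separately. There $\phi$ is empty, so Player~2 answers with a monochromatic set and the single ordinal $\omega\cdot\gamma$.

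Finally, the thinning you flag as the main obstacle is routine, because instances have finite range. No canonical Ramsey theorem is needed: ordinary Ramsey for finitely many finite-valued colorings suffices, or a single application to $a\mapsto c([\phi\cup a]^n)$ as in the paper.
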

\begin{proof}
    The proof amounts to showing that in optimal play of the $\gamma$-$\RT^n_{\alpha,\omega \cdot \gamma + N}$ game, Player~1 forces Player~2 to add an arbitrarily large finite number of colors. Then, Player~2 can choose any limit ordinal smaller than $\omega \cdot \gamma$ and stay above it by picking a large enough finite portion of the Cantor normal form. We check the details below.

    (\ref{itm:rankn-omega-gamma-geq-beta}): Since $\RT^n_{\N,\beta} \geq_\W^* \RT^n_{\alpha,\beta}$ for all $\alpha < \omega_1$, it is sufficient to argue for $\alpha < \omega_1$ by the Player~1 direction of Weihrauch invariance of the discontinuity game (Theorem~\ref{Thm:DiscontinuityGameInvariant}). 
    
    First, we prove the following claim.
    \begin{Claim}
        Let $\mu$ be a countable ordinal. Fix $N \in \N$, $\theta \in \N^{<\N}$, $\vec{\eta} = \eta_1\eta_2\dots\eta_m \in S(\alpha)$ (or $\vec{\eta} = \lambda$, in which case we set $\eta_m = \alpha$), $\phi \in 2^{<\N}$ and $\vec{\zeta} = \zeta_1\zeta_2\dots \zeta_r \in S(\beta)$. Let $k \leq r$ be the number of colors $ k = |\theta([\phi]^n)|$ defined by $\theta$ on $\phi$. Suppose that the following properties are satisfied.
        \begin{enumerate}
            \item  \label{RTnabPf:LastElementOfEta1Big}  $\zeta_k \leq \omega \cdot \mu + N < \eta_m$, and
            \item $|\{i: \phi(i) = 1 \}| > N$
        \end{enumerate}
        Then, Player~1 has a winning strategy in the rank-$\mu$ discontinuity game for $\RT^n_{\alpha,\beta}|[\llb (\theta, \vec{\eta}) \rrb \times \llb (\phi, \vec{\zeta})\rrb].$
    \end{Claim}
    \begin{proof}[Proof of Claim]
        Suppose that the claim is true for all $\mu' < \mu$. We define a coloring $c \sqsupset \theta$ which adds only $N + 1$ many colors to $\theta$ while forcing any infinite set extending $\phi$ to add at least $N$ colors. Let $M$ be such that $M > \theta(a_1,\dots,a_n)$ for all $a_1,\dots,a_n \in \N$ for which the latter is defined. Let $K \subset \{i: \phi(i) = 1\}$ be the set of $N$ least elements of $\phi$. Define $c$ by
        \[
            c(a_1,a_2,\dots,a_n) = \begin{cases}
                \theta(a_1,\dots,a_n) &\text{if } \theta(a_1,\dots,a_n) \text{ is defined}\\
                M + a_i + 1&\text{if } \theta(a_1,\dots,a_n) \text{ is not defined and } i \text{ is least such that } a_i \in K\\
                M &\text{if } \theta(a_1,\dots,a_n) \text{ is not defined and } \neg(\exists i) (a_i \in K). 
            \end{cases}
        \]
        Then, $c$ has exactly $N + 1$ more colors than $\theta$. Furthermore, since every infinite set $A \sqsupset \phi$ contains each element of $K$ as well as infinitely many elements outside of $K$, the set $A$ has exactly $N +1$ more colors than $\phi$.
        
       Since $c$ adds at most $N + 1$ many colors to $\theta$ and $\eta_m > \omega \cdot \mu + N$ we can set $\Gamma$'s first move to be 
       \[
       \Gamma(\lambda):= (c, \vec{\eta'}),
       \]
       where
       \[
            \vec{\eta'} = \vec{\eta}(\omega \cdot \mu + N)(\omega\cdot\mu + (N - 1))\dots(\omega \cdot \mu + 1)(\omega\cdot \mu).
       \] 
        We have that $\zeta_k \leq \omega \cdot \mu + N$, that $\theta([\phi]^n)$ has $k$ colors and that any infinite set $A \sqsupset \phi$ has $N+1$ more colors than $\theta([\phi]^n)$. Hence, all valid Player~2 moves (if there are any; it is possible that $|c([\phi]^n)| > r$) have form 
        \[
        (A, \vec{\zeta} \zeta'_1 \zeta'_2 \dots \zeta'_{k'})
        \] 
        such that $\zeta_{k'} < \omega \cdot \mu$. If $k' = 0$ then write $\zeta'_{k'}$ for the last element of $\vec{\zeta}$. Let ${\vec{\zeta'}} =  \vec{\zeta} \zeta'_1 \zeta'_2 \dots \zeta'_{k'}$.  By ordinal division, there are $\mu'$ and $N'$ such that 
        \begin{equation}
        \label{eq:satisfyFirstpt1}
        \zeta'_{k'} = \omega \cdot \mu' + N'
        \end{equation}
        Since $\zeta'_{k'} < \omega \cdot \mu$ we also have that 
        $\mu' < \mu$.
        Then, we have $\Gamma$ play 
        \[
            (\phi', {\vec{\zeta'}})
        \]
        for some long enough $\phi' \sqsubset A$ so that this is a legal move and so that 
        \begin{equation}
            \label{eq:SatisfySecond}
            |\{i: \phi'(i) = 1\}| > N'.
        \end{equation}
        In response to any Player~2 Step~\ref{GameStep:Player2PlaysSigma} move $(\theta', \vec{\xi}) \sqsubset (c, \vec{\eta'})$, we have Player~1 play $\mu'$. 

        We now check that the induction hypothesis holds for the $\mu'$-$\RT^n_{\alpha,\beta}|[\llb (\theta', \vec{\eta'}) \rrb \times \llb (\phi', \vec{\zeta'})\rrb]$ game.
        \begin{enumerate}
            \item  By equation~\ref{eq:satisfyFirstpt1}, we have $\zeta'_{k'} \leq \omega \cdot \mu' + N'$. Furthermore, the last entry of $\vec{\xi}$ is greater than or equal to the last entry of $\vec{\eta'}$ which is $\omega \cdot \mu$ which is greater than $\omega \cdot \mu' + N'$ because $\mu > \mu'$.
            \item Equation~\ref{eq:SatisfySecond} is the second condition of the claim. 
        \end{enumerate}        
        This completes the proof of the claim.
    \end{proof}
    
    Due to lack of a $\phi \in 2^{<\N}$, the claim does not apply directly to the initial position of the $\gamma$-$\RT^n_{\alpha,\beta}$ game; indeed, Player~2 can always play a monochromatic set in the $\gamma$'th round. Hence, we treat the $\gamma$'th round separately.

    We construct a winning strategy $\Gamma$. We set $\Gamma$'s first move 
    $\Gamma(\lambda)$ to $(c_0, \beta)$ with $c_0 \equiv 0$, the constant coloring. 
    Consider some Player~2 response $(A, \vec{\zeta})$ 
    with  $\vec{\zeta} = \zeta_1\zeta_2\dots \zeta_r$.  
    By ordinal division, there are $\mu < \omega_1$ and $N < \omega$ such that $\zeta_{1} = \omega \cdot \mu + N$. Since $\omega\cdot \gamma \geq \beta > \zeta_1$, we have $\mu < \gamma$. Let $\phi$ be an initial segment of $A$ such that $|\{i: \phi(i) = 1\}| > N$ and have $\Gamma$ play $(\phi, \vec{\zeta})$ at Step~\ref{GameStep:CheckAlphaGreaterThanZero}. 
    In response to any $(\theta, \vec{\eta})$ played by Player~2 at Step~\ref{GameStep:Player2PlaysSigma}, we have $\Gamma$ play $\mu$ at Step~\ref{GameStep:Player1ChoosesOrdinal}. 

    We set $k = 1$ so that $\zeta_k = \zeta_1$. If $\vec{\eta} = \lambda$ then we set $\eta_m = \alpha$. Otherwise, $\vec{\eta} = \beta$ and we set $\eta_m = \beta$. In either case, $\eta_m \geq \beta > \zeta_k = \omega \cdot \mu + N$. We check that the conditions of the claim are satisfied for the $\mu$-$\RT^n_{\alpha,\beta}|[\llb (\theta, \vec{\eta}) \rrb \times \llb (\phi, \vec{\zeta})\rrb]$ game. 
    \begin{enumerate}
        \item We have already checked that $\zeta_k \leq \omega \cdot \mu + N < \eta_m$.
        \item We have that $|\{i: \phi(i) = 1\}| > N$ by construction. 
    \end{enumerate}

    Thus, by the claim, Player~1 has a winning strategy in the $\mu$-$\RT^n_{\alpha,\beta}|[\llb (\theta, \vec{\eta}) \rrb \times \llb (\phi, \vec{\zeta})\rrb]$ discontinuity game, so $\Gamma$ can play according to this winning strategy. This completes the proof of (\ref{itm:rankn-omega-gamma-geq-beta}).

    (\ref{itm:rankn-omega-gamma-lt-beta}): Since $\RT^n_{\N,\beta} \geq_\W^* \RT^n_{\alpha,\beta}$ for all $\alpha \leq \omega_1$, it is sufficient to argue for $\RT^n_{\N,\beta}$ by the Player~2 direction of Weihrauch invariance of the discontinuity game (Theorem~\ref{Thm:DiscontinuityGameInvariant}). First, we prove the following claim by induction on ordinals $\mu$ and $\nu$. 
    \begin{Claim}
        Let $\mu < \nu$ be countable ordinals with $\omega \cdot \nu < \beta$. Let $\phi \in 2^{<\N}$ and let $\theta \in \N^{<\N}$ be an initial segment of a coloring long enough such that $\theta|_{[\phi]^n}$ is defined. Let $\vec{\zeta} = \zeta_1  \zeta_2 \dots \zeta_k \in S(\beta)$ be such that either $\vec{\zeta} = \lambda$ or $\zeta_k \geq \omega \cdot \nu$ and also such that $(\phi, \vec{\zeta})$ is an initial segment of some element of $\mb{R}^c_\beta$ for some $c \sqsupset \theta$. If $\vec{\zeta} = \lambda$ then $\zeta_k$ is not yet defined and we set $\zeta_k := \beta \geq \omega \cdot \nu$. Then, Player~2 has a winning strategy in the $\mu$-$\RT^n_{\N,\beta}|[\llb \theta \rrb \times \llb(\phi, \vec{\zeta})\rrb]$ discontinuity game.
    \end{Claim}
    \begin{proof}[Proof of Claim]
        Suppose that the claim is true for all $\mu' < \mu$ and $\nu' < \nu$ with $\mu' < \nu'$. We define a Player~2 strategy $\Delta$. Let $c \sqsupset \theta$ be a Player~1 move in Step~\ref{GameStep:Player1Plays}. To build $\Delta$'s response, consider the coloring $c': [\N]^n \to \mathcal{P}(\N)$ defined by
        \[
            \begin{split}
            c'(a_1, a_2, \dots, a_n) = \{i \in \N: (\E b_1,\dots,b_n \in (\phi \cup \{a_1, \dots, a_n\}))( c(b_1, \dots, b_n) = i)\}.
            \end{split}
        \]  
        Since $c$ has finite range, so does $c'$.
        Let $K = |\{i: \phi(i) = 1\}|$ and let 
        \[
            N = \sum_{i = 0}^{n} \binom{K}{i} \binom{n}{n-i},    
        \]
        where $\binom{a}{b}$ denotes $a$ choose $b$, the binomial coefficient. Since $K = |\{i: \phi(i) = 1\}|$, the number $N$ is an upper bound on the number of ways one can choose $n$ many elements from $\phi \cup \{a_1, \dots, a_n\}$.  Hence,
        for any $a_1, \dots, a_n \in \N$, there are at most $N$ many colors in $c'(a_1, \dots, a_n)$. Since $c$ has finitely many colors, so does $c'$. Fix an infinite homogeneous set $H \subset \N$ for $c'$ such that $i \not\in H$ for all $i \in \dom(\phi)$. Then, $|c([H]^n)| \leq N$. Since $c([\phi]^n) \subset c'(a_1, \dots, a_n)$ for any $a_1, \dots, a_n \in \N$, we have that $|c([H \cup \phi]^n)| \leq N$ as well. Since $\mu < \nu$ and $\zeta_k \geq \omega\cdot \nu$, we have that $\zeta_k > \omega \cdot \mu + N$. Hence,  we can set 
        \[
        \Delta(c)= (H \cup \phi, \vec{\zeta}  (\omega \cdot \mu + N)  (\omega \cdot \mu + (N-1))  \dots (\omega \cdot \mu + 1) (\omega \cdot \mu)),
        \] a $\RT^n_{\N, \beta}$-solution to $c$ in $\llb(\phi,\vec{\zeta})\rrb$.

        In response to any initial segment $(\phi', \vec{\zeta'}) \sqsubset \Delta(c)$ played by Player~1 at Step~\ref{GameStep:CheckAlphaGreaterThanZero}, we have $\Delta$ play an initial segment $\theta' \sqsubset c$ at Step~\ref{GameStep:Player2PlaysSigma} so that $\theta'|_{[\phi']^n} = c|_{[\phi']^n}$ is defined. Then, at Step~\ref{GameStep:Player1ChoosesOrdinal}, Player~1 chooses an ordinal $\mu' < \mu$. By the induction hypothesis applied to $\mu'$ and $\nu' := \mu$, $\Delta$ has a winning strategy in the $\mu'$-$\RT^n_{\N,\beta}|[\llb \theta' \rrb \times \llb(\phi', \vec{\zeta'})\rrb]$ discontinuity game, so $\Delta$ can play according to this winning strategy. This completes the proof of the claim. 
    \end{proof}
    Since we have only assumed that $\beta > \omega \cdot \gamma$, the claim is not strong enough to directly show that $\RT^n_{\N,\beta}|[\llb \lambda \rrb  \times \llb \lambda \rrb] = \RT^n_{\N,\beta}$ does not have a rank-$\gamma$ discontinuity. However, $\Delta$ can play a truly monochromatic set on its first turn of the game so that it only needs to decrease its ordinal by $1$, staying  $\geq \omega \cdot \gamma$ while Player~1 must decrease their ordinal. 

    Let $c: [\N]^n \to \N$ be any coloring with finitely many colors played by Player~1 at Step~\ref{GameStep:Player1Plays} of the $\gamma$-$\RT^n_{\N,\beta}$ game. Let $H$ be a monochromatic set for $c$. Since $\beta > \omega\cdot\gamma$, we can define $\Delta(c) = (H, \omega \cdot \gamma)$. Then, for any $(\phi, \vec{\zeta}) \sqsubset (H, \omega \cdot \gamma)$ played by Player~1 at Step~\ref{GameStep:CheckAlphaGreaterThanZero}, we have $\Delta$ play a long enough initial segment $\theta \sqsubset c$ so that $\theta|_{[\phi]^n} = c|_{[\phi]^n}$ is defined. 

    Then, for any $\mu < \gamma =: \nu$ played by Player~1 at Step~\ref{GameStep:Player1ChoosesOrdinal}, the conditions of the claim are satisfied for the $\mu$-$\RT^n_{\N,\beta}|[\llb \theta \rrb \times \llb (\phi, \vec{\zeta}) \rrb]$ game. We extend $\Delta$ by this winning strategy, obtaining a winning strategy for Player~2 in the $\gamma$-$\RT^n_{\N, \beta}$ game. This completes the proof of (\ref{itm:rankn-omega-gamma-lt-beta}).
\end{proof}

\subsection{Guessability with Identified Errors}
In this section, we show the non-reduction of $\RT^n_{\alpha,\beta}$ to $\ACCN$. For $n \in \N$ and $\beta < \alpha \in \omega_{1} \cup \{\N\}$, we show that $\RT^n_{\alpha, \beta} \not\leq_\W^* \ACCN$ by showing that $\RT^n_{\alpha,\beta}$ is not continuously $2$-guessable with errors identified, motivated by Pauly's \cite{paulyMoreIndivisibilityQ2024} use of $k$-guessability to separate problems related to the tree pigeonhole principle. 

The inclusion of identified errors is necessary since $\RT^n_{\omega + 1, \omega}$ is continuously $2$-guessable. In fact, it is computable with a single mind change. To see this, let $\Theta_1(c,\vec{\eta}) = (\N, 0)$ for all $\vec{\eta}$ of length 1. Then, $c$ has only one color, so $(\N,0) \in \RT^n_{\omega +1, \omega}(c,\vec{\eta})$. If $c$ has at least two colors, then our solution process changes its mind and switches to $\Theta_2$.

The function $\Theta_2$ only needs to guess a solution to $(c, \vec{\eta})$ when $c$ has at least two colors. In this case, $\Theta_2$ can wait until $|\vec{\eta}| \geq 2$. Since $\vec{\eta} \in S(\omega + 1)$, the second element of $\vec{\eta} = \eta_0\eta_1 \dots \eta_{k-1}$ must be a finite ordinal $\eta_1 < \omega$. Then, $\Theta_2$ can replace $\eta_0$ with $\eta_1 + 1$, yielding 
\[
\Theta_2(c, \vec{\eta}) = (\N, (\eta_1 + 1)\eta_1 \dots \eta_{k-1}) \in \RT^n_{\omega + 1, \omega}(c, \vec{\eta}).
\]
Hence, for any $(c,\vec{\eta}) \in \dom(\RT^n_{\omega + 1, \omega})$, at least one of $\Theta_1(c,\vec{\eta})$ and $\Theta_2(c,\vec{\eta})$ is a solution, so $\RT^n_{\omega + 1, \omega}$ is continuous with a mind change. 

It will be helpful to separate the notion of a functional identifying its errors.
\begin{Definition}
    Let $\Theta: \subset \N^\N \tto (\N \cup \{\bot\})^\N$ be a partial functional and $P: \subset \N^\N \tto \N^\N$ be a problem. We say that $\Theta$ \emph{identifies its errors} with respect to $P$ if for each $p \in \dom(P)$, if $\Theta(p) \not\in P(p)$, then there is a $j$ such that $\Theta(p)(j)\converges = \bot$.
\end{Definition}
The next lemma gives a way to preserve identifying errors when composing with Weihrauch reductions.
\begin{Lemma}
    \label{Lem:PreserveIdentifyingErrors}
    Suppose that $P \leq_\W^* Q$ via continuous functionals $\Phi$ and $\Psi$. Let $\Theta: \subset \N^\N \tto (\N \cup \{\bot\})^\N$ be a partial functional which identifies its errors with respect to $Q$. Then, there is a functional $\Theta': \subset \N^\N \tto (\N \cup \{\bot\})^\N$ that identifies its errors with respect to $P$ such that whenever $\Theta(\Phi(p)) \in Q(\Phi(p))$ we have that $\Theta'(p) = \Psi(p, \Theta(\Phi(p)))$.
\end{Lemma}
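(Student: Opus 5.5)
The idea is to define $\Theta'$ by running the natural composite $p \mapsto \Psi(p, \Theta(\Phi(p)))$, while watching $\Theta(\Phi(p))$ for the symbol $\bot$. We work with the set-up of Lemma~\ref{Lem:PreserveIdentifyingErrors}: $\Phi,\Psi$ witness $P \leq_\W^* Q$, and $\Theta$ identifies its errors with respect to $Q$.

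Concretely, $\Theta'(p)$ is computed in stages $t = 0, 1, 2, \dots$. At stage $t$ we read the first $t$ symbols of $\Theta(\Phi(p))$ (as far as they have converged). If none of these symbols is $\bot$, we treat the prefix as an initial segment of an element of $\B$ and feed it, together with $p$, to $\Psi$. We then extend the output of $\Theta'(p)$ by whatever further symbols $\Psi$ commits to on this finite information. If at some stage a $\bot$ appears in $\Theta(\Phi(p))$, we immediately output $\bot$ and from then on output $\bot$ forever, or any padding, since the output is already flagged. Because each output symbol depends only on finite initial segments of $p$ and of $\Theta(\Phi(p))$, the resulting $\Theta'$ is continuous wherever $\Phi$, $\Theta$ and $\Psi$ are.

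Next we check the two required properties.

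First, suppose $\Theta(\Phi(p)) \in Q(\Phi(p))$. Then $\Theta(\Phi(p))$ is a total element of $\B$ containing no $\bot$, so the $\bot$-branch never fires. The stagewise simulation of $\Psi$ then produces exactly $\Psi(p, \Theta(\Phi(p)))$. Since $\Phi(p) \in \dom(Q)$ for $p \in \dom(P)$, this value lies in $P(p)$ by the reduction.

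Second, we check that $\Theta'$ identifies its errors with respect to $P$. Let $p \in \dom(P)$ with $\Theta'(p) \notin P(p)$. Then $\Theta(\Phi(p)) \notin Q(\Phi(p))$, because otherwise the first case would give $\Theta'(p) \in P(p)$. Since $\Theta$ identifies its errors with respect to $Q$ and $\Phi(p) \in \dom(Q)$, some position $j$ of $\Theta(\Phi(p))$ converges to $\bot$. The construction notices this at a finite stage and outputs $\bot$, so $\Theta'(p)(j') \converges = \bot$ for some $j'$.

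The only delicate point is ensuring that $\Theta'$ does not emit a symbol that later turns out to be inconsistent, and that a partial or divergent value of $\Psi$ does not prevent the $\bot$ from ever being output. We handle this by interleaving: the $\bot$-check of $\Theta(\Phi(p))$ is performed at every stage, independently of whether $\Psi$ has produced new output. So if $\Psi$ stalls, we still reach the $\bot$ whenever one exists.

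We also allow $\Theta'(p)$ to be a prefix of a $\Psi$-value followed by $\bot$s. This is harmless, because in that case $\Theta'(p)$ is only required to contain a $\bot$.
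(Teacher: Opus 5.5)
Your proposal is correct and follows essentially the same route as the paper: copy $\Psi(p,\Theta(\Phi(p)))$ while watching $\Theta(\Phi(p))$ for a $\bot$, switch to outputting $\bot$ forever once one appears, and then check the two required properties exactly as the paper does. Your remark that the $\bot$-check must be interleaved at every stage, so that a stalled $\Psi$ cannot block it, makes explicit what the paper leaves to the word ``simultaneously.''
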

\begin{proof}
    We define $\Theta'$ as follows. For each $p \in \dom(P)$, we have $\Theta'(p)$ copy $\Psi(p, \Theta(\Phi(p)))$ while simultaneously watching for a $\bot$ symbol to appear in $\Theta(\Phi(p))$. If $\Theta'$ sees a $\bot$ symbol in $\Theta(\Phi(p))$, then $\Theta'(p)$ begins outputting $\bot$ forever.

    We check that $\Theta'$ satisfies the conditions. Fix a $p \in \dom(P)$. If $\Theta'(p) \in P(p)$, then there is no $\bot$ symbol in $\Theta'(p)$, so $\Theta'(p) = \Psi(p, \Theta(\Phi(p)))$, as desired.
    
    If $\Theta'(p) \not\in P(p)$, then $\Theta(\Phi(p)) \not\in Q(\Phi(p))$, so there is a $j$ such that $\Theta(\Phi(p))(j)\converges = \bot$. The computation $\Theta'(p)$ eventually sees this $\bot$ and outputs $\bot$. Hence, $\Theta'$ identifies its errors with respect to $P$.
\end{proof}

\begin{Definition}
    \label{Def:GuessableWithErrors}
    Let $P:\subset \N^\N \tto \N^\N$ be a problem. Fix $n \in \N$. We say that $P$ is continuously $n$-\emph{guessable with identified errors} if there are partial continuous functionals $\Theta_0, \Theta_1, \dots, \Theta_{n-1}: \subset \N^\N \tto (\N \cup \{\bot\})^\N$ which identify their errors such that for each $p \in \dom(P)$ there is $i < n$ such that $\Theta_i(p) \in P(p)$.
\end{Definition}

The problem $\ACCN$ is computably $2$-guessable with identified errors via the functions 
\[
    \Theta_i(p) = \begin{cases}
        i 0^k \bot^\N &\text{if } \hash^k i \sqsubset p\\
        i 0^\N &\text{otherwise}
    \end{cases}
\]
for $i \in \{0,1\}$. 
Next we show that being $n$-guessable with identified errors is Weihrauch invariant. 
\begin{Theorem}
    \label{Thm:GuessableWithIdentifiedErrorsWeihrauchInvariant}
    Let $P$ and $Q$ be problems such that $P \leq_\W^* Q$. If $Q$ is continuously $n$-guessable with identified errors, then so is $P$.
\end{Theorem}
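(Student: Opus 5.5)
The plan is to compose the guessing functionals for $Q$ with the reduction, using Lemma~\ref{Lem:PreserveIdentifyingErrors} once for each index. Suppose $P \leq_\W^* Q$ via continuous $\Phi$ and $\Psi$. Suppose also that $\Theta_0, \dots, \Theta_{n-1}$ witness that $Q$ is continuously $n$-guessable with identified errors.

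For each $i < n$, apply Lemma~\ref{Lem:PreserveIdentifyingErrors} to $\Theta_i$ to obtain $\Theta_i'$. It identifies its errors with respect to $P$, and $\Theta_i'(p) = \Psi(p, \Theta_i(\Phi(p)))$ whenever $\Theta_i(\Phi(p)) \in Q(\Phi(p))$. We also need $\Theta_i'$ to be continuous. This follows from the construction in the proof of that lemma: the output of $\Theta_i'(p)$ up to any position is determined by finite portions of $p$ and of $\Theta_i(\Phi(p))$, via the continuity of $\Psi$, $\Theta_i$ and $\Phi$, together with the finite check for a $\bot$ entry. So $\Theta_i'$ is a partial continuous functional. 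I would state this explicitly, since the lemma does not record continuity.

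Next we check correctness. Fix $p \in \dom(P)$. Then $\Phi(p) \in \dom(Q)$, so some $i < n$ has $\Theta_i(\Phi(p)) \in Q(\Phi(p))$. For that $i$ the lemma gives $\Theta_i'(p) = \Psi(p,\Theta_i(\Phi(p)))$, and since the reduction is correct, this lies in $P(p)$. Thus the functionals $\Theta_0', \dots, \Theta_{n-1}'$ witness that $P$ is continuously $n$-guessable with identified errors.

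The main obstacle is a mismatch in the definitions. $\Psi$ is only guaranteed to behave on genuine $Q$-solutions, but $\Theta_i(\Phi(p))$ may contain $\bot$ or fail to be a solution. So $\Psi(p,\Theta_i(\Phi(p)))$ might be undefined or only partially defined, and the resulting $\Theta_i'(p)$ might not be total. I would handle this in the lemma's construction. If $\Theta_i(\Phi(p))$ is not a $Q$-solution, it eventually shows a $\bot$, and $\Theta_i'$ then outputs $\bot$ forever, so its output is still a total sequence containing $\bot$. To cover the interim, I would have $\Theta_i'$ interleave a padding symbol (or read its outputs through the $\hash$ convention) while waiting on $\Psi$. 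Since the definitions allow partial functionals, it may also be acceptable to leave $\Theta_i'$ undefined off $\dom(P)$, where nothing is required.
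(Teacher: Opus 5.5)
Your proposal is correct and is essentially the paper's proof: apply Lemma~\ref{Lem:PreserveIdentifyingErrors} to each $\Theta_i$, then use that some $\Theta_i$ is correct on $\Phi(p)$. Your explicit check that the resulting $\Theta_i'$ are continuous is a useful point the paper leaves implicit. One caution: drop the padding you propose for the interim. It is unnecessary, because dovetailing the copy of $\Psi$'s output with the search for $\bot$ already gives a total output: either the guess is correct and $\Psi$'s output is total, or a $\bot$ eventually appears. Literally interleaving extra symbols would also break the identity $\Theta_i'(p)=\Psi(p,\Theta_i(\Phi(p)))$ on correct guesses.
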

\begin{proof}
    Let $\Phi$ and $\Psi$ be the continuous functionals witnessing $P \leq_\W^* Q$. Let $\widehat{\Theta}_0, \widehat{\Theta}_1, \dots, \widehat{\Theta}_{n-1}$ witness that $Q$ is continuously $n$-guessable with identified errors. For $i < n$, let $\Theta_i$ be the witness to Lemma~\ref{Lem:PreserveIdentifyingErrors} applied to $\widehat{\Theta}_i$. Then, each $\Theta_i$ identifies its errors for $P$. Furthermore, if $\widehat{\Theta}_i(\Phi(p)) \in Q(\Phi(p))$ then $\Theta_i(p) \in P(p)$. Since at least one of the $\widehat{\Theta}_i$ is correct for each $p$, the same is true for the $\Theta_i$. 
\end{proof}

To show that $\RT^n_{\alpha,\beta} \not\leq_\W^* \ACCN$, it therefore suffices to show that $\RT^n_{\alpha,\beta}$ is not continuously $2$-guessable with identified errors.

\begin{Theorem}
    \label{Thm:RTNotGuessableWithErrors}
    Let $n \in \N$ and $\beta < \alpha \in \omega_1 \cup \{\N\}$. Then, $\RT^n_{\alpha,\beta}$ is not continuously $2$-guessable with identified errors.
\end{Theorem}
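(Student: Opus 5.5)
The plan is a diagonalization against any pair of guessers $\Theta_0,\Theta_1$. First I reduce to a single convenient case. Since $\beta \leq \beta < \beta+1 \leq \alpha$, Theorem~\ref{thm:BasicThinSetReductions} gives $\RT^n_{\alpha,\beta} \geq_\sW^* \RT^n_{\beta+1,\beta}$. By Theorem~\ref{Thm:GuessableWithIdentifiedErrorsWeihrauchInvariant} it then suffices to show that $\RT^n_{\beta+1,\beta}$ is not continuously $2$-guessable with identified errors. (For $\alpha=\N$ the same reduction applies with the obvious modification.) To handle all $n$ uniformly, I will only build colorings of the form $c(a_1,\dots,a_n)=f(\min\{a_1,\dots,a_n\})$. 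For such colorings, an infinite set $A$ has $c([A]^n) = f(A \setminus \{\max\text{-finite tail issues}\})$, which essentially behaves like the $n=1$ case. Adding a new color of $f$ on a tail then forces every infinite $A$ containing earlier elements to see that color.

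The key observation is what identified errors buys us. For every instance $p$, each $\Theta_i(p)$ is either a genuine solution or outputs $\bot$ after finitely many steps. So on the limit of any construction, a guesser cannot "wait forever" as $\Theta_2$ does in the $\RT^n_{\omega+1,\omega}$ example above. Moreover, once $\Theta_i$ outputs $\bot$ on a prefix $\sigma$ of the instance name, it is dead on all of $\llb\sigma\rrb$.

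The construction proceeds in rounds, maintaining a finite prefix $\sigma_s$ of a $\mathbf{C}^n_{\beta+1}$-name whose last instance ordinal is $\eta_s$ (with $\eta_0=\beta$) and whose current top color is $M_s$. We also maintain the invariant that every still-alive guesser's committed solution ordinal is $<\eta_s$; initially this holds since solution ordinals lie below $\beta$. In round $s$, extend $\sigma_s$ by coloring all new elements (as minima) with $M_s$. Consider the limit instance $p_s$, which adds no new colors and is therefore a legal name. Each alive $\Theta_i(p_s)$ is either a solution or outputs $\bot$. By continuity there is a finite extension $\sigma_s' \sqsubset p_s$ on which every alive guesser has either output $\bot$ (and is thereby killed) or output a prefix $\rho_i$. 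That prefix names a set containing an element $x_i$ beyond the positions where earlier colors were introduced, i.e.\ an element whose color is $M_s$, together with its ordinal sequence so far, whose last entry I call $\zeta_i$.

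If both guessers are dead on $\sigma_s'$, then any legal extension is an instance on which neither guess is correct, which is a contradiction. Otherwise, let $\eta_{s+1} := \max_i \zeta_i$ over alive $i$. By the invariant, $\eta_{s+1} < \eta_s$, so I can legally extend $\sigma_s'$ with a new color $M_{s+1}$ on all further minima and record instance ordinal $\eta_{s+1}$. Now any solution extending $\rho_i$ is infinite and hence must contain an element colored $M_{s+1}$ alongside $x_i$. Therefore the guesser's name must record a strictly smaller ordinal, below $\zeta_i \leq \eta_{s+1}$, before it can add that element. If $\zeta_i = 0$ this is impossible, so that guesser must eventually output $\bot$. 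This re-establishes the invariant for round $s+1$.

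Because $\eta_0 > \eta_1 > \cdots$ is a strictly decreasing sequence of ordinals, the process must stop after finitely many rounds. The only way it stops is that both guessers are dead, which yields the required instance on which both $\Theta_0$ and $\Theta_1$ fail. I expect the main obstacle to be bookkeeping with names rather than the combinatorics. Specifically, the $c$-respecting names in $A\smashtimes_c\vec{v}$ allow ordinal entries to appear before colors, so ``the last committed ordinal'' must be read off $\rho_i$ carefully, in the sense of Remark~\ref{rem:DealingWithNames}. In addition, the claim that $\zeta_i<\eta_s$ must be justified from the requirement that a solution's ordinal sequence lies in $S(\beta)$ and drops exactly at new colors. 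I would first verify the invariant in the case $n=1$, then check that the min-colorings transfer it verbatim to $n\geq 2$.
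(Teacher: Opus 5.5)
Your argument is correct in outline but takes a different route from the paper. The paper argues by induction on the pair $(\beta,\alpha)$. Starting from the constant instance $(c,\beta)$, it proves a claim: once a guesser has committed to a prefix $(\phi,\vec{\zeta'})$ with first solution ordinal $\zeta$, a shifted coloring $\hat d$ witnesses $\RT^n_{\alpha,\beta}|[\llb(\theta,\beta)\rrb\times\llb(\phi,\vec{\zeta'})\rrb]\geq_\W^*\RT^n_{\beta,\zeta}$. It then splits into two cases. If one guesser has already output $\bot$, the other is a continuous realizer of a problem above the discontinuous $\RT^n_{\beta,\zeta}$. If both are live, it takes $\zeta_0\ge\zeta_1$ and uses Lemma~\ref{Lem:PreserveIdentifyingErrors} to transfer both guessers to $\RT^n_{\beta,\zeta_0}$, contradicting the induction hypothesis.

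You unroll that induction into an explicit finite sequence of rounds with min-colorings. Your rule $\eta_{s+1}=\max_i\zeta_i$ is exactly the paper's choice of the larger committed ordinal. Your version is self-contained: it needs neither the discontinuity of $\RT^n_{\beta,\zeta}$, nor the transfer lemma, nor a case split, since a dead guesser simply drops out. It also visibly works for any finite number of guessers. The paper's version instead hides all the name bookkeeping inside one reduction and reuses Weihrauch invariance.

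When you write it out, two things must be fixed.

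First, nesting and freezing. You need the prefixes $\rho_i^{(0)}\sqsubset\rho_i^{(1)}\sqsubset\cdots$ to be nested, which you can always arrange since $\Theta_i(p_s)$ extends $\rho_i^{(s-1)}$ by continuity. You also need $\sigma_s'$ long enough to contain some $n$-tuple with minimum $x_i^{(s)}$, so that $f(x_i^{(s)})=M_s$ is frozen. Otherwise $x_i^{(s)}$ is recolored $M_{s+1}$ in the next round, and the solution could avoid $M_s$ altogether. With both in place, every live solution at round $s$ has exactly the $s+1$ colors $M_0,\dots,M_s$.

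Second, the definition of $\zeta_i$. It cannot be the last ordinal entry of $\rho_i$. A $c$-respecting name may pre-record ordinals, for example by beginning with $\hash v_0\hash v_1\hash v_2$. Then the last entry can stay the same from one round to the next, so your rule gives $\eta_{s+1}=\eta_s$. That is not a legal instance ordinal, and the guesser would not be forced to record anything new.

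Instead, define $\zeta_i^{(s)}$ as the $(s+1)$-st entry of the guesser's ordinal sequence, taking $\rho_i$ long enough to contain it. This entry exists because the solution has $s+1$ colors. Then $\zeta_i^{(s)}<\zeta_i^{(s-1)}\le\eta_s$. In the next round the $(s+2)$-nd entry must be $<\zeta_i^{(s)}\le\eta_{s+1}$, wherever in the name it is written. With these fixes the strict decrease of the $\eta_s$, and hence termination, goes through.
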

\begin{proof}
    We argue by induction. Suppose that the theorem is true for all $\mu < \beta$ and $\nu < \alpha$ with $\mu < \nu$. Suppose some continuous functionals $\Theta_0$ and $\Theta_1$ $2$-guess $\RT^n_{\alpha,\beta}$ with identified errors.  Let $c \equiv 0$ be a constant coloring of $[\N]^n$. Then, $(c,\beta)$ is an instance of $\RT^n_{\alpha,\beta}$. We diagonalize against $\Theta_0$ and $\Theta_1$ by waiting for them to either give up or to each output a color, and therefore commit to an ordinal. Once they do this, we may use the following claim to apply the inductive hypothesis.
    
    \begin{Claim}
        Fix any $(\theta, \beta) \sqsubset (c,\beta)$. Let $m \in \N$ be such that there are $a_1, a_2, \dots a_n < m$ satisfying $(a_1,a_2,\dots, a_n) \in \dom(\theta)$. Then, there is a forward functional $\Phi$ such that the following is true. 
    
        Fix a solution $(H, \vec{\zeta}) \in \RT^n_{\alpha,\beta}(c,\beta)$ such that $H$ contains an element less than $m$. Let $\zeta$ be the first element of $\vec{\zeta}$. Then, there is $(\phi, \vec{\zeta'}) \sqsubset (H, \vec{\zeta})$ such that $\Phi$ is a forward function in a reduction witnessing that
        \[
            \RT^n_{\alpha,\beta}|[\llb (\theta, \beta)\rrb \times \llb (\phi,\vec{\zeta'})\rrb]\geq_\W^*
            \RT^n_{\beta,\zeta}.
        \]  
    \end{Claim}
\begin{proof}[Proof of claim]
    We define $\Phi$ witnessing such reductions. Fix $(H, \vec{\zeta}) \in \RT^n_{\alpha,\beta}(c, \beta)$ and $(d, \vec{\nu}) \in \mathbf{C^n_{\beta}}$. Let $\zeta < \beta$ be the first element of $\vec{\zeta}$. Fix $\phi \sqsubset H$ such that there is $a \in \phi$ satisfying $a < m$. By Remark~\ref{rem:DealingWithNames}, there is $\vec{\zeta'} \sqsubset \vec{\zeta}$ such that $(\phi, \vec{\zeta'}) \sqsubset (H, \vec{\zeta})$.  Recall that, since $\theta \sqsubset c$, we have that $\theta =0$ wherever it is defined. Let $\hat{d} \sqsupset \theta$ be the coloring defined by
    \[
        \hat{d}(a_1, \dots, a_n) = \begin{cases}
            0 &\text{if } (\E i)(a_i < m) \text{ or } \theta(a_1, \dots, a_n) = 0\\
            1 + d(a_1 - m, a_2 - m, \dots, a_n - m) &\text{otherwise}.
        \end{cases}
    \]
    Then, we let $\Phi(d, \vec{\nu}) = (\hat{d}, \beta\vec{\nu})$. More precisely, for a name $p$ of $(d, \vec{\nu})$, we compute a name $\Phi(p)$ for $(\hat{d}, \beta\vec{\nu})$ by first writing out our name for $(\theta, \beta)$ and then continuing to write out $\hat{d}$ with interruptions from $\vec{\nu}$ as we find them. Since $\hat{d}$ gains a new color only when this algorithm sees a new color in $d$, the resulting element of $\extendedBaireSpace$ is a $\mathbf{C^n_\alpha}$-name for $(\hat{d}, \beta\vec{\nu})$.

    Now we build the backward function $\Psi$ witnessing the reduction for specific $(\phi,\zeta)$. Let $(\widehat{A}, \vec{\gamma}) \sqsupset (\phi, \vec{\zeta'})$ be a $\RT^n_{\alpha,\beta}$ solution of $(\hat{d}, \beta\vec{\nu})$. Let $A = \{a: a + m \in \widehat{A}\}$. Let $\vec{\gamma}'$ be $\vec{\gamma}$ without its first element $\zeta$. Then, $A$ is an infinite set and $\vec{\gamma}' \in S(\zeta)$. 

    Also, $0  \in \hat{d}([\widehat{A}]^n)$ because $\phi \sqsubset \widehat{A}$ and there is $i < m$ such that $\phi(i) = 1$. Furthermore, every non-zero color in $\hat{d}([\widehat{A}]^n)$ corresponds with a color in $d([A]^n)$. Thus, $|d([A]^n)| \leq |\hat{d}([\widehat{A}]^n)| - 1$, so $(A,\vec{\gamma}')$ is a $\RT^n_{\beta,\zeta}$-solution to $(d, \vec{\nu})$. We define $\Psi((\hat{d},\beta\vec{\nu}),(\widehat{A},\vec{\gamma})) = (A, \vec{\gamma}')$. This completes the reduction.
\end{proof}
    
    Let $p$ be a $\mathbf{C^n_\alpha}$ name for $(c,\beta)$. There are two cases. 
    
    Case 1: Suppose that either $\Theta_0(p)$ or $\Theta_1(p)$ contains a $\bot$ symbol. 
    Without loss of generality, assume that $\Theta_1(p)$ contains the $\bot$ symbol. Then, $\Theta_0(p)$ is a $\mathbf{R}^c_\beta$-name for some $(H,\vec{\zeta}) \in \RT^n_{\alpha,\beta}((c,\beta))$.
    Let $\zeta$ be the first element of $\vec{\zeta}$. By Remark~\ref{rem:DealingWithNames}, there are $\phi \sqsubset H$ and $\vec{\zeta'} \sqsubset \vec{\zeta}$ such that $(\phi,\vec{\zeta'}) \sqsubset (H,\vec{\zeta})$. 

    Since $\Theta_0$ and $\Theta_1$ are continuous, there is $(\theta, \beta) \sqsubset (c, \beta)$ such that $\Theta_0(\llb (\theta, \beta) \rrb) \subset \llb (\phi, \vec{\zeta'}) \rrb$ and all elements of $\Theta_1(\llb(\theta,\beta)\rrb)$ contain a $\bot$ symbol. Hence, $\Theta_0$ is a continuous realizer of $\RT^n_{\alpha,\beta}|[\llb (\theta, \beta)\rrb \times \llb (\phi, \vec{\zeta'}) \rrb]$.
    By the claim, we have $\RT^n_{\alpha,\beta}|[\llb (\theta, \beta)\rrb \times \llb (\phi, \vec{\zeta'}) \rrb] \geq_\W^* \RT^n_{\beta,\zeta}$. However, since $\beta > \zeta$, the problem $\RT^n_{\beta,\zeta}$ is not continuous, a contradiction. 

    Case 2: Suppose that $\Theta_0(p) = (H_0,\vec{\zeta}_0)$ and $\Theta_1(p) = (H_1,\vec{\zeta}_1)$ both output a solution to $p$. 
    Let $\zeta_0$ be the first element of $\vec{\zeta}_0$ and let $\zeta_1$ be the first element of $\vec{\zeta}_1$. 
    By Remark~\ref{rem:DealingWithNames} and continuity of $\Theta_0$ and $\Theta_1$, there are $(\theta,\beta) \sqsubset (c,\beta)$, $(\phi_0, \vec{\zeta}_0') \sqsubset (H_0, \vec{\zeta}_0)$ and $(\phi_1, \vec{\zeta}_1') \sqsubset (H_1, \vec{\zeta}_1)$ such that $\phi_0$ and $\phi_1$ are each nonempty; $\Theta_0(\llb (\theta, \beta) \rrb) \subset \llb (\phi_0, \vec{\zeta}_0') \rrb$; and $\Theta_1(\llb (\theta, \beta) \rrb) \subset 
    \llb (\phi_1, \zeta_1) \rrb$. Fix an $m$ so that each of $\phi_0 \subset \N$ and $\phi_1 \subset \N$ contains an element less than $m$.
    Suppose without loss of generality that $\zeta_0 \geq \zeta_1.$
    For brevity, we write $P_i = \RT^n_{\alpha,\beta}|[\llb (\theta, \beta)\rrb \times \llb (\phi_i, \zeta_i) \rrb]$ for $i \in \{0,1\}$. 
    By the claim, there is a forward function $\Phi$ yielding the reductions $P_0 \geq_\W^* \RT^n_{\beta,\zeta_0}$ and $P_1 \geq_\W^* \RT^n_{\beta,\zeta_1}$. 
    Furthermore, the identity function $\id$ is a forward function witnessing $\RT^n_{\beta,\zeta_1} \geq_\W^* \RT^n_{\beta,\zeta_0}$, so the forward functions witnessing these reductions commute at the level of names. 
    
    We use these reductions to define functionals $\Theta_0'$ and $\Theta_1'$ witnessing that $\RT^n_{\beta,\zeta_0}$ is continuously $2$-guessable with identified errors. 
    Let $\Psi$ be the backward function witnessing $\RT^n_{\beta,\zeta_1} \geq_\W^* \RT^n_{\beta,\zeta_0}$, let $\Psi_0$ be the backward function witnessing $P_0 \geq_\W^* \RT^n_{\beta,\zeta_0}$ and let $\Psi_1$ be the backward function witnessing that $P_1 \geq_\W^* \RT^n_{\beta,\zeta_1}$. Define $G(p, q) = \Psi(p, \Psi_1(\Phi(p),q))$ to be the standard witness that Weihrauch reducibility is transitive; that is, $G$ is a backward function witnessing that $P_1 \geq_\W^* \RT^n_{\beta,\zeta_0}$ with forward function $\Phi \circ \id = \Phi$. 

    Define $\Theta_0'$ as the witness to Lemma~\ref{Lem:PreserveIdentifyingErrors} applied to $\Theta_0$ and the reduction $P_0 \geq_\W^* \RT^n_{\beta,\zeta_0}$ with forward function $\Phi$ and backward function $\Psi_0$. Define $\Theta_1'$ as the witness to Lemma~\ref{Lem:PreserveIdentifyingErrors} applied to $\Theta_1$ and the reduction $P_1 \geq_\W^* \RT^n_{\beta,\zeta_0}$ with forward function $\Phi$ and backward function $G$. Then, $\Theta_0'$ and $\Theta_1'$ are partial continuous functionals which identify their errors with respect to $\RT^n_{\beta,\zeta_0}$. Furthermore, $\Theta_0'(p)$ is correct if $\Theta_0(\Phi(p))$ is correct and $\Theta_1'(p)$ is correct if $\Theta_1(\Phi(p))$ is correct. Since at least one of $\Theta_0(\Phi(p))$ and $\Theta_1(\Phi(p))$ is correct, at least one of $\Theta_0'(p)$ and $\Theta_1'(p)$ is correct. Hence, $\RT^n_{\beta,\zeta_0}$ is continuously $2$-guessable with identified errors, contradicting the inductive hypothesis.
\end{proof}

\subsection{Conclusion of Section~\ref{Sec:AchromaticRamseyTheorems}}
We have shown that both $\RT^1_{\alpha,\beta}$ and $\RT^n_{\alpha,\omega \cdot \beta}$ have rank~$\beta$ discontinuities for $n \geq 2$ and no discontinuities of lower rank. Furthermore, we have shown that, although $\ACCN^\beta$ also has a rank-$\beta$ discontinuity, neither of these thin-set principles is reducible to $\ACCN^\beta$. We have also shown that if $\beta \leq \mu < \nu \leq \alpha$, then
    \[
        \RT^n_{\alpha,\beta} \geq_\sW^* \RT^n_{\nu,\mu}.
    \]
The situation for $\RT^n_{k,1}$ with $k \in \N$ is well understood (see \cite{brattkaUNIFORMCOMPUTATIONALCONTENT2017,doraisUniformRelationshipsCombinatorial2015, pateyThesis,hirschfeldtJockuschNotionsComputabilitytheoreticReduction2016}). Some of the techniques in the literature may generalize to the ordinal valued case, but this is beyond the scope of this paper. We leave the remaining comparisons between the problems $\RT^n_{\alpha,\beta}$ to future work.

%% file: Conclusion.tex
We conclude by responding directly to a few related notions in the literature. 
\subsection{Diagonalization Opportunities and Modified Discontinuity Games}
\label{Sec:DiagonalizationOpps}
For a set $X \subset \B$, let $\closure(X)$ denote the closure of $X$ in the usual topology on Baire space. 

We note a curious connection between having diagonalization opportunities and the discontinuity games defined in this paper. Hirschfeldt and Jockusch \cite{hirschfeldtJockuschNotionsComputabilitytheoreticReduction2016} define having diagonalization opportunities for computable functionals. They are motivated by an argument similar to the one at the beginning of Section~\ref{Sec:AchromaticRamseyTheorems} showing that $\TS^n_\N$ is discontinuous.

When this is extended to continuous functionals, it turns out that it characterizes the Player~2 wins in a slightly modified version of the discontinuity game. Suppose we allow Player~2 to play \emph{any} point $q$ at Step~\ref{GameStep:Player2Playsq} and also allow the game to last infinitely many rounds, with Player~2 winning if the play is infinite and Player~1's winning condition remaining the same. Since playing any $q \not \in \closure(P(p))$ is a losing move for Player~2, this is equivalent to the infinite-run discontinuity game for $P'$, where $P'(p) = \closure(P(p))$. 
\begin{Definition}[\cite{hirschfeldtJockuschNotionsComputabilitytheoreticReduction2016}]
    Let $P$ be a problem. We say that a continuous functional $\Upsilon: \subset \B \to \B$ has a \emph{diagonalization opportunity} for $P$ if there exists a $p \in \dom(P)$, $\sigma \sqsubset p$ and $\rho \sqsubset \Upsilon(p)$ such that $\Upsilon(\llb \sigma \rrb) \subset \llb \rho \rrb$ and $\llb \rho\rrb \cap P(p) = \emptyset$. We say that $P$ has \emph{diagonalization opportunities for continuous functionals} if every continuous functional which is total on $\dom(P)$ has a diagonalization opportunity for $P$. \footnote{$\TS^n_k$ does not have diagonalization opportunities in the way we have defined it here. Hirschfeldt and Jockusch address this by further stipulating that the functionals in question always yield an element of $2^{\N}$ coding an infinite subset of $\N$.}
\end{Definition}

We note that this is not a Weihrauch-invariant notion. Indeed, let $\ACCN'$ be the problem defined by
\[
    \ACCN'(\hash^\N) = \{\hash^n i q: n \in \N, i \in \N, q \in \extendedBaireSpace\},
\]
and, for $j \in \N$ and $r \in \extendedBaireSpace$,
\[
    \ACCN'(\hash^m j r) = \{\hash^n i q: n \in \N, j \neq i \in \N, q \in \extendedBaireSpace\},
\]
allowing a solver to output $\hash$'s before committing to an element of $\N$. 

Then, $\ACCN'$ is strongly Weihrauch equivalent to $\ACCN$, but $\ACCN'$ does not have diagonalization opportunities for continuous functionals (witnessed by the constant functional $\Upsilon \equiv \hash^\N$), while $\ACCN$ does. Nevertheless, the following theorem holds. 

\begin{Theorem}
    \label{Thm:DiagonalizationOpportunitiesGame}
    Let $P: \subset \B \tto \B$ be a problem and let $P'$ be the problem defined by $P'(p) = \closure(P(p))$. Then, Player~2 has a winning strategy in the infinite-run discontinuity game for $P'$ if and only if $P$ does not have diagonalization opportunities for continuous functionals.
\end{Theorem}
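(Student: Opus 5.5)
Both directions will be proved by contraposition, and the first step is to pin down the infinite-run game for $P'$. A round goes: Player~1 plays $p\in\dom(P)$; Player~2 must play $q\in\closure(P(p))$, and loses if that set is empty; Player~1 plays $\rho\sqsubset q$; Player~2 plays $\sigma\sqsubset p$; the next round is played on $P'|[\llb\sigma\rrb\times\llb\rho\rrb]$. The key observation is that in a restricted game Player~2 is stuck exactly when $\closure(P(p))\cap\llb\rho\rrb=\emptyset$. Since $\llb\rho\rrb$ is clopen, this is equivalent to $P(p)\cap\llb\rho\rrb=\emptyset$. So Player~1 wins precisely when he reaches some $p\in\llb\sigma\rrb$ with $\llb\rho\rrb\cap P(p)=\emptyset$, which is the shape of a diagonalization opportunity.

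($\Leftarrow$) Suppose Player~2 has no winning strategy. The game is open for Player~1, so it is determined by Gale--Stewart, and Player~1 has a winning strategy $\Gamma$. I must show that every continuous $\Upsilon$ total on $\dom(P)$ has a diagonalization opportunity. Let Player~2 respond using $\Upsilon$:
\begin{itemize}
\item at each Step~\ref{GameStep:Player2Playsq}, against the current instance $p_k$, play $q_k=\Upsilon(p_k)$;
\item at each Step~\ref{GameStep:Player2PlaysSigma}, play some $\sigma_k\sqsubset p_k$ with $\Upsilon(\llb\sigma_k\rrb)\subset\llb\rho_k\rrb$, extending the previous $\sigma$.
\end{itemize}
Such $\sigma_k$ exists by continuity of $\Upsilon$, since $\rho_k\sqsubset\Upsilon(p_k)$. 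Each later instance lies in $\llb\sigma_k\rrb$, so $\Upsilon(p_{k+1})\in\llb\rho_k\rrb$ and the first move is legal whenever $\Upsilon(p_{k+1})\in\closure(P(p_{k+1}))$. Because $\Gamma$ wins, this play must fail at some finite stage. Failure means that at some round $k+1$ the instance $p_{k+1}$ has no element of $\closure(P(p_{k+1}))$ in $\llb\rho_k\rrb$ (including the case $P(p_{k+1})=\emptyset$), or that $\Upsilon(p_{k+1})\notin\closure(P(p_{k+1}))$. In the first case, $p=p_{k+1}$, $\sigma=\sigma_k$ and $\rho=\rho_k$ witness a diagonalization opportunity. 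In the second case some $\rho\sqsubset\Upsilon(p_{k+1})$ has $\llb\rho\rrb\cap P(p_{k+1})=\emptyset$, and continuity of $\Upsilon$ supplies the matching $\sigma$. The base round is handled the same way with $\rho=\lambda$.

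($\Rightarrow$) Suppose $P$ has no diagonalization opportunities for continuous functionals. I expect this direction to be the main obstacle, because it requires building a single continuous functional out of an arbitrary strategy for the game. Let $\Delta$ be a winning strategy for Player~2, and assume for contradiction that $\Upsilon$ is continuous, total on $\dom(P)$, and has no diagonalization opportunity. The plan is to use $\Upsilon$ to drive Player~1 and extract a contradiction.
\begin{itemize}
\item Fix any $p_0$ and let Player~1 play it. Since $\Upsilon(p_0)$ is not a diagonalization opportunity, every $\rho\sqsubset\Upsilon(p_0)$ meets $P(p_0)$, so $\Upsilon(p_0)\in\closure(P(p_0))$.
\item From $\Delta$'s reply $q_0$, Player~1 plays $\rho_0$, the longest common initial segment of $q_0$ and $\Upsilon(p_0)$ up to a chosen length.
\item Then Player~1 picks $p_1\in\llb\sigma_0\rrb$ with $p_1\neq p_0$, and continues in this way.
\end{itemize}
Player~2 always survives, so these plays produce an infinite run, but that does not by itself contradict $\Delta$ winning. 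The correct route is to show that $P'$ is continuous, using $\Delta$ as a realizer in the spirit of Theorem~\ref{Thm:undiagonalizableImpliesPlayer2Win}. The idea is to define $\Upsilon$ by following $\Delta$ along $p$. In the $k$-th round, Player~1 plays the current $p$ and the prefix $q_k|_k$ of $\Delta$'s answer; Player~2's move $\sigma_k$ refines the position and fixes $\Upsilon(p)$ on $\llb\sigma_k\rrb$ up to a longer prefix. A Lemma~\ref{Lemma:CoverByNodes}-style disjoint cover gives the pasting, producing a continuous $\Upsilon$ total on $\dom(P)$ with $\Upsilon(p)\in\closure(P(p))$ for every $p$. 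Such an $\Upsilon$ has no diagonalization opportunity, contradicting the hypothesis that $P$ has diagonalization opportunities.

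In summary: in ($\Leftarrow$) a functional with no diagonalization opportunity yields a Player~2 strategy, using determinacy of the open game. In ($\Rightarrow$) a Player~2 winning strategy, run along each input, yields a continuous functional with no diagonalization opportunity. The step I expect to be delicate is making that functional well-defined and continuous. The plan there is to track all of $\Delta$'s responses along the tree of prefixes of the input, with Player~1 always replaying the current input $p$ itself.
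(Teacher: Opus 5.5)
Your ($\Leftarrow$) direction is correct and is essentially the paper's $\Delta_\Upsilon$ argument. The detour through Gale--Stewart is unnecessary: your own bookkeeping shows directly that if $\Upsilon$ has no diagonalization opportunity, then $\Upsilon(p)\in\closure(P(p))\cap\llb\rho_k\rrb$ for every legal Player~1 move $p\in\llb\sigma_k\rrb$, so $\Delta_\Upsilon$ never gets stuck.

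The gap is in ($\Rightarrow$). The opening of that paragraph also has the hypotheses reversed: you should assume that every total continuous $\Upsilon$ has a diagonalization opportunity, and then produce one that has none. Your summary gets this right. The real problem is the construction you settle on, in which Player~1 replays the input $p$ itself in every round. It does not give a well-defined continuous $\Upsilon$. Two inputs $p\neq p'$ in the same cone $\llb\sigma_k\rrb$ generate different runs from their very first move. So $\Delta$'s answers to them, and hence the prefixes you want to write on $\llb\sigma_k\rrb$, need not agree, and nothing ``fixes $\Upsilon$ on $\llb\sigma_k\rrb$''. This cannot be patched while keeping the replay. Against a Player~1 who keeps replaying $p$, Player~2 survives forever just by repeating one $q\in P(p)$. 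Yet $\ACCN$ has closed solution sets and is discontinuous. So any continuity argument must use how $\Delta$ answers Player~1 moves other than the input itself.

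The missing idea is the paper's tree of runs with \emph{representatives}. At a node $\xi$ ending in $(\rho_\xi,\sigma_\xi)$, apply Lemma~\ref{Lemma:CoverByNodes} to $\llb\sigma_\xi\rrb\cap\dom(P)$ with the following predicate: $\sigma$ is $\Delta$'s Step~4 answer after Player~1 plays some $r\sqsupset\sigma$ and then the length-$(|\xi|+1)$ prefix of $\Delta$'s reply $s_r$. This yields incomparable strings $\sigma_{\xi\cat i}$ with representatives $p_{\xi\cat i}$. Set $\rho_{\xi\cat i}=s_{p_{\xi\cat i}}|_{|\xi|+1}$ and require $\Upsilon(\llb\sigma_{\xi\cat i}\rrb)\subset\llb\rho_{\xi\cat i}\rrb$. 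Every input in $\llb\sigma_{\xi\cat i}\rrb$ then continues along the representative's run, and this is what makes $\Upsilon$ well defined, continuous and total on $\dom(P)$.

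Now, however, $\rho_\xi$ is a prefix of $\Delta$'s answer to the representative, not to $p$. So $\Upsilon(p)\in\closure(P(p))$ needs a separate argument. The point $p\in\llb\sigma_\xi\rrb\cap\dom(P)$ is a legal next Player~1 move at node $\xi$. Since $\Delta$ is winning, it must answer that move with a point of $\closure(P(p))\cap\llb\rho_\xi\rrb$, so $\llb\rho_\xi\rrb\cap P(p)\neq\emptyset$. This is where the fact that $\Delta$ wins against arbitrary Player~1 play is really used, and your sketch has no counterpart to it.
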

\begin{proof}
    ($\impliedby$): Suppose that $P$ does not have diagonalization opportunities for continuous functionals. Let $\Upsilon$ be a continuous functional which is total on $\dom(P)$ and does not have a diagonalization opportunity for $P$. Define Player~2 strategy $\Delta_\Upsilon$ by having $\Delta_\Upsilon$ always play $\Upsilon(p)$ in response to Player~1 move $p \in \dom(P)$, and in response to $\rho \sqsubset \Upsilon(p)$, $\Delta_\Upsilon$ plays the shortest $\sigma \sqsubset p$ such that $\Upsilon(\llb \sigma \rrb) \subset \llb \rho \rrb$, depicted below. 
    \begin{equation*}
			\begin{array}{c|c}
				\text{Player~1} (\Gamma)& \text{Player~2} (\Delta_\Upsilon) \\
				p & \Upsilon(p)\\
				\rho & \sigma \text{ such that } \Upsilon(\llb \sigma \rrb) \subset \llb \rho \rrb \\
		\end{array}
	\end{equation*}

    Since $\Upsilon$ does not have diagonalization opportunities for $P$, the set $\llb \rho \rrb \cap P(p)$ is non-empty for each $\rho \sqsubset \Upsilon(p)$, so $\Upsilon(p) \in \closure{(P(p))}$ and hence $\Delta_\Upsilon$ always plays valid Player~2 moves in the discontinuity game for $P'$. Thus, $\Delta_\Upsilon$ cannot lose, so is a winning strategy for Player~2 in the infinite-run discontinuity game for $P'$.

    ($\implies$): Let $\Delta$ be a winning strategy for Player~2 in the infinite-run discontinuity game for $P'$. We define a continuous functional $\Upsilon$ which is total on $\dom(P)$ and does not have a diagonalization opportunity for $P$. The idea is to use $\Delta$ to define $\Upsilon$ one entry at a time on a collection of strings whose cones cover $\dom(P)$. We do this by running branching runs of the discontinuity game for $P'$ simultaneously, with each run defining $\Upsilon$ on a different string and with each having Player~2 play according to $\Delta$. We index the runs of the game by points in a set $X \subset \N^\N$ with finite partial runs of the game indexed by elements of $\N^{<\N}$. For each $\xi \in \{\xi: \xi \sqsubset x \text{ for some } x \in X\}$, we will define $p_\xi$, $q_\xi$, $\rho_\xi$, and $\sigma_\xi$ to be moves in the game as in the table below. 
    \begin{equation*}
            \begin{array}{c|c}
                \text{Player~1} & \text{Player~2} (\Delta) \\
                p_\xi & q_\xi\\
                \rho_\xi & \sigma_\xi\\
        \end{array}
    \end{equation*}
    Then, we set
    \[
        \Upsilon(\llb \sigma_{x|_i} \rrb) \subset \llb \rho_{x|_i} \rrb.
    \]
    
    Let $\sigma_{\lambda} = \rho_{\lambda} = \lambda$. If $\dom(P) = \emptyset$, then the nowhere defined function witnesses that $P$ does not have diagonalization opportunities, so we may assume that $\dom(P)$ is nonempty.  Let $p \in \dom(P)$ be arbitrary. We set $p_\lambda := p$. For $\xi \in \N^{<\N}$ suppose that $p_{\xi}$, $q_{\xi}$, $\sigma_{\xi}$ and $\rho_{\xi}$ are defined. For $r \in \dom(P)$, let 
    \[
        s_r = \Delta(p_{\lambda}, \rho_{\lambda}, \dots, p_{\xi}, \rho_{\xi}, r) \in P'(r) 
    \] 
    be $\Delta$'s response to Player~1 extending the game with index $\xi$ by playing $p = r$.
    We apply Lemma~\ref{Lemma:CoverByNodes} to $\llb \sigma_{\xi} \rrb$ with the relation $R$ defined by 
    \[
        R(\sigma) \iff (\E r \sqsupset \sigma)(\Delta(p_{\lambda}, \rho_{\lambda}, \dots, p_{\xi}, \rho_{\xi}, r, s_r|_{|\xi| + 1}) = \sigma).
    \] 
    Our construction ensures that the sequence of moves indexed by initial segments of $\xi$ is a valid partial run of the discontinuity game for $P'$ in which Player~2 plays according to $\Delta$,  so each $p \sqsupset \sigma_\xi$ has a $\sigma \sqsubset p$ satisfying $R(\sigma)$. Hence, we may apply Lemma~\ref{Lemma:CoverByNodes} to obtain a (possibly finite) sequence $\sigma_{\xi\cat0}, \sigma_{\xi \cat1} \dots$ so that $R(\sigma_{\xi \cat i})$ holds for each $i$ and so that the $\llb \sigma_{\xi \cat i}\rrb$ form a disjoint cover of $\llb \sigma_\xi \rrb \cap \dom(P)$. For each $i$, let $p_{\xi \cat i}$ be the $r \sqsupset \sigma_{\xi \cat i}$ promised by $R(\sigma_{\xi \cat i})$, let $q_{\xi\cat i} = s_{p_{\xi \cat i}}$ and let $\rho_{\xi \cat i} = q_{\xi \cat i}|_{|\xi| + 1}$, matching the round of the game described in our definition of $R$. 

    Since the $\llb \sigma_{\xi \cat i}\rrb$ are disjoint covers of $\dom(P)$ for each fixed $\xi$, our definition of $\Upsilon$ is well defined as a continuous partial function on Baire space and total on $\dom(P)$. 

    Finally, we show that $\Upsilon$ does not have a diagonalization opportunity. Fix  $p \in \dom(P)$, $\sigma \sqsubset p$ and $\rho \sqsubset \Upsilon(p)$ such that $\Upsilon(\llb \sigma \rrb) \subset \llb \rho \rrb$. Then, there is some $\xi \in \N^{<\N}$ such that $\rho = \rho_\xi$ and $\sigma \sqsupset \sigma_\xi$. Since $\Delta$ is a winning strategy for Player~2 in the infinite-run discontinuity game for $P'$ and $\sigma_\xi = \Delta(p_{\lambda}, \rho_{\lambda}, \dots, p_{\xi}, \rho_{\xi})$, we have that the partial run with index $\xi$ in which Player~2 follows $\Delta$ begins the $P'|[\llb \sigma_\xi \rrb \times \llb \rho_\xi \rrb]$-game which $\Delta$ continues on to win. Furthermore, $\sigma_\xi \sqsubset p$, so $p$ is a valid Player~1 move. Since $\Delta$ is a winning strategy, Player~2 does not lose when Player~1 plays $p$, so there is $q \in \closure(P(p)) \cap \llb \rho_\xi \rrb$. By properties of closures, there is $q' \in P(p) \cap \llb \rho_\xi \rrb$. Hence, $\Upsilon$ does not have a diagonalization opportunity for $P$.
\end{proof}

Although having diagonalization opportunities is not Weihrauch invariant, Theorem~\ref{Thm:DiagonalizationOpportunitiesGame} and the proof that $\TS^n_\N$ is discontinuous at the beginning of Section~\ref{Sec:AchromaticRamseyTheorems} suggest that a more systematic account of this notion (or a similar one) may be fruitful.

Loosely motivated by the exponent lifting technique of Hirschfeldt and Jockusch \cite{hirschfeldtJockuschNotionsComputabilitytheoreticReduction2016}, we discover that Player~1 having a strategy in the rank-$\alpha$ discontinuity game which defeats continuous functionals of fixed complexity is preserved by computable Weihrauch reductions.  

\begin{Theorem}
    \label{Thm:Computable-WeihruachInvariant}
    Fix a set $Z \subset \N$ and a countable ordinal $\alpha < \omega_1$. Let $P: \subset \B \tto \B$ and $Q: \subset \B \tto \B$ be problems such that $P \leq_\W Q$. 
    Suppose that there is a Player~1 strategy $\Gamma$ for the $\alpha$-$P$ game which defeats the Player~2 strategy $\Delta_\Upsilon$ for all $Z$-computable $\Upsilon$, where $\Delta_\Upsilon$ is defined as in the proof of Theorem~\ref{Thm:DiagonalizationOpportunitiesGame}. 
    Then, there is a Player~1 strategy $\widehat{\Gamma}$ for the $\alpha$-$Q$ game which defeats the strategy $\Delta_{\widehat{\Upsilon}}$ for all $Z$-computable functionals $\widehat{\Upsilon}$.
\end{Theorem}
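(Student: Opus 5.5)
The plan is to mimic the proof of Theorem~\ref{Thm:DiscontinuityGameInvariant}(1), but to run it against a fixed $Z$-computable $\widehat{\Upsilon}$ rather than against an arbitrary Player~2. Fix computable $\Phi,\Psi$ witnessing $P \leq_\W Q$. The key observation is that composition with a Weihrauch reduction preserves $Z$-computability: if $\widehat{\Upsilon}$ is $Z$-computable, then so is
\[
\Upsilon(p) := \Psi(p,\widehat{\Upsilon}(\Phi(p))).
\]
Moreover, $\Upsilon$ is total on $\dom(P)$ whenever $\widehat{\Upsilon}$ is total on $\dom(Q)$.

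The strategy $\widehat{\Gamma}$ translates moves of $\Gamma$ exactly as in Theorem~\ref{Thm:DiscontinuityGameInvariant}.
\begin{itemize}
\item In the first round, $\widehat{\Gamma}$ plays $\Phi(p_0)$ where $p_0=\Gamma(\lambda)$.
\item On seeing $\hat q$, it feeds $q=\Psi(p_0,\hat q)$ to $\Gamma$ and obtains $\rho$. It then plays a $\hat\rho\sqsubset\hat q$ with $\Psi(\llb p_0|_n\rrb,\llb\hat\rho\rrb)\subset\llb\rho\rrb$.
\item On seeing $\hat\sigma$, it feeds $\Gamma$ a $\sigma\sqsupseteq p_0|_n$ with $\Phi(\llb\sigma\rrb)\subset\llb\hat\sigma\rrb$, and copies $\Gamma$'s ordinal $\gamma$ and next instance $p'$, answering with $\Phi(p')$.
\end{itemize}
This continues round by round. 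In later rounds the restricted problems are $P|[\llb\sigma\rrb\times\llb\rho\rrb]$ and $Q|[\llb\hat\sigma\rrb\times\llb\hat\rho\rrb]$, and the same $\Phi,\Psi$ still reduce one to the other. So the construction can be iterated without re-choosing reductions; this matters because we need a single $\Upsilon$ across the whole run.

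Now let $\Delta_{\widehat\Upsilon}$ play against $\widehat{\Gamma}$. I will check that the simulated $P$-run is precisely a run of $\Gamma$ against $\Delta_{\Upsilon}$, up to the choice of $\sigma$. Player~2's first move there is $\widehat\Upsilon(\Phi(p))$, and the translated move is $\Psi(p,\widehat\Upsilon(\Phi(p)))=\Upsilon(p)$, as required. The $\sigma$-moves are the delicate point, since $\Delta_\Upsilon$ plays the \emph{shortest} $\sigma$ with $\Upsilon(\llb\sigma\rrb)\subset\llb\rho\rrb$, while the translation produces some longer $\sigma$.

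I expect this mismatch to be the main obstacle. My first attempt would be to choose the translated $\sigma$ as the shortest legal one. If that fails, I would instead show that $\Gamma$'s victory against $\Delta_\Upsilon$ extends to any Player~2 strategy that answers $\Upsilon(p)$ and plays $\sigma$-moves at least as long as $\Delta_\Upsilon$'s. The justification is that a longer $\sigma$ only shrinks the restricted problem's domain, so Player~1's later instances remain legal in the larger game. Alternatively, I could define $\Upsilon$ and its shortest-$\sigma$ rule directly so that the two coincide; by the continuity computations, any $\sigma$ with $\Phi(\llb\sigma\rrb)\subset\llb\hat\sigma\rrb$ and $\sigma\sqsupseteq p_0|_n$ already satisfies $\Upsilon(\llb\sigma\rrb)\subset\llb\rho\rrb$.

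Granting this, $\Gamma$ defeats the simulated Player~2. It eventually plays some $p$ with no solution in the current $\llb\rho\rrb$, or one at which the simulated Player~2 has no legal move. At that point, since $\Psi$ maps $Q$-solutions of $\Phi(p)$ inside $\llb\hat\rho\rrb$ to $P$-solutions inside $\llb\rho\rrb$, the instance $\Phi(p)$ has no $Q$-solution in $\llb\hat\rho\rrb$. Hence $\widehat{\Gamma}$ defeats $\Delta_{\widehat\Upsilon}$.
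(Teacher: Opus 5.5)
Your route is the paper's route. Its proof consists of your first two ingredients: $\Upsilon(p)=\Psi(p,\widehat\Upsilon(\Phi(p)))$ is $Z$-computable, and one reuses the $\widehat\Gamma$ of Theorem~\ref{Thm:DiscontinuityGameInvariant}. It then simply \emph{asserts} that $\widehat\Gamma$ against $\Delta_{\widehat\Upsilon}$ is the translation of $\Gamma$ against $\Delta_\Upsilon$. You are right to iterate the translation with the same $\Phi,\Psi$, so that one $\Upsilon$ governs the whole run. You are also right that the correspondence is not literal. Write $\sigma^*$ for the shortest initial segment of $p_0$ with $\Upsilon(\llb\sigma^*\rrb)\subset\llb\rho\rrb$, which is the move of $\Delta_\Upsilon$. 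Your continuity computation gives only $\sigma^*\sqsubseteq\sigma$ for the translated $\sigma$.

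The gap is that the proposal says ``granting this'', and none of your three remedies closes the step.
\begin{itemize}
\item \textbf{Remedy~1.} The shortest $\sigma\sqsupseteq p_0|_n$ with $\Phi(\llb\sigma\rrb)\subset\llb\hat\sigma\rrb$ is governed by the moduli of $\Phi$ and $\Psi$, while $\sigma^*$ is governed by $\Upsilon$ alone. So in general $\sigma^*$ is a proper initial segment of it. Nor can $\widehat\Gamma$ feed $\Gamma$ the move $\sigma^*$ itself. First, $\sigma^*$ depends on $\widehat\Upsilon$ on a whole neighbourhood of $\Phi(p_0)$, not only on the moves $\hat q,\hat\sigma$ that $\widehat\Gamma$ sees. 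Second, $\Gamma$'s reply $p'\in\llb\sigma^*\rrb$ may have $\Phi(p')\notin\llb\hat\sigma\rrb$, which is an illegal $Q$-move.
\item \textbf{Remedy~2.} This argues in the wrong direction. After a strictly longer $\sigma$, $\Gamma$ is at a different history. Its reply there is not controlled by the run against $\Delta_\Upsilon$; it could only be controlled by runs against other $\Delta_{\Upsilon'}$, which is Remedy~3. Its winning reply to $\sigma^*$ need not even lie in $\llb\sigma\rrb$. The fact that moves legal in the smaller restricted game are legal in the larger one transfers Player~1 wins from a longer $\sigma$ to a shorter one, not conversely: a longer $\sigma$ is a weakly stronger Player~2 move.
\end{itemize}

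\textbf{Remedy~3} is the viable direction, but it needs an actual construction. You need a $Z$-computable $\Upsilon'$ that agrees with $\Upsilon$ at the finitely many instances of the run (finitely many because the ordinals decrease), and whose shortest-$\sigma$ rule returns exactly the translated $\sigma$'s. A natural candidate is $\Upsilon$ patched on a sibling cone of each translated $\sigma$ with a computable value outside $\llb\rho\rrb$. This is not automatic. Suppose that in round $i$, $\Gamma$ plays $\rho_i\sqsubseteq\rho_k$ for an earlier round $k$. Then every $\Upsilon'$ reproducing round $k$ has $\Upsilon'(\llb\sigma_k\rrb)\subset\llb\rho_k\rrb\subseteq\llb\rho_i\rrb$. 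Since $p_i\in\llb\sigma_k\rrb$, the round-$i$ answer of $\Delta_{\Upsilon'}$ is an initial segment of $\sigma_k$. So no patch can reproduce a translated $\sigma_i$ that properly extends $\sigma_k$; and if $\rho_i=\lambda$, every $\Delta_{\Upsilon'}$ answers $\lambda$. Hence $\widehat\Gamma$'s choices of $\hat\rho$, $n$ and $\sigma$ must be refined in such rounds. Also, patches lying inside earlier cones $\llb\sigma_k\rrb$ must take values in $\llb\rho_k\rrb\setminus\llb\rho_i\rrb$.

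What your argument does establish cleanly is a variant with a stronger hypothesis: $\Gamma$ defeats \emph{every} Player~2 that answers $\Upsilon(p)$ and then some $\sigma\sqsubset p$ with $\Upsilon(\llb\sigma\rrb)\subset\llb\rho\rrb$. Your continuity computation is exactly what that variant needs, and its conclusion applies in particular to $\Delta_{\widehat\Upsilon}$.

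A minor point: $\Upsilon$ need not be total on $\dom(P)$, since $\Psi(p,\cdot)$ is only guaranteed to converge on $Q(\Phi(p))$. This is harmless, because $\Psi(p,\hat q)$ does converge whenever $\hat q$ is a legal $Q$-answer.
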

\begin{proof}
    Suppose that $P \leq_\W Q$ via $\Phi$ and $\Psi$. Given a $Z$-computable functional $\widehat{\Upsilon}$, define $\Upsilon(p) = \Psi(p,\widehat{\Upsilon}(\Phi(p))).$ Then, the $\widehat{\Gamma}$ defined in the proof of Theorem~\ref{Thm:DiscontinuityGameInvariant} plays against $\Delta_{\widehat{\Upsilon}}$ when $\Gamma$ plays against $\Delta_{\Upsilon}$. Since $\Gamma$ wins its run of the game, so does $\widehat{\Gamma}$.
\end{proof}

Our proof of Theorem~\ref{Thm:Computable-WeihruachInvariant} does not extend to continuous Weihrauch reducibility because we lose the $Z$-computability of $\Upsilon$. 

\subsection{Ranks and Represented Spaces}

Consider the following notion of Cantor-Bendixson rank of a set $A \subset \B$ relative to a represented space $\mb{X} = (X, \delta_X)$.

\begin{Definition}
    \label{Def:RankOfSpace}
    Let $\mathbf{X} = (X, \deltarep{X})$ be a represented space and let $A \subseteq \dom(\deltarep{X})$.
    \begin{itemize}
        \item  We say that an open set $U$ is an \emph{$\mathbf{X}$-$\leq\!\!0$-isolating neighborhood for a point $p$ in $A$} if $p \in U \cap A$ and for all $q \in U \cap A$, we have $\deltarep{X}(p) = \deltarep{X}(q)$.
        \item For $\alpha > 0$, we say that an open set $U$ is an \emph{$\mathbf{X}$-$\leq\!\!\alpha$-isolating neighborhood for a point $p$ in $A$} if $p \in U \cap A$ and for all points $q \in U \cap A$ with $\deltarep{X}(p) \neq \deltarep{X}(q)$ there is $\beta < \alpha$ and open set $V$ such that $V$ is an $\mathbf{X}$-$\leq\!\!\beta$-isolating neighborhood of $q$. 
        \item If $p \in A$ has an $\mathbf{X}$-$\leq\!\!\alpha$-isolating neighborhood then we write $\rank^\mathbf{X}_A(p) \leq \alpha$.
        \item By a temporary abuse of notation, we define $\rank^\mathbf{X}_A(p) = \alpha$ if $\alpha$ is the least ordinal such that $\rank^\mathbf{X}_A(p) \leq \alpha$. If no such $\alpha$ exists then we write $\rank^\mathbf{X}_A(p) = \infty$.
        \item We say that $\rankp^\mathbf{X}(A) = \alpha$ for the least ordinal $\alpha$ such that $\rank^\mathbf{X}_A(p) \leq \alpha$ for all $p \in A$. If no such $\alpha$ exists then we write $\rankp^\mathbf{X}(A) = \infty$. 
        \item We write $\rankp(\mathbf{X})$ for $\rankp^\mathbf{X}(\dom(\deltarep{X}))$.
    \end{itemize}
\end{Definition}

In an early draft of this paper, it was claimed that a problem $P: \subset \mb{X} \tto \mb{Y}$ has a discontinuity of rank $\alpha$ if and only if $\realizerver{P}$ is discontinuous on a set $A \subset \B$ with $\rankp^{\mb{X}}(A) \leq \alpha$. The author found a hole in the proof and the following was posed as an open question. 

\begin{Question}
    \label{Question:Rank1Counterexample}
    Does there exist a represented space $\mb{X}$ and problem $P: \subset \mb{X} \tto \B$ such that 
    \begin{enumerate}
        \item $P$ is discontinuous,
        \item $\rankp^\mb{X}(\dom(\realizerver{P})) = 1$,
        \item and Player~2 has a winning strategy in the rank-$1$ discontinuity game for $\realizerver{P}$?
    \end{enumerate}
\end{Question}

Jun Le Goh  has since found a counter-example with $\rankp(\dom(\realizerver{P})) = 2$ (personal communication). 

\subsection{Other Rankings of Discontinuity}
We compare our results to those of some other notions of ranking discontinuity introduced by Peter Hertling. 
\subsubsection{Levels of Discontinuity and Mind Changes}

In \cite{hertlingTopologicalComplexityContinuous1996}, Hertling defines the levels of discontinuity of a single valued function by iteratively removing points outside of the closure of the set of discontinuities. The level of discontinuity of a function is the least ordinal $\alpha$ such that the $\alpha$-th iteration of this process yields the empty set, or $\infty$ if this never happens. In contrast, the ranks of Definition~\ref{Def:RankOfDiscontinuity} are local. Furthermore, they collapse to being either $0$, $1$, or $2$ for single valued functions.

De Brecht shows that Hertling's levels characterize when a single-valued function between countable $T_0$ spaces is continuous with $\alpha$ many mind changes \cite{debrechtLevelsDiscontinuityLimitcomputability2014}. We obtain a sufficient condition in terms of our ranking for multivalued functions: if $P$ has domain of pointwise Cantor-Bendixson rank $\alpha$, then $P$ is continuously solvable with $\alpha$ many mind changes, which we shall prove below. This echoes the result of Luo and Schulte \cite{luoMindChangeEfficient2006} that if the pointwise Cantor-Bendixson rank of a collection $\mathcal{L}$ of languages is $\leq \alpha$ then $\mathcal{L}$ is learnable with $\alpha$ many mind changes.
Let $w_{\alpha}$ be an enumeration of $\alpha$ as in Section~\ref{Sec:AchromaticRamseyTheorems}. In the following definition, the symbol $\hash$ is used to represent a mind change.
\begin{Definition}
    Let $P: \subset \B \tto \B$ be a problem and let $\alpha$ be a countable ordinal. We define the problem $\MC_\alpha(P): \subset \B \tto \extendedBaireSpace$ by $\dom(\MC_{\alpha}(P)) = \dom(P)$ and 
    \[
        \MC_\alpha(P)(p) = \{\phi_0 \hash s_1 \phi_1 \dots \hash s_k q :\phi_i \in \N^{<\N},  \alpha > w_\alpha(s_1) > \dots > w_{\alpha}(s_k), \text{ and } q \in P(p)\}.
    \] 
\end{Definition}

\begin{Theorem}
    \label{Thm:MCAlphaContinuous}
    Let $P: \subset \B \tto \B$ be a problem and let $\alpha$ be a countable ordinal. If $P(p)$ is nonempty for all $p \in \dom(P)$ and $\rankp(\dom(P)) \leq \alpha$ then $\MC_\alpha(P)$ is continuous. 
\end{Theorem}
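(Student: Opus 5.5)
I will argue by induction on $\alpha$, building an explicit continuous realizer $\Phi_\alpha$ for $\MC_\alpha(P)$. The intended behaviour is simple: on input $p$, the realizer should guess that $p$ is the rank-maximal point of the isolating cone it currently sees. If that guess is refuted, it announces a mind change and records a strictly smaller ordinal.

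For the base case $\alpha = 0$, every point of $A := \dom(P)$ is isolated. By Lemma~\ref{Lem:CoverByIsolating} we obtain incomparable strings $\sigma_i$ with $A \cap \llb \sigma_i \rrb = \{p_i\}$ covering $A$. Fix $q_i \in P(p_i)$, and let the realizer read $p$ until it finds $\sigma_i \sqsubset p$ and then output $q_i$. This map is continuous because the domains $\llb\sigma_i\rrb$ are disjoint clopen sets, exactly as in the proof of Theorem~\ref{Thm:ContinuityIsContinuityAtAllPoints}. No $\hash$ is ever used.

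For the inductive step with $\alpha > 0$, apply Lemma~\ref{Lem:CoverByIsolating} again. This gives incomparable $\sigma_i$ and points $p_i \in \llb\sigma_i\rrb$ such that $\llb\sigma_i\rrb$ is a $\leq\!\!\rank_A(p_i)$-isolating neighborhood of $p_i$, and the sets $A\cap\llb\sigma_i\rrb$ partition $A$. It suffices to build a realizer on each cone and paste the pieces together.

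Fix $i$, set $\gamma = \rank_A(p_i) \leq \alpha$, and fix $q_i \in P(p_i)$. On input $p \sqsupset \sigma_i$ the realizer copies $q_i$ symbol by symbol while checking whether $p$ still agrees with $p_i$. If $p = p_i$, it outputs exactly $q_i$, which lies in $\MC_\alpha(P)(p)$ with $k=0$.

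If $p \ne p_i$, then at some finite stage the realizer sees the first position where $p$ departs from $p_i$. It now knows a string $\tau$ with $\sigma_i \sqsubset \tau$, $\tau \not\sqsubset p_i$ and $p \in \llb\tau\rrb$. Every point of $A\cap\llb\tau\rrb$ is a point of $A\cap\llb\sigma_i\rrb$ other than $p_i$, so by Lemma~\ref{Lem:SurroundedByLowerRanks} it has $A$-rank $<\gamma$. By Lemma~\ref{Lem:OpenSetsPreserveRank}, these ranks are the same when computed in $A\cap\llb\tau\rrb$.

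The main obstacle is at this point. The set $A \cap \llb \tau\rrb$ need not have pointwise rank bounded by a single $\beta<\gamma$, because the ranks of its points may be cofinal in a limit $\gamma$. So the inductive hypothesis cannot be applied to the whole of $A\cap\llb\tau\rrb$ at once.

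To handle this, apply Lemma~\ref{Lem:CoverByIsolating} once more, now to $A \cap \llb\tau\rrb$. This gives a disjoint cover by cones $\llb\sigma'_j\rrb$, each of which is a $\leq\!\!\beta_j$-isolating neighborhood of some point with $\beta_j<\gamma$. By Lemma~\ref{Lem:SurroundedByLowerRanks}, each piece $A\cap\llb\sigma'_j\rrb$ has pointwise rank $\le\beta_j$. The realizer waits until $p$ reveals which $\sigma'_j$ it extends. It then emits $\hash$ followed by a code $s$ with $w_\alpha(s)=\beta_j$; such an $s$ exists because $\beta_j < \gamma \le \alpha$ and $w_\alpha$ enumerates $\alpha$. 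After that it continues with the inductively given realizer for $\MC_{\beta_j}(P|_{\llb\sigma'_j\rrb})$.

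One bookkeeping point remains. The later mind-change codes produced by that realizer are written with respect to $w_{\beta_j}$, not $w_\alpha$. To fix this, I will strengthen the inductive statement so that it allows an arbitrary enumeration $w_\alpha$ of an ordinal $\ge\alpha$, or equivalently outputs ordinals below $\beta_j$ coded via $w_\alpha$. Then the same $w_\alpha$ is used throughout, and the sequence of announced ordinals is strictly decreasing.

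Continuity at $p_i$ holds because $q_i$ is copied for as long as $p$ agrees with $p_i$. Everywhere else the output is determined by finite information about $p$ together with the continuous inductive realizers on the disjoint clopen pieces.

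Finally, the output always consists of finitely many prefixes, marks $\hash s$ with strictly decreasing $w_\alpha(s)$, and a tail $q\in P(p)$. Hence it is a valid $\MC_\alpha(P)$-solution, which completes the induction.
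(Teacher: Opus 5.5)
Your proposal is correct and is essentially the paper's argument: induct on $\alpha$, partition $\dom(P)$ into isolating cones via Lemma~\ref{Lem:CoverByIsolating}, copy a fixed solution for the cone's distinguished point until $p$ departs from it, then announce a mind change with a smaller ordinal and hand over to an inductively obtained realizer. Two of your steps make explicit points the paper's proof passes over quickly: applying Lemma~\ref{Lem:CoverByIsolating} a second time to $A\cap\llb\tau\rrb$, where the paper picks an isolating neighborhood of $p$ of rank exactly $\rank_A(p)$, which cannot be determined from a finite prefix of $p$; and re-coding all later mind-change ordinals through the single enumeration $w_\alpha$.
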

\begin{proof}
    We argue by induction. Suppose that the theorem is true for all $\beta < \alpha$. If $\rankp(\dom(P)) < \alpha$ then we may apply the induction hypothesis along with the fact that being continuous in $\beta$ mind changes implies being continuous in $\alpha$ mind changes for all $\alpha > \beta$. 

    Hence, we may assume that $\rankp(\dom(P)) = \alpha$. We give a continuous algorithm for computing $\MC_{\alpha}(P)$. Since $\rankp(\dom(P)) = \alpha$ we may apply Lemma~\ref{Lem:CoverByIsolating} to obtain $\sigma_1, \sigma_2, \dots$ and $a_1 \in \llb \sigma_1 \rrb$, $a_2 \in \llb \sigma_2 \rrb$, $\dots$ such that for each $i \in \N$, $\sigma_i$ is a $\rank_{\dom(P)}(a_i)$-isolating neighborhood of $a_i$ in $\dom(P)$ and such that $\dom(P) = \bigsqcup_{i \in \N} \dom(P) \cap \llb \sigma_i \rrb$.
    
    For each $i \in \N$, fix a $b_i \in P(a_i)$. Given $p \in \dom(P)$, we describe how to continuously produce an element of $\MC_\alpha(P)(p)$. First, read $p$ until discovering that $\sigma_i \sqsubset p$ for some $i \in \N$. Then, start outputting $b_i$. If it turns out that $p = a_i$, then we have succeeded. Otherwise, we find some $\tau \sqsubset p$ such that $\tau \not\sqsubset a_i$ after outputting some initial segment $\phi \sqsubset b_i$. Since $\llb \sigma \rrb$ is $\leq\alpha$-isolating of $a_i$, we have that each point of $\dom(P) \cap \llb \tau \rrb$ has rank strictly less than $\alpha$. Hence, $\rank_{\dom(P)}(p) = \beta$ for some $\beta < \alpha$. Find a $\sigma' \sqsubset p$ such that $\llb \sigma'\rrb$ is a $\beta$-isolating neighborhood for $p$. By the inductive hypothesis, $\MC_\beta(P|_{\llb \sigma' \rrb})$ is continuous, say via some function $\Phi$. We then output $\phi \hash s \Phi(p)$, where $s$ is such that $w_\alpha(s) = \beta$. Since the last guess of $\Phi$ is correct, we have succeeded.
\end{proof}

Note that Theorem~\ref{Thm:MCAlphaContinuous} implies that $\MC_{\alpha}(P)$ never has a discontinuity of rank $\beta$ such that $0 < \beta \leq \alpha$. This makes sense because Player~2 can always play a solution to Player~1's instance while committing to only a single additional mind change; an alternative proof to Theorem~\ref{Thm:MCAlphaContinuous} uses the idea that $\MC_{\alpha}$ has access to $\alpha + 1$ many attempts, while Player~1 can only force Player~2 to change their mind about $\alpha$ many of these attempts in the rank-$\alpha$ discontinuity game.

\subsubsection{Ranks of Forest Classes Representing a Weihrauch Degree}
Hertling \cite{hertlingForestsDescribingWadge2020} defines a notion of reducibility between labelled forests which corresponds to Weihrauch reducibility. He notes that a natural notion of rank of discontinuity emerges from this framework by taking the rank of the forest class representing a Weihrauch degree. While there are similarities in the construction, this notion of rank of discontinuity is different from the one we have defined here. Indeed, in Section~\ref{sec:SingleValuedFunctions}, we observe that the rank of a point of discontinuity for single valued functions collapses to be either $0$, $1$, or $2$. On the other hand, Hertling exhibits examples of single valued functions whose forest class has rank $\omega$ \cite[cf. Example 8.6]{hertlingForestsDescribingWadge2020}. A rough heuristic for the source of the difference is that the recursion in Definition~\ref{Def:RankOfDiscontinuity} takes place while considering instances and solutions one at a time while the recursive step in Hertling's notion simultaneously takes into account all solutions to all instances in an open subset of the domain. It would be interesting to understand the relationship between these two notions of rank of discontinuity more precisely.